\def\english{\selectlanguage{english}}
\providecommand\mathbb{\bf}
\newcommand\R{{\mathbb R}}
\newcommand\N{{\mathbb N}}
\newcommand\Z{{\mathbb Z}}
\newcommand\Y{{\mathbb Y}}
\newtheorem{thm}{Theorem}[section]
\newtheorem{lemma}{Lemma}[section]
\newtheorem{pro}{Proposition}[section]
\newtheorem{defi}{Definition}[section]
\newtheorem{coro}{Corollary}[section]
\newtheorem{remark}{Remark}[section]
\newcounter{Remark}
\renewcommand\theRemark{\arabic{Remark}}
\newcounter{steps}
\newenvironment{proof}[1][]{%
\par\medbreak\setcounter{steps}{0}
{\noindent\bfseries Proof#1. }} {\hfill\fbox{\ }\medbreak}
\newcounter{substeps}[steps]
\newcommand{\QFP}[0]{
Q_{\mathrm{FP}}}
\newcommand{\calE}[0]{
\mathcal{E}}
\newcommand{\calF}[0]{
\mathcal{F}}
\newcommand{\calV}[0]{
\mathcal{V}}
\newcommand{\calH}[0]{
\mathcal{H}}
\newcommand{\calT}[0]{
\mathcal{T}}
\newcommand{\intv}[1]{
\int _{\R ^d} \!#1 \;\mathrm{d}v}
\newcommand{\intvtt}[1]{
\int _{\R ^2} \!#1 \;\mathrm{d}v}
\newcommand{\intxtt}[1]{
\int _{\R ^2} \!#1 \;\mathrm{d}x}
\newcommand{\intxvtt}[1]{
\int _{\R ^2} \int_{\R^2} \!#1 \;\mathrm{d}v\mathrm{d}x}
\newcommand{\intTxtt}[1]{
\int_{0}^{T} \int _{\R ^2}  \!#1 \;\mathrm{d}x \mathrm{d}t}
\newcommand{\intTxvtt}[1]{
\int_{0}^{T} \int _{\R ^2} \int_{\R^2} \!#1 \;\mathrm{d}v\mathrm{d}x \mathrm{d}t}
\newcommand{\intxtp}[1]{
\int _{\R ^2} \!#1 \;\mathrm{d}x^\prime}
\newcommand{\intvtp}[1]{
\int _{\R ^2} \!#1 \;\mathrm{d}v^\prime}
\newcommand{\fe}[0]{
f ^\varepsilon}
\newcommand{\Divx}[0]{
\mathrm{div}_x}
\newcommand{\Divv}[0]{
\mathrm{div}_v}
\newcommand{\eps}[0]{
\varepsilon}
\newcommand{\calO}[0]{
\mathcal{O}}
\begin{document}
\english

\title{Asymptotic behavior of the two-dimensional Vlasov-Poisson-Fokker-Planck equation with a strong external magnetic field}

\author{Miha\"i BOSTAN \thanks{Aix Marseille Universit\'e, CNRS, Centrale Marseille, Institut de Math\'ematiques de Marseille, UMR 7373, Ch\^ateau Gombert 39 rue F. Joliot Curie, 13453 Marseille FRANCE. E-mail : {\tt mihai.bostan@univ-amu.fr}}, 
Anh-Tuan VU \thanks{Aix Marseille Universit\'e, CNRS, Centrale Marseille, Institut de Math\'ematiques de Marseille, UMR 7373, Ch\^ateau Gombert 39 rue F. Joliot Curie, 13453 Marseille FRANCE. E-mail : {\tt anh-tuan.vu@univ-amu.fr}}
}

\date{(\today)}

\maketitle

\begin{abstract}
The subject matter of the paper concerns the Vlasov-Poisson-Fokker-Planck (VPFP) equations in the context of magnetic confinement. We study the long-time behavior of the VPFP system with an intense external magnetic field, neglecting the curvature of the magnetic lines. When the intensity of the magnetic field tends to infinity, the long-time behavior of the particle concentration is described by a first-order nonlinear hyperbolic equation of the Euler type for fluid mechanics. More exactly, when the magnetic field is uniform, we find the vorticity formulation of the incompressible Euler equations in two-dimensional space. Our proofs rely on the modulated energy method.

\end{abstract}

\paragraph{Keywords:} Vlasov-Poisson-Fokker-Planck equations, Guiding center approximation, Modulated energy.

\paragraph{AMS classification:} 35Q75, 78A35, 82D10
\\
\\

\section{Introduction}
\label{Intro}
We consider $f = f(t, x, v)$ the density of a population of charged particles of mass $m$, charge $q$ depending on time $t$, position $x$ and velocity $v$. We are interested in the Vlasov-Poisson system, in the presence of an external magnetic field, taking into account the collisions between charged particles. Neglecting the curvature direction of magnetic field lines, we assume that the external magnetic field has a constant direction orthogonal to $Ox_1, Ox_2$ but a variable amplitude. In the two-dimensional setting $x = (x_1, x_2)$, $v = (v_1, v_2)$, the Vlasov-Fokker-Planck equation is written 
\begin{align}
\label{VPFP2D-nonScale}
{\partial _t}f + v \cdot {\nabla _x}f + \dfrac{q}{m}\left\{ {E\left[ f(t) \right]\left( {x} \right) + {B}\left( x \right){}^ \bot v} \right\} \cdot {\nabla _v}f = {\QFP}\left( f \right),\,\,\left( {t,x,v} \right) \in {\R_ + } \times {\R^2} \times {\R^2}.
\end{align}
Here the notation ${}^ \bot \left( \cdot \right)$ stands for the rotation of angle $-\pi /2$, $\mathit{i.e.,}$ ${}^ \bot v  = (v_2, - v_1)$, $v=(v_1,v_2)$ and the magnetic field is written $\textbf{B}(x) = (0, 0, B(x))$, where $B(x)$ is a given function. The electric field $E[f(t)] = -\nabla_x \Phi[f(t)]$ derives from the potential
\[
\Phi[f(t)](x) = -\dfrac{q}{2\pi \epsilon_0}\intxtp{\ln |x-x'|\left(\intvtp{f(t,x',v')}-D(x')\right)},
\]
which satisfies the Poisson equation
\[
- {\epsilon _0}{\Delta _x}\Phi \left[ {f\left( t \right)} \right](x) = q\left(\intvtt{f\left( {t,x,v} \right)} - D(x)\right) ,\,\,(t,x)\in \R_ +  \times \R^2,
\]
whose fundamental solution is $z\to -\frac{1}{2\pi}\ln|z|, z\in\R^2\backslash \left\{0\right\}$. Here, the function $D= D(x)$ is the concentration of a background of positive charges and is assumed to be given. The constant $\epsilon_0$ represents the electric permittivity of the vacuum. For any particle density $f = f(t, x, v)$, the notation $E[f(t)] ( x)$ represents the Poisson electric field 
\begin{align}
\label{ElecField-nonScale2D}
E\left[ f(t) \right](x) =\dfrac{q}{2\pi {\epsilon _0}}\intxtp{\left( {\intvtp{f\left( {t,x',v'}  \right) } } - D(x') \right)\dfrac{{x - x'}}{\left| {x - x'} \right|^2}},
\end{align}
and $n[f(t)]$, $j[f(t)]$ stand for the concentration and the current density respectively
\[
n\left[ f(t) \right] = \intvtt {f\left( {t,\cdot ,v} \right)},\,\,\,\,j\left[ f(t) \right] =  q\intvtt {vf\left( {t,\cdot ,v} \right)}. 
\]
In the equation \eqref{VPFP2D-nonScale}, the operator $\QFP$ is the linear Fokker-Planck operator acting on velocities
\begin{align*}
\QFP\left( f \right) = \dfrac{1}{\tau}\Divv \left( {\sigma {\nabla _v}f + vf} \right),
\end{align*}
where $\tau$ is the relaxation time and $\sigma$ is the velocity diffusion, see \cite{Cha1949} for the introduction of
this operator, based on the principle of Brownian motion. We complete the above system by the initial condition
\begin{align}
\label{Initial-nonScale2D}
f\left( {0,x,v} \right) = f_\mathrm{in}\left( {x,v} \right),\,\,\left( {x,v} \right) \in \R^2 \times \R^2.
\end{align}

In this work, we analyze the evolution of the distribution density $f$ over a long time, in the regime of an intense magnetic field (gyro-kinetic), in order to observe the drift phenomenon in the directions orthogonal to the magnetic field. Indeed, it is well known that the velocities of electric cross field drift and the magnetic gradient drift are proportional to $\frac{1}{B}$ and consequently it is necessary  to observe the drift movements over a large time proportional to $B$. Namely, we consider
\begin{align*}
f\left( {t,x,v} \right) = {\fe }\left( {\bar t,x,v} \right), \,\, B^\eps(x)=\dfrac{B(x)}{\eps},\,\,\, \bar t = \eps t.
\end{align*}
Here $\eps > 0$ is a small parameter related
to the ratio between the cyclotronic period and the advection time scale. Hence ${\partial _t}f = \eps {\partial _{\bar t}}{f^\eps }$. Then in the  equation \eqref{VPFP2D-nonScale}, the term $\partial_t$ is to be replaced by $\eps\partial_{\bar t}$ or by $\eps\partial _t$ to simplify our notation, and the Vlasov-Poisson-Fokker-Planck system \eqref{VPFP2D-nonScale}, \eqref{ElecField-nonScale2D} becomes
\begin{align}
 & \eps {\partial _t}{f^\eps } + v \cdot {\nabla _x}{f^\eps } + \dfrac{q}{m}E\left[ {f^\eps }(t) \right]\cdot\nabla_v f^\eps + \dfrac{{{\omega _c}( x )}}{\eps }{}^ \bot v \cdot {\nabla _v}f^\eps = Q_{FP}(f^\eps ), \label{VPFP2d-Scale} \\
 & E\left[ f^\eps  \right] =  - {\nabla _x}\Phi[f^\eps],\, \,\, - {\epsilon _0}{\Delta _x}\Phi [ f^\eps ] = q\left({{n^\eps }} - D\right)  = q\left( \intvtt{f^\eps \left( {t,\cdot,v} \right)} -  D \right), \label{Poisson2D-Scale}
\end{align}
where $\omega_c (x)= \frac{qB(x)}{m}$ stands for the cyclotron frequency. We complete with an initial condition
\begin{align}
{f^\eps }\left( {0,x,v} \right) = f^\eps_\mathrm{in} \left( {x,v} \right),\,\,(x,v)\in \R^2\times\R^2. \label{Initial2D-Scale} 
\end{align}

The existence theory of the weak and classical solution of the VPFP system is now well developed and understood. Let us summarize the literature concerning existence results for this problem. In the absence of the external magnetic field $\mathit{i.e.,}$ $ B(x)=0$, several existence results for the VPFP system are known. The classic solutions have been studied by Degond in \cite{Degond1986} which showed the global/local existence and the uniqueness of the strong solution in one and two/three dimensions respectively, without friction term $\mathit{i.e.,}$ $\QFP=\sigma\Delta_v $. Victory and O'Dwyer obtained in \cite{VicDwy} the same result of existence of classical solution using the fundamental solution of the operator $\partial_t + v\cdot\nabla_x - \nabla_v\cdot(\sigma\nabla_v + v)$. In \cite{ReinWeck}, G. Rein and J. Weckler gave sufficient conditions to show the global existence of classical solutions in three dimensions. Regarding weak solutions, we can mention the works of Victory in \cite{Victory91}, J. A. Carrillo and J. Soler in \cite{CarSol95} with an initial data in the space $L^p$. With the magnetized VPFP system, we show the global existence in time of weak solutions, in the sense of Definition $2.1$. 


We study the asymptotic behavior of the solutions $(f^\eps)_{\eps >0}$ of the problem \eqref{VPFP2d-Scale}, \eqref{Poisson2D-Scale}, and \eqref{Initial2D-Scale} when $\eps$ tends to $0$. By investigating the balance of free energy associated with the VPFP system, we formally show in Section $4$ that the family $(f^\eps)_{\eps >0}$ converges to the limit distribution function $f\left( {t,x,v} \right) =n(t,x)M(v)= n(t,x)\frac{1}{{2\pi\sigma }}{e^{\frac{{ - {{\left| v \right|}^2}}}{2\sigma}}}$, $(t,x,v)\in\R_+ \times\R^2\times\R^2$, where the limit concentration $n$ verifies the first-order nonlinear hyperbolic equation
\begin{equation}
\label{equ:LimMod2D}
{\partial _t}n + \Divx \left[ n\left( \dfrac{^ \bot E[n]}{B(x)} - \sigma\dfrac{ ^ \bot \nabla \omega _c(x)}{\omega _c^2 (x)} \right) \right] = 0,\,\,(t,x)\in\mathbb{R}_+ \times\mathbb{R}^2,
\end{equation}
coupled to the Poisson equation
\begin{equation}
\label{LimPoisson2D}
E[n] = -\nabla_x \Phi[n],\,\, -\epsilon_0 \Delta_x \Phi[n] = q (n-D),
\end{equation}
with the initial condition
\begin{equation}
\label{LimInit2D}
n(0,x) = n_{\mathrm{in}}(x) =  \intvtt{f(0,x,v)}.
\end{equation}
Let us observe the limit equation \eqref{equ:LimMod2D}, we see that the concentration $n$ is advected along the vector field $\left( \frac{^ \bot E}{B(x)} - \sigma\frac{^ \bot \nabla \omega _c}{\omega _c^2 (x)} \right)$
which is the drift velocity respectively to the sum of the electric cross field drift $\frac{^\perp E}{B}$ and the magnetic gradient drift $\sigma \frac{ ^\perp \nabla\omega_c}{\omega_c ^2(x) }$. These drift velocities were mentioned in \cite{HerRod, DegFil16}. In the case of the uniform magnetic field $\mathit{i.e.,}$ $B(x) = B$, the system \eqref{equ:LimMod2D}, \eqref{LimPoisson2D}, and \eqref{LimInit2D} becomes
\begin{equation*}
\left\{\begin{array}{l}
  {\partial _t}n + \dfrac{{^\perp E[n]}}{B} \cdot {\nabla _x}n = 0,\,\,(t,x)\in\R_{+}\times\R^2,  \\
  E[n] =  - {\nabla _x}\Phi[n] ,\,\, - {\epsilon _0}{\Delta _x}\Phi[n]  = q\left( {n - D} \right),\,\, (t,x)\in\R_{+}\times\R^2, \\
  n\left( {0,x} \right) = n_{\mathrm{in}}(x),\,\, x\in\R^2 , \\ 
\end{array}\right. 
\end{equation*}
that is to say, the vorticity formulation of the two-dimensional incompressible Euler equations, with the cross electric field drift velocity $\frac{^\perp E}{B}$ and the vorticity $\mathrm{rot}_x ^ \bot E = - \frac{q}{ \epsilon_0}(n-D)$. Notice that when the background of positive charges $D=0$, the same model  was obtained by F. Golse, L. Saint-Raymond in \cite{GolRay99}, L. Saint-Raymond \cite{Ray2002} and E. Miot \cite{Miot} from the two-dimensional Vlasov-Poisson system without collisions. The authors justified rigorously the convergence towards the two-dimensional Euler equation of incompressible fluids in another approach.
Concerning the collisions between charged particles, we can mention the work of M. Herda and L.M. Rodrigues in \cite{HerRod}. In this paper, the authors are interested in the limit $\eps\searrow 0$ of the VPFP system \eqref{VPFP2d-Scale}, \eqref{Poisson2D-Scale}, and \eqref{Initial2D-Scale} in three-dimensional version $(t,x,v)\in \R_{+}\times\mathbb{T}^3\times \R^3$ (where $\mathbb{T} = \R / \Z$ is a torus one-dimensional). They formally show that the family $(f^\eps, \Phi^\eps:=\Phi[f^\eps])_{\eps>0}$ converges to the limit distribution function $f$ and the limit electric potential $\phi$ which have reached an adiabatic regime along the magnetic field
\[
f(t,x,v)= n(t,x)\dfrac{1}{(2\pi)^{3/2}}e^{-\frac{|v|^2}{2}},\,\,(t,x,v)\in \R_{+}\times\mathbb{T}^3\times \R^3,
\]
where the concentration $n$ is the anisotropic Boltzmann-Gibbs density
\[
n(t,x)= N(t, x_{\perp})\dfrac{e^{-q\phi(t,x)}}{\int_{\mathbb{T}}{e^{-q \phi(t,x_{\perp}, x_{\|})}}\mathrm{d}x_{\|}},\,\, x = (x_{\perp}, x_{\|})\in \mathbb{T}^2\times\mathbb{T}.
\]
The limit model is derived by the reduced macroscopic density $N: \R_{+}\times\mathbb{T}^2\to \R_+$ in the perpendicular direction, satisfying
\[
\partial _t N - \mathrm{div}_{x_{\perp}}\left( N ^\perp\left( \nabla_{x_{\perp}}\tilde{\phi} \right)  \right) = 0,
\]
where $\tilde{\phi}: \R_{+}\times \mathbb{T}^2\to \R$ is the average potential
\[
\tilde{\phi}(t, x_\perp) = - q\ln \left(   \int_{\mathbb{T}}{{e^{-q \phi(t,x_{\perp}, x_{\|})}}\mathrm{d}x_{\|}}\right),
\]
with the initial condition
\[
N(0, x_{\perp}) = N_{\mathrm{in}}(x_{\perp}) = \int_{\mathbb{T}}\intvtt{f_{0}(x_{\perp}, x_{\|}, v)\mathrm{d}x_{\|}}.
\]
Their results of passing to the limit concerned a linear model where the electric field is given $\mathit{i.e.,}$ $E[f^\eps] = E = -\nabla_x\phi$, for a given potential $\phi$. However, in the non-linear case of the VPFP type, they do not completely justify the passage to the limit model from the kinetic equation.

To the best of our knowledge, there has been no result on the asymptotic regime when the magnetic field is non-uniform. In the current work, the asymptotic behavior will be investigated by appealing to the relative entropy or modulated energy method, as introduced in \cite{Yau1991}. This relative entropy method relies on the smooth solution of the limit system. By this technique, one gets strong convergences.
Many asymptotic regimes were obtained using this technique, see \cite{Bre2000, BreMauPue2003, GolSaintQuas2003, PueSaint2004}
for quasineutral regimes in collisionless plasma physics, \cite{Saint2003, BerVas2005} for hydrodynamic limits in gaz dynamics, \cite{GouJabVas2004} for fluid-particle interaction, \cite{BosGou08, Bos2007, GouNiePouSol} for high electric or magnetic field limits in plasma physics.

Before establishing our main result, we define the modulated energy $\calE[n^\eps(t)|n(t)] $ by
\[
\calE[n^\eps(t)|n(t)] = \sigma\intxtt{n(t) h\left(\dfrac{n^\eps(t)}{n(t)}\right)} + \dfrac{\epsilon_0}{2m}\intxtt{|\nabla_x\Phi[n^\eps]-\nabla_x\Phi[n]|^2},
\]
where $h : \R_+ \to \R_+$ is the convex function defined by $h(s) = s\ln s -s +1, s\in\R_+$. This quantity splits into the standard $L^2$ norm of the electric field plus the relative entropy between
the particle density $n^\eps$ of \eqref{VPFP2d-Scale}, \eqref{Poisson2D-Scale}, and \eqref{Initial2D-Scale} and the particle concentration $n$ of the limit model \eqref{equ:LimMod2D}, \eqref{LimPoisson2D}, and \eqref{LimInit2D}. For any nonnegative integer $k$ and $p\in [1,\infty]$, $W^{k,p} = W^{k,p}(\R^d)$ stands for the $k$-th order $L^p$ Sobolev space. $C_b^k$ stands for $k$ times continuously differentiable functions, whose partial derivatives, up to order $k$, are all bounded and $C^k ([0, T ]; E)$ is the set of $k$-times continuously differentiable
functions from an interval $[0, T ] \subset \R$ into a Banach space $E$. $L^p (0, T ; E)$ is the set of measurable
functions from an interval $(0, T )$ to a Banach space $E$, whose $p$-th power of the $E$-norm is Lebesgue
measurable. The main result of this paper is the following

\begin{thm}$\;$\\
\label{MainThm2D}
Let $T>0$. Let $B\in C^3_b(\R^2)$ be a smooth magnetic field, such that $\inf_{x\in\R^2} B(x) = B_0 >0$ and $D$ be a fixed background density  verifying 
$
|x|D\in L^1(\R^2),\,\, D\in W^{1,1}(\R^2)\cap W^{2,\infty}(\R^2).
$
Assume that the initial particle densities $(f^\eps_\mathrm{in})_{\eps>0}$ satisfy the hypotheses $H1$, $H2$, and $H3$ (see Section $2$ below) and $M_\mathrm{in} := \sup_{\eps>0} M^\eps_\mathrm{in} <+\infty$, $U_\mathrm{in} := \sup_{\eps>0} U^\eps_\mathrm{in}< +\infty $ where
\[
M^\eps_\mathrm{in} := \intxvtt{f^\eps(x,v)},\,\, U^\eps_\mathrm{in} := \intxvtt{\dfrac{|v|^2}{2}f^\eps_\mathrm{in}(x,v)}+\dfrac{\epsilon_0}{2m}\intxtt{|\nabla_x\Phi[f^\eps_\mathrm{in}]|^2}.
\]
Let $f^\eps$ be the weak solutions of the VPFP system \eqref{VPFP2d-Scale}, \eqref{Poisson2D-Scale}, and \eqref{Initial2D-Scale} with initial data $f^\eps_\mathrm{in}$ provided by Theorem \ref{main_weak_sol}. We also assume that the initial concentration $n_\mathrm{in}$ verifies the hypotheses $H4$, $H5$ (see Section 5 below) and let $n$ be the unique smooth solution of the limit system \eqref{equ:LimMod2D}, \eqref{LimPoisson2D}, and \eqref{LimInit2D} with initial condition $n_\mathrm{in}$ constructed in Proposition \ref{main_sol_Lim}. We suppose that
\[
\lim_{\eps\searrow 0} \sigma\intxvtt{n^\eps_\mathrm{in}M(v) h\left(\dfrac{f^\eps_\mathrm{in}}{n^\eps_\mathrm{in}M}\right)}=0,\,\, \lim_{\eps\searrow 0}\calE[n^\eps_\mathrm{in}|n_\mathrm{in}] =0,
\]
where $n^\eps_\mathrm{in} = \intvtt{f^\eps_\mathrm{in}}$, $\eps>0$. Then we have
\[
\lim_{\eps\searrow 0}\sup_{0\leq t\leq T} \sigma\intxvtt{n^\eps M(v) h\left(\dfrac{f^\eps}{n^\eps M}\right)}=0,\,\, \lim_{\eps\searrow 0}\sup_{0\leq t\leq T}\calE[n^\eps(t)|n(t)] =0,
\]
\[
\lim_{\eps\searrow 0}\dfrac{1}{\eps}\intTxvtt{\dfrac{|\sigma \nabla_v f^\eps + vf^\eps|^2}{f^\eps}} =0.
\]
In particular we have the convergences $\lim_{\eps\searrow 0}f^\eps =nM$ in $L^\infty(0,T;L^1(\R^2\times\R^2))$ and $\lim_{\eps\searrow 0}\nabla_x\Phi[f^\eps] = \nabla_x\Phi[n]$ in  $L^\infty(0,T;L^2(\R^2))$.
\end{thm}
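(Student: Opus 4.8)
The plan is to apply the modulated-energy (relative entropy) method to a single functional carrying both quantities in the statement. The starting observation is that $\nabla_x\Phi$ depends on $f^\eps$ only through $n^\eps=\intvtt{f^\eps}$, so $\nabla_x\Phi[f^\eps]=\nabla_x\Phi[n^\eps]$, while the chain rule for relative entropies together with $\int_{\R^2}M\,\md v=1$ gives the exact splitting $\sigma\intxvtt{nM(v)\,h(f^\eps/(nM))}=\sigma\intxvtt{n^\eps M(v)\,h(f^\eps/(n^\eps M))}+\sigma\intxtt{n\,h(n^\eps/n)}$. Hence the first two limits in the statement are precisely the two pieces of the single modulated functional
\[
\calH^\eps(t):=\sigma\intxvtt{n M(v)\,h\!\left(\dfrac{f^\eps}{nM}\right)}+\dfrac{\epsilon_0}{2m}\intxtt{\left|\nabla_x\Phi[f^\eps]-\nabla_x\Phi[n]\right|^2},
\]
so that it suffices to prove $\lim_{\eps\searrow0}\sup_{[0,T]}\calH^\eps(t)=0$. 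By hypothesis $\calH^\eps(0)\to0$, hence I would target a Gronwall inequality for $\calH^\eps$.

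\textbf{A priori bounds and the time derivative.} First I would record the free-energy balance of \eqref{VPFP2d-Scale}: testing against $\sigma\ln f^\eps+|v|^2/2$, the magnetic term drops out (since ${}^\perp v\cdot v=0$) and the Poisson coupling produces the time derivative of $\frac{\epsilon_0}{2m}\intxtt{|\nabla_x\Phi[f^\eps]|^2}$, leaving a balance whose only sink is $-\frac{1}{\eps\sigma\tau}\intxvtt{|\sigma\nabla_v f^\eps+vf^\eps|^2/f^\eps}$; this bounds $M^\eps,U^\eps$ uniformly and makes the dissipation integral finite. Then I would differentiate $\calH^\eps$. Because, up to the macroscopic drift, \eqref{VPFP2d-Scale} relaxes $f^\eps$ towards the local Maxwellian $n^\eps M$, the entropy part yields the negative dissipation $-\frac{c}{\eps}\intxvtt{|\sigma\nabla_v f^\eps+vf^\eps|^2/f^\eps}$ plus transport cross-terms tested against the smooth profile $\nabla_{t,x}\log(nM)$; the field part is differentiated through the two continuity equations $\partial_t n^\eps+\frac1\eps\Divx\intvtt{vf^\eps}=0$ and $\partial_t n+\Divx[nU]=0$, with drift $U=\frac{{}^\perp E[n]}{B}-\sigma\frac{{}^\perp\nabla_x\omega_c}{\omega_c^2}$, combined with \eqref{LimPoisson2D}.

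\textbf{The current expansion (the crux).} The core is to replace the singular flux $\frac1\eps\intvtt{vf^\eps}$ by the drift flux so that the $1/\eps$ terms cancel against the limit dynamics. I would read it off the first $v$-moment of \eqref{VPFP2d-Scale},
\[
\eps\,\partial_t\!\intvtt{vf^\eps}+\Divx\!\intvtt{v\otimes v\,f^\eps}-\dfrac{q}{m}n^\eps E[f^\eps]-\dfrac{\omega_c}{\eps}\,{}^\perp\!\intvtt{vf^\eps}=-\dfrac1\tau\intvtt{vf^\eps},
\]
which shows $\intvtt{vf^\eps}=O(\eps)$; applying ${}^\perp$ and using ${}^\perp{}^\perp=-\mathrm{Id}$ and $\frac{q}{m\omega_c}=\frac1B$ gives $\frac1\eps\intvtt{vf^\eps}=\frac{n^\eps\,{}^\perp E[f^\eps]}{B}-\frac{1}{\omega_c}{}^\perp\Divx\intvtt{v\otimes v\,f^\eps}+O(\eps)$. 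Replacing the momentum flux by its equilibrium value $\intvtt{v\otimes v\,n^\eps M}=\sigma n^\eps\,\mathrm{Id}$ produces the term $-\frac{\sigma}{\omega_c}{}^\perp\nabla_x n^\eps$, which reproduces the gradient drift through the pointwise identity $\Divx[\frac1{\omega_c}{}^\perp\nabla_x n]=\Divx[\frac{n}{\omega_c^2}{}^\perp\nabla_x\omega_c]$, checked by direct differentiation. The fluctuation $\intvtt{v\otimes v\,(f^\eps-n^\eps M)}$ is then controlled by a weighted Poincar\'e/Cauchy--Schwarz estimate against the Fokker--Planck dissipation, of size $O(\eps^{1/2})$. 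This balance between the $1/\eps$ current and the small dissipation, together with the nonlinear nonlocal field terms, is the step I expect to be the main obstacle: the field cross-terms must be organized, using \eqref{LimPoisson2D} and the antisymmetry ${}^\perp w\cdot w=0$, so that the genuinely quadratic contribution either cancels or is bounded by $C\,\calH^\eps$ through $\|\nabla_x U\|_{L^\infty}$.

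\textbf{Gronwall and strong convergences.} Collecting everything I expect an inequality of the form
\[
\dfrac{\md}{\md t}\calH^\eps(t)+\dfrac{c}{\eps}\intxvtt{\dfrac{|\sigma\nabla_v f^\eps+vf^\eps|^2}{f^\eps}}\le C\,\calH^\eps(t)+R^\eps(t),
\]
where $\int_0^T|R^\eps(t)|\,\md t\to0$ as $\eps\searrow0$ and $C$ depends only on $T$, on $\|B\|_{C^3_b}$ and on the norms of the smooth solution $n$ from Proposition \ref{main_sol_Lim}. Since $\calH^\eps(0)\to0$, Gronwall's lemma gives $\sup_{[0,T]}\calH^\eps(t)\to0$, which is the first two conclusions; integrating the same inequality in time then yields the third, $\frac1\eps\intTxvtt{|\sigma\nabla_v f^\eps+vf^\eps|^2/f^\eps}\to0$. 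Finally, the Csisz\'ar--Kullback--Pinsker inequality turns the relative-entropy bound into $\lim_{\eps\searrow0}\|f^\eps-nM\|_{L^\infty(0,T;L^1(\R^2\times\R^2))}=0$, while the field part of $\calH^\eps$ is exactly $\frac{\epsilon_0}{2m}\|\nabla_x\Phi[f^\eps]-\nabla_x\Phi[n]\|_{L^2(\R^2)}^2$, giving $\lim_{\eps\searrow0}\nabla_x\Phi[f^\eps]=\nabla_x\Phi[n]$ in $L^\infty(0,T;L^2(\R^2))$.
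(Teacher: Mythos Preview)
Your proposal is correct and follows essentially the same modulated-energy route as the paper: combine the relative entropy of $f^\eps$ with respect to $nM$ and the $L^2$ field energy into one functional, use the free-energy balance of \eqref{VPFP2d-Scale} to extract the Fokker--Planck dissipation, replace the singular current $\frac{1}{\eps}\intvtt{vf^\eps}$ via the first $v$-moment identity (the paper packages this as Corollary \ref{NewConcen}), absorb the momentum-flux fluctuation $\intvtt{(v\otimes v-\sigma I_2)f^\eps}$ into the dissipation by Cauchy--Schwarz, close by Gronwall, and conclude with Csisz\'ar--Kullback.

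Two remarks on where the paper is more explicit than your outline. First, rather than differentiating $\calH^\eps$ directly, the paper writes $\calH^\eps=\calE[f^\eps]-\calE[n]-\intxtt{k[n](n^\eps-n)}+\mathrm{const}$ with $k[n]=\sigma(1+\ln n)+\frac{q}{m}\Phi[n]$, so that the time derivative reduces to the single cross term $-\frac{\mathrm d}{\mathrm dt}\intxtt{k[n](n^\eps-n)}$; this is algebraically the same as your computation but makes the bookkeeping cleaner (Lemmas \ref{EvoluFirstTerm}, \ref{EvoluSecdTerm}, Proposition \ref{TimeEvolution2D}). Second, the step you flag as ``the main obstacle'' is handled exactly by the Maxwell-stress identity coming from the Poisson equation,
\[
(n^\eps-n)(E[n^\eps]-E[n])=\dfrac{\epsilon_0}{q}\,\Divx\!\left[(E[n^\eps]-E[n])\otimes(E[n^\eps]-E[n])-\dfrac{|E[n^\eps]-E[n]|^2}{2}I_2\right],
\]
which, after one integration by parts against $\frac{{}^\perp\nabla_x k[n]}{B}$, bounds the quadratic nonlocal term by $C\,\dfrac{\epsilon_0}{2m}\|E[n^\eps]-E[n]\|_{L^2}^2\le C\,\calH^\eps$ with $C$ depending on $\left\|\partial_x\!\left(\dfrac{{}^\perp\nabla_x k[n]}{\omega_c}\right)\right\|_{L^\infty}$; this is precisely why the $W^{2,\infty}$ regularity of $\ln n$ and $E[n]$ (Proposition \ref{Regularity}, Lemma \ref{BoundLogHigh}) is needed. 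With these two points made explicit, your sketch matches the paper's proof.
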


\begin{remark}$\;$\\
\label{RemElecL22D}
In two dimensional setting, the initial potential energy $\frac{\epsilon_0}{2m}\intxtt{|\nabla_x\Phi[f^\eps_\mathrm{in}]|^2}$ may not be finite (or the electric field $E[f^\eps_\mathrm{in}]$ cannot belong to $L^2(\R^2)$) even if the initial datum $f^\eps_\mathrm{in}$ lies in $C^\infty_0(\R^2\times\R^2)$. This is due to the fact that the kernel $x/|x|^d$ does not belong to $L^2(\R^2)$ at infinity, see \cite{GouNiePouSol} for a disscusion. For these
reasons one needs to slightly modify the Poisson equation adding a fixed background density $D$ satisfying the global neutrality relation $H3$, see Section $2$ below. 
\end{remark}
The paper is structured as follows. Section 2 is devoted to establish the global existence of weak solutions to the VPFP system with external magnetic field. In Section $3$, we derive a priori estimates with respect to the small parameter $\eps>0$ on the weak solutions from the evolution of physical quantities associated to the VPFP system. Section $4$ is devoted to the formal derivation of the limit model. The well-posedness of the limit model is studied in the next section. We establish existence and uniqueness results for the strong solution. The convergence towards the limit model is justified rigorously in Section $5$. We obtain strong convergence for well prepared initial conditions.

\section{Global existence of weak solutions of the VPFP equations}
In this section we will study the global existence of weak solution for the VPFP equation in the presence of an external magnetic field for fixed $\eps > 0$ under suitable assumptions on the initial data. In order to  simplify the proofs of existence of the solution, as we do not want any uniform
estimate with respect to $\eps$, we will take $\eps = 1$ and omit all the subscripts. Thus we first consider the following problem  
\begin{align}
 & \partial _t f + v \cdot \nabla _x f  + E[f]\cdot\nabla_v f + B(x) {}^ \bot v \cdot \nabla _v f = \Divv(\sigma\nabla_v f + vf), \label{eq:VPFP-NonEps} \\
 & E[f ] =  - {\nabla _x}\Phi \left[ {{f }} \right],\, \,\, - \Delta _x\Phi[f] =  \intvtt{f \left( {t,\cdot,v} \right)} -  D , \label{eq:Poi-NonEps}\\
 & f(0,x,v) = f_{\mathrm{in}} \left( {x,v} \right),\,\, (x,v)\in \R^2\times\R^2 .\label{eq:Init-NonEps}
\end{align}
The dependency on the small parameter $\eps > 0$ will be taken into account when establishing a priori estimates uniform in $\eps$ in the next section. We assume that the initial data $f_{\mathrm{in}}$  satisfies the hypotheses
\begin{enumerate}
\item[H1)] $f_{\mathrm{in}} \geq 0,\,\, f_{\mathrm{in}}\in (L^1 \cap L^\infty)(\R^2\times\R^2),\,\, (|x|+ |v|^2 +  |\ln f_{\mathrm{in}}|)f_{\mathrm{in}} \in L^1(\R^2\times\R^2)$,\label{Hypothesis1}
\item[H2)] $(1+|v|^2)^{\gamma/2}f_{\mathrm{in}} \in L^\infty(\R^2\times\R^2),\,\, \gamma >2$,\label{Hypothesis2}
\item[H3)] $\intxvtt{f_\mathrm{in}(x,v)} = \intxtt{D(x)}$.
\end{enumerate} 
We first introduce a notion of weak solution to the problem \eqref{eq:VPFP-NonEps}, \eqref{eq:Poi-NonEps}, and \eqref{eq:Init-NonEps} and our result on the global-in-time existence of
weak solutions.
\begin{defi}$\;$\\
\label{DefWeakSol}
For a given $T\in ]0,\infty[$. We say that the pair $(f,E[f])$ is a weak solution to the system \eqref{eq:VPFP-NonEps}, \eqref{eq:Poi-NonEps}, and \eqref{eq:Init-NonEps} if and only if the following conditions are satisfied
\begin{enumerate}
\item[(i)] 
$
f\geq 0,\,\,f \in L^\infty(0,T; L^1 \cap L^\infty (\R^2\times\R^2)),\,\,\ E[f] \in L^\infty((0,T)\times\R^2),
$
\item[(ii)] for any $\varphi\in C^\infty _0 ([0,T[\times\R^2\times\R^2)$, we have
\begin{align*}
&\intTxvtt{f\left( \partial_t \varphi + v\cdot\nabla_x\varphi + ( E[f]+B(x){}^\perp v)\cdot\nabla_v\varphi \right)} \\
&+ \intTxvtt{f\left( \sigma\Delta_v\varphi - v\cdot\nabla_v\varphi \right)} + \intxvtt{f_\mathrm{in}(x,v) \varphi(0,x,v)} = 0.
\end{align*}
\end{enumerate}
\end{defi} 
We will provide the global existence of the weak solution to the VPFP system \eqref{eq:VPFP-NonEps}, \eqref{eq:Poi-NonEps}, and \eqref{eq:Init-NonEps}  based on a compactness argument. For this purpose, we need the following velocity averaging lemma obtained in \cite{KarMelTri}, see also \cite{PerSou}. 
\begin{lemma}$\;$\\
\label{VelAver}
Let $(g^k)_k$ be bounded in $L^p _{\mathrm{loc}}((0,T)\times\R^2\times\R^2)$ with $1<p<\infty$, and $(G^k)_k$  be bounded in $L^p _{\mathrm{loc}}((0,T)\times\R^2\times\R^2)$. If for any $k$,  $g^k$ and $G^k$ satisfy the equation
\begin{align*}
\partial_t g^k + v\cdot\nabla_x g^k = \nabla_v^l G^k,\,\,\, g^k(t=0)= g_0\in L^p(\R^2\times\R^2),
\end{align*}
for some multi-index $l$, then for any $ \psi\in C^1 _c (\R^2\times\R^2)$ we have $\left(\intv{f^k \psi}\right)_k
$
is relatively compact in $L^p _{\mathrm{loc}}((0,T)\times\R^2)$.
\end{lemma}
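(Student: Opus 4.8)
The plan is to deduce relative compactness from the Fréchet–Kolmogorov–Riesz criterion applied to the velocity averages $\rho^k(t,x) := \int_{\R^2} g^k(t,x,v)\,\psi(x,v)\,\mathrm{d}v$. Fix a compact set $K \subset\subset (0,T)\times\R^2$. Uniform boundedness of $(\rho^k)_k$ in $L^p(K)$ is immediate: by Hölder's inequality and the compact $v$-support of $\psi$, one has $|\rho^k(t,x)| \lesssim \|\psi\|_\infty\,(\mathrm{meas}\,\mathrm{supp}_v\,\psi)^{1/p'}\big(\int_{\R^2}|g^k|^p\,\mathrm{d}v\big)^{1/p}$, whose right-hand side is bounded in $L^p$ by hypothesis. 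The entire difficulty therefore lies in proving the \emph{uniform equicontinuity of translations}, namely $\sup_k \|\rho^k(\cdot+(s,y)) - \rho^k\|_{L^p(K)} \to 0$ as $(s,y)\to 0$; once this holds, Fréchet–Kolmogorov yields the claim.

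I would first establish the key equicontinuity estimate in the Hilbert case $p=2$ by a Fourier analysis of the transport symbol, then transfer it to general $p$. After extending $g^k, G^k$ by zero outside $(0,T)$, the equation holds distributionally with additional boundary terms supported at $t=0$ (carrying the fixed datum $g_0$) and $t=T$. Taking the Fourier transform in $(t,x)$ with dual variables $(\tau,\xi)$ and writing $a = a(\tau,\xi,v) = \tau + v\cdot\xi$ for the transport symbol, one gets $\widehat{g^k} = (ia)^{-1}\big(\widehat{\nabla_v^l G^k} + \text{boundary terms}\big)$ away from $\{a=0\}$, so that $\widehat{\rho^k} = \int \widehat{g^k}\,\psi\,\mathrm{d}v$ splits according to a threshold $\delta>0$. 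On $\{|a|\ge\delta\}$ I divide by the symbol and integrate by parts in $v$ to move the $l$ velocity derivatives onto $\psi/(ia)$, producing factors controlled by $\delta^{-1-|l|}|\xi|^{|l|}$ times $\widehat{G^k}$; on $\{|a|<\delta\}$ I use the crude bound together with the geometric estimate $\mathrm{meas}\{v\in\mathrm{supp}\,\psi : |\tau+v\cdot\xi|<\delta\} \lesssim \delta/|\xi|$ (a slab of width $\delta/|\xi|$ meeting the bounded support of $\psi$). Optimizing $\delta$ in $|\xi|$ produces a gain of a fractional power $(1+|\tau|+|\xi|)^{s}$, $s\in(0,1)$ depending on $|l|$, under which $\widehat{\rho^k}$ is controlled in $L^2$ by the uniform bounds on $g^k$ and $G^k$. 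Hence $(\rho^k)_k$ is bounded in $H^s_{\mathrm{loc}}$ of $(t,x)$, which furnishes translation-equicontinuity in $L^2$.

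To pass from $L^2$ to $1<p<\infty$ I would interpolate this fractional-Sobolev gain against the trivial uniform $L^r_{\mathrm{loc}}$ bounds on $(\rho^k)$ (equivalently, invoke the Littlewood–Paley/maximal-function form of the estimate), obtaining a quantitative modulus of continuity $\sup_k\|\rho^k(\cdot+(s,y))-\rho^k\|_{L^p(K)} \le C\,|(s,y)|^{\theta}$ for some $\theta>0$; this is exactly the content of the $L^p$ averaging lemma of \cite{KarMelTri}. Two technical points remain: the $x$-dependence of $\psi$ is handled by a partition of unity in $x$, on each piece freezing $x$ so that $\psi(x,\cdot)$ acts as a uniformly bounded family of $v$-test functions, the $x$-derivatives of $\psi$ contributing lower-order errors absorbed by the same estimate; and the boundary terms at $t=0,T$ involve either the fixed datum $g_0\in L^p$ or uniformly $L^p$-bounded traces, so they are compact independently of $k$. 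The step I expect to be the main obstacle is precisely this passage from the Hilbertian estimate to general $L^p$: the clean Plancherel/symbol argument is special to $p=2$, and for $p\ne 2$ one must replace it by real interpolation or Littlewood–Paley decomposition combined with maximal-function bounds for the transport semigroup, controlling uniformly the interplay between $\nabla_v^l G^k$ and the $v$-localization by $\psi$. This is why I would, as the paper does, lean on the sharp $L^p$ averaging lemma of \cite{KarMelTri} rather than reprove it from scratch.
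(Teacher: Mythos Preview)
The paper does not prove this lemma at all: it is stated with the attribution ``obtained in \cite{KarMelTri}, see also \cite{PerSou}'' and used as a black box. So there is no ``paper's own proof'' to compare your attempt against. You acknowledge this yourself in your last sentence, where you note that you would, as the paper does, lean on the averaging lemma of \cite{KarMelTri} rather than reprove it.

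That said, your sketch is a reasonable outline of the standard strategy behind such velocity-averaging results: Fourier analysis on the transport symbol $\tau+v\cdot\xi$ with a near/far splitting relative to a threshold $\delta$, the slab-measure estimate $\mathrm{meas}\{v:|\tau+v\cdot\xi|<\delta\}\lesssim \delta/|\xi|$, optimization in $\delta$ to gain fractional $(t,x)$-regularity in $L^2$, and then interpolation to reach general $1<p<\infty$. You are right that the passage from $p=2$ to $p\ne 2$ is the delicate part and is precisely what the references \cite{KarMelTri, PerSou} handle. Since the paper only quotes the result, your write-up goes well beyond what the paper itself does; for the purposes of this paper a one-line citation is all that is expected.
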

The averaging lemma allows to pass to the limit in the VPFP equation including the nonlinear term $E[f]f$ in the sense of distribution, see \cite{PerLions}. The only difficult term is the Poisson equation $- \Delta_x \Phi[f] = n[f] -D$, since the velocity averaging lemma cannot be directly applied to conclude compactness of the density $n[f]$ (recall that only quantities of the type $\int_{|v|\leq R} g^k\mathrm{d}v$, $R$ being finite, converge to $\int_{|v|\leq R} g\mathrm{d}v$). In order
to get such compactness, one uses the previous lemma to show the following result, see Lemma $2.8$ in \cite{KarMelTri}.
\begin{lemma}$\;$\\
\label{Compactness}
Let $(g^k)_k$ and $(G^k)_k$ be as in Lemma \ref{VelAver} and we assume that
\begin{align*}
g^k\,\, \text{is bounded in} \,\, L^p((0,T)\times\R^2\times\R^2),\\
(|v|^2 + |x|)g^k \,\,\text{is bounded in}\,\, L^\infty(0,T;L^1(\R^2\times\R^2)).
\end{align*}
Then for any $\psi(v)$ such that $ |\psi(v)|\leq c|v|$ and $1< q<\frac{4}{3}$, the sequence $\left(\intv{g^k \psi}\right)_k$ is compact in $L^q ((0,T)\times\R^2)$.
\end{lemma}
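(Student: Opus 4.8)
The plan is to prove the compactness by treating the velocity average $\rho^k := \int_{\R^2}\psi(v)\, g^k\,\mathrm{d}v$ as a velocity-bulk part plus a velocity-tail part, controlling the spatial tail separately, and then combining the local averaging compactness from Lemma \ref{VelAver} with uniform smallness of both tails. Throughout I write $m_j^k(t,x)=\int_{\R^2}|v|^j g^k\,\mathrm{d}v$; the hypotheses give $\sup_k\|m_2^k\|_{L^\infty_tL^1_x}\le C$, $\sup_k\||x|\,m_0^k\|_{L^\infty_tL^1_x}\le C$, together with $\sup_k\|g^k\|_{L^p}\le C$.

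First I would establish a uniform higher-integrability bound $\sup_k\|\rho^k\|_{L^{4/3}((0,T)\times\R^2)}\le C$; this is the crux and the source of the exponent $4/3$. For fixed $(t,x)$ and a radius $R>0$ to be optimized, I split $\int\psi g^k\,\mathrm{d}v$ over $\{|v|\le R\}$ and $\{|v|>R\}$: on the tail $|\psi|\le c|v|\le c|v|^2/R$ gives a bound $c\,m_2^k/R$, while on the bulk I use the integrability of $g^k$. Optimizing $R$ pointwise produces a velocity-moment interpolation $|\rho^k|\lesssim(\text{integrability bound})^{\theta}(m_2^k)^{1-\theta}$; with the $L^\infty$-in-$v$ control available for the densities this reads $|\rho^k|\lesssim\|g^k(t,x,\cdot)\|_{L^\infty_v}^{1/4}(m_2^k)^{3/4}$, so $|\rho^k|^{4/3}\lesssim m_2^k$ and $\int_0^T\!\!\int_{\R^2}|\rho^k|^{4/3}\lesssim T\|m_2^k\|_{L^\infty_tL^1_x}\le C$. (The plain $L^p$ hypothesis drives the same mechanism with the slightly smaller exponent $(4p-2)/(3p-1)$, approaching $4/3$ as $p\to\infty$.)

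Next I would treat the two tails. For the velocity tail, fix a smooth cutoff $\chi_R$ with $\chi_R\equiv1$ on $\{|v|\le R\}$ and $\chi_R\equiv0$ on $\{|v|\ge 2R\}$, and write $\rho^k=\rho^k_R+r^k_R$ with $\rho^k_R=\int\psi\chi_R g^k\,\mathrm{d}v$. The remainder satisfies $|r^k_R|\le c\int_{|v|>R}|v|\,g^k\,\mathrm{d}v\le c\,m_2^k/R$, hence $\sup_k\|r^k_R\|_{L^1((0,T)\times\R^2)}\le C/R\to0$; since $|r^k_R|\le c\,m_1^k$ is also bounded in $L^{4/3}$ by the estimate above, interpolating the $L^1$-smallness against the $L^{4/3}$-bound gives $\sup_k\|r^k_R\|_{L^q}\to0$ as $R\to\infty$, for every $q<4/3$. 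For the spatial tail I would use that $\psi\chi_R$ is supported in $\{|v|\le2R\}$, so $|\rho^k_R|\le 2cR\,m_0^k$ and $\int_{|x|>\varrho}|\rho^k_R|\,\mathrm{d}x\le 2cR\,\varrho^{-1}\int|x|\,m_0^k\,\mathrm{d}x\le C_R/\varrho$, uniformly in $k$; interpolation with the $L^{4/3}$-bound again upgrades this to smallness of the spatial $L^q$-tail.

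Finally I would assemble these. For fixed $R$ one has $\psi\chi_R\in C^1_c(\R^2)$, so Lemma \ref{VelAver} makes $(\rho^k_R)_k$ relatively compact in $L^p_{\mathrm{loc}}((0,T)\times\R^2)$, hence in $L^1_{\mathrm{loc}}$; being bounded in $L^{4/3}$, it is then relatively compact in $L^q_{\mathrm{loc}}$ for every $q<4/3$ by interpolation, and together with the uniform spatial-tail smallness this yields relative compactness in the full space $L^q((0,T)\times\R^2)$. Given $\delta>0$, I choose $R$ so large that $\sup_k\|r^k_R\|_{L^q}<\delta$; covering the relatively compact family $(\rho^k_R)_k$ by finitely many $\delta$-balls and enlarging them to $2\delta$-balls covers $(\rho^k)_k$, which is therefore totally bounded and hence relatively compact in $L^q((0,T)\times\R^2)$. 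The main obstacle is precisely the uniform $L^{4/3}$ estimate of the second paragraph: the sharp threshold $4/3$ is exactly what the velocity-moment interpolation between the sup-bound on $g^k$ and the $L^1_x$-bound on $m_2^k$ yields, and it is this bound that allows every $L^1$-type smallness and local compactness to be promoted to $L^q$ for $q<4/3$.
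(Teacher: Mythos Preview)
The paper does not supply its own proof of this lemma; it quotes the result from Karper--Mellet--Trivisa (their Lemma~2.8). Your strategy---cut off in velocity, invoke the averaging Lemma~\ref{VelAver} for local compactness of the truncated averages, control the velocity tail by the second moment and the spatial tail by the first spatial moment, and upgrade all $L^1$-type smallness and local compactness to $L^q$ by interpolation against a uniform higher-integrability bound on $\rho^k$---is precisely the standard argument and is what that reference does.

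Two remarks. First, you are right that the sharp threshold $4/3$ comes from the interpolation $|\rho^k|\lesssim\|g^k(t,x,\cdot)\|_{L^\infty_v}^{1/4}(m_2^k)^{3/4}$, which uses an $L^\infty$ bound on $g^k$ not literally present in the hypotheses (Lemma~\ref{VelAver} fixes $1<p<\infty$). With only the $L^p$ bound the same optimisation yields $\rho^k$ bounded in $L^{(4p-2)/(3p-1)}$, so compactness only in $L^q$ for $q<(4p-2)/(3p-1)<4/3$. In the paper's application the $f^\eta$ lie in $L^1\cap L^\infty$, so one may take $p$ arbitrarily large and recover every $q<4/3$; the lemma statement is slightly loose on this point, and your parenthetical identifies exactly where. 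Second, Lemma~\ref{VelAver} requires test functions in $C^1_c(\R^2_x\times\R^2_v)$, compactly supported in \emph{both} variables, so to apply it to $\psi(v)\chi_R(v)$ you should also insert a spatial cutoff $\eta_M(x)$ and use $\eta_M\rho^k_R=\int g^k\,\psi\chi_R\eta_M\,\mathrm{d}v$; since $\eta_M\equiv1$ on $\{|x|\le M\}$ this still gives the $L^p_{\mathrm{loc}}$ compactness you need before invoking the spatial-tail bound. Both points are routine and do not affect the correctness of your scheme.
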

We now state the existence results for this type of solutions to the VPFP system.
\begin{thm}$\;$\\
\label{main_weak_sol}
Let $T>0$. Let $B\in L^\infty(\R^2)$ be a smooth magnetic field and $D$ be a fixed background density verifying $|x|D\in L^1(\R^2)$, $D\in L^1(\R^2)\cap L^\infty(\R^2)$.
Assume that the initial condition $f_{\mathrm{in}}$ satisfies the hypotheses $\mathrm{H1}$, $\mathrm{H2}$ and $\mathrm{H3}$. Then  the problem \eqref{eq:VPFP-NonEps}, \eqref{eq:Poi-NonEps}, and \eqref{eq:Init-NonEps} has a global weak solution $f\geq 0$ in the sense of Definition \ref{DefWeakSol}, satisfying the following properties:
\begin{align}
\label{PropWeakSol}
 f\in L^\infty(0,T; L^1 \cap L^\infty (\R^2\times\R^2)),\,\, \left(1+ |v|^2  \right)^{\gamma/2}f\in L^\infty((0,T)\times\R^2\times\R^2),\nonumber\\ 
(|x|+|v|^2 +  |\ln f|)f\in L^\infty(0,T;L^1(\R^2\times\R^2)),\\
E[f]\in L^\infty((0,T)\times\R^2)^2,\,\, E[f]\in L^\infty(0,T;L^2(\R^2))^2\nonumber.
\end{align}
Furthermore, we have $f\in L^2([0,T]\times\R^2_x,H^1(\R^2_v))$.
\end{thm}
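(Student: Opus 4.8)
The plan is to construct the solution by a regularization-and-compactness scheme, which is precisely the setting for which Lemmas \ref{VelAver} and \ref{Compactness} are prepared. First I would build a family of approximate solutions $(f^k)_k$ by smoothing the initial datum and mollifying the self-consistent coupling, replacing $E[f]$ by a regularized field $E_k$ (e.g.\ a mollified Poisson kernel) that removes the singularity of the two-dimensional kernel $x/|x|^2$ and renders the velocity part a genuinely parabolic, hypoelliptic linear Fokker--Planck transport equation. For each fixed $k$ a unique smooth nonnegative solution on $[0,T]$ is produced by a fixed-point argument on the frozen field together with the maximum principle; the magnetic term $B(x){}^\perp v\cdot\nabla_v f$ is divergence-free in $v$ and causes no trouble here. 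The whole difficulty is to obtain estimates uniform in $k$, and then to pass to the limit so as to recover the listed properties \eqref{PropWeakSol} and a solution in the sense of Definition \ref{DefWeakSol}.

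For the uniform bounds I would test the equation successively against $1$, $\ln(f/M)$ (equivalently $\ln f$ plus the kinetic weight), $|v|^2/2$, $|x|$, and a suitable velocity weight. Conservation of mass gives the $L^1$ bound from \textbf{H1}. The $L^\infty$ bound, and more importantly the weighted bound $(1+|v|^2)^{\gamma/2}f\in L^\infty$, I would obtain by a comparison argument using a radial-in-$v$ barrier of the type $\vab$ that decays like $|v|^{-\gamma}$: the magnetic rotation annihilates such barriers since ${}^\perp v\cdot\nabla_v g=0$ for $g$ radial in $v$, the Fokker--Planck drift pushes $|v|$ inward, and the already-bounded field together with an exponential-in-$t$ prefactor absorbs the remaining lower-order term, making it a supersolution exactly when $\gamma>2$, which is \textbf{H2}. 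The free-energy balance is central: it produces the dissipation $\int\!\!\int|\sigma\nabla_v f+vf|^2/f$ and, with the kinetic-energy identity, controls the $|v|^2$-moment, after which the $|x|$-moment closes via $\tfrac{d}{dt}\int|x|f\le\int|v|f$. The neutrality relation \textbf{H3} and $|x|D\in L^1$ keep the logarithmic potential energy finite in two dimensions (cf.\ Remark \ref{RemElecL22D}), and simultaneously yield $E[f]\in L^\infty\cap L^2$: the $L^\infty$ bound follows by splitting $1/|z|$ into its locally integrable near part (controlled by $\|n\|_{L^\infty}$) and its bounded far part (controlled by $\|n\|_{L^1}$), while the $L^2$ bound is read off the identity $\|\nabla_x\Phi[f]\|_{L^2}^2=\int\Phi[f]\,(n-D)$.

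With these $k$-uniform bounds I would extract a weak-$*$ limit $f^k\rightharpoonup f$ and pass to the limit in the weak formulation. The only genuinely nonlinear term is $E[f^k]\,f^k$, and the obstacle—exactly why the averaging lemmas are quoted—is that the Poisson coupling needs \emph{strong} convergence of the density $n[f^k]=\intvtt{f^k}$, whereas Lemma \ref{VelAver} gives compactness only of $\int f^k\psi\,\md v$ for compactly supported $\psi$. I would therefore invoke Lemma \ref{Compactness}, whose moment hypotheses are furnished by the uniform $(|v|^2+|x|)f$ bound, to upgrade to $L^q$-compactness ($1<q<4/3$) of $n[f^k]$ (and of the current $j[f^k]$); the point, as the paper stresses, is that the $v$-moments are controlled, so the $v$-tails of the truncated averages of Lemma \ref{VelAver} are uniformly negligible and the full density inherits compactness. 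Strong convergence of $n^k$ gives strong convergence of $E^k$ in the relevant $L^p$ space, whence $E^k f^k$ passes to the limit as strong $\times$ weak-$*$. Matching the exponents so the averaging lemma applies and absorbing the velocity tails is the main technical point.

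Finally, the regularity $f\in L^2([0,T]\times\R^2_x,H^1(\R^2_v))$ is a byproduct rather than a separate construction. Writing $\sigma\nabla_v f=(\sigma\nabla_v f+vf)-vf$ and using the pointwise inequality $|\nabla_v f|^2/f\lesssim|\sigma\nabla_v f+vf|^2/f+|v|^2 f$, the $L^\infty$ bound on $f$ and the entropy dissipation give $\int\!\!\int|\nabla_v f|^2\lesssim\|f\|_{L^\infty}\bigl(\int\!\!\int|\sigma\nabla_v f+vf|^2/f+\int\!\!\int|v|^2 f\bigr)<\infty$; since both the dissipation and the $|v|^2$-moment are bounded uniformly in $k$, weak lower semicontinuity of the convex functionals transfers the bound to $f$, which closes the theorem.
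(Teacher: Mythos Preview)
Your proposal is correct and follows essentially the same approach as the paper: regularize the Poisson coupling, build approximate solutions by a fixed-point argument on the linear Fokker--Planck equation (the paper's Appendix~A), derive uniform bounds---in particular the weighted $L^\infty$ bound $(1+|v|^2)^{\gamma/2}f^\eta\in L^\infty$ \`a la Degond which bootstraps the uniform $L^\infty$ control on $E^\eta$---and then pass to the limit using Lemma~\ref{Compactness} to obtain strong compactness of the densities. The only point to make explicit is the order of the uniform estimates: in the paper the weighted bound of Lemma~\ref{LInftyNormVeloc} is proved \emph{first} and independently of any a priori control on $\|E^\eta\|_{L^\infty}$ (this is the Degond bootstrap), and only then are the remaining $\eta$-uniform bounds~\eqref{IneqKinEnerSequ}--\eqref{IneqDissSequ} deduced; your phrase ``the already-bounded field'' slightly obscures this logical dependence.
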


The proof of Theorem \ref{main_weak_sol} will be devided in 5 steps. The first is devoted to regularize the VPFP system by introducing regularization parameter $\eta$, the second stablishs some uniform-in-$\eta$ estimates, the third to a convergence of the sequence, the fourth passes to the limit using the velocity averaging lemma, Lemma 2.2 and the last step studies the properties of the solution.

\paragraph*{Step 1: Regularized VPFP system.}
For the existence of weak solutions to \eqref{eq:VPFP-NonEps}-\eqref{eq:Init-NonEps}, we first regularize this system
with respect to regularization parameters $\eta > 0$ as follows: we mollify the singular interaction potential $\ln |x|$ by parameter $\eta$ and consider a solution $f^\eta$ to that regularized system
\begin{align}
\label{eq:VFP2DBis}
\partial_t f^{\eta} + v\cdot\nabla_x f^{\eta} + E^\eta\cdot\nabla_v f^{\eta} + B (x) {}^\perp v\cdot \nabla_v f^{\eta} = \sigma \Delta_v f^{\eta} + \Divv(v f^{\eta}),
\end{align}
subject to the initial data
$
f^\eta_0 = f^{\eta}(0,x,v)=f_{\mathrm{in}}(x,v),\,\,(x,v)\in\R^2\times\R^2,
$
where the electric field $E^\eta$ is given as
\[
E^\eta = -\dfrac{1}{4\pi}\nabla_x W^\eta \star (n^\eta -D) = -\dfrac{1}{4\pi} \nabla_x \left[ \ln(|x|^2 +\eta) \right]\star (n^\eta -D), \,\, n^\eta = \intvtt{f^\eta}.
\]
We note that $\nabla_x W^\eta$ is bounded and Lipschitz continuous. Thus, the global-in-time existence of weak solutions to \eqref{eq:VFP2DBis} follows by the standard existence theory for kinetic equations, more exactly a fixed point argument in the following sense: for any $\bar{E}\in L^\infty(0,T;L^\infty(\R^2))^2$,  let $f$ be the solution of
\begin{equation*}
\left\{\begin{array}{l}
\partial_t f  + v\cdot \nabla_x f + \bar{E}(t,x)\cdot\nabla_v f + B(x) {^\perp}v \cdot \nabla_v f = \sigma \Delta_v f + \Divv(vf),\\
f(0,x,v) = f_\mathrm{in}(x,v).
\end{array}\right.
\end{equation*}  
We then define a map $T$ by
\begin{align*}
T: L^\infty(0,T;L^\infty(\R^2))^2 &\to L^\infty(0,T;L^\infty(\R^2))^2\\
\bar{E} &\mapsto T(\bar{E}) = E^\eta = -\dfrac{1}{4\pi} \nabla W^\eta \star (n-D), \,\, n = \intvtt{f}.
\end{align*}
Thanks to Theorem \ref{ExiVFP2D} in Appendix A, the operator $T$ is well-defined. The existence of a fixed point for the mapping $T$ comes from almost the same argument in \cite{KarMelTri}, Theorem 6.3. 

\paragraph*{Step 2: Uniform-in-$\eta$ estimates.} 
Thanks to Theorem \ref{ExiVFP2D}, the solution $f^\eta$ satisfies the following estimations:
\begin{equation}
\label{IneqNormSequ}
 f^{\eta} \geq 0,\,\, \|f^{\eta}(t)\|_{L^p(\R^2\times\R^2)}\leq C(T)\|f_{\mathrm{in}}\|_{L^p(\R^2\times\R^2)},\,\,p\in[1,\infty],\,\, t\in[0,T].
\end{equation}
\begin{equation}
\label{IneqKinEnerSequ}
\intxvtt{f^{\eta}\dfrac{|v|^2}{2}}  <  C(T, \|E^\eta\|_{L^\infty})\intxvtt{f_{\mathrm{in}}|v|^2},\,\, t\in [0,T].
\end{equation}
\begin{equation}
\label{IneqPosition}
\intxvtt{f^{\eta}|x|}  <  C(T)\intxvtt{f_{\mathrm{in}}|x|},\,\, t\in [0,T].
\end{equation}
\begin{equation}
\label{IneqDissSequ}
\|\sigma\nabla_v  f^{\eta}/{\sqrt{f^{\eta}}}\|_{L^2(0,T;L^{2}(\R^2\times\R^2))} \leq C(\|E^\eta\|_{L^\infty},T, f_\mathrm{in}, \sigma)+   \intxvtt{\sigma f_\mathrm{in}|\ln f_\mathrm{in}|}.
\end{equation}
We will now establish the uniform estimates with respect to $\eta$ of the electric field $E^\eta$, that means
$
\sup_{[0,T]}\|E^\eta\|_{L^\infty(\R^2)} < C
$
for some constant $C>0$, not depending on $\eta$. Thanks to Lemma B.1 in \cite{Degond1986}, Lemma 5.3 in Section 5, and the estimate \eqref{IneqNormSequ}, it suffices to show that for all $\eta >0$, the following inequality 
\[
\|Y^\eta (t)\|_{L^\infty(\R^2)}< C,\,\,  t \in [0,T],
\]
where we denote $Y^{\eta}(t) = (1+|v|^2)^{\gamma/2}f^{\eta}(t,x,v)$.
\begin{lemma}$\;$\\
\label{LInftyNormVeloc}
Let $f_\mathrm{in}$ be an initial data verifying the hypothesis $H2$, that means
\[
\| Y^0 \|_{L^\infty(\R^2\times\R^2)} = \| (1+|v|^2)^{\gamma/2} f_\mathrm{in}\|_{L^\infty(\R^2\times\R^2)} < \infty,\,\, \gamma>2.
\]
Then there exists a constant $C>0$ independent of $\eta$ satisfying for all $\eta>0$
\begin{align*}
\| Y^{\eta}(t) \|_{L^{\infty}(\R^2\times\R^2)} \leq C,\,\, t\in [0,T].
\end{align*}
\end{lemma}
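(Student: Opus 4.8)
The plan is to work directly with the weighted unknown $Y^\eta = (1+|v|^2)^{\gamma/2}f^\eta$ and to establish the bound through a maximum principle applied to the (degenerate, Kolmogorov-type) parabolic equation that $Y^\eta$ satisfies. First I would perform the change of unknown $f^\eta = (1+|v|^2)^{-\gamma/2}Y^\eta$ in \eqref{eq:VFP2DBis} and rewrite every term of the Fokker--Planck operator in terms of $Y^\eta$ and its derivatives. The transport terms $\partial_t$ and $v\cdot\nabla_x$ commute with the (velocity) weight, so only the velocity advection $(E^\eta + B\,{}^\perp v)\cdot\nabla_v$, the diffusion $\sigma\Delta_v$ and the friction $\Divv(v\,\cdot)$ produce zeroth-order contributions. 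A key simplification is that the magnetic part drops out of the dangerous zeroth-order term, because ${}^\perp v\cdot v =0$; hence the large magnetic field plays no role here, as expected from the rotational invariance of the weight.

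A direct computation (using $\Divv v = 2$ and $\nabla_v(1+|v|^2)^{-\gamma/2} = -\gamma v(1+|v|^2)^{-\gamma/2-1}$) yields
\begin{equation*}
\partial_t Y^\eta + v\cdot\nabla_x Y^\eta + \tilde a\cdot\nabla_v Y^\eta - \sigma\Delta_v Y^\eta = c(t,x,v)\,Y^\eta,
\end{equation*}
with the (harmless, bounded) modified drift $\tilde a = E^\eta + B\,{}^\perp v - v + 2\sigma\gamma\,\frac{v}{1+|v|^2}$ and the zeroth-order coefficient
\begin{equation*}
c = 2 - \gamma\,\frac{|v|^2}{1+|v|^2} + \gamma\,\frac{E^\eta\cdot v}{1+|v|^2} + \sigma\gamma\left( \frac{(\gamma+2)|v|^2}{(1+|v|^2)^2} - \frac{2}{1+|v|^2} \right).
\end{equation*}
The crucial point is the competition between the constant $+2$ (coming from $\Divv v = 2$) and the friction term $-\gamma\frac{|v|^2}{1+|v|^2}$, which tends to $-\gamma$ as $|v|\to\infty$: since by $H2$ we have $\gamma>2$, the coefficient $c$ tends to $2-\gamma<0$ at large velocities and is therefore bounded from above on all of $\R^2_v$. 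Using $\frac{|v|}{1+|v|^2}\le\frac12$ to control the $E^\eta$-term and the boundedness of the purely diffusive contribution, I would record the pointwise bound $\sup_{(x,v)}c(t,x,v)\le \frac\gamma2\|E^\eta(t)\|_{L^\infty} + C(\sigma,\gamma)$.

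Next I would invoke the maximum principle for the above equation. Although it is only degenerate parabolic (diffusion acts in $v$ only), the comparison argument still applies: at a point where $Y^\eta$ attains a positive interior maximum one has $\nabla_{x,v}Y^\eta = 0$ and $\Delta_v Y^\eta\le 0$, so that $\frac{d}{dt}\|Y^\eta(t)\|_{L^\infty}\le(\sup c)\|Y^\eta(t)\|_{L^\infty}$; the decay of $Y^\eta$ at infinity (from $H1$--$H2$ and the propagated integrability) legitimizes this. Gronwall's lemma then gives
\begin{equation*}
\|Y^\eta(t)\|_{L^\infty}\le \|Y^0\|_{L^\infty}\exp\left( \int_0^t \Big( \tfrac\gamma2\|E^\eta(s)\|_{L^\infty} + C(\sigma,\gamma)\Big)\,\mathrm{d}s \right),\quad t\in[0,T].
\end{equation*}

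The remaining, and genuinely delicate, step is to close this estimate uniformly in $\eta$, since $\|E^\eta\|_{L^\infty}$ is itself controlled by $Y^\eta$: combining Lemma B.1 of \cite{Degond1986}, Lemma 5.3 and the mass bound in \eqref{IneqNormSequ} yields $\|E^\eta(t)\|_{L^\infty}\le C\|Y^\eta(t)\|_{L^\infty}^{1/2}$, which inserted above produces a nonlinear feedback. I expect this to be the main obstacle: a naive application of Gronwall to $\frac{d}{dt}M\lesssim M^{3/2}$ only yields a short-time bound. To obtain the bound on all of $[0,T]$ I would exploit the uniform-in-$\eta$ control of the kinetic energy coming from the free-energy balance (so that intermediate Lebesgue norms of $n^\eta$, and hence $\int_0^T\|E^\eta\|_{L^\infty}\,\mathrm{d}s$, stay bounded independently of $\eta$), and run a continuation argument on the maximal interval where $\|Y^\eta\|_{L^\infty}$ stays below a fixed threshold, showing that it cannot be shorter than $T$. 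The finiteness of $\|Y^0\|_{L^\infty}$ provided by $H2$ furnishes the starting point.
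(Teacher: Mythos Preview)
Your computation of the equation satisfied by $Y^\eta=(1+|v|^2)^{\gamma/2}f^\eta$ is correct, including the observation that the magnetic rotation drops out of the zeroth-order coefficient because ${}^\perp v\cdot v=0$, and the upper bound $\sup c\le \tfrac{\gamma}{2}\|E^\eta(t)\|_{L^\infty}+C(\sigma,\gamma)$. Up to the Gronwall inequality this is a clean alternative to the route the paper takes: the paper simply defers to Degond's Lemma~3.1, whose proof is based not on a maximum principle but on the explicit fundamental solution of the Kolmogorov operator and a Duhamel representation. That approach produces an \emph{integral} inequality of the type $\|Y^\eta(t)\|_{L^\infty}\le C(T)\|Y^0\|_{L^\infty}+C(T)\int_0^t (t-s)^{-1/2}\|E^\eta(s)\|_{L^\infty}\|Y^\eta(s)\|_{L^\infty}\,\mathrm{d}s$, linear in $\|Y^\eta\|$, rather than the exponential dependence your maximum-principle argument gives; this difference matters when one tries to close the loop with $\|E^\eta\|_{L^\infty}\le C\|Y^\eta\|_{L^\infty}^{1/2}$.

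The genuine gap is in your last paragraph. You propose to break the nonlinear feedback by invoking a uniform-in-$\eta$ kinetic-energy bound coming from a free-energy balance, so as to control $\int_0^T\|E^\eta\|_{L^\infty}\,\mathrm{d}s$ independently of $\eta$. But the only a priori bounds available for the \emph{regularized} system are precisely \eqref{IneqKinEnerSequ} and \eqref{IneqDissSequ}, whose constants are written $C(T,\|E^\eta\|_{L^\infty})$: they already depend on the quantity you are trying to bound. A genuine free-energy argument would require the regularized potential energy $\tfrac12\int(n^\eta-D)\,\Phi^\eta$ with $\Phi^\eta=\tfrac{1}{4\pi}W^\eta\star(n^\eta-D)$ to be bounded below uniformly in $\eta$, and since $W^\eta(x)=\ln(|x|^2+\eta)$ is \emph{not} the Green's function of $-\Delta$, this positivity is not available for free. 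Your continuation argument therefore uses as input exactly the uniform control it is meant to establish. Degond's argument avoids this circularity by combining the Duhamel estimate above with the uniform $L^1\cap L^\infty$ bound \eqref{IneqNormSequ} on $f^\eta$ (which does \emph{not} involve $E^\eta$), and it is that mechanism---rather than an energy estimate---that you should reproduce to close the proof.
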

\begin{proof}
Since the proof can be easily obtained similarly as in \cite{Degond1986}, Lemma 3.1, so we left this lemma to the reader.
\end{proof}
By Lemma \ref{LInftyNormVeloc}, we deduce that the constants in the inequalities \eqref{IneqKinEnerSequ}, \eqref{IneqDissSequ} respectively are independent with respect to $\eta$. Moreover, together this Lemma with Lemma B.1 in \cite{Degond1986} and \eqref{IneqNormSequ} gives the uniform bound of the sequence $(n^\eta)_{\eta >0}$ in $L^\infty(0,T;L^p(\R^2))$, for any $p\in [1,\infty]$.

\paragraph*{Step 3: Compactness and convergence.}
It follows from the uniform bound of the sequences that there exist a limit $(f, n, E)$  such that up to extraction of a subsequence, it holds as $\eta\to 0$ that
\begin{align*}
f^{\eta} \rightharpoonup  f \,\,\,\,\text{weak}\, \star\,\text{in}\,\,\, L^\infty (0,T;L^p(\R^2\times\R^2)), \,\, p\in ]1,\infty],\\
n^{\eta} \rightharpoonup  n \,\,\,\,\text{weak}\, \star\,\text{in}\,\,\, L^\infty (0,T;L^p(\R^2)), \,\, p\in ]1,\infty],\\
E^{\eta} \rightharpoonup  E  \,\,\,\,\text{weak}\, \star\,\text{dans}\,\,\, L^\infty ((0,T)\times\R^2).
\end{align*}
Furthermore, by using Lemma \ref{Compactness} with $\psi(v) =1$ we get the strong convergence 
\begin{equation}
\label{strongconv}
n^{\eta} \to n \,\,\text{in}\,\, L^q ((0,T)\times\R^2),\,\,\, q\in]1, 4/3[.
\end{equation}
Indeed, by uniform estimates \eqref{IneqNormSequ}, \eqref{IneqKinEnerSequ} and \eqref{IneqPosition}, the conditions in Lemma \ref{Compactness} are verified. Let us write
\[
G^{\eta} := \sigma \nabla_v f^{\eta} + vf^{\eta} - E^{\eta}f^{\eta} - B(x){}^\perp v f^{\eta}.
\]
Then the equation \eqref{eq:VFP2DBis} can be written as
\[
\partial_t f^{\eta} + v\cdot \nabla_x f^{\eta} = \nabla_v G^{\eta}.
\]
 We now claim that the sequence $(G^\eta)_{\eta >0}$ is bounded in $L^q((0,T)\times\R^2\times\R^2)$ to apply the averaging lemma, Lemma \ref{Compactness}. Hence, we need to prove the following lemma.
\begin{lemma}$\;$\\
\label{BoundGk}
For any $q\in [1,2]$, there exists a constant $C$ independent of $\eta$ such that for every $\eta >0$ we have
\[
\| G^{\eta}\|_{L^q ((0,T)\times\R^2\times\R^2)} \leq C,\,\, q\in [1,2].
\]
\end{lemma}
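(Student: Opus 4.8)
The plan is to estimate separately each of the four terms composing $G^\eta$, bounding each one uniformly in $\eta$ both in $L^1((0,T)\times\R^2\times\R^2)$ and in $L^2((0,T)\times\R^2\times\R^2)$. The conclusion for every $q\in[1,2]$ then follows from the interpolation inequality $\|G^\eta\|_{L^q}\le \|G^\eta\|_{L^1}^{1-\theta}\|G^\eta\|_{L^2}^{\theta}$, with $\theta = 2(q-1)/q\in[0,1]$, which holds on any measure space and hence does not require the domain to have finite measure.

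For the diffusion term $\sigma\nabla_v f^\eta$ I would write, on $\{f^\eta>0\}$, $\nabla_v f^\eta = \big(\nabla_v f^\eta/\sqrt{f^\eta}\big)\sqrt{f^\eta}$. The $L^1$ bound follows from Cauchy--Schwarz in $(x,v)$, namely $\int_{\R^2\times\R^2}|\nabla_v f^\eta|\le \big(\int_{\R^2\times\R^2}|\nabla_v f^\eta|^2/f^\eta\big)^{1/2}\|f^\eta(t)\|_{L^1}^{1/2}$, combined with the mass bound \eqref{IneqNormSequ} and, after integrating in time and applying Cauchy--Schwarz in $t$, the dissipation estimate \eqref{IneqDissSequ}. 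The $L^2$ bound uses instead $|\nabla_v f^\eta|^2 = (|\nabla_v f^\eta|^2/f^\eta)\,f^\eta \le \|f^\eta\|_{L^\infty}\,|\nabla_v f^\eta|^2/f^\eta$, so that $\int_0^T\!\int_{\R^2\times\R^2}|\nabla_v f^\eta|^2 \le \|f^\eta\|_{L^\infty((0,T)\times\R^2\times\R^2)}\,\|\nabla_v f^\eta/\sqrt{f^\eta}\|^2_{L^2(0,T;L^2)}$, both factors being controlled uniformly in $\eta$ by \eqref{IneqNormSequ} and \eqref{IneqDissSequ}.

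The transport-type terms $vf^\eta$ and $B(x){}^\perp v f^\eta$ are treated together, since $|B(x){}^\perp v f^\eta|\le \|B\|_{L^\infty}|v|f^\eta$. For the $L^1$ bound I use $\int_{\R^2\times\R^2}|v|f^\eta \le \big(\int_{\R^2\times\R^2}|v|^2 f^\eta\big)^{1/2}\|f^\eta(t)\|_{L^1}^{1/2}$ together with the kinetic energy and mass bounds \eqref{IneqKinEnerSequ}, \eqref{IneqNormSequ}. For the $L^2$ bound the decisive ingredient is the weighted pointwise estimate of Lemma \ref{LInftyNormVeloc}: replacing one factor by $f^\eta \le C(1+|v|^2)^{-\gamma/2}$ gives $|v|^2(f^\eta)^2 \le C\,|v|^2(1+|v|^2)^{-\gamma/2}f^\eta$, and since $\gamma>2$ the weight $|v|^2(1+|v|^2)^{-\gamma/2}$ is bounded, whence $\int_0^T\!\int_{\R^2\times\R^2}|v|^2(f^\eta)^2 \le C\int_0^T\|f^\eta(t)\|_{L^1}\,\mathrm{d}t$, uniformly bounded. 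The electric term $E^\eta f^\eta$ is the simplest: the uniform bound $\sup_{[0,T]}\|E^\eta\|_{L^\infty}\le C$ reduces both estimates to those of $f^\eta$, giving $\|E^\eta f^\eta\|_{L^1}\le C\,T\,\|f^\eta\|_{L^\infty(0,T;L^1)}$ and $\|E^\eta f^\eta\|_{L^2}^2 \le C^2\int_0^T\|f^\eta(t)\|_{L^\infty}\|f^\eta(t)\|_{L^1}\,\mathrm{d}t$.

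I expect the velocity-diffusion term to be the main obstacle: it is the only contribution not controlled by low-order velocity moments, and it forces the use of the entropy-dissipation estimate \eqref{IneqDissSequ} through the $\sqrt{f^\eta}$ splitting, the $L^2$ bound requiring in addition the uniform $L^\infty$ bound on $f^\eta$. A secondary subtlety is that, because the domain has infinite measure, the $L^2$ control of the terms linear in $v$ cannot come from the kinetic energy alone and genuinely relies on the polynomial velocity decay $\gamma>2$ furnished by Lemma \ref{LInftyNormVeloc}.
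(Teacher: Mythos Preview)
Your proof is correct and follows the same overall strategy as the paper --- split $G^\eta$ into its four constituents and bound each one uniformly --- but the packaging differs slightly. The paper estimates each piece directly in $L^q$ for every $q\in[1,2]$ via H\"older with exponents $\tfrac{2}{q},\tfrac{2}{2-q}$ (for instance $\|vf^\eta\|_{L^q}\le\big(\int|v|^2 f^\eta\big)^{1/2}\|f^\eta\|_{L^{q/(2-q)}}^{1/2}$), whereas you do the two endpoints $q=1,2$ and then interpolate. Both are fine.

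One point worth correcting: your claim that the $L^2$ bound on $vf^\eta$ ``genuinely relies on the polynomial velocity decay $\gamma>2$'' is not accurate. The paper obtains this bound from the simpler inequality
\[
\|vf^\eta\|_{L^2(\R^2\times\R^2)}^2 = \int_{\R^2\times\R^2}|v|^2 f^\eta\cdot f^\eta \le \|f^\eta\|_{L^\infty}\int_{\R^2\times\R^2}|v|^2 f^\eta,
\]
which uses only the uniform $L^\infty$ bound \eqref{IneqNormSequ} and the kinetic energy \eqref{IneqKinEnerSequ}. Your route through Lemma~\ref{LInftyNormVeloc} works, but it invokes a stronger hypothesis than is actually needed at this step.
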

\begin{proof}$\;$\\
Since the proof is similar to \cite{KarMelTri}, Lemma 3.5, we briefly give the idea of that.
As the sequence of electric fields $E^\eta$ is bounded in $L^\infty((0,T)\times\R^2)$ and the magnetic field $B$ belongs to $L^\infty(\R^2)$ we obtain
\[
\| G^{\eta}\|_{L^q } \leq \|\sigma\nabla_v f^{\eta} \|_{L^q } + (1+  \|B\|_{L^\infty}) \| v f^{\eta} \|_{L^q ((0,T)\times\R^2\times\R^2)} + C \|f^{\eta}  \|_{L^q },
\]
for some positive constant $C$ not depending on $\eta$.\\
From \eqref{IneqNormSequ} it is easily seen that
$
\|f^{\eta}  \|_{L^q ((0,T)\times\R^2\times\R^2)}\leq T^{1/q} \|f_\mathrm{in}\|_{L^q(\R^2\times\R^2)}.
$
On the other hand, since $\|vf^{\eta}  \|_{L^q ((0,T)\times\R^2\times\R^2)}\leq T^{1/q} \sup_{[0,T]} \|vf^{\eta}\|_{L^q(\R^2\times\R^2)}$ and thanks to Hölder's inquality for $q\in[1,2[$ we have
\begin{align*}
\|vf^{\eta}\|_{L^q(\R^2\times\R^2)} \leq \left(\intxvtt{|v|^2 f^\eta}\right)^{1/2} \| f^\eta\|^{q/2}_{L^{q/(2-q)}(\R^2\times\R^2)}.
\end{align*}
When $q=2$ we also get
\[
\|vf^{\eta}\|_{L^2(\R^2\times\R^2)} \leq \| f^\eta \|^{1/2}_{L^\infty(\R^2\times\R^2)}\left(\intxvtt{|v|^2f^\eta}\right)^{1/2}.
\]
Consequently, the sequence $(vf^\eta)_\eta$ is bounded in $L^q((0,T)\times\R^2\times\R^2)$, for any $q\in [1,2]$. It remains to uniformly bound the sequence $\|\sigma\nabla_v f^{\eta} \|_{L^q((0,T)\times\R^2\times\R^2) }$. Using Hölder's inequality again for $q\in [1,2[$, we have
\begin{align*}
\intTxvtt{|\nabla_v f^{\eta}|^q}
\leq C(T)\|f^{\eta}\|^\frac{q}{2}_{L^\infty(0,T;L^{\frac{q}{2-q}})}\left(\intTxvtt{\dfrac{|\nabla_v f^{\eta}|^2}{f^{\eta}}}\right)^{\frac{q}{2}},
\end{align*}
and when $p=2$ we also get
\begin{align*}
\intTxvtt{|\nabla_v f^{\eta}|^2} &= \intTxvtt{f^{\eta}\dfrac{|\nabla_v f^{\eta}|^2}{f^{\eta}}}\\ &\leq \|f^{\eta}\|_{L^\infty((0,T)\times\R^2\times\R^2)} \intTxvtt{\dfrac{|\nabla_v f^{\eta}|^2}{f^{\eta}}}.
\end{align*}
Thanks to Lemma \ref{LInftyNormVeloc} and \eqref{IneqDissSequ}, we deduce that the sequence $(|\nabla_v f^{\eta}|^2/f^{\eta})_{\eta >0}$ is bounded in $L^1((0,T)\times\R^2\times\R^2)$. Therefore, the sequence $(\nabla_v f^{\eta})_{\eta >0}$ is bounded in $(L^q((0,T)\times\R^2\times\R^2))^2$ with $q\in[1,2]$. Altogether the above estimates we conclude the result of Lemma \ref{BoundGk}.
\end{proof}
\paragraph*{Step 4: Passing to the limit.}
Thanks to the weak convergences obtained in Step 3, we see
that to pass to the limit in the weak formulation of equation \eqref{eq:VFP2DBis} it suffices to show convergence towards $0$ for any test function $\varphi\in C^\infty _0 ([0,T[\times\R^2\times\R^2)$ of the non-linear contribution
\begin{align}
\label{NonLinearEner}
&\intTxvtt{ \left[  (\nabla W^\eta\star n^{\eta}) f^{\eta} - (\nabla W\star n) f   \right] \varphi}\nonumber\\
&= \intTxvtt{[(\nabla(W^\eta -W)\star n^\eta)f^\eta]\varphi}    \nonumber\\
&+\intTxvtt{ \left[ \nabla W\star (n^{\eta}-n)f^\eta \right]\varphi} + \intTxvtt{    (\nabla W\star n)\varphi (f^{^\eta} - f ) }.
\end{align}
The convergence of the contribution \eqref{NonLinearEner} to $0$ as $\eta\to 0$ can follow from almost the same analysis as in \cite{CaChoJu}, Section 5. Therefore we obtain $f$ is the weak solution of VPFP system \eqref{eq:VPFP-NonEps}, \eqref{eq:Poi-NonEps}, and \eqref{eq:Init-NonEps} with the electric field $E$ satisfying $E = -\frac{q}{2\pi\epsilon _0}\nabla_x\ln|\cdot|\star (n-D)$. Furthermore, since the sequence $(f^\eta)_{\eta >0}$ belongs to $L^2([0,T]\times\R^2_x,H^1(\R^2_v))$ it is easily check that $f\in L^2([0,T]\times\R^2_x,H^1(\R^2_v))$ by using the Theorem \ref{LionsThm}.

\paragraph*{Step 5: Properties \eqref{PropWeakSol} of solutions.}
 The nonegative limit function $f$  is a direct consequence of the weak-$\star$ convergence of the nonegative sequence $(f^\eta)_{\eta > 0}$ in $L^\infty((0,T)\times\R^2\times\R^2)$. In particular,  $f\in L^\infty((0,T)\times\R^2\times\R^2)$. Moreover, we also have $(1+|v|^2)^{\gamma/2}f \in L^\infty((0,T)\times\R^2\times\R^2)$ since the sequence $((1+|v|^2)^{\gamma/2}f^\eta)_{\eta >0}$ is bounded in $L^\infty((0,T)\times\R^2\times\R^2)$. Now, let $\varphi $ be any nonnegative function in $ C^\infty _0 ([0,T[)$  and $R>0$ be a constant. To prove $f \in L^\infty(0,T;L^1(\R^2\times\R^2))$ we use the function $\psi_R(t,x,v) = \varphi(t)\mathds{1}_{\left\{|x|\leq R, |v|\leq R\right\}}$. Hence by the weak-$\star$ convergence of $(f^\eta)_{\eta >0}$ to $f$ we deduce that
\[
\int_{0}^{T}\varphi(t) \intxvtt{f(t,x,v)\mathds{1}_{\left\{|x|\leq R, |v|\leq R\right\}}}\mathrm{d}t \leq \limsup_{\eta\to 0}\int_{0}^{T}\varphi(t) \intxvtt{f^\eta(t,x,v)}\mathrm{d}t.
\]
Taking now the limit $R\to \infty$ and apply the dominated convergence theorem to get $f \in L^\infty(0,T;L^1(\R^2\times\R^2))$. Similarly, if we choose the test function $\psi_R(t,x,v) = \varphi(t)(|x|+|v|^2)\mathds{1}_{\left\{|x|\leq R, |v|\leq R\right\}}$ then we can show that $(|x|+|v|^2)f\in L^\infty(0,T;L^1(\R^2\times\R^2))$. We complete  the property of the solution by showing that $f\ln f \in L^\infty(0,T;L^1(\R^2\times\R^2))$. Indeed, we have the identity
\[
f |\ln f| = f \ln f \mathds{1}_{\left\{ f\geq 1 \right\}} - f\ln f\mathds{1}_{\left\{0\leq f\leq 1 \right\}}.
\]
Since $f \ln f \mathds{1}_{\left\{ f\geq 1 \right\}}\leq f^2$ and $f\ln f\mathds{1}_{\left\{0\leq f\leq 1 \right\}}\leq C e^{-(|x|+|v|^2)} + (|x|+|v|^2)f$, for some constant $C>0$ together with $f\in L^\infty(0,T;L^2(\R^2\times\R^2))$ and $(|x|+|v|^2) f\in L^\infty(0,T;L^1(\R^2\times\R^2))$, we deduce that $f\ln f \in L^\infty(0,T;L^1(\R^2\times\R^2))$.

The following lemma provides the property on the potential $\Phi[f]$ and the electric field $E[f] = -\nabla_x\Phi[f]$ of the Poisson equation on $\R^2$, so as to control the potential energy under the hypothesis H3. We refer to Lemma 3 in \cite{GouNiePouSol}.
\begin{lemma}$\;$\\
\label{PropSolPoi}
Let $\rho\in L^p(\R^2)$ with any $p\in [1,\infty]$ be such that 
\[
\intxtt{(1+|x|)|\rho(x)|} < +\infty,\,\, \intxtt{\rho(x)} = 0.
\]
Consider the potential $\Phi$ given by $ \Phi(x) = -\frac{1}{2\pi}\int_{\R^2}\ln |x-y|\rho(y)\mathrm{d}y$. Then, $\Phi$ is a continuous and bounded function such that $\lim_{|x|\to\infty}\Phi(x) =0$. Furthermore, we also have $\Phi\in L^2(\R^2)$ and $\nabla\Phi\in (L^2(\R^2))^2$. 
\end{lemma}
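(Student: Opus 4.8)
The plan is to reduce the whole statement to the two competing difficulties of the two-dimensional logarithmic kernel: its mild singularity at the origin and its unbounded growth at infinity. The singularity is harmless because $\ln|\cdot|$ is locally integrable (to every finite power) in dimension two, while the growth at infinity must be defeated by the neutrality condition $\int_{\R^2}\rho\,dx=0$. I first record that the moment hypothesis already forces $\rho\in L^1(\R^2)$, since $\int|\rho|\le\int(1+|x|)|\rho|<\infty$, so that $\Phi$ is given by a convergent integral once the logarithmic growth is controlled as below.

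For continuity and local boundedness I split the kernel as $\ln|z|=\ln|z|\,\mathds{1}_{\{|z|\le1\}}+\ln|z|\,\mathds{1}_{\{|z|>1\}}=:K_{\mathrm{near}}+K_{\mathrm{far}}$. The near part satisfies $K_{\mathrm{near}}\in L^r(\R^2)$ for every $r<\infty$, so convolving with $\rho\in L^p$ and invoking Young's inequality with the conjugate exponent shows that $K_{\mathrm{near}}\star\rho$ is continuous and bounded. For the far part, on any ball $\{|x|\le R\}$ one dominates the integrand by $|\ln|x-y||\,|\rho(y)|\le C_R(1+|y|)|\rho(y)|\in L^1_y$, using $\ln(R+|y|)\lesssim_R 1+|y|$, so dominated convergence yields continuity; hence $\Phi\in C(\R^2)$.

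To obtain decay, and thereby global boundedness, I use neutrality to recentre the kernel: since $\int\rho=0$,
\[
\Phi(x)=-\frac{1}{2\pi}\int_{\R^2}\ln\frac{|x-y|}{|x|}\,\rho(y)\,dy .
\]
Splitting the integral at $|y|=|x|/2$, on $\{|y|\le|x|/2\}$ the elementary bound $\bigl|\ln\frac{|x-y|}{|x|}\bigr|\le 2|y|/|x|$ gives a contribution of size $O\!\bigl(|x|^{-1}\int|y||\rho|\bigr)$, while on $\{|y|>|x|/2\}$ one uses the tail of $(1+|y|)|\rho|\in L^1$ together with a Hölder estimate against the integrable near-singularity of $\ln$; both tend to $0$ as $|x|\to\infty$. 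Thus $\Phi\to 0$ at infinity and, with continuity, $\Phi\in L^\infty(\R^2)$.

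For the gradient, $\nabla\Phi(x)=-\frac{1}{2\pi}\int_{\R^2}\frac{x-y}{|x-y|^2}\rho(y)\,dy$, whose kernel has modulus $|x-y|^{-1}$. Splitting again at $|x-y|=1$, the near part lies in $L^q$ for every $q<2$ and Young's inequality, interpolating $\rho\in L^1\cap L^p$, places it in $L^2$; for the far part I subtract $x/|x|^2$ using $\int\rho=0$, and a first-order Taylor estimate of the kernel produces $O(|x|^{-2})$ decay, which is square-integrable at infinity, so $\nabla\Phi\in(L^2(\R^2))^2$. The membership $\Phi\in L^2(\R^2)$ then follows by combining the quantitative decay rate from the recentred formula with the local $L^p$ control near the origin. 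The main obstacle throughout is precisely this far-field integrability: in two dimensions nothing decays without the neutrality condition, and the square-integrability of $\Phi$ is the borderline case, where one leans hardest on the first-moment hypothesis $|x|\rho\in L^1$ to control the critical $|x|^{-1}$ decay.
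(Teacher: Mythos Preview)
The paper does not supply its own proof of this lemma; it simply refers the reader to Lemma~3 of \cite{GouNiePouSol}. So there is no in-paper argument to compare against, and your task is really to give a self-contained proof.

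Your outline for continuity, boundedness, and vanishing of $\Phi$ at infinity via the recentring trick $\Phi(x)=-\frac{1}{2\pi}\int\ln\frac{|x-y|}{|x|}\,\rho(y)\,dy$ is correct and standard. Your argument for $\nabla\Phi\in L^2$ is also on the right track, though you should say explicitly what happens on the region $\{|y|>|x|/2\}$, where the first-order Taylor bound on the kernel is unavailable; there one controls the contribution using $\int_{|y|>|x|/2}|\rho|\le\frac{2}{|x|}\int|y||\rho|$ together with a separate near-diagonal H\"older estimate.

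There is, however, a genuine gap in your last step, and it cannot be repaired. From the recentred formula you correctly extract $|\Phi(x)|=O(|x|^{-1})$ on the region $\{|y|\le|x|/2\}$, and you then assert that this decay, combined with local $L^p$ control, gives $\Phi\in L^2(\R^2)$. But in two dimensions $|x|^{-1}\notin L^2(\R^2\setminus B_1)$, since $\int_1^\infty r^{-2}\cdot r\,dr$ diverges logarithmically; your own remark that ``the square-integrability of $\Phi$ is the borderline case'' is precisely the warning sign, and you are on the wrong side of it. Worse, the rate $|x|^{-1}$ is \emph{sharp}: expanding one order further in the recentred kernel gives
\[
\Phi(x)=\frac{1}{2\pi}\,\frac{x\cdot m}{|x|^2}+o(|x|^{-1}),\qquad m:=\int_{\R^2} y\,\rho(y)\,dy,
\]
so whenever the dipole moment $m\neq 0$ one has $\Phi\notin L^2(\R^2)$. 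The hypotheses of the lemma do not force $m=0$. Either the statement as transcribed here carries an implicit extra hypothesis from the cited reference, or the claim $\Phi\in L^2$ should simply be dropped. Note that the energy estimates in the paper hinge on $\nabla\Phi\in L^2$, which your approach does establish; the assertion $\Phi\in L^2$ is the problematic one.
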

\section{A priori estimates}
The aim of this section is the derivation of a priori estimates, uniform  with respect to $\eps$, on the weak solution $f^\eps$ provided by Theorem \ref{main_weak_sol}. These estimates are deduced from the conservation properties of the system and from the dissipation mechanism
due to the collisions. We recall that $(f^\eps, E[f^\eps])$ is a weak solution to the problem \eqref{VPFP2d-Scale}, \eqref{Poisson2D-Scale}, and \eqref{Initial2D-Scale} on $[0,T]$ with any $T>0$, if for any the test function $\varphi\in C^\infty_0([0,T[\times\R^2\times\R^2)$ we have
\begin{align}
\label{WeakSolScaleVPFP}
&\intTxvtt{f^\eps\left( \eps \partial_t \varphi + v\cdot\nabla_x\varphi + \frac{q}{m}( E[f^\eps]+\frac{B(x)}{\eps}{}^\perp v)\cdot\nabla_v\varphi \right)} \\
&+ \dfrac{1}{\tau}\intTxvtt{f^\eps\left( \sigma\Delta_v\varphi - v\cdot\nabla_v\varphi \right)} + \intxvtt{\eps f^\eps_\mathrm{in}(x,v) \varphi(0,x,v)} = 0.\nonumber
\end{align}
Let us define the free energy of the VPFP system \eqref{VPFP2d-Scale}, \eqref{Poisson2D-Scale}, and \eqref{Initial2D-Scale} as
\[
\calE[f^\eps] = \intxvtt{(\sigma f^\eps \ln f^\eps + f^\eps\dfrac{|v|^2}{2})} + \dfrac{\epsilon_0}{2m}\intxtt{|E[f^\eps]|^2}.
\]
\begin{pro}$\;$\\
\label{WeakFreeEnergy2D}
Let $(f^\eps, E[f^\eps])$ be a weak solution of the system \eqref{VPFP2d-Scale}, \eqref{Poisson2D-Scale}, and \eqref{Initial2D-Scale} provided by Theorem \ref{main_weak_sol}. Then, we have the mass conservation and the balance of the free energy
\[
\dfrac{\mathrm{d}}{\mathrm{dt}}\intxvtt{f^\eps(t)} =0,\,\,\,\,   \eps \dfrac{\mathrm{d}}{\mathrm{dt}}\calE[f^\eps(t)] = - \dfrac{1}{\tau}\intxvtt{\dfrac{|\sigma \nabla_v f^\eps + v f^\eps|^2}{f^\eps}}.
\]
\end{pro}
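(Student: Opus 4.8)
The plan is to read off both identities from the weak formulation \eqref{WeakSolScaleVPFP}, testing it against a function of $t$ alone for the mass conservation and against the kinetic free--energy variable $\psi^\eps := \sigma\big(1+\ln f^\eps\big) + \frac{|v|^2}{2}$ for the energy balance, the potential--energy part being matched separately through the Poisson coupling. The serious point is that $\psi^\eps$ is neither smooth, compactly supported, nor independent of the unknown, so the whole computation has to be performed at a regularized or renormalized level and then passed to the limit.

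For the mass conservation I would insert $\varphi(t,x,v)=\theta(t)\chi_R(x,v)$ into \eqref{WeakSolScaleVPFP}, with $\theta\in C^\infty_0([0,T[)$ and $\chi_R$ a cut--off equal to $1$ on $\{|x|\le R,\,|v|\le R\}$, supported in $\{|x|\le 2R,\,|v|\le 2R\}$ and satisfying $|\nabla\chi_R|\le C/R$, $|\Delta_v\chi_R|\le C/R^2$. Every term except the one carrying $\theta'$ then involves $\nabla\chi_R$ or $\Delta_v\chi_R$ supported on the corona $\{R\le\max(|x|,|v|)\le 2R\}$; using $f^\eps,\,|x|f^\eps,\,|v|^2f^\eps\in L^\infty(0,T;L^1)$ and $E[f^\eps]\in L^\infty$ from \eqref{PropWeakSol}, these contributions are $O(1/R)$ and disappear as $R\to\infty$, leaving $\eps\int_0^T\theta'(t)\,\big(\intxvtt{f^\eps(t)}\big)\,\md t+\eps\,\theta(0)\intxvtt{f^\eps_\mathrm{in}}=0$, which is the announced conservation.

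For the free--energy balance the core is the formal computation obtained by multiplying \eqref{VPFP2d-Scale} by $\psi^\eps$ and integrating in $(x,v)$. The $\eps\partial_t$ term reproduces $\eps\frac{\md}{\md t}\intxvtt{\big(\sigma f^\eps\ln f^\eps + f^\eps\frac{|v|^2}{2}\big)}$; the free streaming $v\cdot\nabla_x f^\eps$ and the entropy part of the force terms are exact $x$-- or $v$--divergences and vanish, while the magnetic term disappears altogether because $\Divv{}^\perp v=0$ and $v\cdot{}^\perp v=0$. The only surviving force contribution is the $\frac{|v|^2}{2}$ weight acting on the electric force, namely $-\frac{q}{m}\intxvtt{f^\eps E[f^\eps]\cdot v}=-\frac1m\intxtt{E[f^\eps]\cdot j[f^\eps]}$. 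I would match it with the potential energy by writing $\frac{\epsilon_0}{2m}\intxtt{|E[f^\eps]|^2}=\frac{q}{2m}\intxtt{\Phi[f^\eps](n^\eps-D)}$, differentiating in time, using self--adjointness of $(-\Delta_x)^{-1}$ and $\partial_t D=0$ to symmetrize into $\frac{q}{m}\eps\intxtt{\Phi[f^\eps]\,\partial_t n^\eps}$, and finally the continuity equation $\eps\partial_t n^\eps+\Divx\big(\intvtt{vf^\eps}\big)=0$ (obtained by integrating \eqref{VPFP2d-Scale} in $v$), which yields exactly $\eps\frac{\md}{\md t}\big(\frac{\epsilon_0}{2m}\intxtt{|E[f^\eps]|^2}\big)=-\frac1m\intxtt{E[f^\eps]\cdot j[f^\eps]}$. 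Hence the force term is precisely $\eps\frac{\md}{\md t}$ of the potential energy and merges with the $\partial_t$ contribution into $\eps\frac{\md}{\md t}\calE[f^\eps]$. The collision term, integrated once by parts in $v$ with $\nabla_v\psi^\eps=\sigma\nabla_v f^\eps/f^\eps+v$ and the identity $\sigma\nabla_v f^\eps+vf^\eps=f^\eps\nabla_v\psi^\eps$, gives $-\frac1\tau\intxvtt{|\sigma\nabla_v f^\eps+vf^\eps|^2/f^\eps}$, i.e. the claimed dissipation, so that the two sides assemble into the stated identity.

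I expect the rigorous justification of the multiplication by $\psi^\eps$ to be the main difficulty, since $\psi^\eps$ is singular through $\ln f^\eps$, unbounded, and solution--dependent. The natural remedy is to work with a renormalized form of \eqref{VPFP2d-Scale}, testing against $\sigma\beta_\delta(f^\eps)+\frac{|v|^2}{2}$ with $\beta_\delta$ a smooth bounded approximation of $1+\ln(\cdot)$, multiplied by cut--offs in $(x,v)$; all the integrations by parts are then licit because $f^\eps\in L^2([0,T]\times\R^2_x;H^1(\R^2_v))$ makes the dissipation integral finite, while $(|x|+|v|^2+|\ln f^\eps|)f^\eps\in L^\infty(0,T;L^1)$ controls the entropy and moment weights. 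Letting $\delta\to0$ and the cut--offs tend to $1$ by monotone and dominated convergence --- whose applicability rests exactly on these bounds --- turns the computation into an honest identity rather than the \emph{inequality} that mere lower semicontinuity would produce; the coupling term is passed to the limit with the strong convergence \eqref{strongconv} of $n^\eps$ and the self--adjointness of the Poisson kernel used above. Alternatively, one may run the whole computation on the smooth, fast--decaying regularized solutions $f^\eta$ of Step 1, where every step is an exact identity, and then pass to the limit $\eta\searrow0$ using \eqref{strongconv}, the uniform dissipation bound \eqref{IneqDissSequ}, and the entropy/moment controls of \eqref{PropWeakSol}.
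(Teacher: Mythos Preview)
Your proposal is correct and follows essentially the same route as the paper: cut--off test functions for mass conservation, and the decomposition of the free energy into kinetic, potential and entropic parts, each handled by cut--offs and a renormalization of $\ln f^\eps$. The paper organizes this as three separate balances (Lemmas \ref{BalanceKin2D}, \ref{BalancePot2D}, \ref{BalanceEntropy}) and then sums, whereas you sketch the computation as a single block; the entropy part in the paper uses the regularization $\psi_\delta(f)=(\delta+f)\ln(1+f/\delta)+f\ln\delta$, which plays exactly the role of your $\beta_\delta$.

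One point where the paper is more careful than your outline is the potential--energy step. You write $\frac{\epsilon_0}{2m}\intxtt{|E[f^\eps]|^2}=\frac{q}{2m}\intxtt{\Phi[f^\eps](n^\eps-D)}$ and then ``differentiate in time''; making this rigorous for weak solutions is the content of Lemma \ref{BalancePot2D}, whose key technical ingredient is that $\partial_t E[f^\eps]\in L^\infty(0,T;L^2(\R^2))$, obtained from the continuity equation and a Calder\'on--Zygmund estimate on $\nabla_x\ln|\cdot|\star\Divx j[f^\eps]$. Once this is in hand, a mollification $E^\nu=E\star\kappa_\nu$ combined with the cut--off $\chi_R$ justifies the formal identity you wrote. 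Your alternative suggestion of carrying the full computation on the regularized solutions $f^\eta$ and passing $\eta\to0$ is viable in principle, but passing to the limit in the \emph{equality} (rather than an inequality) for the entropy dissipation would require more than the lower semicontinuity you allude to; the paper avoids this by working directly on the weak solution.
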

The mass conservation  formally follows by integrating \eqref{VPFP2d-Scale} in $v$, which gives the continuity equation for the mass density, and then integrating in $x$. On the other hand, the law for the balance of the total energy is formally derived  by summing up these relations below. First, multiplying the equation \eqref{VPFP2d-Scale} by $\frac{|v|^2}{2}$ to obtain the balance of kinetic energy
\[
\eps\dfrac{\mathrm{d}}{\mathrm{dt}}\intxvtt{\dfrac{|v|^2}{2}f^\eps} = \dfrac{q}{m}\intxvtt{ E[f^\eps]\cdot vf^\eps} - \dfrac{1}{\tau}\intxvtt{(\sigma\nabla_v f^\eps + vf^\eps)\cdot v}.
\]
Then, thanks to the continuty equation $\eps\partial_t n[f^\eps] + \Divx \intvtt{v f^\eps} =0$, we multiply this equation by $\Phi[f^\eps]$ and use the Poisson equation to find the balance of potential energy
\[
 \dfrac{\epsilon_0 \eps}{2m} \dfrac{\mathrm{d}}{\mathrm{dt}}  \intxtt{|E[f^\eps]|^2} = - \dfrac{q}{m} \intxvtt{E[f^\eps]\cdot vf^\eps}.
\]
Finally, multiplying the equation \eqref{VPFP2d-Scale} by $\sigma(1+ \ln f^\eps)$ to get the balance of entropy
\[
\eps \dfrac{\mathrm{d}}{\mathrm{dt}}\intxvtt{\sigma f^\eps \ln f^\eps } = -\dfrac{1}{\tau}\intxvtt{(\sigma\nabla_v f^\eps + vf^\eps)\cdot \dfrac{\sigma \nabla_v f^\eps}{f^\eps}}.
\]
As for weak solutions, we shall follow the same scheme. We find relations analogous to previous relations in Lemmas below. The difficulty is in overcoming the lack of regularity and the need to justify operations that are taken for granted when the solutions are smooth. We will prove these properties of solutions by combining the formal arguments above with the choice of an appropriate sequence of test functions in \eqref{WeakSolScaleVPFP} for every studied property. A similar rigorous approach that the one given in \cite{BonCarSol} and \cite{BouDol} can be easily adapted for the properties studied in our weak solution.

We start with the balance of kinetic energy.
\begin{lemma}$\;$\\
\label{BalanceKin2D}
Let $f^\eps$ be the weak solution of the problem \eqref{VPFP2d-Scale}, and \eqref{Poisson2D-Scale}, \eqref{Initial2D-Scale} provided by Theorem \ref{main_weak_sol}. Then we have
\[
\eps\dfrac{\mathrm{d}}{\mathrm{dt}}\intxvtt{\dfrac{|v|^2}{2}f^\eps} = \intxvtt{\frac{q}{m} E[f^\eps]\cdot vf^\eps} - \dfrac{1}{\tau}\intxvtt{(\sigma\nabla_v f^\eps + vf^\eps)\cdot v}.
\]
\end{lemma}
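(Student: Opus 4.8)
The plan is to return to the weak formulation \eqref{WeakSolScaleVPFP} and test it against a smooth, compactly supported approximation of the (inadmissible, because unbounded) weight $\frac{|v|^2}{2}$, then let the truncation escape to infinity. Concretely, fix $\chi\in C^\infty_0([0,T[)$ and a cut-off $\psi\in C^\infty_0(\R^2)$ with $0\le\psi\le 1$ and $\psi\equiv 1$ on the unit ball, and set
\[
\varphi_R(t,x,v)=\chi(t)\,\frac{|v|^2}{2}\,\psi\!\left(\frac{x}{R}\right)\psi\!\left(\frac{v}{R}\right)\in C^\infty_0([0,T[\times\R^2\times\R^2).
\]
Inserting $\varphi_R$ into \eqref{WeakSolScaleVPFP} and passing to the limit $R\to\infty$ term by term produces the announced identity. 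All limits are controlled by the a priori bounds of Theorem \ref{main_weak_sol}, in \eqref{PropWeakSol}: $f^\eps\in L^\infty(0,T;L^1\cap L^\infty)$, $(|x|+|v|^2)f^\eps\in L^\infty(0,T;L^1)$, $E[f^\eps]\in L^\infty$, $B\in L^\infty$, together with the regularity $f^\eps\in L^2([0,T]\times\R^2_x,H^1(\R^2_v))$.

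For the time term, $\partial_t\varphi_R=\chi'\frac{|v|^2}{2}\psi(x/R)\psi(v/R)$ and dominated convergence (dominated by $|\chi'|\frac{|v|^2}{2}f^\eps$) yields $\eps\int_0^T\chi'(t)\intxvtt{\frac{|v|^2}{2}f^\eps}\,\md t$. The transport contribution survives only on $\{R\le|x|\le 2R\}$ through $\frac1R\nabla\psi(x/R)$; bounding $\frac{|v|^2}{2}|v|\,\psi(v/R)\le R|v|^2$ on $\{|v|\le 2R\}$ turns the prefactor $1/R$ into a uniform constant and leaves $\int_{|x|\ge R}\int|v|^2 f^\eps$, whose time integral tends to $0$ by dominated convergence. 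In the magnetic term the leading piece carries the factor ${}^\perp v\cdot\nabla_v(\frac{|v|^2}{2})={}^\perp v\cdot v=0$ and hence vanishes identically — this is precisely what neutralizes the singular factor $1/\eps$ — while the cut-off remainder, proportional to $\frac1R{}^\perp v\cdot\nabla\psi(v/R)$, is bounded by $\frac{C}{\eps}\int_{|v|\ge R}\int|v|^2 f^\eps\to 0$ ($\eps$ being fixed here). The electric term splits likewise: its main part converges, by $E[f^\eps]\in L^\infty$ and $|v|f^\eps\in L^\infty(0,T;L^1)$, to $\frac{q}{m}\int_0^T\chi\,\intxvtt{E[f^\eps]\cdot v f^\eps}\,\md t$, and the remainder vanishes. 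Finally the collision term is already in divergence-tested form $\frac1\tau\int f^\eps(\sigma\Delta_v\varphi_R - v\cdot\nabla_v\varphi_R)$ and converges, using $\Delta_v\frac{|v|^2}{2}=2$ and $v\cdot\nabla_v\frac{|v|^2}{2}=|v|^2$, to $\frac1\tau\int_0^T\chi\big(2\sigma\intxvtt{f^\eps}-\intxvtt{|v|^2 f^\eps}\big)\md t$.

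Collecting these limits gives, for every $\chi\in C^\infty_0([0,T[)$, with $\mathcal K(t)=\intxvtt{\frac{|v|^2}{2}f^\eps(t)}$,
\begin{align*}
&\eps\int_0^T\chi'\,\mathcal K\,\md t + \frac{q}{m}\int_0^T\chi\,\intxvtt{E[f^\eps]\cdot v f^\eps}\,\md t \\
&\quad + \frac1\tau\int_0^T\chi\,\Big(2\sigma\intxvtt{f^\eps}-\intxvtt{|v|^2 f^\eps}\Big)\md t + \eps\,\chi(0)\,\intxvtt{\tfrac{|v|^2}{2}f^\eps_\mathrm{in}} = 0.
\end{align*}
Restricting first to $\chi\in C^\infty_0(]0,T[)$ kills the boundary term and identifies $\eps\frac{\md}{\md t}\mathcal K$, in $\mathcal D'(]0,T[)$, with the sum of the other two bracketed expressions; since the latter lies in $L^1(0,T)$, $\mathcal K$ is absolutely continuous and the equality holds for a.e. $t$. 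It remains to recast the collision term: integrating by parts in $v$, legitimate because $f^\eps\in L^2([0,T]\times\R^2_x,H^1(\R^2_v))$ and $\Divv v=2$, gives $\intxvtt{v\cdot\nabla_v f^\eps}=-2\intxvtt{f^\eps}$, whence $2\sigma\intxvtt{f^\eps}-\intxvtt{|v|^2 f^\eps}=-\intxvtt{(\sigma\nabla_v f^\eps+vf^\eps)\cdot v}$, which is exactly the claimed right-hand side.

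The genuine difficulty is not algebraic but the justification of the truncation limits: the weight $\frac{|v|^2}{2}$ is unbounded, so each cut-off remainder must be absorbed by a velocity moment controlled only in $L^\infty(0,T;L^1)$. The delicate case is the transport term, where the extra power of $|v|$ has to be compensated by the factor $1/R$ supplied by $\nabla\psi(x/R)$ together with the support restriction $\{|v|\le 2R\}$. At the regularized level of Step~1 these manipulations are rigorous; passing the identity through the weak-$\star$ limits of Step~3 and invoking the dissipation bound \eqref{IneqDissSequ} (which survives in the limit and furnishes the $H^1_v$ regularity used in the last integration by parts) completes the argument, along the lines of \cite{BonCarSol, BouDol}.
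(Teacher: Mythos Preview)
Your proof is correct and follows the same approach as the paper: test the weak formulation \eqref{WeakSolScaleVPFP} against $\phi(t)\chi_R(x)\chi_R(v)\frac{|v|^2}{2}$, pass $R\to\infty$ by dominated convergence using the moment bounds of Theorem~\ref{main_weak_sol}, then rewrite the collision contribution $\frac{1}{\tau}(2\sigma\int f^\eps-\int|v|^2 f^\eps)$ as $-\frac{1}{\tau}\int(\sigma\nabla_v f^\eps+vf^\eps)\cdot v$ by integration by parts in $v$. You are in fact more explicit than the paper in controlling the transport and magnetic remainder terms (the paper simply invokes dominated convergence without isolating them).

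One small point: for the final integration by parts $\intxvtt{v\cdot\nabla_v f^\eps}=-2\intxvtt{f^\eps}$, membership in $L^2([0,T]\times\R^2_x,H^1(\R^2_v))$ alone does not dispose of the boundary term at $|v|\to\infty$; the paper invokes the pointwise decay $(1+|v|^2)^{\gamma/2}f^\eps\in L^\infty$ with $\gamma>2$ from \eqref{PropWeakSol}, and you should cite this as well. Your closing paragraph about working at the regularized level of Step~1 and passing through the weak-$\star$ limits is unnecessary here---the argument runs directly on the weak solution, as you already carried out.
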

\begin{proof}$\;$\\
Let $\chi$ be a nonegative function of class $C^\infty _0(\R)$ such that
\[
\chi(s) = 1,\,\,\, \text{on}\,\,|s|\leq 1,\,\,\,\chi(s) = 0\,\,\text{on}\,\,|s|\geq 2,
\]
we define the function  $\chi_R$ as $\chi_R(z) = \chi\left( \frac{|z|}{R} \right)$. Then $\chi_R(z) =1$ on $|z|\leq R$, $\chi_R(z)=0$ on $|z|\geq 2R$ and $\|\nabla_z\chi_R\|_{L^\infty}\leq \frac{\|\chi'\|_{\infty}}{R}$. In the definition of weak solution \eqref{WeakSolScaleVPFP}, we use the test functions $\varphi(t,x,v) = \phi(t)\chi_R(x)\chi_R(v)\frac{|v|^2}{2}$ with $\phi\in C^\infty _0([0,T[)$, we obtain
\begin{align*}
&\intTxvtt{f^\eps \left[\eps \partial_{t}\phi(t)\chi_{R}(x) + v\cdot\nabla_{x}\chi_{R}(x)\phi(t)\right]\chi_{R}(v)\dfrac{|v|^2}{2}}\\
&+ \intTxvtt{f^\eps \left( E[f^\eps]+ \frac{B(x)}{\eps} {^\perp} v\right)\cdot\nabla_v\left( \chi_{R}(v)\dfrac{|v|^2}{2} \right)\phi(t)\chi_{R}(x)}\\
&+ \dfrac{1}{\tau}\intTxvtt{f^\eps\left(\sigma\Delta_{v}-v\cdot\nabla_{v}  \right)\left( \chi_{R}(v)\dfrac{|v|^2}{2} \right)\phi(t)\chi_{R}(x) }\\
&+ \intxvtt{\eps f^\eps _\mathrm{in}(x,v)\phi(0)\chi_{R}(x) \chi_{R}(v)\dfrac{|v|^2}{2}}  = 0.
\end{align*}
For each $\eps>0$, using the Theorem \ref{main_weak_sol} on the solution, we have $(1+|v|^2)f^\eps\in L^\infty(0,T;L^1(\R^2\times\R^2))$ and $E[f^\eps]\in L^\infty((0,T)\times\R^2)$. Letting $R\to\infty$, one gets, by the dominated convergence theorem, the following relation for any $\phi\in C^\infty_0([0,T[)$
\begin{align*}
\int_{0}^{T}\partial_t\phi(t) \intxvtt{\eps \dfrac{|v|^2}{2}f^\eps(t,x,v)}\mathrm{d}t + \int_{0}^{T}\phi(t)\intxvtt{E[f^\eps]\cdot v f^\eps}\mathrm{d}t \\ + \dfrac{1}{\tau}\int_{0}^{T}\phi(t)\intxvtt{(2\sigma - |v|^2)f^\eps(t,x,v)}\mathrm{d}t  + \intxvtt{\eps \dfrac{|v|^2}{2}f^\eps_{\mathrm{in}}(x,v)\phi(0)} =0.
 \end{align*}
On the other hand, by Theorem \ref{main_weak_sol}, our weak solution $f^\eps$ belongs to $L^2([0,T]\times\R^2_x, H^1(\R^2_v))$ and tends to $0$ at infinity since $(1+|v|^2)^{\gamma/2} f^\eps \in L^\infty $, thus by the divergence theorem we have
\[
\dfrac{1}{\tau}\intxvtt{(2\sigma - |v|^2)f^\eps(t,x,v)} =  -\dfrac{1}{\tau}\intxvtt{(\sigma \nabla_v f^\eps + v f^\eps)\cdot v}.
\]
Substituting into the previous relation, we easily deduce the assertions on the lemma.
\end{proof}

In the following lemma we obtain the balance of the potential energy.
\begin{lemma}$\;$\\
\label{BalancePot2D}
Let $f^\eps$ be the weak solution of the problem \eqref{VPFP2d-Scale}, \eqref{Poisson2D-Scale}, and \eqref{Initial2D-Scale} provided by Theorem \ref{main_weak_sol}. Then we have
\[
\dfrac{\epsilon_0 \eps}{2m} \dfrac{\mathrm{d}}{\mathrm{dt}}  \intxtt{|E[f^\eps]|^2} = - \dfrac{q}{m} \intxvtt{E[f^\eps]\cdot vf^\eps} = - \dfrac{1}{m}\intxtt{E[f^\eps]\cdot j[f^\eps]}.
\]
\end{lemma}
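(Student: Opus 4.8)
The plan is to derive the balance of potential energy by pairing the macroscopic continuity equation with the electric potential $\Phi[f^\eps]$ and exploiting the self-adjointness of the Poisson operator. The second equality in the statement is immediate from the definition $j[f^\eps] = q\intvtt{vf^\eps}$, so the whole content lies in the first one.

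First I would establish, in the weak sense, the continuity equation
\[
\eps\partial_t n[f^\eps] + \Divx\intvtt{vf^\eps} = 0 .
\]
To this end I test the weak formulation \eqref{WeakSolScaleVPFP} against functions $\varphi(t,x,v) = \phi(t)\psi(x)\chi_R(x)$ depending only on $(t,x)$, with $\phi\in C^\infty_0([0,T[)$, $\psi\in C^\infty_0(\R^2)$, and $\chi_R$ the cutoff from the proof of Lemma \ref{BalanceKin2D}. Because $\nabla_v\varphi\equiv 0$ and $\Delta_v\varphi\equiv 0$, both the force terms (electric and magnetic) and the Fokker--Planck terms drop out, leaving only the transport part. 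Letting $R\to\infty$ and using $f^\eps\in L^\infty(0,T;L^1)$ together with the bound on $\intvtt{vf^\eps}$ from Section~2, the dominated convergence theorem (exactly as in Lemma \ref{BalanceKin2D}) yields the weak continuity equation above.

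Next I would pair this equation with $\frac{q}{m}\Phi[f^\eps]$. Integrating the divergence term by parts and using $\nabla_x\Phi[f^\eps] = -E[f^\eps]$ gives, on the right-hand side,
\[
\frac{q\eps}{m}\intxtt{\partial_t n[f^\eps]\,\Phi[f^\eps]} = \frac{q}{m}\intxtt{\left(\intvtt{vf^\eps}\right)\cdot\nabla_x\Phi[f^\eps]} = -\frac{q}{m}\intxvtt{E[f^\eps]\cdot vf^\eps}.
\]
For the left-hand side I use the self-adjointness of the Poisson operator: since $-\epsilon_0\Delta_x\Phi[f^\eps] = q(n[f^\eps]-D)$ and $D$ is time-independent, two integrations by parts give the symmetry $\intxtt{\partial_t n[f^\eps]\,\Phi[f^\eps]} = \intxtt{(n[f^\eps]-D)\,\partial_t\Phi[f^\eps]}$, whence by the product rule $\frac{\mathrm d}{\mathrm dt}\intxtt{(n[f^\eps]-D)\Phi[f^\eps]} = 2\intxtt{\partial_t n[f^\eps]\,\Phi[f^\eps]}$. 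Combined with the identity $\intxtt{(n[f^\eps]-D)\Phi[f^\eps]} = \frac{\epsilon_0}{q}\intxtt{|\nabla_x\Phi[f^\eps]|^2} = \frac{\epsilon_0}{q}\intxtt{|E[f^\eps]|^2}$, this turns the left-hand side into $\frac{\epsilon_0\eps}{2m}\frac{\mathrm d}{\mathrm dt}\intxtt{|E[f^\eps]|^2}$, which is precisely the claimed identity.

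The main obstacle is the rigorous justification of these manipulations, since $\Phi[f^\eps]$ is neither compactly supported nor smooth and the two-dimensional logarithmic kernel does not decay. To handle this I would truncate and regularize $\Phi[f^\eps]$ (replacing it by a mollified $\Phi[f^\eps]\chi_R$), invoke Lemma \ref{PropSolPoi} --- which guarantees that $\Phi[f^\eps]$ is bounded, continuous, tends to $0$ at infinity, and satisfies $\Phi[f^\eps],\nabla_x\Phi[f^\eps]\in L^2(\R^2)$ thanks to the neutrality relation $\intxtt{(n[f^\eps]-D)} = 0$ (from $\mathrm{H3}$ and mass conservation) --- and then pass to the limit in the truncation and regularization parameters. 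Because $n[f^\eps]$ has only low time-regularity, all these identities should be read in $\mathcal{D}'(0,T)$: one first writes the time-integrated version against $\phi\in C^\infty_0([0,T[)$, performs the symmetrization there, and only afterwards removes the cutoffs. This is the step where boundary contributions at spatial infinity must be controlled, and the $L^2$ decay from Lemma \ref{PropSolPoi} is exactly what makes the symmetrization legitimate; a rigorous scheme of this kind can be adapted from \cite{BonCarSol, BouDol}.
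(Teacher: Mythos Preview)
Your strategy is sound and lands on the same identity, but the paper organizes the argument differently and isolates one regularity step that your outline leaves implicit. Instead of pairing the continuity equation with $\Phi[f^\eps]$ and invoking self-adjointness of the Poisson operator, the paper works directly with the field: from the continuity equation and the Poisson equation it writes $\Divx\bigl(\partial_t E[f^\eps]+\tfrac{1}{\epsilon_0\eps}j[f^\eps]\bigr)=0$, mollifies in~$x$ by a kernel $\kappa_\nu$, multiplies by $\Phi^\nu[f^\eps]\chi_R(x)$, integrates by parts once, and then sends $\nu\to0$ and $R\to\infty$ to reach $\int\partial_t E\cdot E+\tfrac{1}{\epsilon_0\eps}\int E\cdot j=0$.

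The technical step that makes this passage rigorous --- and which your sketch does not supply --- is the bound $\partial_t E[f^\eps]\in L^\infty(0,T;L^2(\R^2))$. The paper obtains it by writing $\partial_t E[f^\eps]=\tfrac{1}{2\pi\epsilon_0\eps}\nabla_x\ln|\cdot|\star\Divx j[f^\eps]$ and applying a dual Calder\'on--Zygmund estimate (from \cite{NiPouSo}), using $j[f^\eps]\in L^\infty(0,T;L^2)$. Your symmetrization $\int\partial_t n\,\Phi=\int(n-D)\,\partial_t\Phi$ is formally correct, but once you unfold it through the Poisson equation both sides equal $\tfrac{\epsilon_0}{q}\int\partial_t E\cdot E$, so justifying it rigorously runs into exactly the same need for time regularity of $E$; the appeal to Lemma~\ref{PropSolPoi} alone, which is static in time, does not provide it. Your route would work, but the missing ingredient is precisely this $L^2$ control on $\partial_t E$.
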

\begin{proof}$\;$\\
First, we show that $\Phi[f^\eps]$, $E[f^\eps]$ and $\partial_t E[f^\eps]$ belong to $L^\infty(0,T;L^2(\R^2))$. We will apply Lemma \ref{PropSolPoi}. The conditions in Lemma \ref{PropSolPoi} are fulfilled by the properties on the solution $f^\eps$ and the background densities $D(x)$ by  assumption $\mathrm{H}3$. Hence one gets $\Phi[f^\eps]$ and $E[f^\eps]$ lie in $L^\infty(0,T;L^2(\R^2))$. It remains to prove that  $\partial_t E[f^\eps]$ belong to $L^\infty(0,T;L^2(\R^2))^2$.
Thanks to the continuity equation on
$[0, T[  \times \R^2$ in the sense of distributions
\[
\eps \partial_t n[f^\eps] + \Divx{\intvtt{v f^\eps}} =0,
\]
see Lemma \ref{ConservationLaw} below, together with the Poisson equation \eqref{Poisson2D-Scale}, we deduce that 
\[
\partial_t E[f^\eps(t)](x) = -\dfrac{q}{2\pi\epsilon_0} \nabla_x \ln |\cdot|\star \partial_t (n[f^\eps(t)] -D) = \dfrac{1}{2\pi\epsilon_0 \eps} \nabla_x \ln |\cdot|\star (\Divx j[f^\eps(t)]).
\]
In order to estimate $\partial_t E[f^\eps(t)]$, we will use the Calderon-Zygmund inequality, see \cite{NiPouSo}, Lemma 10, but in the dual version. Let $\eta$ be a test function in $C^\infty_0([0,T[\times \R^2)$. We have
\[
\left<\dfrac{1}{2\pi\epsilon _0\eps} \nabla_x \ln |\cdot|\star (\Divx j[f^\eps])  ,\eta\right> = \intxtt{\dfrac{1}{2\pi\epsilon_0\eps} \partial^2_x \ln |\cdot|\star \eta(t,\cdot) \cdot j[f^\eps(t)]}.
\]
By a simple esimate, one gets $j[f^\eps]\in L^\infty(0,T;L^2(\R^2\times\R^2))$. Therefore we deduce that
\begin{align*}
\left|
\left<\dfrac{1}{2\pi\epsilon_0\eps} \nabla \ln |\cdot|\star (\Divx j[f^\eps])  ,\eta\right>  \right| &\leq  \dfrac{1}{2\pi\epsilon_0\eps} \| \partial^2_x \ln|\cdot|\star \eta \|_{L^2}\|j[f^\eps(t)]\|_{L^2}\\
&\leq C \|\eta\|_{L^2}\|j[f^\eps(t)]\|_{L^2}.
\end{align*}
It allows to conclude that $\partial_t E[f^\eps]$ belongs to $L^\infty (0,T;L^2(\R^2))$.

Now let $\nu>0$ and let $\kappa\in C^\infty_0(\R^2)$  be a standard mollifier. Define the regularization kernel $\kappa_\nu(x) : = \frac{1}{\nu ^2}\kappa(\frac{x}{\nu})$. Convoluting with $ \kappa_\nu $ in the equation $\Divx(\partial_t E[f^\eps]+ \frac{1}{\epsilon_0 \eps}j[f^\eps]) =0$ we obtain
\[
\Divx(\partial_t E^\nu[f^\eps]+ \dfrac{1}{\epsilon_0 \eps}j^\nu[f^\eps]) =0,
\]
where $E^\nu[f^\eps] = E[f^\eps]\star\kappa_\nu$, $j^\nu[f^\eps] = j[f^\eps]\star \kappa_\nu$. Multiplying the previous equation by $\Phi^\nu[f^\eps]\chi_R(x)$ and integrate by parts to find that
\begin{align*}
\intxtt{\partial_t E^\nu[f^\eps]\cdot E^\nu[f^\eps]\chi_R(x)} + \dfrac{1}{\epsilon_0\eps}\intxtt{E^\nu[f^\eps]\cdot j^\nu[f^\eps]\chi_R(x)} \\+ \intxtt{(\partial_t E^\nu{[f^\eps] + \dfrac{1}{\epsilon_0\eps}j^\nu[f^\eps])\cdot \Phi^\nu[f^\eps] \nabla\chi_R}} = 0,
\end{align*}
where $\Phi^\nu[f^\eps] = \Phi[f^\eps]\star \kappa_\nu$ and $\chi_R$ stands for the family of smooth cut-off functions, defined in Lemma \ref{BalanceKin2D}.
Let $\nu\to 0$. The terms on the left side converge as a consequence of the theorem of smooth approximations from the first arguments on $\Phi[f^\eps], E[f^\eps], \partial_t E[f^\eps]$. Then we obtain
\begin{align*}
\intxtt{\partial_t E[f^\eps]\cdot E[f^\eps]\chi_R(x)} + \dfrac{1}{\epsilon_0\eps}\intxtt{E[f^\eps]\cdot j[f^\eps]\chi_R(x)} \\+ \intxtt{(\partial_t E{[f^\eps] + \dfrac{1}{\epsilon_0\eps}j[f^\eps])\cdot \Phi[f^\eps] \nabla\chi_R}} = 0.
\end{align*}
Letting $R\to \infty$, the dominated convergence theorem yields
\[
\intxtt{\partial_t E[f^\eps]\cdot E[f^\eps]} + \dfrac{1}{\epsilon_0\eps}\intxtt{E[f^\eps]\cdot j[f^\eps]}  =0,
\]
which gives the result in the lemma.
\end{proof}

Finally, let us deduce the balance of entropy.
\begin{lemma}$\;$\\
\label{BalanceEntropy}
Let $f^\eps$ be a weak solution of the problem \eqref{VPFP2d-Scale}, \eqref{Poisson2D-Scale}, and \eqref{Initial2D-Scale} provided by Theorem \ref{main_weak_sol}. Then we have
\[
\eps \dfrac{\mathrm{d}}{\mathrm{dt}}\intxvtt{\sigma f^\eps \ln f^\eps } = -\dfrac{1}{\tau}\intxvtt{(\sigma\nabla_v f^\eps + vf^\eps)\cdot \dfrac{\sigma \nabla_v f^\eps}{f^\eps}}.
\]
\end{lemma}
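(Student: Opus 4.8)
The plan is to carry out rigorously the formal computation sketched before the lemmas, namely to multiply \eqref{VPFP2d-Scale} by the entropy multiplier $\sigma(1+\ln f^\eps)$ and integrate. Since $\ln f^\eps$ is singular where $f^\eps$ vanishes and is in no case an admissible test function, I would replace it by a smooth renormalization and then reproduce the cut-off/passage-to-the-limit scheme already used in Lemmas \ref{BalanceKin2D} and \ref{BalancePot2D}. For $\delta>0$ fix $\beta_\delta\in C^2(\R_+)$ approximating $s\mapsto \sigma s\ln s$, chosen so that $\beta_\delta''$ is bounded, with $\beta_\delta\to \sigma s\ln s$ and $\beta_\delta'\to\sigma(1+\ln s)$ pointwise, and $0\le\beta_\delta''(s)\nearrow \sigma/s$ as $\delta\searrow 0$ (regularize near $s=0$ and cap the growth for large $s$). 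Because the weak solution enjoys the extra regularity $f^\eps\in L^2([0,T]\times\R^2_x,H^1(\R^2_v))$ from Theorem \ref{main_weak_sol}, the chain rule in $v$ is licit, and, in the spirit of \cite{BonCarSol,BouDol}, $\beta_\delta(f^\eps)$ solves in the sense of distributions
\begin{align*}
\eps\partial_t\beta_\delta(f^\eps) + v\cdot\nabla_x\beta_\delta(f^\eps) + \frac{q}{m}\Big(E[f^\eps]+\frac{B(x)}{\eps}{}^\perp v\Big)\cdot\nabla_v\beta_\delta(f^\eps) = \frac{1}{\tau}\beta_\delta'(f^\eps)\,\Divv(\sigma\nabla_v f^\eps + vf^\eps).
\end{align*}

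Following Lemma \ref{BalanceKin2D}, I would test this identity against $\phi(t)\chi_R(x)\chi_R(v)$ with $\phi\in C^\infty_0([0,T[)$ and the cut-offs $\chi_R$ introduced there. The transport term carries a full $x$-divergence and the force term a full $v$-divergence: indeed $E[f^\eps]$ is independent of $v$ and $\Divv({}^\perp v)=0$, so $\Divv(E[f^\eps]+\frac{B(x)}{\eps}{}^\perp v)=0$. Hence, after integrating by parts, besides the boundary remainders produced by $\nabla\chi_R$, the only surviving contributions are the $\eps\partial_t$ term acting on $\intxvtt{\beta_\delta(f^\eps)}$ and the collisional right-hand side, which after one further integration by parts in $v$ reads $-\frac{1}{\tau}\intxvtt{\beta_\delta''(f^\eps)\,\nabla_v f^\eps\cdot(\sigma\nabla_v f^\eps + vf^\eps)}$. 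Letting $R\to\infty$ kills the cut-off remainders thanks to $\|\nabla\chi_R\|_{L^\infty}\le\|\chi'\|_\infty/R$ together with the integrability $(|x|+|v|^2)f^\eps\in L^\infty(0,T;L^1)$, $f^\eps|\ln f^\eps|\in L^\infty(0,T;L^1)$ and $|\nabla_v f^\eps|^2/f^\eps\in L^1$ coming from \eqref{PropWeakSol} and \eqref{IneqDissSequ} (these control, via Cauchy--Schwarz, the crossed terms $|v|\,\beta_\delta(f^\eps)$ and $\beta_\delta'(f^\eps)|\nabla_v f^\eps|$).

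Finally I would send $\delta\searrow 0$. On the one hand $\intxvtt{\beta_\delta(f^\eps)}\to\sigma\intxvtt{f^\eps\ln f^\eps}$ by dominated convergence, using $f^\eps|\ln f^\eps|\in L^1$. On the other hand, since $\beta_\delta''(f^\eps)\nabla_v f^\eps\to\frac{\sigma\nabla_v f^\eps}{f^\eps}$ on $\{f^\eps>0\}$, with the nonnegative part $\sigma\beta_\delta''(f^\eps)|\nabla_v f^\eps|^2\nearrow\frac{\sigma^2|\nabla_v f^\eps|^2}{f^\eps}$ monotonically and $\beta_\delta''(f^\eps)f^\eps\to\sigma$ boundedly while $v\cdot\nabla_v f^\eps\in L^1$, the monotone and dominated convergence theorems yield the convergence of the dissipation term to $-\frac{1}{\tau}\intxvtt{(\sigma\nabla_v f^\eps + vf^\eps)\cdot\frac{\sigma\nabla_v f^\eps}{f^\eps}}$. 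Reading off the coefficient of $\phi(t)$ gives precisely the distributional identity for $\eps\frac{\mathrm{d}}{\mathrm{dt}}\,\sigma\intxvtt{f^\eps\ln f^\eps}$ claimed in the lemma.

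I expect the delicate point to be the $\delta\searrow 0$ passage in the dissipation term: one must ensure that the singular weight $1/f^\eps$ is handled only on $\{f^\eps>0\}$ and that $\nabla_v f^\eps=0$ almost everywhere on $\{f^\eps=0\}$ (a standard property of $H^1$ functions), so that the limiting integral is finite and coincides with the entropy dissipation already bounded in \eqref{IneqDissSequ}. The renormalization step itself is routine given the $L^2_xH^1_v$ regularity, but it is exactly what makes the otherwise formal integrations by parts against $\sigma(1+\ln f^\eps)$ licit.
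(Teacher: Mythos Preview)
Your proposal is correct and follows essentially the same approach as the paper: renormalize with a $C^2$ approximation of $s\mapsto s\ln s$ having bounded second derivative (the paper uses the explicit choice $\psi_\delta(s)=(\delta+s)\ln(1+s/\delta)+s\ln\delta$, whose second derivative is $1/(\delta+s)$), test against $\phi(t)\chi_R(x)\chi_R(v)$, send $R\to\infty$, and finally let $\delta\searrow 0$ using monotone/dominated convergence on the dissipation. The paper also invokes \cite{BouDol} for the renormalization step, exactly as you do.
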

\begin{proof}$\;$\\
Since our solution $f^\eps\in L^\infty(0,T;L^1\cap L^\infty(\R^2\times\R^2))$ and $E[f^\eps]\in L^\infty((0,T)\times\R^2)$, the proof is very similar to \cite{BouDol}, Proposition 2.3, we briefly give the idea of that.
We first show that for any $\Psi\in C^2(\R)$ such that $\Psi''\in L^\infty(\R)$ and $\Psi(0)=0$, $\Psi(f)$ solves the following equation in the sense of distribution on $[0,T[\times\R^2\times\R^2$
\begin{align}
\label{NonlinearEqu}
\eps \partial_t \Psi(f^\eps) + v\cdot \nabla_{x}\Psi(f^\eps) + \frac{q}{m} E[f^\eps]\cdot\nabla_{v}\Psi(f^\eps) + \frac{q}{m}\frac{ B(x)}{\eps} {^\perp} v\cdot\nabla_{v}\Psi(f^\eps)\nonumber \\-\dfrac{1}{\tau} v\cdot\nabla_{v}\Psi(f^\eps) - \dfrac{\sigma}{\tau}\Delta_{v}\Psi(f^\eps) = \dfrac{2}{\tau} f^\eps \Psi'(f^\eps) - \dfrac{\sigma}{\tau} \Psi''(f^\eps)\left| \nabla_v f^\eps \right|^2. 
\end{align}
The proof of equality \eqref{NonlinearEqu} comes from the same argument in \cite{BouDol}, Proposition 2.2.
We then apply \eqref{NonlinearEqu} for the following function
\[
\psi_{\delta}(f^\eps) = \left( \delta + f^\eps \right)\ln \left(  1+ \dfrac{f^\eps}{\delta}\right)+ f^\eps\ln\delta,\,\,\delta>0,
\]
and consider the test function $\varphi(t,x,v)=\phi(t)\chi_R(x)\chi_R(v)$, where the function $\chi_R$ was defined in Lemma \ref{BalanceKin2D}, and $\phi(t)\in C^1_0([0,T[)$.
Passing to the limit as $R\to\infty$, we easily deduce the following relation
\begin{align*}
\eps \dfrac{\mathrm{d}}{\mathrm{dt}}\intxvtt{\psi_{\delta}(f^\eps(t))}  + \dfrac{2}{\tau}\intxvtt{\psi_{\delta}(f^\eps)}\mathrm{d}\tau \nonumber\\
 = \dfrac{2}{\tau} \intxvtt{f^\eps\left( \ln\left(1+ \dfrac{f^\eps}{\delta}\right)+\ln\delta +1\right)} - \dfrac{\sigma}{\tau}\intxvtt{\dfrac{|\nabla_v f^\eps|^2}{\delta + f^\eps}},
\end{align*}
which is equivalent to
\begin{align}
\label{equ:LimNonLinear}
\eps \dfrac{\mathrm{d}}{\mathrm{dt}}\intxvtt{\psi_{\delta}(f^\eps(t))} + \dfrac{\sigma}{\tau}\intxvtt{\dfrac{|\nabla_v f^\eps|^2}{\delta + f^\eps}} = \dfrac{2}{\tau} \intxvtt{\left( f^\eps - \delta\ln\left(1+ \dfrac{f^\eps}{\delta}\right) \right)}.
\end{align}
Finally, 
by using the dominated convergence theorem when $\delta $ goes to zero in \eqref{equ:LimNonLinear} and then the divergence theorem  for the integral on the right hand side, we obtain
\[
\eps \dfrac{\mathrm{d}}{\mathrm{dt}}\intxvtt{f^\eps\ln f^\eps} =- \dfrac{\sigma}{\tau}\intxvtt{\dfrac{|\nabla_v f^\eps|^2}{ f^\eps}} - \dfrac{1}{\tau}\intxvtt{ v\cdot\nabla_v f^\eps}.
\]
So we complete the proof of Lemma.
\end{proof}

Now we are ready to prove Proposition 3.1.
\begin{proof}(of Proposition \ref{WeakFreeEnergy2D})$\;$\\
The mass conservation can be deduced by testing the test function $\varphi(t,x,v) = \phi(t)\chi_R(|x|)\chi_R(|v|)$ in \eqref{WeakSolScaleVPFP}.
On the other hand, using Lemmas \ref{BalanceKin2D}, \ref{BalancePot2D} and \ref{BalanceEntropy}, we imply the desired result for the balance of energy $\calE[f^\eps]$.
\end{proof}

We establish now uniform bounds for the kinetic energy.
\begin{lemma}$\;$\\
\label{BoundKinEner2D}
Assume that the initial particle densities $(f^\eps_{\mathrm{in}})$ satisfy $f^\eps _{\mathrm{in}} \geq 0$, $M_{\mathrm{in}}:= \sup_{\eps >0} M^\eps _{\mathrm{in}} < +\infty$, $U_{\mathrm{in}} := \sup_{\eps >0} U^\eps _{\mathrm{in}} < +\infty$, where for any $\eps >0$
\[
M^\eps _{\mathrm{in}} := \intxvtt{f^\eps _{\mathrm{in}} (x,v)},\,\, U^\eps _{\mathrm{in}} := \intxvtt{\dfrac{|v|^2}{2}f^\eps _{\mathrm{in}} (x,v)} + \dfrac{\epsilon_0}{2m}\intxtt{|\nabla_x \Phi[f^\eps _{\mathrm{in}}]|^2}.
\]
We assume that $(f^\eps)_{\eps>0}$ are weak solutions of \eqref{VPFP2d-Scale}, \eqref{Poisson2D-Scale}, and \eqref{Initial2D-Scale} given by Theorem 2.1. Then we have 
\[
\eps \sup_{0\leq t\leq T} \left\{ \intxvtt{\dfrac{|v|^2}{2}f^\eps(t,x,v)}+ \dfrac{\epsilon_0}{2m}\intxtt{|\nabla_x \Phi[f^\eps]|^2} \right\} \leq \eps U_{\mathrm{in}} + \dfrac{2\sigma T}{\tau}M_{\mathrm{in}},
\]
and
\[
\dfrac{1}{\tau} \intTxvtt{|v|^2 f^\eps(t,x,v)} \leq \eps U_{\mathrm{in}} + \dfrac{2\sigma T}{\tau}M_{\mathrm{in}}.
\]
\end{lemma}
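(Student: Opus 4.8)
The plan is to add the balances of kinetic and potential energy already established in Lemmas \ref{BalanceKin2D} and \ref{BalancePot2D} and to exploit the exact cancellation of the electric coupling term. Setting $W^\eps(t) = \intxvtt{\frac{|v|^2}{2}f^\eps} + \frac{\epsilon_0}{2m}\intxtt{|\nabla_x\Phi[f^\eps]|^2}$ and recalling $E[f^\eps] = -\nabla_x\Phi[f^\eps]$, so that $|E[f^\eps]|^2 = |\nabla_x\Phi[f^\eps]|^2$, I would sum the identity of Lemma \ref{BalanceKin2D} with that of Lemma \ref{BalancePot2D}. The two occurrences of $\frac{q}{m}\intxvtt{E[f^\eps]\cdot v f^\eps}$ carry opposite signs and cancel, leaving
\[
\eps\frac{\md}{\md t}W^\eps(t) = -\frac{1}{\tau}\intxvtt{(\sigma\nabla_v f^\eps + v f^\eps)\cdot v}.
\]

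Next I would simplify the right-hand side. Splitting the integrand and integrating by parts in $v$ gives $\sigma\intxvtt{v\cdot\nabla_v f^\eps} = -2\sigma\intxvtt{f^\eps}$, the boundary terms vanishing thanks to the decay $(1+|v|^2)^{\gamma/2}f^\eps\in L^\infty$ from Theorem \ref{main_weak_sol} (this is exactly the manipulation already carried out inside the proof of Lemma \ref{BalanceKin2D}). By the mass conservation of Proposition \ref{WeakFreeEnergy2D} one has $\intxvtt{f^\eps(t)} = M^\eps_{\mathrm{in}}$ for every $t$, hence
\[
\eps\frac{\md}{\md t}W^\eps(t) = \frac{2\sigma}{\tau}M^\eps_{\mathrm{in}} - \frac{1}{\tau}\intxvtt{|v|^2 f^\eps}.
\]

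This single identity produces both assertions. Since $f^\eps\geq 0$, the last term is nonnegative, so $\eps\frac{\md}{\md t}W^\eps \leq \frac{2\sigma}{\tau}M^\eps_{\mathrm{in}}$; a direct integration on $[0,t]$ (no Gronwall is needed, the bound on the derivative being constant in time), together with $W^\eps(0) = U^\eps_{\mathrm{in}}$ and the hypotheses $M^\eps_{\mathrm{in}}\leq M_{\mathrm{in}}$, $U^\eps_{\mathrm{in}}\leq U_{\mathrm{in}}$ and $t\leq T$, yields the first bound after taking the supremum over $t\in[0,T]$. For the second bound I would instead integrate the exact identity over $[0,T]$, obtaining $\frac{1}{\tau}\intTxvtt{|v|^2 f^\eps} = \frac{2\sigma T}{\tau}M^\eps_{\mathrm{in}} + \eps W^\eps(0) - \eps W^\eps(T)$, and then discard the nonnegative term $\eps W^\eps(T)\geq 0$.

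The argument is essentially bookkeeping once the energy balances are in hand; there is no genuine obstacle beyond ensuring that the $v$-integration by parts is legitimate, which is guaranteed by the velocity decay of $f^\eps$ recorded in Theorem \ref{main_weak_sol} and was already justified in the proof of Lemma \ref{BalanceKin2D}. The only conceptual point worth flagging is that one must \emph{keep} the velocity-moment term $\frac{1}{\tau}\intxvtt{|v|^2 f^\eps}$ to extract the second inequality, while \emph{discarding} it (using its sign) to extract the first.
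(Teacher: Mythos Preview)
Your proof is correct and follows essentially the same approach as the paper: combine Lemmas \ref{BalanceKin2D} and \ref{BalancePot2D} to obtain the identity $\eps\frac{\md}{\md t}W^\eps(t) = \frac{2\sigma}{\tau}M^\eps_{\mathrm{in}} - \frac{1}{\tau}\intxvtt{|v|^2 f^\eps}$, then integrate in time and discard the appropriate nonnegative term to get each inequality. The paper presents the integrated equality in one line and reads off both bounds simultaneously, whereas you separate the two cases, but the content is identical.
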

\begin{proof}$\;$\\
Using Lemmas \ref{BalanceKin2D} and \ref{BalancePot2D} yields
\[
\eps \dfrac{\mathrm{d}}{\mathrm{d}t} \left\{ \intxvtt{\dfrac{|v|^2}{2}f^\eps(t,x,v)}+ \dfrac{\epsilon_0}{2m}\intxtt{|\nabla_x \Phi[f^\eps]|^2} \right\} = \dfrac{2\sigma}{\tau}M^\eps _{\mathrm{in}} -  \dfrac{1}{\tau}\intxvtt{|v|^2 f^\eps},
\]
and therefore we obtain
\begin{align*}
&\eps  \left\{ \intxvtt{\dfrac{|v|^2}{2}f^\eps(t,x,v)}+ \dfrac{\epsilon_0}{2m}\intxtt{|\nabla_x \Phi[f^\eps]|^2} \right\} + \dfrac{1}{\tau}\int_{0}^{t}\intxvtt{|v|^2 f^\eps}\mathrm{d}s \\
&= \eps U^\eps _{\mathrm{in}}+  \dfrac{2\sigma t}{\tau} M^\eps _{\mathrm{in}}
\end{align*}
which yields the results.
\end{proof}
\section{Formal derivation of the limit model}
The asymptotic behavior of the Vlasov-Fokker-Planck-Poisson equation \eqref{VPFP2d-Scale}, \eqref{Poisson2D-Scale}, and \eqref{Initial2D-Scale} when $\eps$ becomes small comes from
the balance of the free energy functional $\calE[f^\eps]$. Thanks to  Proposition \ref{WeakFreeEnergy2D}, we deduce that
\[
\eps \calE[f^\eps(t)] + \dfrac{1}{\tau}\int_{0}^{t}\intxvtt{\dfrac{|\sigma\nabla_v f^\eps + v f^\eps|^2}{f^\eps}}\mathrm{d}s = \eps \calE[f^\eps(0)].
\]
Since the dissipation term can rewrite as 
\[
\dfrac{1}{\tau}\int_{0}^{t}\intxvtt{\dfrac{|\sigma M\nabla_v (f^\eps/M)|^2}{f^\eps}}\mathrm{d}s,
\]
where $M$ stands for the Maxwellian equilibrium $M(v) = (2\pi\sigma)^{-1}\exp\left(-\frac{|v|^2}{2\sigma}\right), v\in\R^2$.
Therefore, at least formally, we deduce that $f^\eps = f+ \calO(\eps)$, as $\eps\searrow 0$, where the leading order distribution function $f$
satisfies
\[
\dfrac{1}{\tau}\intxvtt{\dfrac{|\sigma M\nabla_v (f/M)|^2}{f}}=0,\,\, t\in \R_+.
\]
Hence, we obtain $f(t,x,v) = n(t,x)M(v), (t,x,v)\in \R_+\times\R^2\times\R^2$.
Then, the question is to determine the evolution equation satisﬁed by the concentration $n(t, x) = \intvtt{f(t,x,v)}$.\\
We are looking for the model for the concentration $n[f^\eps] = \intvtt{f^\eps}$. First, by integrating
the equation \eqref{VPFP2d-Scale} with  $1$ and $v$, we straightforwardly get the local conservation laws
satisfied by the first two moments.
\begin{lemma}$\;$\\
\label{ConservationLaw}
Let $\eps>0$. Let $f^\eps$ be a weak solution of the system \eqref{VPFP2d-Scale}, \eqref{Poisson2D-Scale}, and \eqref{Initial2D-Scale} provided by Theorem \ref{main_weak_sol}. Then the following conservation laws hold in the distributional sense
\begin{equation}
\label{ContinuLaw}
\partial_t n[f^\eps] + \dfrac{1}{\eps}\Divx \left( \dfrac{j[f^\eps] }{q}\right)=0.
\end{equation}
\begin{equation}
\label{MomentLaw}
\eps \partial_t \left( \dfrac{j[f^\eps]}{q} \right) + \Divx\intvtt{v\otimes v f^\eps} - \dfrac{q}{m} E[f^\eps] n[f^\eps] - \dfrac{\omega_c(x)}{\eps}\dfrac{{^\perp}j[f^\eps]}{q} = - \dfrac{1}{\tau}\dfrac{j[f^\eps]}{q}.
\end{equation}
\end{lemma}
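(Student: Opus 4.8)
The plan is to derive both conservation laws directly from the weak formulation \eqref{WeakSolScaleVPFP}, choosing test functions that localize the velocity moments and then sending the localization parameter to infinity, in exactly the spirit of the proof of Lemma \ref{BalanceKin2D}. For the continuity law I would insert $\varphi(t,x,v)=\zeta(t,x)\chi_R(v)$ with $\zeta\in C^\infty_0([0,T[\times\R^2)$ and $\chi_R$ the radial velocity cut-off defined in Lemma \ref{BalanceKin2D}; for the momentum law I would insert, componentwise, $\varphi(t,x,v)=\zeta(t,x)\chi_R(v)\,v_i$ for $i\in\{1,2\}$. Formally these choices amount to integrating \eqref{VPFP2d-Scale} in $v$ against $1$ and against $v$, the electric, magnetic and Fokker--Planck terms being perfect $v$-divergences in the first case.

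For the continuity law the crucial simplification is that the magnetic contribution drops out identically: since $\chi_R(v)=\chi(|v|/R)$ is radial, $\nabla_v\chi_R$ is parallel to $v$, so ${}^\perp v\cdot\nabla_v\chi_R=0$. The electric term $\frac{q}{m}E[f^\eps]\cdot\zeta\,\nabla_v\chi_R$ and the Fokker--Planck terms $\frac{\zeta}{\tau}(\sigma\Delta_v\chi_R-v\cdot\nabla_v\chi_R)$ are all supported in the annulus $\{R\le|v|\le 2R\}$ and bounded there by $C/R$, $C/R^2$ and $C$ respectively; using $f^\eps\in L^\infty(0,T;L^1)$ and the decay of the velocity tails $\int_{\{|v|\ge R\}}f^\eps\,\md v\to0$, each of them vanishes as $R\to\infty$. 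What survives is $\eps\,n[f^\eps]\,\partial_t\zeta$ together with the transport term $\frac{j[f^\eps]}{q}\cdot\nabla_x\zeta$, whose convergence relies on $|v|f^\eps\in L^\infty(0,T;L^1)$. This yields \eqref{ContinuLaw} in the distributional sense.

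For the momentum law I would expand $\nabla_v(\chi_R(v)\,v_i)=\chi_R\,e_i+v_i\nabla_v\chi_R$. The $\chi_R e_i$ piece produces the three genuine force contributions appearing in \eqref{MomentLaw}: the electric term $\frac{q}{m}E_i[f^\eps]\,n[f^\eps]$, the magnetic gyration term $\frac{\omega_c(x)}{\eps}\big({}^\perp\frac{j[f^\eps]}{q}\big)_i$ (the companion boundary piece again dying because ${}^\perp v\cdot\nabla_v\chi_R=0$), and the friction term $\frac{1}{\tau}\frac{j_i[f^\eps]}{q}$ coming from the Fokker--Planck operator after noting $\int\nabla_v f^\eps\,\md v=0$. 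The $v_i\nabla_v\chi_R$ piece carries an extra factor of $|v|$ on the annulus and is dominated by $|v|^2 f^\eps\,\mathds{1}_{\{|v|\ge R\}}$, which tends to $0$ in $L^1$ by the uniform kinetic-energy bound of Lemma \ref{BoundKinEner2D}. The stress term $\Divx\int_{\R^2}v\otimes v\,f^\eps\,\md v$ emerges from $v\cdot\nabla_x\zeta\,\chi_R v_i$ and is the delicate one, as it requires the full second velocity moment; its cut-off error is again controlled by $|v|^2 f^\eps$ and disappears in the limit.

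The main obstacle, as in the companion lemmas, is not the algebra but the justification for merely weak solutions: one may not differentiate $f^\eps$ directly, so every integration by parts must be encoded in the smooth test function, and each passage to the limit $R\to\infty$ must be underpinned by the integrability stated in Theorem \ref{main_weak_sol} together with the second-moment control of Lemma \ref{BoundKinEner2D}. The technical heart is therefore the handling of the $v$- and $v\otimes v$-weighted terms, for which a rigorous approximation argument identical to the ones in \cite{BonCarSol, BouDol} applies.
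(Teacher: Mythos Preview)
Your proposal is correct and takes essentially the same approach as the paper: insert into the weak formulation \eqref{WeakSolScaleVPFP} a spacetime test function multiplied by a radial velocity cut-off (respectively by $\chi_R(v)v_i$), then remove the cut-off by dominated convergence using the moment bounds $(1+|v|^2)f^\eps\in L^\infty(0,T;L^1)$ and $E[f^\eps]\in L^\infty$ from Theorem \ref{main_weak_sol}. The paper's proof is terser and writes the test functions as $\phi(t)\chi_R(x)\chi_R(v)$ and $\phi(t)\chi_R(x)\chi_R(v)v$ with an additional spatial cut-off, but your choice of an arbitrary $\zeta(t,x)\in C^\infty_0([0,T[\times\R^2)$ is equally valid (indeed more direct for the distributional identity) and the ingredients are identical.
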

\begin{proof}$\;$\\
For each $\eps>0$, $(f^\eps, E[f^\eps])$ solves \eqref{VPFP2d-Scale} in the sense of distribution given by equation \eqref{WeakSolScaleVPFP} and satisfies $(1+|v|^2)f^\eps\in L^\infty(0,T;L^1(\R^2\times\R^2))$, $E[f^\eps]\in L^\infty((0,T)\times\R^2)$. Then, we test \eqref{WeakSolScaleVPFP} on the test functions of the form $\varphi(t,x,v) = \phi(t)\chi_R(x)\chi_R(v)$ and $\varphi(t,x,v) = \phi(t)\chi_R(x)\chi_R(v)v$, where the function $\chi_R$ was defined in Lemma \ref{BalanceKin2D}, and $\phi\in C^\infty_0([0,T[)$. Letting $R\to\infty$, one gets, by
dominated convergence theorem, the relations \eqref{ContinuLaw} and \eqref{MomentLaw} which hold in the distribution sense on $[0,T[\times\R^2$ and are respectively the continuity equation and
the momentum equation.
\end{proof}

Then, we apply the rotation $v \mapsto {^\perp} v$ to the equation \eqref{MomentLaw} and eliminating $\frac{1}{\eps}\frac{j[f^\eps]}{q}$ between the resulting equation and \eqref{ContinuLaw} leads to the new equation for the concentration $n[f^\eps]$.
\begin{coro}$\;$\\
\label{NewConcen}
Let $\eps>0$. Let $f^\eps$ be a weak solution of the system \eqref{VPFP2d-Scale}, \eqref{Poisson2D-Scale}, \eqref{Initial2D-Scale} provided by Theorem \ref{main_weak_sol}. Then the concentration $n[f^\eps]$ satisfies the following equation
\begin{equation}
\label{ModConcen2D}
\partial_t n[f^\eps] + \Divx\left[n[f^\eps]\left(\dfrac{^\perp E[f^\eps]}{B(x)} -\sigma \dfrac{^\perp \nabla_x \omega_c(x)}{\omega_c(x)^2}\right)\right] = \Divx F^\eps,
\end{equation}
where we denote
\[
F^\eps =  \dfrac{1}{\omega_c (x)}\left(\dfrac{\eps \partial_t {^\perp} j[f^\eps]}{q} + \dfrac{1}{\tau}\dfrac{{^\perp} j[f^\eps]}{q} + {}^\perp \Divx \intvtt{(v\otimes v -\sigma I_2 ) f^\eps}\right).
\]
\end{coro}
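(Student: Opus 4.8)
The plan is to eliminate the scaled current $\tfrac{1}{\eps}\tfrac{j[f^\eps]}{q}$ between the momentum equation \eqref{MomentLaw} and the continuity equation \eqref{ContinuLaw}, both provided by Lemma \ref{ConservationLaw}. First I would apply the rotation ${}^\perp$ to \eqref{MomentLaw}. Since ${}^\perp{}^\perp=-\mathrm{Id}$ on $\R^2$, the gyration term $-\tfrac{\omega_c(x)}{\eps}\tfrac{{}^\perp j[f^\eps]}{q}$ becomes $+\tfrac{\omega_c(x)}{\eps}\tfrac{j[f^\eps]}{q}$, so that the rotated momentum equation can be solved algebraically for the leading-order current:
\[
\frac{1}{\eps}\,\frac{j[f^\eps]}{q} = \frac{1}{\omega_c(x)}\left( \frac{q}{m}\,{}^\perp E[f^\eps]\,n[f^\eps] - {}^\perp\Divx\intvtt{v\otimes v\, f^\eps} - \frac{\eps\,\partial_t\, {}^\perp j[f^\eps]}{q} - \frac{1}{\tau}\,\frac{{}^\perp j[f^\eps]}{q} \right).
\]
Every term on the right is a well-defined distribution thanks to Theorem \ref{main_weak_sol}: the bound $(1+|v|^2)f^\eps\in L^\infty(0,T;L^1)$ controls the second moment $\intvtt{v\otimes v\, f^\eps}$, while $j[f^\eps]\in L^\infty(0,T;L^2)$ and its distributional time derivative are handled exactly as in the proof of Lemma \ref{BalancePot2D}.

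Next I would insert this expression into $\partial_t n[f^\eps] + \Divx\!\left(\tfrac{1}{\eps}\tfrac{j[f^\eps]}{q}\right)=0$. Using $\omega_c(x)=\tfrac{qB(x)}{m}$, the electric contribution collapses to $\Divx\!\left(\tfrac{{}^\perp E[f^\eps]}{B(x)}\,n[f^\eps]\right)$, which is the first drift term of \eqref{ModConcen2D}. For the stress term I would decompose $v\otimes v = (v\otimes v-\sigma I_2)+\sigma I_2$; since $\Divx(\sigma n[f^\eps] I_2)=\sigma\nabla_x n[f^\eps]$, this produces an isotropic piece $-\tfrac{\sigma}{\omega_c(x)}{}^\perp\nabla_x n[f^\eps]$ together with exactly the three terms grouped in $F^\eps$. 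Recognizing that these three terms equal $\omega_c(x)F^\eps$, they may be moved to the right-hand side as $\Divx F^\eps$, leaving at this stage the intermediate identity $\partial_t n[f^\eps] + \Divx\!\left(\tfrac{{}^\perp E[f^\eps]}{B(x)}\,n[f^\eps] - \tfrac{\sigma}{\omega_c(x)}{}^\perp\nabla_x n[f^\eps]\right)=\Divx F^\eps$.

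The one genuinely non-mechanical step, and the place where the gradient-pressure term turns into the magnetic-gradient drift, is the observation that
\[
\frac{{}^\perp\nabla_x n[f^\eps]}{\omega_c(x)} - n[f^\eps]\,\frac{{}^\perp\nabla_x \omega_c(x)}{\omega_c^2(x)} = {}^\perp\nabla_x\!\left( \frac{n[f^\eps]}{\omega_c(x)} \right),
\]
which is just the quotient rule followed by the rotation, combined with the fact that ${}^\perp\nabla_x g$ is divergence free for every scalar $g$ (indeed $\Divx {}^\perp\nabla_x g = \partial_{x_1}\partial_{x_2} g - \partial_{x_2}\partial_{x_1} g = 0$). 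Consequently $\Divx\!\left(\tfrac{\sigma}{\omega_c(x)}{}^\perp\nabla_x n[f^\eps]\right)=\Divx\!\left(\sigma\,n[f^\eps]\,\tfrac{{}^\perp\nabla_x\omega_c(x)}{\omega_c^2(x)}\right)$, so the isotropic gradient term inside the divergence may be replaced by the magnetic-gradient drift $-\sigma\tfrac{{}^\perp\nabla_x\omega_c(x)}{\omega_c^2(x)}$, which yields precisely \eqref{ModConcen2D}.

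The computation itself is elementary algebra once the rotation has been applied; the main obstacle is therefore not the identity but ensuring that every operation (the rotation, the tensor splitting, and the two divergences) is legitimate in the distributional sense for the merely weak solution $f^\eps$. This is guaranteed by the integrability and moment estimates of Theorem \ref{main_weak_sol}, exactly the same regularity already exploited in Lemmas \ref{BalanceKin2D}--\ref{BalancePot2D}, so that the formal manipulations above can be read as equalities of distributions on $[0,T[\times\R^2$.
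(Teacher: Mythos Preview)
Your proposal is correct and follows exactly the route indicated by the paper: apply the rotation ${}^\perp$ to \eqref{MomentLaw}, eliminate $\tfrac{1}{\eps}\tfrac{j[f^\eps]}{q}$ against \eqref{ContinuLaw}, and split the momentum flux tensor as $v\otimes v=(v\otimes v-\sigma I_2)+\sigma I_2$. The paper's own proof records only this last decomposition and leaves the rest implicit; your write-up simply makes explicit the additional identity $\Divx\bigl(\tfrac{\sigma}{\omega_c}{}^\perp\nabla_x n\bigr)=\Divx\bigl(\sigma n\,\tfrac{{}^\perp\nabla_x\omega_c}{\omega_c^2}\bigr)$ via the divergence-free property of ${}^\perp\nabla_x$, which is the same step the paper is tacitly using.
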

\begin{proof}$\;$\\
The proof of the result is obviously by observing that  the momentum flux tensor can be decomposed as
\[
\intvtt{v\otimes v f^\eps} =  \intvtt{(v\otimes v -\sigma I_2)f^\eps} + \sigma I_2 n[f^\eps].
\]
\end{proof} 

Passing formal to the limit in \eqref{ModConcen2D}, as $\eps\searrow 0$, we get
\[
\partial_t n[f] + \Divx\left[n[f]\left(\dfrac{^\perp E[f]}{B(x)} -\sigma \dfrac{^\perp \nabla \omega_c(x)}{\omega_c^2(x)}\right)\right] = 0,
\]
where we have used that $f^\eps$ tends to $f = n(t,x)M(v)$ leading to $n[f^\eps] \to n[f]$, $j[f^\eps]\to j[f] =0$ and $\intvtt{(v\otimes v -\sigma I_2)f^\eps}\to \intvtt{(v\otimes v -\sigma I_2)f} =0$. Therefore the limit model is
\begin{equation}
\label{LimitMod2D}
\partial_t n + \Divx\left[n\left(\dfrac{^\perp E[n]}{B(x)} -\sigma \dfrac{^\perp \nabla \omega_c(x)}{\omega_c^2(x)}\right)\right] = 0,\,\,(t,x)\in \R_+\times\R^2,
\end{equation}
\begin{equation}
\label{PoissonLim2D}
E[n] = -\nabla_x \Phi[n],\,\, -\epsilon_0 \Delta _x \Phi[n] = q (n -D),
\end{equation}
with the initial condition
\begin{equation}
\label{LimitInitial2D}
n(0,x) = n_\mathrm{in}(x),\,\,x\in\R^2.
\end{equation}

We have the following balances for the previous limit model
\begin{pro}$\;$\\
\label{ConserveEnerLim}
Any smooth solution of the limit model \eqref{LimitMod2D}, \eqref{PoissonLim2D}, and \eqref{LimitInitial2D} verifies the mass and free energy conservations
\[
\dfrac{\mathrm{d}}{\mathrm{dt}}\intxtt{n(t,x)} =0,\,\, \dfrac{\mathrm{d}}{\mathrm{dt}}\intxtt{\left\{\sigma n\ln n + \dfrac{\epsilon_0}{2m} |\nabla_x \Phi[n]|^2 \right\}} =0.
\]
\end{pro}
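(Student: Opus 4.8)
The plan is to differentiate the two conserved quantities along the limit equation \eqref{LimitMod2D}, written compactly as $\partial_t n + \Divx(nU)=0$ with drift field
\[
U := \dfrac{{}^\perp E[n]}{B(x)} - \sigma\dfrac{{}^\perp\nabla\omega_c(x)}{\omega_c^2(x)},
\]
and to exploit two elementary facts about the planar rotation: the antisymmetry $a\cdot{}^\perp b=-b\cdot{}^\perp a$ (so $a\cdot{}^\perp a=0$) and the divergence-free identity $\Divx({}^\perp\nabla g)=0$. All integrations by parts below are licit because a smooth solution of finite mass and finite free energy decays at infinity; rigorously this is handled with the cut-off functions $\chi_R$ of Lemma \ref{BalanceKin2D} and dominated convergence, exactly as in Section 3. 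Mass conservation is then immediate: integrating the equation in $x$, the transport term is a perfect divergence and $\frac{\mathrm{d}}{\mathrm{dt}}\intxtt{n}=0$.

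For the free energy $\calF(t):=\intxtt{\left(\sigma n\ln n + \frac{\epsilon_0}{2m}|\nabla_x\Phi[n]|^2\right)}$ I would treat the entropy and the potential energy separately. The entropy contribution gives, after one integration by parts and the cancellation $\nabla(\ln n)\cdot nU=\nabla n\cdot U$,
\[
\dfrac{\mathrm{d}}{\mathrm{dt}}\intxtt{\sigma n\ln n} = \sigma\intxtt{(\ln n+1)\partial_t n} = \sigma\intxtt{\nabla n\cdot U}.
\]
For the potential energy, differentiating, integrating by parts, and invoking the Poisson equation $-\epsilon_0\Delta_x\Phi[n]=q(n-D)$ together with its time derivative $-\epsilon_0\Delta_x\partial_t\Phi[n]=q\,\partial_t n$ (recall $\partial_t D=0$), the self-adjointness of $\Delta_x$ turns $\frac{q}{m}\intxtt{(n-D)\partial_t\Phi[n]}$ into $\frac{q}{m}\intxtt{\Phi[n]\,\partial_t n}$; then $\partial_t n=-\Divx(nU)$ and $E[n]=-\nabla_x\Phi[n]$ yield
\[
\dfrac{\mathrm{d}}{\mathrm{dt}}\dfrac{\epsilon_0}{2m}\intxtt{|\nabla_x\Phi[n]|^2} = -\dfrac{q}{m}\intxtt{nE[n]\cdot U}.
\]
Adding the two contributions,
\[
\dfrac{\mathrm{d}}{\mathrm{dt}}\calF = \intxtt{\left(\sigma\nabla n - \dfrac{q}{m}nE[n]\right)\cdot U}.
\]

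The heart of the matter is to show this integral vanishes. Substituting $U$ and recalling $\omega_c=qB/m$, the purely electric term $\frac{q}{m}nE[n]\cdot\frac{{}^\perp E[n]}{B}$ drops by $E[n]\cdot{}^\perp E[n]=0$, leaving three contributions. The term $\sigma\nabla n\cdot\big(-\sigma\frac{{}^\perp\nabla\omega_c}{\omega_c^2}\big)=\sigma^2\nabla n\cdot{}^\perp\nabla(1/\omega_c)=\sigma^2\Divx\!\big(n\,{}^\perp\nabla(1/\omega_c)\big)$ is a perfect divergence and integrates to zero. The two surviving cross terms,
\[
-\dfrac{\sigma q}{m}\intxtt{\dfrac{1}{\omega_c}\nabla n\cdot{}^\perp\nabla\Phi[n]}\quad\text{and}\quad -\dfrac{\sigma q}{m}\intxtt{\dfrac{n}{\omega_c^2}\nabla\Phi[n]\cdot{}^\perp\nabla\omega_c},
\]
cancel one another: integrating the first by parts gives $\frac{\sigma q}{m}\intxtt{n\,\Divx\big(\frac{{}^\perp\nabla\Phi[n]}{\omega_c}\big)}$, and since $\Divx\big(\frac{{}^\perp\nabla\Phi}{\omega_c}\big)=-\frac{1}{\omega_c^2}\nabla\omega_c\cdot{}^\perp\nabla\Phi=\frac{1}{\omega_c^2}\nabla\Phi\cdot{}^\perp\nabla\omega_c$ (using $\Divx{}^\perp\nabla\Phi=0$ and the antisymmetry), this equals $+\frac{\sigma q}{m}\intxtt{\frac{n}{\omega_c^2}\nabla\Phi[n]\cdot{}^\perp\nabla\omega_c}$, i.e. minus the second term. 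Hence $\frac{\mathrm{d}}{\mathrm{dt}}\calF=0$.

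The main obstacle is precisely this algebraic cancellation of the three magnetic terms: it hinges on keeping the signs straight through the antisymmetry $a\cdot{}^\perp b=-b\cdot{}^\perp a$ and the identity $\Divx{}^\perp\nabla g=0$, together with the observation $\frac{{}^\perp\nabla\omega_c}{\omega_c^2}=-{}^\perp\nabla(1/\omega_c)$. A secondary technical point is justifying the time differentiation of $\intxtt{|\nabla_x\Phi[n]|^2}$ and the integrations by parts, which follow from the decay and integrability of the smooth solution and are dispatched by the same cut-off and dominated-convergence reasoning used in Lemmas \ref{BalanceKin2D}--\ref{BalanceEntropy}.
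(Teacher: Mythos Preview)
Your proof is correct and follows essentially the same approach as the paper: differentiate the entropy and the potential energy separately, use the continuity equation and the Poisson equation to rewrite each as an integral against the drift $U$, and then exploit $E\cdot{}^\perp E=0$, $\Divx{}^\perp\nabla g=0$, and the antisymmetry $a\cdot{}^\perp b=-b\cdot{}^\perp a$ to obtain cancellation. The only cosmetic difference is that the paper simplifies each of the two pieces individually to $\pm\,\sigma\intxtt{n\,\frac{{}^\perp E[n]}{B}\cdot\frac{\nabla B}{B}}$ and then observes they are opposite, whereas you first add the two contributions and then track the three surviving terms to zero; the underlying computation is identical.
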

\begin{proof}$\;$\\
Clearly we have the total mass conservation. For the energy conservation, a straightforward computation, the evolution in time of the energy for the limit model can be written as
\[
\intxtt{\sigma \partial_t n (1 + \ln n)} + \intxtt{\dfrac{\epsilon_0}{m}E[n]\cdot\partial_t E[n]}.
\] 
Using the equation \eqref{LimitMod2D} for the first integral in the previous equality, we have
\[
\intxtt{\sigma \partial_t n (1 + \ln n)} = \intxtt{\sigma \left(\dfrac{^\perp E[n]}{B(x)} -\sigma \dfrac{^\perp \nabla \omega_c(x)}{\omega_c^2(x)}\right)\cdot{\nabla n} } = \sigma \intxtt{n \dfrac{^\perp E[n]}{B(x)}\cdot \dfrac{\nabla B}{B(x)}}.
\]
Thanks to Poisson's equation \eqref{PoissonLim2D}, then using again \eqref{LimitMod2D} for the second integral, we get
\begin{align*}
\intxtt{\dfrac{\epsilon_0}{m}E[n]\cdot\partial_t E[n]} = \intxtt{\dfrac{q}{m} \Phi[n]\partial_t n} &= -\dfrac{q}{m} \intxtt{E[n]\cdot n\left(\dfrac{^\perp E[n]}{B(x)} -\sigma \dfrac{^\perp \nabla \omega_c(x)}{\omega_c^2(x)}\right)}\\
&= -\sigma\intxtt{n \dfrac{^\perp E[n]}{B(x)}\cdot \dfrac{\nabla B}{B(x)}}.
\end{align*}
Combining these equalities we obtain the balance of the energy.
\end{proof}
\section{Well-posedness of the limit model}
In this section we focus on the existence, uniqueness and the properties of the solution for the limit model \eqref{LimitMod2D}, \eqref{PoissonLim2D}, and \eqref{LimitInitial2D} with regular initial data. We will construct smooth solution on any time interval $[0,T]$, $T\in\R_+$, following the same arguments as in the well posedness proof for the Vlasov–Poisson problem with external magnetic field, cf. \cite{Bos2019}. We assume that the initial condition $n_\mathrm{in}$ satisfies the hypotheses
\begin{enumerate}
\item[H4)] $n_\mathrm{in} \geq 0, \,\, |x|n_\mathrm{in}\in L^1(\R^2), \,\,n_\mathrm{in}\in W^{1,1}(\R^2)\cap W^{1,\infty}(\R^2)$,
\item[H5)] $\intxtt{n_\mathrm{in}(x)} = \intxtt{D(x)}$,
\end{enumerate}
and the external magnetic field $B(x)$ verifies
\[
B\in C^2_b(\R^2),\,\, \inf_{x\in \R^2} |B(x)| =B_0 >0.
\]
\paragraph*{Solution integrated along the characteristics.}
First, a standard computation, the equation \eqref{LimitMod2D} can be rewritten for the unknown $n/B$ as
\begin{equation}
\label{EquivLimMo2D}
{\partial _t}\left(\dfrac{n}{B}\right)  + \left( {\frac{{{}^ \bot E[n]}}{B} -\sigma \frac{{{}^ \bot \nabla {\omega _c(x)}}}{{\omega _c^2(x)}}} \right) \cdot {\nabla _x}\left(\dfrac{n}{B}\right) = 0.
\end{equation}
For any smooth field $E\in L^\infty(0,T;W^{1,\infty}(\R^2))^2$, we consider the associated characteristics flow of \eqref{EquivLimMo2D} given by
\begin{equation}
\label{equ:CharLimMo2D}
\left\{\begin{array}{l}
  \dfrac{\mathrm{d}}{\mathrm{dt}} X\left( {t;s,x} \right) = \dfrac{{^\perp E}\left( {t, X\left( {t;s,x} \right)} \right)}{B\left( { X\left( {t;s,x} \right)} \right)} -\sigma \dfrac{ {^\perp \nabla} {\omega _c}\left(  X\left( {t;s,x} \right) \right)}{\omega _c^2\left( { X\left( {t;s,x} \right)} \right)},\, t, s\in [0,T],\\
   X\left( {s;s,x} \right) = x, \,\, s\in [0,T],\,\, x\in\R^2,
\end{array}\right.
\end{equation}
where $X(t; s, x)$ is the solution of the equation \eqref{equ:CharLimMo2D}, $t$ represents the time variable, $s$ is the initial time
and $x$ is the initial position. $X(s; s, x) = x$ is our initial condition. Notice that by the hypothesis on the magnetic field $B(x)$, the vector field $\sigma \frac{^ \bot \nabla {\omega _c}}{\omega _c^2(x)}$ is also smooth with respect to $x$ and we have 
\[
\left\|\sigma \frac{^ \bot \nabla {\omega _c}}{\omega _c^2(x)}\right\|_{W^{1,\infty}(\R^2)} \leq C(\sigma, \|B\|_{W^{1,\infty}(\R^2)}, B_0).
\]
Therefore, thanks to Cauchy-Lipschitz theorem, the characteristics in \eqref{equ:CharLimMo2D} are well defined for any $(s,x)\in [0,T]\times\R^2$ and $X(t;s,x)\in W^{1,\infty}\left([0,T]\times [0,T]\times\R^2\right)^2$. Then the equation \eqref{EquivLimMo2D} can be written as
\[
\dfrac{\mathrm{d}}{\mathrm{dt}}\left[  \dfrac{n\left( {t, X\left( t;s,x \right)} \right)}{B(X(t;s,x))} \right] = 0,
\]
which yields the solution of the transport equation \eqref{EquivLimMo2D} given by
\begin{align} 
\label{SolCharac2D}
 n\left( {t,x} \right) =B(x)\dfrac{ n\left( {0, X\left( {0;t,x} \right)} \right)}{B(X(0;t,x))} = B(x)\dfrac{{{n_\mathrm{in}}\left( { X\left( {0;t,x} \right)} \right)}}{{B\left( { X\left( {0;t,x} \right)} \right)}},\,\,t\in[0,T].
\end{align}
\paragraph*{Conservation law on a volume.}
We have the following result
\begin{align}
\label{ConserLaw}
\intxtt{|n(t,x)|} = \intxtt{n_\mathrm{in}(x)},\,\, t\in[0,T].
\end{align}
Indeed, we denote $J(t;s,x)$ is the Jacobian matrix of $X(t; s, x)$ with respect to $x$ at $(t; s, x)$. Then the evolution of determinant for the Jacobian matrix $J(t; s, x)$ is given by
\begin{equation*}
\left\{\begin{array}{l}
\dfrac{\mathrm{d}}{\mathrm{dt}} \mathrm{det}J(t;s,x) = \Divx\left(\dfrac{^\perp E}{B} - \sigma \dfrac{^\perp \nabla {\omega _c}}{\omega _c^2(x)}  \right)(X(t;s,x)) \mathrm{det}J(t;s,x), \\
\mathrm{det}J(s;s,x) = 1,
\end{array}\right.
\end{equation*}
which is equivalent to
\begin{equation}
\label{JacobDeter2D}
\frac{\mathrm{d}}{\mathrm{dt}} \mathrm{det}J(t;s,x) = - \dfrac{^\perp E(t,X(t;s,x))}{B(X(t;s,x))}\cdot \dfrac{\nabla B(X(t;s,x))}{B(X(t;s,x))} \mathrm{det}J(t;s,x).
\end{equation}
On the other hand, using the equation \eqref{equ:CharLimMo2D} we deduce that
\begin{align}
\label{LnBTrajec}
  &\dfrac{\mathrm{d}}{\mathrm{dt}}\ln \left| {B\left( {X\left( {t;s,x} \right)} \right)} \right| \nonumber\\&= \dfrac{B(X(t;s,x))}{|B(X(t;s,x))|} \dfrac{{\nabla B\left( {X\left( {t;s,x} \right)} \right)}}{|{B\left( {X\left( {t;s,x} \right)} \right)}|} \cdot \left( {\dfrac{{{}^ \bot E\left( {t,X\left( {t;s,x} \right)} \right)}}{{B\left( {X\left( {t;s,x} \right)} \right)}} - \dfrac{{^\perp \nabla {\omega _c}\left( {X\left( {t;s,x} \right)} \right)}}{{{\omega _c}{{\left( {X\left( {t;s,x} \right)} \right)}^2}}}} \right)\nonumber  \\
  & = \dfrac{ \nabla B(X(t;s,x))}{B(X(t;s,x))} \cdot \dfrac{{^\perp E}(X(t;s,x))}{B{\left( {X\left( {t;s,x} \right)} \right)}}.
\end{align}
Combining \eqref{JacobDeter2D} and \eqref{LnBTrajec} yields
\[
\frac{\mathrm{d}}{\mathrm{dt}} \mathrm{det}J(t;s,x) = - \frac{\mathrm{d}}{\mathrm{dt}}\ln \left| {B\left( {X\left( {t;s,x} \right)} \right)} \right| \mathrm{det}J(t;s,x),
\]
and together with $ \mathrm{det}J(s;s,x)=1$ one gets $\left| {B\left( {X\left( {t;s,x} \right)} \right)} \right| \mathrm{det}J(t;s,x) = |B(x)|$. Therefore, integrating the equality \eqref{SolCharac2D} with respect to $x$ and then changing the variable $x$ to $X(t; 0, x)$,
we obtain 
\begin{align*}
  \intxtt{|n(t,x)|} &= \intxtt{\left| {B\left( x \right)} \right|\frac{{\left| {{n_\mathrm{in}}\left( X\left( {0;t,x} \right) \right)} \right|}}{{\left| {B\left( X\left( {0;t,x} \right) \right)} \right|}}} \\&= \intxtt {\left| {B\left( X\left( {t;0,x} \right) \right)} \right|\frac{{ {{n_\mathrm{in}}\left( x \right)} }}{{\left| {B\left( x \right)} \right|}}\mathrm{det}J\left( {t;0,x} \right)}\\
  &= \intxtt{n_\mathrm{in}(x)},
\end{align*}
which completes the proof of the equality \eqref{ConserLaw}.
\paragraph*{A priori estimates.}
We establish here a priori estimates on the solution $n(t,x)$ provided by \eqref{SolCharac2D} and its derivative.
\begin{lemma}(The bound in $L^\infty(0,T;W^{1,\infty}(\R^2))$).
\label{BoInftyLimMod2D}
Let $n(t,x)$ be a solution of \eqref{EquivLimMo2D} given by \eqref{SolCharac2D}. Then we have
\begin{equation}
\label{InftyNorm2D}
\|n(t)\|_{L^\infty(\R^2)} \leq C(\|B\|_{L^\infty(\R^2)}, B_0)\|n_\mathrm{in}\|_{L^\infty(\R^2)},\,\, t\in [0,T].
\end{equation}
\begin{equation}
\label{InftyNormGrad2D}
\|\nabla_x n(t)\|_{L^\infty(\R^2)} \leq C\left(1 +  \exp\left(\int_{0}^{t}\|E(s)\|_{W^{1,\infty}(\R^2)}\mathrm{d}s\right)\right),\,\, t\in [0,T],
\end{equation}
for some constant $C(q,m, \|n_\mathrm{in}\|_{W^{1,\infty}(\R^2)},T, \|B\|_{W^{2,\infty}(\R^2)}, B_0,\sigma)$.
\end{lemma}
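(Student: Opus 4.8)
The first bound \eqref{InftyNorm2D} I would read off directly from the representation formula \eqref{SolCharac2D}. Since the characteristic flow maps $\R^2$ into itself, the backward foot point $X(0;t,x)$ lies in $\R^2$, so $|B(X(0;t,x))|\geq B_0$ by hypothesis. Taking absolute values in \eqref{SolCharac2D} then gives
\[
|n(t,x)| = |B(x)|\,\frac{|n_\mathrm{in}(X(0;t,x))|}{|B(X(0;t,x))|} \leq \frac{\|B\|_{L^\infty(\R^2)}}{B_0}\,\|n_\mathrm{in}\|_{L^\infty(\R^2)},
\]
uniformly in $t\in[0,T]$, which is exactly \eqref{InftyNorm2D}.

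For \eqref{InftyNormGrad2D} the plan is to differentiate \eqref{SolCharac2D} in $x$. Writing $U = \frac{{}^\perp E}{B} - \sigma\frac{{}^\perp\nabla\omega_c}{\omega_c^2}$ for the drift velocity of \eqref{equ:CharLimMo2D} and $Y=X(0;t,x)$ for the backward foot point, the product and chain rules give
\[
\nabla_x n(t,x) = \frac{n_\mathrm{in}(Y)}{B(Y)}\,\nabla_x B(x) + B(x)\,(\nabla_x Y)^\top\, \nabla\!\left(\frac{n_\mathrm{in}}{B}\right)(Y).
\]
The first term is bounded by the fixed constant $\frac{\|\nabla B\|_{L^\infty}}{B_0}\|n_\mathrm{in}\|_{L^\infty}$. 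In the second term the factor $\nabla(n_\mathrm{in}/B)$ is controlled in $L^\infty$ by $C(\|n_\mathrm{in}\|_{W^{1,\infty}},\|B\|_{W^{1,\infty}},B_0)$, so the whole estimate reduces to bounding the Jacobian $\nabla_x X(0;t,x)$ of the backward flow.

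That Jacobian solves the variational (linearized) equation $\frac{\md}{\md t}\nabla_x X = \nabla U(\cdot,X)\,\nabla_x X$ along the trajectory, with $\nabla_x X(t;t,x)=I_2$, so Gronwall's lemma yields $\|\nabla_x X(0;t,x)\| \leq \exp\!\left(\int_0^t \|\nabla U(s)\|_{L^\infty(\R^2)}\,\md s\right)$. It then remains to bound $\|\nabla U\|_{L^\infty}$ uniformly in $x$. Decomposing $\nabla\!\left(\frac{{}^\perp E}{B}\right) = \frac{{}^\perp\nabla E}{B} - \frac{{}^\perp E\otimes\nabla B}{B^2}$ shows that this $E$-dependent piece is bounded by $C(B_0,\|B\|_{W^{1,\infty}})\|E\|_{W^{1,\infty}}$, while the magnetic-gradient drift $\sigma\frac{{}^\perp\nabla\omega_c}{\omega_c^2}$ is independent of $E$ and contributes only a constant $C(q,m,\sigma,\|B\|_{W^{2,\infty}},B_0)$ — its $W^{1,\infty}$ norm is already noted to be finite, the bookkeeping now involving $\nabla^2 B$, which is the source of the $W^{2,\infty}$ requirement on $B$ (and $q,m$ enter through $\omega_c = qB/m$). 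Hence $\|\nabla U(s)\|_{L^\infty} \leq C(\|E(s)\|_{W^{1,\infty}}+1)$.

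Combining the three estimates and absorbing the factor coming from the constant part of the exponent into the overall constant gives \eqref{InftyNormGrad2D}. The genuinely delicate point is this last chain: the Gronwall control of the flow Jacobian together with the uniform-in-$x$ Lipschitz bound on the drift $U$. One must verify that $U$ is globally Lipschitz with the asserted dependence on $\|E\|_{W^{1,\infty}}$ and on $\|B\|_{W^{2,\infty}}$, $B_0$, $\sigma$, and carefully track the constants through Gronwall so that only the exponential dependence on $\int_0^t\|E(s)\|_{W^{1,\infty}}\,\md s$ survives; everything else is routine.
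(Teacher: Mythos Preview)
Your proof is correct and follows essentially the same route as the paper: read off \eqref{InftyNorm2D} from the representation \eqref{SolCharac2D}, then differentiate \eqref{SolCharac2D} in $x$, reduce \eqref{InftyNormGrad2D} to a bound on the Jacobian $\partial_x X(0;t,x)$ of the backward flow, and control the latter via the variational equation and Gronwall after splitting the Lipschitz constant of the drift into an $E$-dependent piece bounded by $C\|E(s)\|_{W^{1,\infty}}$ and a fixed piece depending on $q,m,\sigma,\|B\|_{W^{2,\infty}},B_0$. The paper writes out each term of $\partial_x U$ explicitly rather than packaging the drift as a single vector field $U$, but the argument is the same.
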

\begin{proof}$\;$\\
The bound \eqref{InftyNorm2D} is obviouly from the formula \eqref{SolCharac2D} and the hypothesis of the magnetic field.
For the estimate \eqref{InftyNormGrad2D}, taking the derivative with respect to $x$ in \eqref{SolCharac2D}, we have
\begin{align}
\label{GradSolChar}
\nabla _x n(t,x)\nonumber &={^t}\left(\partial _x{X}\right)\left( {0;t,x} \right)\left[ \dfrac{\nabla_x {n_\mathrm{in}}\left( {{X}\left( {0;t,x} \right)} \right)}{{B\left( {X}\left( {0;t,x} \right) \right)}} - \dfrac{n_\mathrm{in}\left( {X}\left( {0;t,x} \right) \right)}{B(X(0;t,x))} \dfrac{\nabla B\left( {X}\left( {0;t,x} \right) \right)}{B{{\left( X\left( {0;t,x} \right) \right)}}} \right]B\left( x \right)\nonumber\\
 &+ \nabla B(x)\dfrac{n_\mathrm{in}\left( X\left( {0;t,x} \right) \right)}{B\left( X\left( {0;t,x} \right) \right)},
\end{align}
which implies that
\begin{align}
\label{IneqGradSolChar}
|\nabla_x  n(t,x)| \leq C(n_\mathrm{in}, B, B_0)(1 +  |\left({\partial _x}{X}\right)\left( {0;t,x} \right)|),
\end{align}
where $C(n_\mathrm{in}, B, B_0)$ is the constant depending on $\|n_\mathrm{in}\|_{W^{1,\infty}(\R^2)}, \|B\|_{W^{1,\infty}(\R^2)}, B_0$. It therefore remains to estimate the first derivative of $X(0;t,x)$ with respect to $x$. Taking the derivative with respect
to $x$ in \eqref{equ:CharLimMo2D}, we deduce that
\begin{align*}
  \dfrac{\mathrm{d}}{\mathrm{dt}}\left( \partial _x X \right)\left( {t} \right)\nonumber &= \left[ \dfrac{\left( {{\partial _x}{}^ \bot E} \right)\left( {t,{X}\left( t \right)} \right)  }{{B\left( {{X}\left( t \right)} \right)}} - \dfrac{{^\perp}E(t,X(t))}{B(X(t))} \otimes \dfrac{\nabla B(X(t))}{B(X(t))} \right] \partial_x X(t) \\
   &+ \left[2\sigma \dfrac{{^\perp}\nabla \omega_c (X(t))\otimes \nabla \omega_c (X(t)) }{\omega _c ^3(X(t))} 
   -\sigma\dfrac{\left( {{\partial _x}{}^ \bot \nabla {\omega _c}} \right)\left( {{X}\left( t \right)} \right) }{\omega _c ^2 \left( {{X}\left( t \right)} \right)}\right]\partial_x X(t),
\end{align*}
and after integrating in time between $s$ and $t$ we find
\[
|(\partial_ x X)(t)| \leq 1 +  C(q,m, B, B_0,\sigma)\int_{s}^{t} (\|E(\tau)\|_{W^{1,\infty}(\R^2)} + 1)|(\partial_ x X)(\tau)|\mathrm{d}\tau,
\]
where we have written $X(t)$ instead of $X(t; s, x)$  for simplicity, and $C(q,m,B,B_0,\sigma)$ stands for the constant depending only on $q,m, \|B\|_{W^{2,\infty}(\R^2)}, B_0, \sigma$. Thanks to Gronwall's inequality we deduce that
\begin{equation}
\label{EstGradCharac}
|(\partial_ x X)(t;s,x)| \leq  C(q,m,T, B, B_0,\sigma)\exp\left( \int_{s}^{t}\|E(\tau)\|_{W^{1,\infty}(\R^2)}\mathrm{d}\tau  \right).
\end{equation}
Therefore, substituting \eqref{EstGradCharac} into \eqref{IneqGradSolChar} we get
\[
|\nabla_x  n(t,x)| \leq C(q,m, n_\mathrm{in},T, B, B_0,\sigma)\left(1 +  \exp\left(\int_{0}^{t}\|E(s)\|_{W^{1,\infty}(\R^2)}\mathrm{d}s\right)\right),
\]
which yields the desired estimate.
\end{proof}
\begin{lemma}(The bound in $L^\infty(0,T;W^{1,1}(\R^2))$).
\label{BoL1LimMod2D}
Let $n(t,x)$ be a solution of \eqref{EquivLimMo2D} given by \eqref{SolCharac2D}. Then we have
\begin{equation}
\label{L1Norm2D}
\|n(t)\|_{L^1(\R^2)} = \|n_\mathrm{in}\|_{L^1(\R^2)},\,\, t\in[0,T].
\end{equation}
\begin{equation}
\label{L1NormGrad2D}
\|\nabla_x n(t)\|_{L^1(\R^2)} \leq C\left(1+ \exp\left( \int_{0}^t \|E(s)\|_{W^{1,\infty}(\R^2)}\mathrm{d}s \right)\right),\,\,t\in [0,T],
\end{equation}
for some constant $C(q,m,\|n_\mathrm{in}\|_{W^{1,\infty}(\R^2)}, \|n_\mathrm{in}\|_{W^{1,1}(\R^2)}, T, \|B\|_{W^{2,\infty}(\R^2)}, B_0,\sigma)$.
\end{lemma}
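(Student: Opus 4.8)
The identity \eqref{L1Norm2D} follows at once from the volume conservation \eqref{ConserLaw}. Under hypothesis $\mathrm{H4}$ we have $n_\mathrm{in}\geq 0$, and since $|B|\geq B_0>0$ with $B$ continuous, the field $B$ keeps a constant sign, so the ratio $B(x)/B(X(0;t,x))$ appearing in the representation formula \eqref{SolCharac2D} stays positive; hence $n(t,\cdot)\geq 0$ on $[0,T]$ and $\|n(t)\|_{L^1(\R^2)}=\intxtt{|n(t,x)|}=\intxtt{n_\mathrm{in}(x)}=\|n_\mathrm{in}\|_{L^1(\R^2)}$.

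For the gradient bound \eqref{L1NormGrad2D} the plan is to integrate in $x$ the pointwise differentiation formula \eqref{GradSolChar} already obtained in the proof of Lemma \ref{BoInftyLimMod2D}, replacing the pointwise supremum used there by a change of variables along the flow. Taking absolute values in \eqref{GradSolChar} and integrating over $x$, I would separate the terms carrying the Jacobian factor ${}^t(\partial_x X)(0;t,x)$ from the last term, which involves only $\nabla B(x)$. On the first group one invokes the Gronwall estimate \eqref{EstGradCharac}, which bounds $|(\partial_x X)(0;t,x)|$ uniformly in $x$ by $C\exp\left(\int_0^t\|E(s)\|_{W^{1,\infty}(\R^2)}\,\mathrm{d}s\right)$, so this exponential factor may be pulled out of the integral; using moreover $B_0\leq |B|\leq \|B\|_{L^\infty(\R^2)}$ and $\|\nabla B\|_{L^\infty(\R^2)}\leq \|B\|_{W^{1,\infty}(\R^2)}$, the whole estimate reduces to controlling the two transported integrals $\intxtt{|\nabla_x n_\mathrm{in}(X(0;t,x))|}$ and $\intxtt{|n_\mathrm{in}(X(0;t,x))|}$.

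These are treated by the substitution $y=X(0;t,x)$, whose inverse is $x=X(t;0,y)$ with Jacobian $\det J(t;0,y)$. By the volume-conservation identity $|B(X(t;0,y))|\det J(t;0,y)=|B(y)|$ obtained by combining \eqref{JacobDeter2D} and \eqref{LnBTrajec}, one has $\mathrm{d}x=\det J(t;0,y)\,\mathrm{d}y=\frac{|B(y)|}{|B(X(t;0,y))|}\,\mathrm{d}y\leq \frac{\|B\|_{L^\infty(\R^2)}}{B_0}\,\mathrm{d}y$, whence $\intxtt{|\nabla_x n_\mathrm{in}(X(0;t,x))|}\leq \frac{\|B\|_{L^\infty(\R^2)}}{B_0}\|\nabla_x n_\mathrm{in}\|_{L^1(\R^2)}$ and, likewise, $\intxtt{|n_\mathrm{in}(X(0;t,x))|}\leq \frac{\|B\|_{L^\infty(\R^2)}}{B_0}\|n_\mathrm{in}\|_{L^1(\R^2)}$, both finite by $\mathrm{H4}$. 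Collecting these, the group of terms carrying $\partial_x X$ contributes the factor $\exp\left(\int_0^t\|E\|_{W^{1,\infty}(\R^2)}\right)$, while the $\nabla B$ term is bounded by a pure constant, yielding \eqref{L1NormGrad2D} with a constant depending only on $q,m,\|n_\mathrm{in}\|_{W^{1,\infty}(\R^2)},\|n_\mathrm{in}\|_{W^{1,1}(\R^2)},T,\|B\|_{W^{2,\infty}(\R^2)},B_0,\sigma$.

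The argument is essentially bookkeeping once the differentiation formula \eqref{GradSolChar}, the Gronwall bound \eqref{EstGradCharac}, and the Jacobian identity are in hand. The one point that requires care is the change of variables: using the correct orientation of the flow, with $X(t;0,\cdot)$ the inverse diffeomorphism of $X(0;t,\cdot)$ (both of class $W^{1,\infty}$ by Cauchy--Lipschitz), handling the resulting $B$-weight through the conservation identity, and verifying that the transported integrals stay finite under $\mathrm{H4}$; none of these is expected to be a serious obstacle.
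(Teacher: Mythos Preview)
Your proposal is correct and follows essentially the same route as the paper: both arguments take absolute values in \eqref{GradSolChar}, control $|(\partial_x X)(0;t,\cdot)|$ uniformly via \eqref{EstGradCharac}, and use the change of variables $x\mapsto X(t;0,x)$ together with the Jacobian identity $|B(X(t;0,x))|\det J(t;0,x)=|B(x)|$ to reduce the transported integrals to $\|n_\mathrm{in}\|_{W^{1,1}(\R^2)}$. The only cosmetic difference is that the paper performs the change of variables first and then takes the supremum of $|\partial_x X|$, whereas you pull out the Gronwall bound before changing variables; the estimates are equivalent.
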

\begin{proof}$\;$\\
\eqref{L1Norm2D} is clearly from \eqref{ConserLaw}. For the estimate \eqref{L1NormGrad2D}, taking the absolute value on both sides in \eqref{GradSolChar} then integrating with respect to $x$ and changing the variable $x$ to $X(t; 0, x)$, we get
\begin{align*}
\intxtt{|\nabla_x n(t,x)|} &\leq \intxtt{|(\partial_x X)(0;t,\cdot)|\left( |\nabla_x n_\mathrm{in}(x)| + \dfrac{|\nabla B(x)|}{|B(x)|}n_\mathrm{in(x)}\right)}\\
&+ \intxtt{\dfrac{|\nabla B(X(t;0,x))|}{|B(X(t;0,x))|}n_\mathrm{in}(x)},
\end{align*}
which implies that
\[
\intxtt{|\nabla_x n(t,x)|} \leq \left(\sup_{\R^2}|\partial_x X(0;t,\cdot)| + C(\|B\|_{W^{1,\infty}(\R^2)}, B_0) \right)\|n_\mathrm{in}\|_{W^{1,1}(\R^2)}.
\]
Using the inequality \eqref{EstGradCharac} we get the estimate \eqref{L1NormGrad2D}.
\end{proof}
\paragraph*{Global existence of smooth solutions.}
We define the following set of electric field
\[
\Sigma  = \left\{ {E \in {L^\infty }{{\left( {[0,T];{{W}^{1,\infty }}\left( \R^2 \right)} \right)}^2}:{{\left\| E(t) \right\|}_{L_{t,x}^\infty }} \leq M,\,\,{{\left\| {{\partial _x}E\left( t \right)} \right\|}_{L_x^\infty }} \leq \alpha \left( t \right),\,\,t \in \left[ {0,T} \right]} \right\},
\]
where the constant $M>0$ and the function $\alpha(t): [0,T]\to {\mathbb{R}_+} $ will be determined later. Given an electric field $E$ in $\Sigma$. Considering the characteristic solution of \eqref{EquivLimMo2D} on $\R^2$, corresponding to the electric field $E$, denoted by $n^E$ which is given by the formula \eqref{SolCharac2D}. We then construct the
following map $\calF$ on $\Sigma$, whose fixed point gives the solution of the system \eqref{EquivLimMo2D}, \eqref{PoissonLim2D}, \eqref{LimitInitial2D}
\begin{align}
\label{MapFixed2D}
E \to \calF \left(E  \right)(x) =  - \frac{q}{2\pi\epsilon _0}\left( {{\nabla }\ln |\cdot| } \right){ * _x}\left( {n^E - D} \right)(x).
\end{align}
We will show that the map $\calF$ is left invariant on the set $\Sigma$ for a convenient choice of the positive constant $M$ and the function $\alpha(t)$, then we want to establish an estimate like
\begin{align}
\label{Mapconstract2D}
\| { {\calF\left(E \right)(t) - \calF(\tilde{E})} ( t)} \|_{{L^\infty }\left( {{\R^2}} \right)} \leq C_T\int_{0}^{T}{\| {( E - \tilde{E} )( t )} \|}_{L^\infty \left( \R^2 \right)}\mathrm{d}t,\,\,\forall t \in \left[ {0,T} \right],
\end{align}
for some constant $C_T$, not depending on $E, \tilde{E}$. After that, the existence of the solution of the system \eqref{EquivLimMo2D}, \eqref{PoissonLim2D}, and \eqref{LimitInitial2D}  immediately, based on the construction of an iterative method for $\calF$.
Before starting, let us recall the following classical inequality cf. \cite{Degond1986}.
\begin{lemma}
\label{ClassIneq}
Let $\rho(x)$ be a function which belongs to $L^1(\R^2)\cap W^{1,\infty}(\R^2)$ and let $U(x)$ such that 
\[
U(x) = \int_{\R^2}{\dfrac{x-y}{|x-y|^2}\rho(y)}\mathrm{d}y.
\]
Then we have the following estimates
\begin{align}
\|U\|_{L^\infty(\R^2)} &\leq C \|\rho\|^{1/2}_{L^1(\R^2)}\|\rho\|^{1/2}_{L^\infty(\R^2)},\label{LemmaE}\\
\|\nabla_x U\|_{L^\infty(\R^2)} &\leq C ( 1+\|\rho\|_{L^\infty(\R^2)} (1+\ln^+\|\nabla_x \rho\|_{L^\infty(\R^2)})+\|\rho\|_{L^1(\R^2)}),\label{LemmaGradE}
\end{align}
here the notation $\ln^+$ stands for the positive part of $\ln$.
\end{lemma}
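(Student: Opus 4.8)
The plan is to treat the two bounds separately, exploiting that the kernel $K(z) = z/|z|^2 = \nabla_z \ln|z|$ is only weakly singular in dimension two, since $|K(z)| = 1/|z|$ is locally integrable, whereas its gradient is a genuine Calder\'on--Zygmund kernel and will therefore require a cancellation argument.

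For the $L^\infty$ bound \eqref{LemmaE} I would split the convolution at a radius $R>0$ to be optimized,
\[
U(x) = \int_{|x-y|\le R} \frac{x-y}{|x-y|^2}\rho(y)\,\mathrm{d}y + \int_{|x-y|> R} \frac{x-y}{|x-y|^2}\rho(y)\,\mathrm{d}y =: U_{<}(x)+U_{>}(x).
\]
On the inner ball one bounds $|K|$ pointwise and uses $L^\infty$ control, $|U_{<}(x)| \le \|\rho\|_{L^\infty}\int_{|z|\le R}|z|^{-1}\,\mathrm{d}z = 2\pi R\,\|\rho\|_{L^\infty}$; on the outer region one uses $|K(z)|\le 1/R$ together with $L^1$ control, $|U_{>}(x)| \le R^{-1}\|\rho\|_{L^1}$. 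Minimizing $2\pi R\|\rho\|_{L^\infty} + R^{-1}\|\rho\|_{L^1}$ over $R$ (i.e.\ $R \sim (\|\rho\|_{L^1}/\|\rho\|_{L^\infty})^{1/2}$) yields exactly the geometric-mean bound \eqref{LemmaE}.

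The gradient bound \eqref{LemmaGradE} is the delicate point, because $\nabla K(z)$ is homogeneous of degree $-2$ and its convolution with $\rho$ only enjoys a borderline ($L^\infty$) estimate. Fixing $x$ and two radii $0<\delta<R$, I would decompose $\partial_{x_i}U_j(x)$ over the three regions $\{|x-y|\le\delta\}$, $\{\delta<|x-y|\le R\}$ and $\{|x-y|>R\}$. On the innermost ball the kernel is still integrable, so I transfer the $x$-derivative onto $\rho$, giving a contribution bounded by $\|\nabla\rho\|_{L^\infty}\int_{|z|\le\delta}|z|^{-1}\,\mathrm{d}z = 2\pi\delta\,\|\nabla\rho\|_{L^\infty}$. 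On the outer region I differentiate the kernel and use $|\nabla K(z)|\le C|z|^{-2}\le C R^{-2}$ with $L^1$ control, obtaining $C R^{-2}\|\rho\|_{L^1}$. The annulus is where the cancellation enters: the direct computation $\partial_{z_i}(z_j/|z|^2) = \delta_{ij}|z|^{-2} - 2z_iz_j|z|^{-4}$ shows that the average of $\nabla K$ over each circle $\{|z|=r\}$ vanishes, hence $\int_{\delta<|z|\le R}\nabla K(z)\,\mathrm{d}z = 0$; subtracting the constant $\rho(x)$ from $\rho(x-z)$ and using $|\rho(x-z)-\rho(x)|\le 2\|\rho\|_{L^\infty}$, the annular term is bounded by $2\|\rho\|_{L^\infty}\int_{\delta<|z|\le R}C|z|^{-2}\,\mathrm{d}z = C\|\rho\|_{L^\infty}\ln(R/\delta)$.

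Collecting the three contributions gives, for every admissible pair $\delta<R$,
\[
|\partial_{x_i}U_j(x)| \le C\Big(\delta\,\|\nabla\rho\|_{L^\infty} + \|\rho\|_{L^\infty}\ln(R/\delta) + R^{-2}\|\rho\|_{L^1}\Big).
\]
I would then fix $R=1$ and choose $\delta = \min\{1,\ \|\rho\|_{L^\infty}/\|\nabla\rho\|_{L^\infty}\}$, which balances the first two terms and produces $C\|\rho\|_{L^\infty}\big(1+\ln^+(\|\nabla\rho\|_{L^\infty}/\|\rho\|_{L^\infty})\big) + C\|\rho\|_{L^1}$. Finally, since $s\mapsto s\ln^+(1/s)$ is bounded on $(0,\infty)$, the residual term $\|\rho\|_{L^\infty}\ln^+(1/\|\rho\|_{L^\infty})$ is absorbed into the additive constant, leaving precisely the stated form \eqref{LemmaGradE}. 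The main obstacle is exactly this borderline estimate for the singular integral $\nabla U$: without the mean-zero cancellation on circles the annular piece could only be controlled by $\|\nabla\rho\|_{L^\infty}\ln(R/\delta)$, which cannot be closed, and it is the log-Lipschitz choice of the cutoff $\delta$ that converts the logarithmic divergence into the $\ln^+\|\nabla\rho\|_{L^\infty}$ factor.
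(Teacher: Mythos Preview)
Your argument is correct and is essentially the classical proof of these potential-theoretic bounds in dimension two. Note, however, that the paper does not supply its own proof of this lemma at all: it is stated with the preamble ``let us recall the following classical inequality cf.\ \cite{Degond1986}'' and then used as a black box. So there is nothing to compare against beyond the fact that your decomposition (near/far splitting for \eqref{LemmaE}, three-zone splitting with the mean-zero cancellation of $\nabla K$ on circles for \eqref{LemmaGradE}) is exactly the argument one finds in Degond's paper and in the standard Vlasov--Poisson literature.

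One very minor remark: when you pass from $K*\partial_i\rho$ on $\{|z|>\delta\}$ back to $(\partial_i K)*\rho$ by integration by parts, there is a boundary contribution on the circle $\{|z|=\delta\}$ of size $O(\|\rho\|_{L^\infty})$, which you did not write explicitly but which is harmlessly absorbed into the $\|\rho\|_{L^\infty}$ term already present. With that understood, your proof is complete.
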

\begin{lemma}
\label{ClosedSet2D}
There exists a positive constant $M$ and a function $\alpha(t)$ such that $\calF(\Sigma) \subset \Sigma$.
\end{lemma}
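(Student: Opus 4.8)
The plan is to verify the two defining constraints of $\Sigma$ separately, exploiting that $\calF(E)$ has exactly the convolution structure covered by Lemma \ref{ClassIneq}. Writing $\rho^E(t) := n^E(t) - D$ and using $\nabla_x \ln|\cdot| = \cdot/|\cdot|^2$, we have $\calF(E)(t,x) = -\frac{q}{2\pi\epsilon_0}U(x)$ with $U(x) = \int_{\R^2}\frac{x-y}{|x-y|^2}\rho^E(t,y)\,\mathrm{d}y$, so Lemma \ref{ClassIneq} applies with $\rho = \rho^E(t)$. First I would record the bounds on $n^E$ that do not see the electric field: by Lemma \ref{BoInftyLimMod2D} and Lemma \ref{BoL1LimMod2D},
\[
\|n^E(t)\|_{L^\infty(\R^2)} \leq C(\|B\|_{L^\infty}, B_0)\|n_\mathrm{in}\|_{L^\infty(\R^2)},\qquad \|n^E(t)\|_{L^1(\R^2)} = \|n_\mathrm{in}\|_{L^1(\R^2)},
\]
uniformly in $E\in\Sigma$. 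Together with the hypotheses $D\in L^1(\R^2)\cap L^\infty(\R^2)$ this yields data-only bounds $\|\rho^E(t)\|_{L^1(\R^2)} \leq R_1$ and $\|\rho^E(t)\|_{L^\infty(\R^2)}\leq R_\infty$. Since $n^E\in W^{1,1}\cap W^{1,\infty}$ and $D\in W^{1,1}\cap W^{2,\infty}$, the regularity hypothesis of Lemma \ref{ClassIneq} is met.

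The $L^\infty$ constraint is then immediate and decouples from $\alpha$: estimate \eqref{LemmaE} gives
\[
\|\calF(E)(t)\|_{L^\infty(\R^2)} \leq \frac{|q|}{2\pi\epsilon_0}C\,\|\rho^E(t)\|_{L^1}^{1/2}\|\rho^E(t)\|_{L^\infty}^{1/2} \leq \frac{|q|}{2\pi\epsilon_0}C\,R_1^{1/2}R_\infty^{1/2} =: M,
\]
and I would simply take this constant as the definition of $M$. For the gradient constraint I would use estimate \eqref{LemmaGradE}, in which the only term depending on $E$ is $\ln^+\|\nabla_x\rho^E(t)\|_{L^\infty}$. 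Here the growth of $\|\nabla_x n^E\|_{L^\infty}$ is controlled by Lemma \ref{BoInftyLimMod2D}: for $E\in\Sigma$ one has $\|E(s)\|_{W^{1,\infty}}\leq M + \alpha(s)$, hence
\[
\|\nabla_x n^E(t)\|_{L^\infty(\R^2)} \leq C\left(1 + \exp\left(Mt + \int_0^t\alpha(s)\,\mathrm{d}s\right)\right).
\]
The decisive point is that applying $\ln^+$ linearizes this exponential: since $D\in W^{2,\infty}(\R^2)$ controls $\|\nabla_x D\|_{L^\infty}$, one gets $\ln^+\|\nabla_x\rho^E(t)\|_{L^\infty}\leq C + Mt + \int_0^t\alpha(s)\,\mathrm{d}s$. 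Feeding this back into \eqref{LemmaGradE} and bounding $Mt\leq MT$ into the constant produces an estimate of the form
\[
\|\partial_x\calF(E)(t)\|_{L^\infty(\R^2)} \leq C_1 + C_2\int_0^t\alpha(s)\,\mathrm{d}s,
\]
with $C_1, C_2$ depending only on the data.

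The invariance $\calF(\Sigma)\subset\Sigma$ thus reduces to choosing $\alpha$ so that $\alpha(t)\geq C_1 + C_2\int_0^t\alpha(s)\,\mathrm{d}s$ on $[0,T]$. I would define $\alpha$ to realize equality, i.e.\ as the solution of $\alpha'= C_2\alpha$, $\alpha(0)=C_1$, namely $\alpha(t) = C_1 e^{C_2 t}$, which is finite on $[0,T]$; by construction it satisfies the required inequality, while the $L^\infty$ bound already holds with the $M$ fixed above. The main obstacle — and the reason the argument closes at all — is precisely this gradient estimate: a naive bound on $\|\partial_x\calF(E)\|_{L^\infty}$ in terms of $\|\nabla_x n^E\|_{L^\infty}$ would inherit the exponential growth in $\int_0^t\|E\|_{W^{1,\infty}}$ and could never be absorbed by $\alpha$ on $\Sigma$. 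The logarithmic factor in \eqref{LemmaGradE} is what converts the fixed-point condition into a linear Gronwall-type integral inequality, making the closed invariant set $\Sigma$ available.
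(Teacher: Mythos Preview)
Your proposal is correct and follows essentially the same approach as the paper: you use \eqref{LemmaE} together with the $E$-independent bounds \eqref{InftyNorm2D}, \eqref{L1Norm2D} to fix $M$, then feed \eqref{InftyNormGrad2D} into \eqref{LemmaGradE} so that the logarithm linearizes the exponential, and close with the Gronwall-type choice $\alpha(t)=C_1 e^{C_2 t}$. The only cosmetic difference is that the paper absorbs the $Mt$ contribution directly into the constant $C_1$ (writing the integrand as $\|\partial_x E(s)\|_{L^\infty}$ rather than the full $W^{1,\infty}$ norm), whereas you carry it explicitly before bounding $Mt\leq MT$; the resulting ODE for $\alpha$ and its solution are identical.
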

\begin{proof}$\;$\\
Let $E\in \Sigma$. Thanks to \eqref{LemmaE}, \eqref{InftyNorm2D} and \eqref{L1Norm2D} we have
\[
\|\calF(E)(t) \|_{L^\infty(\R^2)} \leq C\left(\|n_\mathrm{in}\|_{L^1(\R^2)} + \|D\|_{L^1(\R^2)}\right)^{1/2} \left( \|n_\mathrm{in}\|_{L^\infty(\R^2)} + \|D\|_{L^\infty(\R^2)} \right)^{1/2},
\]
for some constant $C(q,\epsilon_0, \|B\|_{L^\infty(\R^2)}, B_0)$.
Therefore, choosing the constant $M$ in the set $\Sigma$ as the term on the right hand side of this inequality, 
we conclude that $\sup_{t\in[0,T]}\| \calF(E)(t) \|_{L^\infty(\R^2)} \leq M$, for any $E\in \Sigma$.\\
We estimate now $\|\partial_x\calF(E)(t) \|_{L^\infty(\R^2)}$. Thanks to \eqref{LemmaGradE},  \eqref{InftyNorm2D} and \eqref{L1Norm2D} we have
\[
\|\partial_x\calF(E)(t) \|_{L^\infty(\R^2)} \leq C_0\left(1+ \ln^+(\| \nabla_x n(t) \|_{L^\infty(\R^2) } + \|\nabla_x D\|_{L^\infty(\R^2)})\right),
\]
for some constant $C_0(q,\epsilon_0, \|n_\mathrm{in}\|_{L^\infty(\R^2)}, \|n_{\mathrm{in}}\|_{L^1(\R^2)}, \|D\|_{L^1(\R^2)}, \|D\|_{L^\infty(\R^2)}, \|B\|_{L^\infty(\R^2)}, B_0)$,
which leads to estimate $\ln^+ (\|\nabla_x n(t)\|_{L^\infty(\R^2)}+\|\nabla_x D\|_{L^\infty(\R^2)})$. By inequality \eqref{InftyNormGrad2D}, we deduce that
\[
\|\nabla_x n(t)\|_{L^\infty(\R^2)}+ \|\nabla_x D\|_{L^\infty(\R^2)} \leq  C_1 \left(1 + \exp \left( \int_{0}^{t} \|\partial_x E(s)\|_{L^\infty(\R^2)} \mathrm{d}s \right) \right),
\]
for some constant $ C_1(q,m, \|n_\mathrm{in}\|_{W^{1,\infty}(\R^2)}, \|D\|_{W^{1,\infty}(\R^2)}, T, \|B\|_{W^{2,\infty}(\R^2)}, B_0)$. Using the standard inequality $1+ e^x \leq e^{x+1}$ holds for any $x\geq 0$, we deduce that
\[
\ln^+ (\|\nabla_x n(t)\|_{L^\infty(\R^2)}+ \|\nabla_x D\|_{L^\infty(\R^2)}) \leq (\ln^+ C_1 +1 ) + \int_{0}^{t} \|\partial_x E(s,\cdot)\|_{L^\infty(\R^2)} \mathrm{d}s.
\]
Finally, denoting by $C_2 = \ln^+ C_1 +1$ we have
\[
\|\partial_x\calF(E)(t) \|_{L^\infty(\R^2)} \leq C_0 C_2 +  C_0\int_{0}^{t} \|\partial_x E(s)\|_{L^\infty(\R^2)} \mathrm{d}s. 
\]
Denote by $\alpha(t)$ the
solution on $[0, T]$ of the linear equation $d\alpha/dt = C_0 \alpha(t)$ with the initial condition $\alpha(0)=C_0C_2$. Therefore, choosing the function $\alpha(t) = C_0C_2 e^{C_0 t}$ in the set $\Sigma$, then we have $\|\partial_x\calF(E)(t) \|_{L^\infty(\R^2)} \leq \alpha(t), t\in[0,T]$ for any $E\in\Sigma$.
\end{proof}

Now we are ready to establish the inequality \eqref{Mapconstract2D}. Let us consider $E, \tilde{E} \in \Sigma$ and denote by $n^E , \tilde{n}^{\tilde{E}}$ the characteristics solutions 
 of \eqref{EquivLimMo2D} and \eqref{equ:CharLimMo2D} corresponding to the electric fields $E, \tilde{E}$ respectively. It is easily seen from \eqref{LemmaE} that
\begin{equation}
\label{MapcontractBis}
\| { {\calF\left(E \right)(t) - \calF(\tilde{E})} ( t)} \|_{{L^\infty }\left( {{\R^2}} \right)} \leq C_T \|n^E(t) - \tilde{n}^{\tilde{E}}(t)\|_{L^\infty(\R^2)}^{1/2}\|n^E(t) - \tilde{n}^{\tilde{E}}(t)\|_{L^1(\R^2)}^{1/2},
\end{equation}
where $C_T$ is a positive constant, not depending on $E, \tilde{E}$. Then, the inequality \eqref{Mapconstract2D} is derived from the inequality \eqref{MapcontractBis} and Lemmas \ref{DiffNormInfty2D} and \ref{DiffNormL12D} below.
\begin{lemma}
\label{DiffNormInfty2D}
We have
\[
\|n^E(t) - \tilde{n}^{\tilde{E}}(t)\|_{L^\infty(\R^2)} \leq C_T \int_{0}^{t}\|E(s) - \tilde{E}(s)\|_{L^\infty(\R^2)}\mathrm{d}s,\,\, t\in[0,T],
\]
for some constant $C_T >0$, not depending on $E, \tilde{E}$.
\end{lemma}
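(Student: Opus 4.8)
The plan is to exploit the explicit representation formula \eqref{SolCharac2D} and reduce everything to a comparison of the two backward characteristic flows. Write $g = n_\mathrm{in}/B$, which is Lipschitz on $\R^2$ since $n_\mathrm{in}\in W^{1,\infty}(\R^2)$ and $B\in C^2_b(\R^2)$ with $\inf_x B = B_0 >0$. Denoting by $X^E$, $X^{\tilde E}$ the characteristics \eqref{equ:CharLimMo2D} associated with $E$ and $\tilde E$, formula \eqref{SolCharac2D} gives
\[
n^E(t,x) - \tilde{n}^{\tilde{E}}(t,x) = B(x)\left[ g(X^E(0;t,x)) - g(X^{\tilde{E}}(0;t,x)) \right],
\]
whence
\[
|n^E(t,x) - \tilde{n}^{\tilde{E}}(t,x)| \leq \|B\|_{L^\infty(\R^2)}\,\mathrm{Lip}(g)\,|X^E(0;t,x) - X^{\tilde{E}}(0;t,x)|.
\]
It therefore suffices to bound $\|X^E(0;t,\cdot) - X^{\tilde{E}}(0;t,\cdot)\|_{L^\infty(\R^2)}$ by $C_T \int_0^t \|E(s)-\tilde{E}(s)\|_{L^\infty(\R^2)}\,\mathrm{d}s$.

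First I would fix $(t,x)$ and set $Y^E(\tau) = X^E(\tau;t,x)$, $Y^{\tilde{E}}(\tau) = X^{\tilde{E}}(\tau;t,x)$ for $\tau\in[0,t]$, both satisfying $Y(t) = x$; these are well defined and regular by the Cauchy--Lipschitz discussion following \eqref{equ:CharLimMo2D}. Writing $U^E(\tau,y) = {}^\perp E(\tau,y)/B(y) - \sigma\,{}^\perp\nabla\omega_c(y)/\omega_c^2(y)$ and subtracting the two copies of \eqref{equ:CharLimMo2D}, the crucial observation is that the magnetic drift $\sigma\,{}^\perp\nabla\omega_c/\omega_c^2$ is independent of the electric field and therefore cancels. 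Splitting
\[
U^E(\tau,Y^E) - U^{\tilde{E}}(\tau,Y^{\tilde{E}}) = \left[U^E(\tau,Y^E) - U^E(\tau,Y^{\tilde{E}})\right] + \frac{{}^\perp(E-\tilde{E})(\tau,Y^{\tilde{E}})}{B(Y^{\tilde{E}})},
\]
the first bracket is controlled by $\mathrm{Lip}(U^E(\tau,\cdot))\,|Y^E-Y^{\tilde{E}}|$ and the second by $B_0^{-1}\|E(\tau)-\tilde{E}(\tau)\|_{L^\infty(\R^2)}$.

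Then I would apply Gronwall's lemma on the backward interval $[0,t]$, starting from $\delta(t) := |Y^E(t) - Y^{\tilde{E}}(t)| = 0$. Since $E\in\Sigma$ and $B$ is smooth, the spatial Lipschitz constant of the electric part satisfies $\mathrm{Lip}(U^E(\tau,\cdot)) \leq C\,(1+\|\partial_x E(\tau)\|_{L^\infty(\R^2)}) \leq C\,(1+\alpha(\tau))$, where the drift part contributes a constant depending only on $\sigma$, $\|B\|_{W^{2,\infty}}$, $B_0$; as $\alpha$ is continuous it is bounded on $[0,T]$, so this Lipschitz constant is uniformly bounded and independent of $E,\tilde E$. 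Backward Gronwall then yields
\[
\delta(0) = |X^E(0;t,x) - X^{\tilde{E}}(0;t,x)| \leq \frac{e^{CT}}{B_0}\int_0^t \|E(\tau)-\tilde{E}(\tau)\|_{L^\infty(\R^2)}\,\mathrm{d}\tau,
\]
uniformly in $x$, which combined with the Lipschitz estimate for $g$ gives the claim.

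The main obstacle is to secure a bound on $\mathrm{Lip}(U^E(\tau,\cdot))$ that is uniform in $\tau\in[0,T]$ and independent of the specific field $E\in\Sigma$: this is precisely where the definition of $\Sigma$ (the control $\|\partial_x E(t)\|_{L^\infty(\R^2)}\leq \alpha(t)$ with $\alpha$ continuous, hence bounded, on $[0,T]$) is essential, for otherwise the Gronwall constant would fail to be uniform. The cancellation of the magnetic drift—which is what allows the right-hand side to depend on $\|E-\tilde E\|_{L^\infty}$ rather than on its derivatives—and the Lipschitz regularity of $n_\mathrm{in}/B$ are then routine.
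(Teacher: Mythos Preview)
Your argument is correct and follows essentially the same route as the paper: reduce to a comparison of backward characteristics via \eqref{SolCharac2D}, split the difference of the advection fields into a Lipschitz part (controlled through the $\Sigma$-bound on $\partial_x E$) and the term $\tfrac{{}^\perp(E-\tilde E)}{B}$, and conclude by Gronwall. The only cosmetic difference is that the paper splits the electric contribution into three pieces (difference of fields, then of evaluation points, then of $1/B$) whereas you absorb the latter two at once into $\mathrm{Lip}(U^E(\tau,\cdot))$; both lead to the same estimate.
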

\begin{proof}$\;$\\
Let us denote $X^E, \tilde{X}^{\tilde E} $ the characteristic solutions of \eqref{equ:CharLimMo2D} corresponding to $E, \tilde E$ respectively.
Thanks to the formula \eqref{SolCharac2D} we have
\begin{align*}
|n^{E}(t,x) - \tilde{n}^{\tilde{E}}(t,x)| &\leq |B(x)| \dfrac{| n_{\mathrm{in}}(X^{E}(0;t,x))- n_{\mathrm{in}}(\tilde{X}^{\tilde{E}}(0;t,x))|}{|B(X^{E}(0;t,x))|}\\
&+ |B(x)| n_{\mathrm{in}}(X^{\tilde{E}}(0;t,x))\left|\dfrac{1}{B(X^{E}(0;t,x))} - \dfrac{1}{B(\tilde{X}^{\tilde E}(0;t,x))} \right|, 
\end{align*}
which implies that
\begin{align}
\label{DiffDensity2D}
|n^{E}(t,x) - \tilde{n}^{\tilde{E}}(t,x)| &\leq C |X^E(0;t,x)-\tilde{X}^{\tilde E}(0;t,x)|,
\end{align}
for some constant $C(\|n_\mathrm{in}\|_{W^{1,\infty}(\R^2)}, \|B\|_{W^{1,\infty}(\R^2)}, B_0)$.
On the other hand, from the characteristic equation \eqref{equ:CharLimMo2D} we deduce that
\begin{align*}
  \dfrac{\mathrm{d}}{\mathrm{dt}}\left( { X^E  - \tilde{X}^{\tilde{E}} } \right)\left( {t;s,x} \right) &= \dfrac{{{}^ \bot E \left( {t, X^E\left( {t;s,x} \right)} \right)}}{{B\left( { X^E \left( {t;s,x} \right)} \right)}} - \dfrac{{{}^ \bot \tilde{E} ( {t, \tilde{X}^{\tilde E} \left( {t;s,x} \right)} )}}{B( \tilde{X}^{\tilde E} ( {t;s,x}))} \\
   &- \sigma\dfrac{{{}^ \bot \nabla {\omega _c}\left( { X^E \left( {t;s,x} \right)} \right)}}{{\omega _c^2\left( { X^E \left( {t;s,x} \right)} \right)}} + \sigma\dfrac{{{}^ \bot \nabla {\omega _c}(  \tilde{X}^{\tilde E}( {t;s,x} ))}}{{\omega _c^2(  \tilde{X}^{\tilde E} ( {t;s,x} ) )}},\\
  (  X^E  -  \tilde{X}^{\tilde E})\left( {s;s,x} \right) = 0.
\end{align*}
The first term in the right hand side of the previous equality can be estimated by 
\begin{align*}
  \left| \dfrac{{{}^ \bot E \left( {t , X^E \left(t  \right)} \right)}}{{B\left( { X^E \left( t  \right)} \right)}} - \dfrac{{}^ \bot \tilde E ( {t, \tilde{X}^{\tilde E} ( t )})}{B(  \tilde{X}^{\tilde E} ( t ))} \right| &\leq \left| \dfrac{{}^ \bot E ( t , X^{E}( t )) - {}^ \bot \tilde E ( t ,X^E( t ))}{{B( {X^E ( t)})}} \right|  \\
   &+ \left| \dfrac{{}^ \bot \tilde E (t,X^E(t)) - {}^\bot \tilde E \left( {t , \tilde{X}^{\tilde E} \left( t  \right)} \right)}{B\left( { X^E \left( t  \right)} \right)} \right| \\&+ \left| {}^ \bot \tilde E( t , \tilde{X}^{\tilde E}( t))\left( {\dfrac{1}{B( X^E ( t ) )} - \frac{1}{B( \tilde{X}^{\tilde E}( t ) )}}\right) \right|  \\
   \le \dfrac{1}{B_0}\| E( t ) - \tilde E( t) \|_{L^\infty(\R^2)} &+  C(\|B\|_{W^{1,\infty}(\R^2)},B_0, M, T)\left|\tilde X^E ( t ) - \tilde X^{\tilde E} ( t ) \right|,
\end{align*}
since $\tilde E \in \Sigma$ while the second term can be bounded by
\begin{align*}
  \left|\sigma {\dfrac{{{}^ \bot \nabla {\omega _c}\left( { X^E\left( t \right)} \right)}}{{\omega _c^2\left( {X^E \left( t  \right)} \right)}} - \sigma\dfrac{{{}^ \bot \nabla {\omega _c}\left( { \tilde{X}^{\tilde E} \left( t  \right)} \right)}}{{\omega _c^2\left( { \tilde{X}^{\tilde E} \left( t  \right)} \right)}}} \right| 
    \le C(\|B\|_{W^{2,\infty}(\R^2)}, B_0, \sigma) \left| { X^E \left( t \right) -  \tilde{X}^{\tilde E} \left( t  \right)} \right|,
\end{align*}
where we denote $(X^E(t), \tilde{X}^{\tilde E}(t)) = (X^E(t;s,x), \tilde{X}^{\tilde E}(t;s,x))$. Integrating between $s$ and $t$ the differential system of $X^E(t)-\tilde{X}^{\tilde{E}}(t)$, and combining the previous estimations  we find
\[
\left| { X^E \left( t \right) -  \tilde{X}^{\tilde{E}} \left( t \right)} \right| \leq \int_{s}^{t} {\dfrac{1}{{{B_0}}}{{\| {E \left( \tau  \right) - \tilde{E} \left( \tau  \right)} \|}_{{L^\infty(\R^2) }}}} \mathrm{d}\tau  + C\int_{s}^{t} {\left| { X^E \left( \tau  \right) -  \tilde{X}^{\tilde{E}} \left( \tau  \right)} \right|}\mathrm{d}\tau ,
\]
for some constant $C(\|B\|_{W^{2,\infty}(\R^2)}, B_0, \sigma, T, M)$.
Thanks to Gronwall's inequality one gets 
\[
\left| { X^E \left( t ;s,x\right) - \tilde{X}^{\tilde E} \left( {t;s,x} \right)} \right| \leq {e^{C\left| {t - s} \right|}}\frac{1}{{{B_0}}}\int_{s}^{t} {{{\| {E \left( \tau  \right) - \tilde{E}\left( \tau  \right)} \|}_{{L^\infty(\R^2) }}}} \mathrm{d}\tau,
\]
which together with \eqref{DiffDensity2D} yields the desired estimate of the lemma.
\end{proof}
\begin{lemma}
\label{DiffNormL12D}
We have
\[
\|n^E - \tilde{n}^{\tilde{E}}\|_{L^1(\R^2)} \leq C_T \int_{0}^{t}\|E(s,\cdot) - \tilde{E}(s,\cdot)\|_{L^1(\R^2)}\mathrm{d}s,\,\, t\in [0,T],
\]
for some constant $C_T>0$, not depending on $E, \tilde{E}$.
\end{lemma}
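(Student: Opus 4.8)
The plan is to avoid the characteristic formula used for Lemma \ref{DiffNormInfty2D}, since that route naturally produces control by $\|E-\tilde E\|_{L^\infty}$ rather than by $\|E-\tilde E\|_{L^1}$, and to argue instead at the level of the transport equation \eqref{EquivLimMo2D} for the rescaled unknown $n/B$. Writing the drift field and the rescaled densities as
\[
a^E = \dfrac{{}^\perp E}{B} - \sigma\dfrac{{}^\perp\nabla\omega_c}{\omega_c^2}, \qquad u^E = \dfrac{n^E}{B}, \qquad u^{\tilde E} = \dfrac{\tilde n^{\tilde E}}{B},
\]
we recall from \eqref{EquivLimMo2D} that $\partial_t u^E + a^E\cdot\nabla_x u^E = 0$ and likewise for $u^{\tilde E}$ with field $a^{\tilde E}$. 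Setting $z = u^E - u^{\tilde E}$ and subtracting, the diffusive gradient terms are field-independent and cancel in $a^E - a^{\tilde E} = {}^\perp(E-\tilde E)/B$, so $z$ solves the linear transport equation with source
\[
\partial_t z + a^E\cdot\nabla_x z = -\left(a^E - a^{\tilde E}\right)\cdot\nabla_x u^{\tilde E} = -\dfrac{{}^\perp(E-\tilde E)}{B}\cdot\nabla_x u^{\tilde E},
\]
supplemented with $z(0,\cdot)=0$, since both densities start from $n_\mathrm{in}/B$.

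The crucial point, and what I expect to be the main obstacle, is that I must bound the right-hand side by $\|E-\tilde E\|_{L^1(\R^2)}$ alone, \emph{without} any derivative of $E-\tilde E$. This is precisely why the non-conservative form for $n/B$ is preferable to the conservation form \eqref{LimitMod2D}: in the latter the source would appear under a $\Divx$, forcing $\nabla_x(E-\tilde E)$ to enter, whereas here the spatial derivative falls on the smooth factor $u^{\tilde E}$ instead. That factor is uniformly controlled: since $\tilde E\in\Sigma$ one has $\int_0^t\|\tilde E(s)\|_{W^{1,\infty}(\R^2)}\,\md s \le T(M+\alpha(T))$, so the a priori estimate \eqref{InftyNormGrad2D} (Lemma \ref{BoInftyLimMod2D}) gives $\sup_{[0,T]}\|\nabla_x\tilde n^{\tilde E}\|_{L^\infty(\R^2)}\le C_T$, and hence $\sup_{[0,T]}\|\nabla_x u^{\tilde E}\|_{L^\infty(\R^2)}\le C_T'$ after accounting for the factor $1/B$ and $\nabla B$.

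With this in hand I would run a standard $L^1$ (renormalized) stability estimate. Because $a^E$ is Lipschitz with $\|\Divx a^E\|_{L^\infty(\R^2)}\le C$ (from the $W^{1,\infty}$ bounds on $E\in\Sigma$ and the smoothness of $\omega_c$), multiplying the equation for $z$ by $\mathrm{sgn}(z)$ and integrating yields, after using $\intxtt{a^E\cdot\nabla_x|z|} = -\intxtt{|z|\Divx a^E}$,
\[
\dfrac{\md}{\md t}\intxtt{|z(t,x)|} \le \|\Divx a^E\|_{L^\infty(\R^2)}\intxtt{|z(t,x)|} + \dfrac{1}{B_0}\,\|\nabla_x u^{\tilde E}(t)\|_{L^\infty(\R^2)}\,\|E(t)-\tilde E(t)\|_{L^1(\R^2)}.
\]
Gronwall's lemma together with $z(0,\cdot)=0$ then gives $\|z(t)\|_{L^1(\R^2)}\le C_T\int_0^t\|E(s)-\tilde E(s)\|_{L^1(\R^2)}\,\md s$, and finally, since $n^E-\tilde n^{\tilde E} = B\,z$, the bound $\|n^E-\tilde n^{\tilde E}\|_{L^1(\R^2)}\le \|B\|_{L^\infty(\R^2)}\|z\|_{L^1(\R^2)}$ delivers the claimed estimate with a constant $C_T$ depending only on $T$, $\|B\|_{W^{2,\infty}(\R^2)}$, $B_0$, $\sigma$, $M$, $\alpha(T)$ and the norms of $n_\mathrm{in}$, hence not on $E,\tilde E$.
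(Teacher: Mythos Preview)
Your argument is correct and follows the same skeleton as the paper's proof: subtract the two transport equations for $n/B$, multiply by the sign of the difference, integrate, and close by Gronwall. Two differences are worth noting. First, the paper exploits the identity $\Divx\!\big(B\,a^E\big)=0$ (since ${}^\perp E$ and ${}^\perp\nabla\omega_c/\omega_c$ are curls), so the transport term $\int B\,a^E\cdot\nabla_x\big|\tfrac{n^E-\tilde n^{\tilde E}}{B}\big|\,\md x$ vanishes exactly; you instead bound $\|\Divx a^E\|_{L^\infty}$ and absorb it via Gronwall, which is a little less sharp but perfectly valid on $\Sigma$. Second, and more interestingly, you pair the source term as $\|\nabla_x u^{\tilde E}\|_{L^\infty}\,\|E-\tilde E\|_{L^1}$ (using Lemma~\ref{BoInftyLimMod2D}), whereas the paper pairs it the other way, $\|E-\tilde E\|_{L^\infty}\,\|\tilde n^{\tilde E}\|_{W^{1,1}}$ (using Lemma~\ref{BoL1LimMod2D}). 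So you genuinely prove the inequality as stated, with $\|E-\tilde E\|_{L^1}$ on the right, while the paper's own proof actually delivers the $L^\infty$ version --- the $L^1$ in the statement is a typo, and it is the $L^\infty$ bound that feeds into \eqref{MapcontractBis}--\eqref{Mapconstract2D} for the fixed-point argument (fields in $\Sigma$ are only controlled in $L^\infty$, not $L^1$). Your estimate is thus correct but, in isolation, not the one the contraction step needs; swapping the H\"older roles in your source bound immediately gives the useful version.
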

\begin{proof}$\;$\\
Since $n^E, \tilde n ^{\tilde E}$ are solutions of \eqref{EquivLimMo2D} corresponding to $E, \tilde E$ thus we deduce that
\begin{equation*}
\left\{\begin{array}{l}
{\partial _t}\left( n^{E} - \tilde{n}^{\tilde E} \right) + B\left( \dfrac{^\perp E}{B} - \sigma\dfrac{^ \perp \nabla {\omega _c}}{\omega _c^2} \right) \cdot \nabla _x\left( \dfrac{n^E - \tilde{n}^{\tilde E}}{B} \right) + \left( ^\perp E - {^\perp \tilde E} \right) \cdot \nabla _x\left( \dfrac{\tilde{n}^{\tilde E}}{B} \right) = 0, \\
\left( n^E - \tilde{n}^{\tilde E} \right)\left( {0,x} \right) = 0.
\end{array}\right.
\end{equation*}
Multiplying this equation by $\mathrm{sign}(n^E - \tilde{n}^{\tilde E})$ and then integrating with respect to $x$ we find
\begin{align}
\label{DeriDiffNormL1}
\dfrac{\mathrm{d}}{\mathrm{dt}}\int_{{\R^2}} {\left| n^E(t) -\tilde{n}^{\tilde{E}}(t) \right|\mathrm{d}x} & + \int_{{\R^2}} {B\left( {\frac{{{}^ \bot {E}}}{B} - \dfrac{{{}^ \bot \nabla {\omega _c}}}{{\omega _c^2}}} \right)}  \cdot {\nabla _x}\left| {\dfrac{{{n^{{E}}} - {\tilde{n}^{\tilde{E}}}}}{B}} \right|\mathrm{d}x \nonumber \\
&+ \int_{{\R^2}} {\mathrm{sign}\left( {{n^{{E}}} - {\tilde{n}^{\tilde{E}}}} \right)}  \left( {{}^ \bot{E} - {}^ \bot\tilde{E}} \right) \cdot {\nabla _x}\left( {\dfrac{{{\tilde{n}^{\tilde{E}}}}}{B}} \right)\mathrm{d}x = 0.
\end{align}
Thanks to Lemma \ref{BoL1LimMod2D} we have $n^E , \tilde n ^{\tilde E}\in W^{1,1}(\R^2)$ a.e $t\in[0,T]$ and since $\Divx\left[B\left( {\frac{{{}^ \bot {E}}}{B} - \frac{{{}^ \bot \nabla {\omega _c}}}{{\omega _c^2}}} \right)  \right] =0$ so by the divergence theorem, we obtain that
\[
\int_{{\R^2}} {B\left( {\dfrac{{{}^ \bot {E}}}{B} - \dfrac{{{}^ \bot \nabla {\omega _c}}}{{\omega _c^2}}} \right)}  \cdot {\nabla _x}\left| {\dfrac{{{n^{{E}}} - {\tilde{n}^{\tilde{E}}}}}{B}} \right|\mathrm{d}x =0.
\]
Then, from \eqref{DeriDiffNormL1} we imply
\[
\dfrac{\mathrm{d}}{\mathrm{dt}}\intxtt{\left| n^E(t) - \tilde{n}^{\tilde{E}}(t) \right|} \leq C(\|B\|_{W^{1,\infty}(\R^2)}, B_0)\|E(t)- \tilde E(t)\|_{L^\infty(\R^2)}\| \tilde{n}^{\tilde E}(t)\|_{W^{1,1}(\R^2)}.
\]
Integrating between $0$ and $t$ of this inequality leads to
\[
\| n^E (t) -\tilde{n} ^{\tilde E}(t) \|_{L^1(\R^2)} \leq C(\|B\|_{W^{1,\infty}(\R^2)},B_0)\sup_{t\in [0,T]}\|\tilde{ n}^{\tilde E}(t,\cdot)\|_{W^{1,1}(\R^2)}  \int_{0}^{t}\|E(s)- \tilde E(s)\|_{L^\infty(\R^2)}\mathrm{d}s.
\]
Finally, thanks to the estimate \eqref{L1NormGrad2D} we conclude the proof. 
\end{proof}

Now, we shall prove that the sequence of iterative method by map $\calF$  converges to a solution of the original problem. First, we consider $E_0 = 0$, then we put  $E_1 = \calF(E_0 ),..., E_{k+1} = \calF(E_k )$ for each $k\in\N$. Applying \eqref{Mapconstract2D} we have
\[
{\left\| {{E_{k + 1}}\left( t \right) - {E_k}\left( t \right)} \right\|_{{L^\infty }\left( {{\R^2}} \right)}} \leq {(C_T) ^k}\frac{{{t^k}}}{{k!}}{\left\| {{E_1}\left( t \right) - {E_0}\left( t \right)} \right\|_{{L^\infty }\left( {{\R^2}} \right)}},
\]
which yields that there exists $E\in L^\infty((0,T)\times\R^2)$ such that $E_k$ tends to $E$ in $L^\infty((0,T)\times\R^2)$ as $k\to +\infty$. Moreover, since $E_k\in \Sigma$ hence we also have $E\in \Sigma$. This allows us to define the action of the map $\calF$ on the vector field $E$ as $\mathcal{F}( E)=-\dfrac{q}{2\pi \epsilon_0}\nabla\ln|\cdot| *\left(n^{ E}-D\right)$ where $n^E$ is the solution of \eqref{EquivLimMo2D} associated with the electric field $E$. Using again \eqref{Mapconstract2D} we find
\[
\left\| E_{k + 1}\left( t \right) - \calF(E)\left( t \right) \right\|_{L^\infty \left(\R^2\right)} = \|\calF(E_k)(t) - \calF (E)(t)  \|_{L^\infty(\R^2)} \leq C_T \left\| E_{k}\left( t \right) - E\left( t \right) \right\|_{L^\infty \left(\R^2\right)},
\] 
which leads to $E_{k+1} \to \calF(E)$ in $L^\infty((0,T)\times\R^2)$ as $k\to\infty$. Therefore we get $\calF(E) =E$ and $n^E$ is the solution of \eqref{LimitMod2D}, \eqref{PoissonLim2D}, and \eqref{LimitInitial2D}. In addition, by Lemmas \ref{BoInftyLimMod2D}, \ref{BoL1LimMod2D} we conclude that $n^E\in L^\infty (0,T;W^{1,\infty}(\R^2)\cap W^{1,1}(\R^2))$. Hence, from \eqref{EquivLimMo2D}, $\partial_t n^E \in L^\infty(0,T;L^1(\R^2)\cap L^\infty(\R^2))$.
Thanks to Lemma \ref{ClassIneq}, we have $\partial_t E\in L^\infty((0,T)\times\R^2)^2$, thus $E\in W^{1,\infty}((0,T)\times\R^2)$. It remains to verify that the electric field $E$ lies in $L^\infty(0,T;L^2(\R^2))$. Applying Lemma \ref{PropSolPoi}, we need to show that $|x|n\in L^\infty(0,T;L^1(\R^2))$. Indeed, by \eqref{SolCharac2D} and the change of variable $x \mapsto X(t;0,x)$ we have
\[
\intxtt{|x||n(t,x)|} = \intxtt{|X(t;0,x)|n_\mathrm{in}(x)}.
\]
On the other hand, from \eqref{equ:CharLimMo2D} we deduce for any $t\in[0,T]$ that
\[
|X(t;0,x)| \leq |x| + C(\|E\|_{L^\infty}, \|B\|_{L^\infty(\R^2)}, B_0)T,
\]
together with $(1+|x|)n_\mathrm{in}\in L^1(\R^2)$ yields the desired result.
\paragraph*{Uniqueness of smooth solutions.}
The uniquenness of smooth solution $n(t,x)$ which belongs to $L^\infty(0,T; W^{1,1}(\R^2)\cap W^{1,\infty}(\R^2))$  is  immediately derived from the inequality \eqref{Mapconstract2D} and Gronwall's inequality. 

Based on the previous details of the arguments we establish the following result.
\begin{pro}
\label{main_sol_Lim}
Let $T>0$. Let $B\in C^2_b(\R^2)$ be a smooth magnetic field, such that $\inf_{x\in\R^2} B(x) = B_0 >0$ and the fixed background density $D$ verifies $|x|D\in L^1(\R^2)$, $D\in W^{1,1}(\R^2)\cap W^{1,\infty}(\R^2)$. Assume that the initial condition $n_\mathrm{in}$ satisfies the hypotheses $H4$, $H5$. There is a unique smooth solution $n(t,x)\geq 0$ on $[0,T]\times\R^2$ of the limit model \eqref{LimitMod2D}, \eqref{PoissonLim2D}, and \eqref{LimitInitial2D} satisfying the following properties:
\[
\intxtt{n(t,x)} = \intxtt{D(x)},\,\, t\in [0,T],
\]
\[
n\in W^{1,\infty}(0,T;L^1(\R^2))\cap W^{1,\infty}((0,T)\times\R^2),\,\, |x|n\in L^\infty(0,T;L^1(\R^2)),
\]
\[
E[n]\in W^{1,\infty}((0,T)\times\R^2),\,\, E[n]\in L^\infty(0,T;L^2(\R^2)).
\]
\end{pro}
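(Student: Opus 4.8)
The plan is to assemble the machinery built up above into a Picard fixed-point argument for the map $\calF$ on the invariant set $\Sigma$, and then to read off every claimed property of $n$ and $E[n]$ from the representation formula \eqref{SolCharac2D} and the a priori estimates already in hand. Almost all of the real analysis has been carried out in Lemmas \ref{BoInftyLimMod2D}--\ref{DiffNormL12D}, so the proof is mostly a matter of collecting these pieces in the right order.

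\textbf{Existence.} First I would iterate $E_0 = 0$, $E_{k+1} = \calF(E_k)$. By Lemma \ref{ClosedSet2D} each $E_k$ lies in $\Sigma$, so the uniform bounds of Lemmas \ref{BoInftyLimMod2D} and \ref{BoL1LimMod2D} hold along the whole sequence. The contraction estimate \eqref{Mapconstract2D} then gives $\|(E_{k+1}-E_k)(t)\|_{L^\infty(\R^2)} \le (C_T)^k \frac{t^k}{k!}\|(E_1-E_0)(t)\|_{L^\infty(\R^2)}$, whose right-hand side is summable in $k$; hence $(E_k)_k$ is Cauchy and converges to some $E \in \Sigma$. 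Applying \eqref{Mapconstract2D} once more identifies $\lim_k \calF(E_k)$ with $\calF(E)$, so $\calF(E)=E$, and the characteristic solution $n = n^E$ given by \eqref{SolCharac2D} solves \eqref{LimitMod2D}, \eqref{PoissonLim2D}, \eqref{LimitInitial2D}.

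\textbf{Properties.} Nonnegativity and the mass relation are immediate from \eqref{SolCharac2D}: since $n_\mathrm{in}\ge 0$ and $B\ge B_0>0$ one has $n(t,\cdot)\ge 0$, and then \eqref{ConserLaw} together with hypothesis $\mathrm{H5}$ gives $\intxtt{n(t,x)} = \intxtt{n_\mathrm{in}(x)} = \intxtt{D(x)}$. Because $E\in\Sigma$, the integral $\int_0^t \|E(s)\|_{W^{1,\infty}(\R^2)}\,\mathrm{d}s$ is bounded on $[0,T]$, so Lemmas \ref{BoInftyLimMod2D} and \ref{BoL1LimMod2D} yield $n\in L^\infty(0,T;W^{1,\infty}(\R^2)\cap W^{1,1}(\R^2))$; inserting this into \eqref{EquivLimMo2D} bounds $\partial_t n$ in $L^\infty(0,T;L^1(\R^2)\cap L^\infty(\R^2))$, which is the asserted $W^{1,\infty}$ regularity in time. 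For the weighted bound I would change variables $x\mapsto X(t;0,x)$ in \eqref{SolCharac2D} and use the elementary estimate $|X(t;0,x)|\le |x| + CT$ coming from integrating the bounded velocity field in \eqref{equ:CharLimMo2D}, together with $(1+|x|)n_\mathrm{in}\in L^1(\R^2)$, to obtain $|x|n\in L^\infty(0,T;L^1(\R^2))$. Finally, Lemma \ref{ClassIneq} applied to $\partial_t n$ gives $\partial_t E\in L^\infty$, hence $E\in W^{1,\infty}((0,T)\times\R^2)$, while Lemma \ref{PropSolPoi}, whose hypotheses are met by $\rho = n-D$ (zero mean by the mass relation, and $(1+|x|)\rho\in L^1$ by the weighted bounds on $n$ and $D$), delivers $E\in L^\infty(0,T;L^2(\R^2))$.

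\textbf{Uniqueness and main difficulty.} Uniqueness follows by feeding two solutions with fields $E_1,E_2\in\Sigma$ into \eqref{Mapconstract2D}, which gives $\|(E_1-E_2)(t)\|_{L^\infty(\R^2)} \le C_T\int_0^t \|(E_1-E_2)(s)\|_{L^\infty(\R^2)}\,\mathrm{d}s$, so Gronwall forces $E_1=E_2$ and then $n^{E_1}=n^{E_2}$ via \eqref{SolCharac2D}. Since the delicate estimates are already in place, the only genuine subtlety left to respect is the logarithmic loss of derivative in Lemma \ref{ClassIneq}: it is precisely this loss that forces the exponentially growing gate $\alpha(t)=C_0C_2 e^{C_0 t}$ in the definition of $\Sigma$ and prevents a plain contraction in $W^{1,\infty}$. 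The fixed point therefore has to be produced at the level of the $L^\infty$ norm alone, with the $W^{1,\infty}$ control recovered a posteriori from the invariance of $\Sigma$ — which is exactly how \eqref{Mapconstract2D} and Lemma \ref{ClosedSet2D} have been arranged.
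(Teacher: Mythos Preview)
Your proposal is correct and mirrors the paper's argument essentially step for step: the same Picard iteration $E_{k+1}=\calF(E_k)$ inside the invariant set $\Sigma$ (Lemma \ref{ClosedSet2D}), convergence via the summable bound coming from \eqref{Mapconstract2D}, identification of the fixed point, then the same chain --- Lemmas \ref{BoInftyLimMod2D}, \ref{BoL1LimMod2D} for $n\in L^\infty(0,T;W^{1,\infty}\cap W^{1,1})$, the equation for $\partial_t n$, the characteristic bound $|X(t;0,x)|\le |x|+CT$ for the weighted $L^1$ estimate, Lemma \ref{ClassIneq} for $\partial_t E$, Lemma \ref{PropSolPoi} for $E\in L^2$, and Gronwall for uniqueness. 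Your closing remark on the logarithmic loss forcing the argument to run in $L^\infty$ with $W^{1,\infty}$ recovered a posteriori is exactly the point of the construction.
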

\begin{remark}
\label{HighOrder}
From the estimates \eqref{InftyNormGrad2D}, \eqref{LemmaGradE}, and \eqref{IneqGradSolChar} we realize that there is a relation in the $L^\infty$-norm between the following quantities
$
\nabla_x  n,\,\, \partial_x  X,\,\, \partial_x  E.
$
In the same way, we can extend this relation to the higher order derivative
$
\partial_x ^2 n,\,\, \partial_x ^2 X,\,\, \partial_x ^2 E
$
by noting that the inequality \eqref{LemmaGradE} can apply to estimate $\partial_x ^2 U$ given by
\[
\|\partial_x ^2 U\|_{L^\infty(\R^2)}\leq C ( 1+\|\nabla_x \rho\|_{L^\infty(\R^2)}) (1+\ln ^+\|\partial^2 _x \rho\|_{L^\infty(\R^2)} )+\|\nabla_x \rho\|_{L^1(\R^2)}).
\]
\end{remark}

By similar arguments we can prove further regularity results for the strong solution of the limit model. The proof is standard and is left to the reader.
\begin{pro}
\label{Regularity}
Let $T>0$. Let $B\in C^3_b(\R^2)$ be a smooth magnetic field, such that $\inf_{x\in\R^2} B(x) = B_0 >0$ and the fixed background density $D$ verifies $|x|D\in L^1(\R^2)$, $D\in W^{1,1}(\R^2)\cap W^{1,\infty}(\R^2)$. Assume that the initial condition $n_\mathrm{in}$ belongs to $W^{2,1}(\R^2)\cap W^{2,\infty}(\R^2)$ and the background density $D$ lies in   $W^{2,\infty}(\R^2)$. Then the global
in time strong solution $(n, E[n])$ constructed in Proposition \ref{main_sol_Lim} satisfies
\begin{align*}
\partial^2_x n \in L^\infty(0,T; L^\infty(\R^2)),\,\, E[n] \in W^{2,\infty}((0,T)\times\R^2),\\
\partial_t \nabla_x n \in L^\infty((0,T)\times\R^2),\,\, \partial_t^2 n\in L^\infty((0,T)\times\R^2).
\end{align*}
\end{pro}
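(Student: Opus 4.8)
The plan is to bootstrap the first-order a priori estimates of Lemmas \ref{BoInftyLimMod2D} and \ref{BoL1LimMod2D} up to the second order, exactly along the lines indicated in Remark \ref{HighOrder}, and then to read off the time regularity algebraically from the transport equation \eqref{EquivLimMo2D}. Write $V(t,y) = \frac{{}^\perp E[n](t,y)}{B(y)} - \sigma\frac{{}^\perp\nabla\omega_c(y)}{\omega_c^2(y)}$ for the drift field, so that \eqref{equ:CharLimMo2D} reads $\frac{\mathrm{d}}{\mathrm{dt}}X = V(t,X)$. Since Proposition \ref{main_sol_Lim} already delivers $E[n]\in W^{1,\infty}((0,T)\times\R^2)$, $n\in W^{1,\infty}((0,T)\times\R^2)$ and $\nabla_x n\in L^\infty(0,T;L^1(\R^2))$, every first-order quantity is already bounded on $[0,T]$; by \eqref{EstGradCharac} this includes $\partial_x X(0;t,\cdot)$, so only the second-order objects remain to be controlled.

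First I would differentiate \eqref{SolCharac2D} twice in $x$. Because $n_\mathrm{in}\in W^{2,\infty}(\R^2)$ and $B\in C^3_b(\R^2)$ with $\inf B = B_0>0$, the map $y\mapsto n_\mathrm{in}(y)/B(y)$ lies in $W^{2,\infty}$, and using that $\partial_x X(0;t,\cdot)$ is bounded one gets the pointwise bound $|\partial_x^2 n(t,x)| \leq C(1 + |\partial_x^2 X(0;t,x)|)$. Next, differentiating the variational equation for $\partial_x X$ once more yields the linear ODE
\[
\frac{\mathrm{d}}{\mathrm{dt}}\partial_x^2 X = (\partial_y V)(t,X)\,\partial_x^2 X + (\partial_y^2 V)(t,X):\partial_x X\otimes\partial_x X,
\]
whose homogeneous coefficient is controlled by $\|E[n]\|_{W^{1,\infty}}$ (bounded), and whose source involves $\partial_y^2 V$, that is $\partial_x^2 E[n]$ together with derivatives of $B$ up to third order (covered by $B\in C^3_b$). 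Integrating and applying Gronwall gives $\|\partial_x^2 X(0;t,\cdot)\|_{L^\infty} \leq C\int_0^t(\|\partial_x^2 E[n](s)\|_{L^\infty}+1)\,\mathrm{d}s$.

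The loop is closed with the second-order Calderon-Zygmund estimate recorded in Remark \ref{HighOrder}: applied to $\rho = n-D$, and using that $\nabla_x n$ is bounded in $L^\infty\cap L^1$ and that $D\in W^{2,\infty}$, it gives $\|\partial_x^2 E[n](t)\|_{L^\infty} \leq C(1 + \ln^+\|\partial_x^2 n(t)\|_{L^\infty})$. Setting $N(t) = \|\partial_x^2 n(t)\|_{L^\infty}$ and chaining the three bounds produces the logarithmic integral inequality $N(t) \leq C(1 + \int_0^t (1 + \ln^+ N(s))\,\mathrm{d}s)$. The main obstacle of the whole argument sits precisely here: the logarithmic loss forbids a linear Gronwall closure, and one must instead invoke an Osgood / log-Gronwall argument (as in Yudovich-type estimates for 2D Euler) to conclude that $N$ stays finite on all of $[0,T]$, since $\int^\infty \mathrm{d}\xi/(1+\ln^+\xi) = +\infty$. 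This yields $\partial_x^2 n\in L^\infty(0,T;L^\infty(\R^2))$ and hence $E[n]\in W^{2,\infty}$ in the space variable. To make the differentiations of the flow rigorous one would carry out these bounds uniformly on the smooth iterates $E_k, n^{E_k}$ of the fixed-point construction, where the characteristics are genuinely $C^2$, and pass to the limit.

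Finally, the time regularity follows from the equation. Writing \eqref{EquivLimMo2D} as $\partial_t n = -\Divx(nV) = -V\cdot\nabla_x n - n\,\Divx V$ and differentiating in $x$, every resulting term is a product of quantities now known to be bounded ($\partial_x^2 n$, $\partial_x^2 E[n]$, the bounded derivatives of $B$, and $n$), so $\partial_t\nabla_x n\in L^\infty((0,T)\times\R^2)$. For $\partial_t^2 n$ I would differentiate $\partial_t n = -\Divx(nV)$ once more in time; the only genuinely new object is $\partial_x\partial_t E[n]$, which I would bound by applying the first-order inequality \eqref{LemmaGradE} to $\rho = \partial_t n$, using that $\partial_t n\in L^\infty\cap L^1$ and $\nabla_x\partial_t n\in L^\infty$ have just been obtained. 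Assembling these bounds gives $\partial_t^2 n\in L^\infty((0,T)\times\R^2)$, which completes the proof.
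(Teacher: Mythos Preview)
Your proposal is correct and follows exactly the route the paper intends: the paper does not write out a proof for this proposition, stating only that it follows ``by similar arguments'' and pointing to Remark \ref{HighOrder}, which is precisely your starting point. Your bootstrapping scheme---differentiate \eqref{SolCharac2D} twice, control $\partial_x^2 X$ by an ODE whose source involves $\partial_x^2 E$, close the loop via the second-order log-potential estimate of Remark \ref{HighOrder}, then read off time regularity from \eqref{EquivLimMo2D}---is the natural second-order analogue of Lemmas \ref{BoInftyLimMod2D}--\ref{ClosedSet2D}.

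One remark on the closure: the Osgood/log-Gronwall step you highlight as ``the main obstacle'' is in fact avoidable. If instead of tracking $N(t)=\|\partial_x^2 n(t)\|_{L^\infty}$ you track $\|\partial_x^2 E[n](t)\|_{L^\infty}$, the chain becomes
\[
\|\partial_x^2 n(t)\|_{L^\infty}\leq C\Bigl(1+\int_0^t\|\partial_x^2 E[n](s)\|_{L^\infty}\,\mathrm{d}s\Bigr),\qquad
\|\partial_x^2 E[n](t)\|_{L^\infty}\leq C\bigl(1+\ln^+\|\partial_x^2 n(t)\|_{L^\infty}\bigr),
\]
and since $\ln^+(C(1+I))\leq C'+I$, this collapses to a \emph{linear} Gronwall inequality for $\|\partial_x^2 E[n]\|_{L^\infty}$, exactly as in the first-order proof of Lemma \ref{ClosedSet2D}. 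Your Osgood argument is valid, but this simpler closure is what the phrase ``similar arguments'' most directly suggests.
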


In the rest of this section, we provide some estimates on $\|\ln n\|_{L^\infty(0,T;W^{2,\infty}(\R^2))}$ if we assume that $\ln n_\mathrm{in}$ belongs to $W^{2,\infty}(\R^2)$. Let us start with the estimate of $\|\ln n\|_{L^\infty((0,T)\times\R^2)}$ in the lemma below.
\begin{lemma}
\label{BoundLoga}
Assume that $\ln n_\mathrm{in}\in L^\infty((0,T)\times\R^2)$ and $B\in C_b(\R^2)$ with $\inf_{x\in\R^2} B(x) = B_0 >0$.
Then, there exists a constant $C > 0$ depends only on $\|\ln n_\mathrm{in}\|_{L^\infty((0,T)\times\R^2)}, \|B\|_{L^\infty(\R^2)}, B_0$ and $T>0$ such that
\[
\|\ln n\|_{L^\infty(\R^2)} \leq C ,\,\,t\in [0,T].
\]
\end{lemma}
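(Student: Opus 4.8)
The plan is to read the estimate directly off the characteristic representation formula \eqref{SolCharac2D}. Since $B(x)\geq B_0>0$ everywhere and $n_\mathrm{in}>0$ (which is implicit in the assumption $\ln n_\mathrm{in}\in L^\infty$, forcing $n_\mathrm{in}$ to be bounded away from both $0$ and $+\infty$), the quantity $n(t,x)$ given by \eqref{SolCharac2D} is strictly positive, so one may take logarithms. First I would write, for every $(t,x)\in[0,T]\times\R^2$,
\[
\ln n(t,x)=\ln n_\mathrm{in}(X(0;t,x))+\ln B(x)-\ln B(X(0;t,x)),
\]
which is nothing but the logarithm of \eqref{SolCharac2D}.

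Next I would estimate the three terms separately. For the first term, since $X(0;t,x)\in\R^2$ and $n_\mathrm{in}$ is $t$-independent, its absolute value is bounded by $\|\ln n_\mathrm{in}\|_{L^\infty}$ uniformly in $t$ and $x$; the precise value of the argument $X(0;t,x)$ is irrelevant for an $L^\infty$ bound. For the second and third terms, the two-sided inequality $B_0\leq B(\cdot)\leq\|B\|_{L^\infty(\R^2)}$ gives $\ln B(\cdot)\in[\ln B_0,\ln\|B\|_{L^\infty(\R^2)}]$, whence both $|\ln B(x)|$ and $|\ln B(X(0;t,x))|$ are bounded by $\max\left\{\left|\ln B_0\right|,\left|\ln\|B\|_{L^\infty(\R^2)}\right|\right\}$.

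Collecting these three bounds yields
\[
\|\ln n(t)\|_{L^\infty(\R^2)}\leq \|\ln n_\mathrm{in}\|_{L^\infty}+2\max\left\{\left|\ln B_0\right|,\left|\ln\|B\|_{L^\infty(\R^2)}\right|\right\},\quad t\in[0,T],
\]
which is a constant depending only on $\|\ln n_\mathrm{in}\|_{L^\infty}$, $B_0$, $\|B\|_{L^\infty(\R^2)}$ and (trivially) $T$. There is essentially no hard step here: the whole estimate reduces to the pointwise identity afforded by \eqref{SolCharac2D} together with the positivity bounds on $B$. The only point to check with any care is the uniformity in $t$, which is immediate, because the flow $X(0;t,x)$ merely relocates the point at which the time-independent functions $n_\mathrm{in}$ and $B$ are evaluated, and both are controlled in $L^\infty$ independently of that point.
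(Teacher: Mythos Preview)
Your proof is correct and follows essentially the same approach as the paper: both take the logarithm of the characteristic representation \eqref{SolCharac2D} to obtain $\ln n(t,x)=\ln n_\mathrm{in}(X(0;t,x))+\ln B(x)-\ln B(X(0;t,x))$ (the paper phrases this as $\ln(n/B)(t,x)=\ln(n_\mathrm{in}/B)(X(0;t,x))$), and then bound each piece using $\ln n_\mathrm{in}\in L^\infty$ and $B_0\leq B\leq\|B\|_{L^\infty}$. Your version is slightly more explicit in displaying the final constant, but the argument is the same.
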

\begin{proof}$\;$\\
From the equation \eqref{EquivLimMo2D}, we deduce that
\begin{align}
\label{equ:LogaLimMod2D}
{\partial _t}\ln \left(\dfrac{n}{B}\right)  + \left( {\frac{{{}^ \bot E}}{B} -\sigma \frac{{{}^ \bot \nabla {\omega _c}}}{{\omega _c^2}}} \right) \cdot {\nabla }\ln\left(\dfrac{n}{B}\right) = 0.
\end{align}
Thanks to the formula of the characteristic solution \eqref{SolCharac2D}, we get
\begin{align}
\label{SolLogChar}
\ln\left(\dfrac{n}{B}\right)(t,x) =  \ln\left(\dfrac{n}{B}\right)(0,X(0,t,x))
\end{align}
which gives the estimate in the lemma.
\end{proof}

We next provide higher-order estimates on $\ln n$.
\begin{lemma}
\label{BoundLogHigh}
Assume that $\ln n_\mathrm{in}\in W^{2,\infty}((0,T)\times\R^2)$ and $B\in C^3_b(\R^2)$ with $\inf_{x\in\R^2} B(x) = B_0 >0$. Then we have
\[
\| \partial_t \ln n  \|_{L^\infty(\R^2)} +  \|\nabla_x \ln n\|_{L^\infty(\R^2)} \leq C_1,\,\, t\in [0,T],
\]
\[
\|\partial ^2 _x \ln n\|_{L^\infty(\R^2)} + \|\partial_{t}\nabla_{x}  \ln n\|_{L^\infty(\R^2)}\leq C_2,\,\, t\in [0,T],
\]
where the constants $C_k> 0$, $k=1, 2$ depend only on $\ln n_{\mathrm{in}}$, $ B $ et $B_0$.
\end{lemma}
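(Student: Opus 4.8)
The plan is to reduce everything to the transport structure already exhibited in \eqref{equ:LogaLimMod2D}--\eqref{SolLogChar}. Writing $\ln n = \ln(n/B) + \ln B$ and noting that $B\in C^3_b(\R^2)$ with $\inf_{x} B = B_0>0$ forces $\ln B\in W^{3,\infty}(\R^2)$ (all derivatives up to order three are bounded in terms of $\|B\|_{C^3_b}$ and $B_0$), it suffices to bound the space--time derivatives of $g:=\ln(n/B)$; the $L^\infty$ bound on $g$ itself is already granted by Lemma \ref{BoundLoga}. The whole argument rests on the representation $g(t,x)=g_\mathrm{in}(X(0;t,x))$ with $g_\mathrm{in}:=\ln(n_\mathrm{in}/B)\in W^{2,\infty}(\R^2)$, so that the regularity of $g$ is controlled by the regularity of the backward flow $X(0;t,\cdot)$.

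First I would treat the first-order estimates. Differentiating $g(t,x)=g_\mathrm{in}(X(0;t,x))$ in $x$ gives $\nabla_x g(t,x)={}^t(\partial_x X)(0;t,x)\,\nabla g_\mathrm{in}(X(0;t,x))$; since $E\in L^\infty(0,T;W^{1,\infty}(\R^2))$ by Proposition \ref{main_sol_Lim}, the bound \eqref{EstGradCharac} makes $\partial_x X$ uniformly bounded on $[0,T]$, whence $\|\nabla_x g\|_{L^\infty}\le C_1$. The time derivative is then read off from the equation itself: \eqref{equ:LogaLimMod2D} gives $\partial_t g=-U\cdot\nabla_x g$ with $U:=\frac{{}^\perp E}{B}-\sigma\frac{{}^\perp\nabla\omega_c}{\omega_c^2}$, and since $\|U\|_{L^\infty}\le C$ (from the $L^\infty$ bound on $E$ and the smoothness of $B$) we get $\|\partial_t g\|_{L^\infty}\le C_1$. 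Adding back $\ln B$ yields the first estimate of the lemma.

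For the second-order estimates the plan is the same, one order higher. Differentiating the representation twice produces, schematically, $\partial_x^2 g={}^t(\partial_x X)\,(\partial^2 g_\mathrm{in})(X)\,(\partial_x X)+(\partial_x^2 X)\,\nabla g_\mathrm{in}(X)$, so I need a uniform bound on the second derivative $\partial_x^2 X(0;t,\cdot)$ of the flow. I would obtain it by differentiating the characteristic system \eqref{equ:CharLimMo2D} twice in $x$: $\partial_x^2 X$ solves a linear ODE whose coefficients involve $\partial_x U$ and whose source involves $\partial_x^2 U$ together with quadratic terms $(\partial_x U)(\partial_x X)^2$, all bounded on $[0,T]$ because $E\in W^{2,\infty}((0,T)\times\R^2)$ (Proposition \ref{Regularity}) and $B\in C^3_b$ (so $\partial_x^2({}^\perp\nabla\omega_c/\omega_c^2)$ is bounded). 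Gronwall's inequality then gives $\|\partial_x^2 X(0;t,\cdot)\|_{L^\infty}\le C_2$, and combined with $g_\mathrm{in}\in W^{2,\infty}$ and the bound on $\partial_x X$ this yields $\|\partial_x^2 g\|_{L^\infty}\le C_2$. The mixed derivative follows by differentiating $\partial_t g=-U\cdot\nabla_x g$ in $x$, namely $\partial_t\nabla_x g=-({}^t\partial_x U)\,\nabla_x g-(U\cdot\nabla_x)\nabla_x g$, and bounding each term via $\|\partial_x U\|_{L^\infty}$, $\|\nabla_x g\|_{L^\infty}\le C_1$, $\|U\|_{L^\infty}$ and $\|\partial_x^2 g\|_{L^\infty}\le C_2$. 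Restoring $\ln B$ finishes the proof.

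The main obstacle is the uniform control of $\partial_x^2 X$ in the third step: unlike the first-order bound \eqref{EstGradCharac}, which only needed $E\in W^{1,\infty}$, the second-order flow estimate genuinely requires $\partial_x^2 E\in L^\infty$, i.e.\ the extra regularity of Proposition \ref{Regularity}, as well as $B\in C^3_b$ to keep the magnetic-drift term of class $C^2$. Care must also be taken to track constants through the Gronwall step so that $C_1$ and $C_2$ depend only on $\ln n_\mathrm{in}$, $B$ and $B_0$ (together with the fixed data $T,\sigma,q,m$), as claimed.
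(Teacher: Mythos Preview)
Your proposal is correct and follows essentially the same approach as the paper: both reduce to $g=\ln(n/B)$ via the characteristic formula \eqref{SolLogChar}, bound $\nabla_x g$ through \eqref{EstGradCharac}, read $\partial_t g$ from the transport equation \eqref{equ:LogaLimMod2D}, and control $\partial_x^2 g$ via the second-order flow bound of Remark~\ref{HighOrder}/Proposition~\ref{Regularity}. The only (harmless) deviation is in the mixed derivative: the paper differentiates the characteristic representation \eqref{GradSolLogChar} in $t$ (invoking bounds on $\partial_t X$ and $\partial_x\partial_t X$), whereas you differentiate the transport identity $\partial_t g=-U\cdot\nabla_x g$ in $x$---this is slightly more direct but equivalent in spirit and uses the same ingredients.
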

\begin{proof}$\;$\\
From the equation \eqref{SolLogChar} we have
\begin{align}
\label{GradSolLogChar}
 \nabla_x \ln\left(\dfrac{n}{B}\right) = ({}^{t}\partial_x X)(0;t,x)\left(  \nabla \ln n_\mathrm{in} \right)(X(0;t,x)) -  ({}^{t}\partial_x X)(0;t,x) \left( \dfrac{\nabla B}{B} \right)(X(0;t,x)).
\end{align}
By \eqref{EstGradCharac}, the derivative in $x$ of $X(0;t,x)$ is bounded in $L^\infty((0,T)\times\R^2)$, thus we get the $L^\infty$ bound for the $\nabla_x \ln n$. Moreover, from \eqref{equ:LogaLimMod2D} we deduce that $\partial_t \ln n \in L^\infty((0,T)\times\R^2)$, together with the above discussion gives the first assertion in the lemma. \\
We next estimate $\|\partial ^2_x \ln n\|_{L^\infty(\R^2)}$. We denote by $\partial_i = \partial_{x_i}$ for $i = 1, 2$. Taking the derivative with respect to $x_i$ in the equation \eqref{GradSolLogChar} we get
\begin{align*}
\partial_i \nabla_x \ln\left(\dfrac{n}{B}\right) &= {}^{t}\left[\partial_i \partial_x X\right](0;t,x)\left(  \nabla \ln n_\mathrm{in} \right)(X(0;t,x))\\ &+ {}^{t}\partial_x X(0;t,x)\left\{ \partial_x^2  (\ln n_\mathrm{in} )(X(0;t,x))(\partial_i X)(0;t,x)\right\}\\
&- {}^{t}\left[\partial_i \partial_x X\right](0;t,x)\left( \dfrac{ \nabla B}{B} \right)(X(0;t,x)) \\ &+ {}^{t}\partial_x X(0;t,x)\left( \partial_x \left( \dfrac{ \nabla B}{B} \right)\right)(X(0;t,x))(\partial_i X)(0;t,x).
\end{align*}
By Remark \ref{HighOrder} it is well known that $\partial^2_x X(0;t,x)\in L^\infty((0,T)\times\R^2)$. Hence we obtain the $L^\infty$ bound for $\partial^2_x \ln n$.\\
Finally we estimate $\| \partial_{t}\nabla_{x}  \ln n \|_{L^\infty(\R^2)}$. Taking the time derivative in \eqref{GradSolLogChar} yields
\begin{align*}
\partial_t \nabla_x\left(\ln \dfrac{n}{B}  \right) &= ({}^{t}\partial_x (\partial_{t}X))(0;t,x)\left(  \nabla \ln n_\mathrm{in} \right)(X(0;t,x))\\
& + ({}^{t}\partial_x X)(0;t,x)  \partial ^2 _x \left(\ln n_\mathrm{in} \right)(X(0;t,x))(\partial_{t}X)(0;t,x)\\
& -({}^{t}\partial_x (\partial_{t}X))(0;t,x)\left( \dfrac{\nabla B}{B} \right)(X(0;t,x)) \\
&+ ({}^{t}\partial_x X)(0;t,x) \left( \partial_x \left(\dfrac{\nabla B}{B}\right) \right)(X(0;t,x))(\partial_{t}X)(0;t,x).
\end{align*} 
The $L^\infty$ bounds of $\partial_t X(0;t,x)$ and $\partial_x(\partial_t)X(0;t,x)$ are derived from the equation \eqref{equ:CharLimMo2D} and the regularity of $E$ and $B$. Combining two of the above discussion yields the second estimate in the lemma.
\end{proof}
\section{Convergence results}
We now concentrate on the asymptotic behavior as $\eps \searrow 0$ of the family of weak solutions $(f^\eps, E[f^\eps])_{\eps>0}$ of the Vlasov-Poisson-Fokker-Planck system \eqref{VPFP2d-Scale}, \eqref{Poisson2D-Scale}, and \eqref{Initial2D-Scale} and we establish rigorously the connection to the fluid model \eqref{equ:LimMod2D}, \eqref{LimPoisson2D}, and \eqref{LimInit2D}.
We justify the convergence of the solutions $(n[f^\eps], E[f^\eps])_{\eps>0}$ of the system \eqref{ModConcen2D} towards the solution $(n, E[n])$ of the limit problem  when $\eps$
goes to zero by performing the balance of the relative entropy between
$n^\eps$ and $n$. The proof requires some regularity properties of the limit solutions as well as the convergence of the initial data.\\
We intend to estimate the modulated energy $\calE[n^\eps(t)|n(t)]$, so we will write as
\begin{align}
\label{equ:EntropyDen2D}
\calE[n^\eps|n] &== \intxtt{\sigma n h\left(\dfrac{n^\eps}{n}\right)} + \dfrac{\epsilon_0}{2m}\intxtt{ |\nabla_x \Phi[n^\eps] -\nabla_x\Phi[n]|^2 }\nonumber\\
&= \intxtt{(\sigma n^\eps \ln n^\eps +\frac{\epsilon_0}{2m} |\nabla_x\Phi[n^\eps]|^2)} - \intxtt{(\sigma n \ln n +\frac{\epsilon_0}{2m} |\nabla_x\Phi[n]|^2)}\nonumber\\
&- \intxtt{\left\{ \sigma(1+\ln n)+ \dfrac{q}{m}\Phi[n]\right\}(n^\eps -n)}\nonumber\\
&:= \calE[n^\eps] -\calE[n] - \intxtt{k[n](n^\eps -n)},
\end{align}
where we have been denoted by $k[n] = \sigma(1+\ln n)+ \dfrac{q}{m}\Phi[n]$.
We introduce as well the modulated energy of $f^\eps$ with respect to $n^\eps M$, given by
\begin{align*}
\sigma&\intxvtt{n^\eps M h\left(\dfrac{f^\eps}{n^\eps M} \right)}+ \dfrac{\epsilon_0}{2m}\intxtt{\underbrace{|\nabla_x \Phi[f^\eps]-\nabla_x\Phi[n^\eps M]|^2}_{=0}}\\
&= \sigma \intxvtt{f^\eps \ln f^\eps - f^\eps \ln n^\eps + f^\eps \ln (2\pi\sigma)+ f^\eps \frac{|v|^2}{2\sigma}}\\
&= \intxvtt{\sigma f^\eps \ln f^\eps + f^\eps \frac{|v|^2}{2}} + \dfrac{\epsilon_0}{2m}\intxtt{|\nabla_x \Phi[f^\eps]|^2} \\
&- \intxtt{\sigma n^\eps \ln n^\eps} - \dfrac{\epsilon_0}{2m}\intxtt{|\nabla_x \Phi[n^\eps]|^2} + \sigma \ln(2\pi\sigma)\intxvtt{f^\eps}\\
&= \calE[f^\eps] - \calE[n^\eps]+ \sigma \ln(2\pi\sigma)\intxvtt{f^\eps}.
\end{align*}
Thanks to the free energy balance and mass conservation of the equation \eqref{VPFP2d-Scale} provided by Proposition \ref{WeakFreeEnergy2D} one gets
\begin{align}
\label{equ:BalanEnerDens2D}
\calE[n^\eps(t)] - \calE[n^\eps(0)] &+ \sigma\intxvtt{n^\eps(t) M h\left(\dfrac{f^\eps(t)}{n^\eps(t) M} \right)}\\
&-\sigma\intxvtt{n^\eps(0) M h\left(\dfrac{f^\eps(0)}{n^\eps(0) M} \right)}\nonumber\\
&= - \dfrac{1}{\eps\tau}\int_{0}^{t}\intxvtt{\dfrac{|\sigma \nabla_v f^\eps + v f^\eps|^2}{f^\eps}}\mathrm{d}s\nonumber.
\end{align}
Thanks to Proposition \ref{ConserveEnerLim} and together with \eqref{equ:EntropyDen2D}, \eqref{equ:BalanEnerDens2D} leads to
\begin{align}
\label{BalModEnerDens2D}
&\calE[n^\eps(t)|n(t)] +  \sigma\intxvtt{n^\eps(t) M h\left(\dfrac{f^\eps(t)}{n^\eps(t) M} \right)} + \dfrac{1}{\eps\tau}\int_{0}^{t}\intxvtt{\dfrac{|\sigma \nabla_v f^\eps + v f^\eps|^2}{f^\eps}}\mathrm{d}s\nonumber\\
&= \calE[n^\eps(0)|n(0)] +  \sigma\intxvtt{n^\eps(0) M h\left(\dfrac{f^\eps(0)}{n^\eps(0) M} \right)} - \int_{0}^{t}\dfrac{\mathrm{d} }{\mathrm{ds}}\intxtt{k[n](n^\eps -n)}\mathrm{d}s.
\end{align}
The next task is to evaluate the time derivative of $-\frac{\mathrm{d} }{\mathrm{dt}}\intxtt{k[n](n^\eps -n)}$. To start establishing, let us rewrite the model \eqref{ModConcen2D} for the concentration $n^\eps$ as following
\begin{equation}
\label{EquivModCon2D}
\partial_t n^\eps + \Divx A[n^\eps] = \Divx F^\eps,
\end{equation}
where the flux $A[n^\eps]$ is defined by $A[n^\eps] = n^\eps\left[\frac{^\perp E^\eps}{B(x)} -\sigma \frac{^\perp \nabla\omega_c(x)}{\omega_c^2(x)} \right]$. Similarly, the limit model \eqref{equ:LimMod2D} for the limit concentration $n$ can be rewritten as
\begin{equation}
\label{EquivLimMod2D}
\partial_t n + \Divx A[n] = 0,
\end{equation}
with the flux $A[n] = n\left[\frac{^\perp E[n]}{B(x)} -\sigma \frac{^\perp \nabla\omega_c(x)}{\omega_c^2(x)} \right]$. By direct formal computations, we get
\begin{align*}
-\dfrac{\mathrm{d} }{\mathrm{dt}}\intxtt{k[n](n^\eps-n)}\nonumber &=- \intxtt{\left(\sigma \dfrac{\partial_t n}{n}+\dfrac{q}{m}\partial_t \Phi[n] \right)(n^\eps -n)}-\intxtt{k[n](\partial_t n^\eps -\partial_t n)}\\
&= -\intxtt{\partial_t n \left(\sigma \dfrac{n^\eps - n}{n} + \dfrac{q}{m}(\Phi[n^\eps]-\Phi[n]) \right)}\\
& -\intxtt{\nabla_x k[n] \cdot \left(   A[n^\eps] - A[n]  -  F^\eps \right)}.
\end{align*}
We shall establish this equality for the weak solution of \eqref{EquivModCon2D} and the strong solution of \eqref{EquivLimMod2D}.
\begin{lemma}
\label{EvoluFirstTerm}
With the notations in \eqref{EquivModCon2D}, \eqref{EquivLimMod2D} we have the equality
\begin{align*}
&-\dfrac{\mathrm{d} }{\mathrm{dt}}\intxtt{\sigma(1+ \ln n)(n^\eps -n)} = - \sigma\intxtt{(n^\eps -n)\partial_t \ln n} 
- \intxtt{\left( A[n^\eps] - A[n]  \right)\cdot[\sigma\nabla_x(1+\ln n)]}\\
&+ \dfrac{\mathrm{d} }{\mathrm{dt}} \intxtt{\eps\dfrac{ ^\perp j^\eps}{q\omega_c(x)}\cdot \nabla_x[\sigma(1+\ln n)]} - \intxtt{\eps\dfrac{^\perp j^\eps}{q\omega_c(x)}\partial_{t}\nabla_x[\sigma(1+\ln n)]} \\
&+\dfrac{1}{\tau} \intxtt{\dfrac{^\perp j^\eps}{q\omega_c(x)}\cdot \nabla_x [\sigma(1+\ln n)]} + \intxtt{\left(\intvtt{( v\otimes v -\sigma I_2) f^\eps}\right): \partial_{x}\left[ \dfrac{^\perp \nabla_x[\sigma(1+\ln n)]}{\omega_c(x)} \right]}.
\end{align*}
\end{lemma}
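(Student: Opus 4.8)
The plan is to make rigorous the direct formal computation displayed just before the statement, so the algebra is essentially dictated and the genuine work lies in justifying each manipulation for the merely weak density $n^\eps$. First I would apply the Leibniz rule in time to split $\frac{\mathrm{d}}{\mathrm{dt}}\intxtt{\sigma(1+\ln n)(n^\eps-n)}$ into $\intxtt{\sigma\,\partial_t\ln n\,(n^\eps-n)}$ and $\intxtt{\sigma(1+\ln n)\,\partial_t(n^\eps-n)}$. After transferring the global minus sign, the first of these is already the desired term $-\sigma\intxtt{(n^\eps-n)\partial_t\ln n}$.

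For the second piece I would subtract \eqref{EquivLimMod2D} from \eqref{EquivModCon2D} to obtain $\partial_t(n^\eps-n)=-\Divx(A[n^\eps]-A[n])+\Divx F^\eps$, and then integrate by parts in $x$ so that both divergences fall on $\nabla_x[\sigma(1+\ln n)]=\sigma\nabla_x(1+\ln n)$. Once the sign is flipped this produces the advection term $-\intxtt{(A[n^\eps]-A[n])\cdot\sigma\nabla_x(1+\ln n)}$ together with a source contribution $\intxtt{\sigma\nabla_x(1+\ln n)\cdot F^\eps}$ that remains to be unfolded.

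That source term I would split along the three summands of $F^\eps=\frac{1}{\omega_c(x)}(\frac{\eps\,\partial_t{}^\perp j^\eps}{q}+\frac1\tau\frac{{}^\perp j^\eps}{q}+{}^\perp\Divx\intvtt{(v\otimes v-\sigma I_2)f^\eps})$. The friction summand yields at once $\frac1\tau\intxtt{\frac{{}^\perp j^\eps}{q\omega_c(x)}\cdot\nabla_x[\sigma(1+\ln n)]}$, since the scalar $\omega_c$ commutes through and the dot product is symmetric. For the inertial summand carrying $\eps\,\partial_t{}^\perp j^\eps$ I would integrate by parts in time through the backward Leibniz rule (noting $\eps,q,\omega_c$ are time independent), giving $\frac{\mathrm{d}}{\mathrm{dt}}\intxtt{\frac{\eps\,{}^\perp j^\eps}{q\omega_c(x)}\cdot\nabla_x[\sigma(1+\ln n)]}$ minus $\intxtt{\frac{\eps\,{}^\perp j^\eps}{q\omega_c(x)}\cdot\partial_t\nabla_x[\sigma(1+\ln n)]}$. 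For the momentum-flux summand I would invoke the pointwise identity $a\cdot{}^\perp b=-{}^\perp a\cdot b$ to rewrite $\frac{\nabla_x[\sigma(1+\ln n)]}{\omega_c(x)}\cdot{}^\perp\Divx S^\eps$ as $-\frac{{}^\perp\nabla_x[\sigma(1+\ln n)]}{\omega_c(x)}\cdot\Divx S^\eps$ with $S^\eps:=\intvtt{(v\otimes v-\sigma I_2)f^\eps}$, and then integrate by parts the matrix divergence to reach $\intxtt{S^\eps:\partial_x[\frac{{}^\perp\nabla_x[\sigma(1+\ln n)]}{\omega_c(x)}]}$ via the Frobenius symmetry $A:B=B:A$. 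Gathering the six resulting terms reproduces the claimed identity.

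The main obstacle is not this bookkeeping but its justification, because $n^\eps$ solves \eqref{EquivModCon2D} only weakly. I would therefore carry out the argument within the weak formulation, using as multiplier the smooth function $\sigma(1+\ln n(t,\cdot))$, whose spatial and mixed space-time derivatives are bounded by Lemmas \ref{BoundLoga} and \ref{BoundLogHigh} and the regularity of $n$ from Propositions \ref{main_sol_Lim} and \ref{Regularity}. Every spatial integration by parts would be performed against the cutoff $\chi_R$ of Lemma \ref{BalanceKin2D} and passed to the limit $R\to\infty$ by dominated convergence, the boundary contributions vanishing since, at fixed $\eps$, the quantities $n^\eps\in L^\infty(0,T;L^1\cap L^\infty(\R^2))$, $j^\eps\in L^\infty(0,T;L^2(\R^2))$ and $S^\eps\in L^\infty(0,T;L^1(\R^2))$ are paired with the bounded fields $\nabla_x\ln n$, $\partial_t\nabla_x\ln n$ and $\partial_x(\frac{{}^\perp\nabla_x\ln n}{\omega_c})$. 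The time integration by parts in the inertial summand is legitimate because $\eps\,\partial_t j^\eps$ is controlled through the momentum equation \eqref{MomentLaw}, which expresses it as a combination of fluxes with the required integrability; this is the one point I would check most carefully.
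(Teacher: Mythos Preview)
Your proposal is correct and follows essentially the same route as the paper: both arguments test the distributional identity $\partial_t(n^\eps-n)+\Divx(A[n^\eps]-A[n])=\Divx F^\eps$ against $\theta(t)\,\sigma(1+\ln n)\,\chi_R(x)$, invoke Lemmas \ref{BoundLoga}--\ref{BoundLogHigh} for the $L^\infty$ bounds on $\partial_t\ln n$, $\nabla_x\ln n$, $\partial_t\nabla_x\ln n$, $\partial_x^2\ln n$, and pass $R\to\infty$ by dominated convergence using $(1+|v|^2)f^\eps\in L^\infty_t L^1_{x,v}$ and $E^\eps\in L^\infty_{t,x}$. The only cosmetic difference is that the paper writes down the weak formulation \eqref{WeakDiffDensi2D} with the time integration by parts on the inertial piece $\eps\,\partial_t{}^\perp j^\eps$ already performed (it is inherited from the kinetic weak formulation \eqref{WeakSolScaleVPFP} via Corollary \ref{NewConcen}), so the point you flag as the one to ``check most carefully'' is in fact automatic once you work at that level rather than appealing separately to \eqref{MomentLaw}.
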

\begin{proof}$\;$\\
From \eqref{EquivModCon2D}, \eqref{EquivLimMod2D}, we find $n^\eps -n$ satisfying the following equation in the sense of distribution
\[
\partial_{t} (n^\eps -n) + \Divx\left( A[n^\eps] - A[n]  \right) = \Divx F^\eps.
\]
Then for any test function $\varphi \in C^1 _0 \left( [0,T[\times\R^2 \right) $ we have
\begin{align}
&\intTxtt{(n^\eps - n)\partial_{t}\varphi} + \intTxtt{\left( A[n^\eps] - A[n]  \right)\cdot\nabla_x\varphi} 
+ \intTxtt{\eps \dfrac{^\perp j^\eps}{q \omega_c (x)}\cdot \partial_{t}\nabla_{x}\varphi}\nonumber \\ &-\dfrac{1}{\tau}\intTxtt{\dfrac{^\perp j^\eps}{q\omega_c (x)}\cdot\nabla_{x}\varphi}
-\intTxtt{{\intvtt{\left( v\otimes v - \sigma I_2  \right)f^\eps}}:\partial_{x}\left( \dfrac{^\perp \nabla_{x} \varphi }{\omega_c(x)}\right)}\nonumber\\
&+ \intxtt{\eps \dfrac{^\perp j^\eps _\mathrm{in}}{q\omega_c(x)}\cdot \nabla_{x}\varphi(0,x)} + \intxtt{(n^\eps _\mathrm{in} - n_\mathrm{in})\varphi(0,x)} =0 \label{WeakDiffDensi2D}.
\end{align}
We test  $\varphi(t,x)$ where $\varphi(t,x) = \theta(t)[\sigma(1+\ln n(t,x))]\chi_{R}(|x|)$ and where $\theta \in C^1_0([0,T[)$, $\chi$ was defined in Lemma \ref{BalanceKin2D}. Notice that by Lemmas \ref{BoundLoga}, \ref{BoundLogHigh}, and a standard computations, the  sequences $\partial_t \varphi, \nabla_x \varphi, \partial_t\nabla_x \varphi, \partial_x (\nabla_x\varphi) $ are uniformly bounded with respect to $R$ in $L^\infty((0,T)\times\R^2)$.
On the other hand, for each $\eps>0$, using the properties on the solution $\mathit{i.e.,}$ taking into account that $(1+|v|^2)f^\eps \in L^\infty(0,T;L^1(\R^2))$, $E^\eps\in L^\infty((0,T)\times\R^2)^2$, we can easily apply the dominated convergence as $R\to\infty$. Passing to the limit as $R \to\infty$, we get for any test function $\theta\in C^1_0([0,T[)$ that
\begin{align*}
&\intTxtt{(n^\eps -n)\partial_t\theta [\sigma(1+\ln n)]}+\sigma\intTxtt{(n^\eps -n)\theta(t)\partial_t \ln n}\\
& + \intTxtt{\left( A[n^\eps] - A[n]  \right)\cdot\theta(t)\nabla[\sigma(1+\ln n)]}+ \intTxtt{\eps\dfrac{ ^\perp j^\eps(t)}{q\omega_c(x)}\cdot \partial_t\theta \nabla[\sigma(1+\ln n)]}  \\&+
\intTxtt{\eps\dfrac{^\perp j^\eps}{q\omega_c(x)}\theta(t)\partial_{t}\nabla[\sigma(1+\ln n)]} 
- \dfrac{1}{\tau}\intTxtt{\dfrac{^\perp j^\eps}{q\omega_c(x)}\cdot \theta(t)\nabla[\sigma(1+\ln n)]} \\&-\intTxtt{\left(\intvtt{( v\otimes v -\sigma I_2) f^\eps}\right): \partial_{x}\left[\theta(t) \dfrac{^\perp \nabla[\sigma(1+\ln n)]}{\omega_c(x)} \right]}\\
&+ \intxtt{\eps \dfrac{^\perp j^\eps _\mathrm{in}}{q\omega_c(x)}\cdot\theta(0)\nabla_{x}[\sigma(1+\ln n _\mathrm{in})]} + \intxtt{(n^\eps _\mathrm{in} - n_\mathrm{in})\theta(0)[\sigma(1+ \ln n_\mathrm{in} )]} =0,
\end{align*}
which implies the desired equality in the lemma.
\end{proof}
\begin{lemma}
\label{EvoluSecdTerm}
With the notations in \eqref{EquivModCon2D}, \eqref{EquivLimMod2D} we have the equality
\begin{align*}
-\dfrac{\mathrm{d} }{\mathrm{dt}}\intxtt{\dfrac{q}{m}\Phi[n](n^\eps -n)} = -\intxtt{(n^\eps -n)\cdot \dfrac{q}{m}\partial_t\Phi[n]} - \intxtt{(A[n^\eps] - A[n])\cdot  \dfrac{q}{m} \nabla_{x}\Phi[n]}\\
+ \dfrac{\mathrm{d} }{\mathrm{dt}}\intxtt{\eps \dfrac{^\perp j^\eps}{q\omega_c(x)}\cdot \dfrac{q}{m}\nabla_{x}\Phi[n]} - \intxtt{\eps \dfrac{^\perp j^\eps}{q\omega_c(x)}\cdot  \dfrac{q}{m}\partial_t\nabla_{x}\Phi[n]}
\\+ \dfrac{1}{\tau}\intxtt{ \dfrac{^\perp j^\eps}{q\omega_c(x)}\cdot \dfrac{q}{m}\nabla_{x}\Phi[n]} +  \intxtt{\left(\intvtt{ (v\otimes v -\sigma I_2) f^\eps}\right):\partial_{x}\left( \dfrac{  \dfrac{q}{m}{^\perp} \nabla_x \Phi[n]}{\omega_c(x)} \right)}.
\end{align*}
\end{lemma}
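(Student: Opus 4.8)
The plan is to reproduce the argument of Lemma~\ref{EvoluFirstTerm} verbatim, with the entropy multiplier $\sigma(1+\ln n)$ replaced throughout by the potential multiplier $\frac{q}{m}\Phi[n]$. Starting from the weak formulation~\eqref{WeakDiffDensi2D} of the equation satisfied by $n^\eps-n$, I would insert the test function $\varphi(t,x)=\theta(t)\,\frac{q}{m}\Phi[n(t,x)]\,\chi_R(|x|)$, where $\theta\in C^1_0([0,T[)$ and $\chi_R$ is the cut-off of Lemma~\ref{BalanceKin2D}. Writing $\psi:=\frac{q}{m}\Phi[n]$, the six integrals of~\eqref{WeakDiffDensi2D} reproduce, after sending $R\to\infty$ and using that the identity holds for every $\theta$, exactly the six terms on the right-hand side of the claim: the term $\int(n^\eps-n)\partial_t\varphi$ splits into its $\partial_t\theta$-part, which rebuilds $-\frac{\mathrm{d}}{\mathrm{dt}}\int\psi\,(n^\eps-n)$, and its $\theta\,\partial_t\psi$-part, which produces $-\int(n^\eps-n)\frac{q}{m}\partial_t\Phi[n]$; the flux integral yields the $A[n^\eps]-A[n]$ term; and the $j^\eps$ and $\intvtt{(v\otimes v-\sigma I_2)f^\eps}$ integrals yield the remaining four, the $\partial_t\nabla_x\varphi$ contribution splitting into the $\frac{\mathrm{d}}{\mathrm{dt}}$-term and the $\partial_t\nabla_x\psi$-term exactly as in Lemma~\ref{EvoluFirstTerm}.

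Before passing to the limit I would verify that $\varphi$ is admissible, i.e.\ that $\partial_t\varphi,\nabla_x\varphi,\partial_t\nabla_x\varphi$ and $\partial_x(\nabla_x\varphi)$ are bounded uniformly in $R$ and integrate against the solution-side quantities $n^\eps-n\in L^\infty(0,T;L^1(\R^2))$, $j^\eps\in L^\infty(0,T;L^1(\R^2))$ (which holds since $(1+|v|^2)f^\eps\in L^\infty(0,T;L^1)$) and $\intvtt{(v\otimes v-\sigma I_2)f^\eps}\in L^\infty(0,T;L^1(\R^2))$, whose integrability is the same one used in Lemma~\ref{EvoluFirstTerm} and is supplied by Theorem~\ref{main_weak_sol} and the estimates of Section~3. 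All spatial derivatives of $\psi$ are controlled by the limit theory: $\Phi[n]$ is continuous, bounded and vanishes at infinity by Lemma~\ref{PropSolPoi} (its hypotheses hold because $\int n=\int D$ and $|x|n\in L^\infty(0,T;L^1)$ by Proposition~\ref{main_sol_Lim}), while $\nabla_x\Phi[n]=-E[n]$, $\partial_t\nabla_x\Phi[n]=-\partial_t E[n]$ and $\partial_x^2\Phi[n]=-\partial_x E[n]$ are bounded because $E[n]\in W^{2,\infty}((0,T)\times\R^2)$ by Propositions~\ref{main_sol_Lim} and~\ref{Regularity}; since $\Phi[n]$ is bounded and $A[n^\eps]-A[n]$ lies in $L^\infty(0,T;L^1)$, the cut-off error terms carrying $\nabla_x\chi_R$ vanish as $R\to\infty$.

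The one genuinely new ingredient compared with Lemma~\ref{EvoluFirstTerm}, and the step I expect to be the main obstacle, is the control of $\partial_t\Phi[n]$, which enters $\partial_t\varphi$ and must be integrated against $n^\eps-n$. Rather than differentiate the potential directly, I would use the continuity equation~\eqref{EquivLimMod2D}: since $\partial_t n=-\Divx A[n]$, an integration by parts in the convolution gives $\partial_t\Phi[n]=-\frac{q}{2\pi\epsilon_0}\ln|\cdot|\star\partial_t n=\frac{q}{2\pi\epsilon_0}\,\frac{\cdot}{|\cdot|^2}\star A[n]$, with $A[n]=n\big(\frac{{}^\perp E[n]}{B(x)}-\sigma\frac{{}^\perp\nabla\omega_c(x)}{\omega_c^2(x)}\big)$. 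As the drift field is bounded and Lipschitz and $n\in W^{1,\infty}(0,T;L^1)\cap W^{1,\infty}((0,T)\times\R^2)$ by Propositions~\ref{main_sol_Lim} and~\ref{Regularity}, the field $A[n]$ belongs to $L^1(\R^2)\cap W^{1,\infty}(\R^2)$ uniformly in $t$, so Lemma~\ref{ClassIneq} applied componentwise yields $\partial_t\Phi[n]\in L^\infty((0,T)\times\R^2)$; this is the exact analogue of the control of $\partial_t E[f^\eps]$ carried out in Lemma~\ref{BalancePot2D}. The bound then pairs with $n^\eps-n\in L^\infty(0,T;L^1)$ to make the first right-hand term well defined. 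With this estimate secured, the admissibility reduces to the bounds of the preceding paragraph, and dominated convergence as $R\to\infty$ followed by the arbitrariness of $\theta$ delivers the asserted identity.
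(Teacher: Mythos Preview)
Your proposal is correct and follows exactly the paper's approach: the paper's proof also tests $\varphi(t,x)=\frac{q}{m}\theta(t)\Phi[n]\chi_R(|x|)$ in \eqref{WeakDiffDensi2D} and invokes the argument of Lemma~\ref{EvoluFirstTerm} verbatim, after noting that $E[n]\in W^{2,\infty}((0,T)\times\R^2)$ (Proposition~\ref{Regularity}) yields $\Phi[n]\in C^1([0,T];C^2(\R^2))$. Your explicit derivation of $\partial_t\Phi[n]\in L^\infty$ via $\partial_t n=-\Divx A[n]$ and Lemma~\ref{ClassIneq} simply spells out what the paper packages into that one regularity claim; it is a welcome elaboration but not a different route.
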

\begin{proof}$\;$\\
We test $\varphi (t,x)= \frac{q}{m}\theta(t)\Phi[n]\chi_R(|x|)$ in \eqref{WeakDiffDensi2D}. Notice that by Proposition \ref{Regularity} we have $E[n]\in W^{2,\infty}((0,T)\times\R^2)$, which proves that $E[n]$ is continuously differential with respect to $(t,x)$. So we have $\Phi[n] \in C^1([0,T];C^2(\R^2))$. Then  we use the same argument as in Lemma \ref{EvoluFirstTerm} which yields the result of the lemma.
\end{proof}

Now we combine Lemmas \ref{EvoluFirstTerm}, \ref{EvoluSecdTerm} and further computations, we get
\begin{pro}$\;$\\
\label{TimeEvolution2D}
With the notations in \eqref{EquivModCon2D}, \eqref{EquivLimMod2D}, we have the evolution of the following equality
\[
-\dfrac{\mathrm{d} }{\mathrm{dt}}\intxtt{k[n](n^\eps-n)} = \intxtt{\dfrac{{}^\perp\nabla_x k[n]}{B(x)} \cdot (n^\eps -n)(E^\eps -E[n])} + K(t,x),
\]
where we denote by
\begin{align*}
K(t,x) = 
& \dfrac{\mathrm{d} }{\mathrm{dt}} \intxtt{\eps\dfrac{ ^\perp j^\eps}{q\omega_c(x)}\cdot \nabla_x k[n]} - \intxtt{\eps\dfrac{^\perp j^\eps}{q\omega_c(x)}\partial_{t}\nabla_x k[n]} \\
&+ \dfrac{1}{\tau}\intxtt{\dfrac{^\perp j^\eps}{q\omega_c(x)}\cdot \nabla_x k[n]} + \intxtt{\left(\intvtt{( v\otimes v -\sigma I_2) f^\eps}\right): \partial_{x}\left( \dfrac{^\perp \nabla_x k[n]}{\omega_c(x)} \right)}.
\end{align*}
\end{pro}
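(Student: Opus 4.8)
The plan is to add the two identities of Lemmas \ref{EvoluFirstTerm} and \ref{EvoluSecdTerm}, exploiting that $k[n]=\sigma(1+\ln n)+\frac{q}{m}\Phi[n]$, hence $\nabla_x k[n]=\sigma\nabla_x(1+\ln n)+\frac{q}{m}\nabla_x\Phi[n]$. With this decomposition, every current contribution $\eps\,{}^\perp j^\eps/(q\omega_c(x))$, every friction contribution $\frac1\tau\,{}^\perp j^\eps/(q\omega_c(x))$ and every momentum flux tensor contribution $\intvtt{(v\otimes v-\sigma I_2)f^\eps}$ appearing in the two lemmas add up verbatim into the four integrals defining $K(t,x)$. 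Consequently the whole statement reduces to proving that the remaining non-$K$ part,
\[
R:=-\intxtt{(n^\eps-n)\,\partial_t k[n]}-\intxtt{(A[n^\eps]-A[n])\cdot\nabla_x k[n]},
\]
equals the single quadratic term $\displaystyle\intxtt{\frac{{}^\perp\nabla_x k[n]}{B(x)}\cdot(n^\eps-n)(E^\eps-E[n])}$.

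Writing $U[n]=\frac{{}^\perp E[n]}{B(x)}-\sigma\frac{{}^\perp\nabla_x\omega_c(x)}{\omega_c^2(x)}$, so that $A[n]=n\,U[n]$ and $U[n^\eps]-U[n]=\frac{{}^\perp(E^\eps-E[n])}{B(x)}$, I would split $A[n^\eps]-A[n]=(n^\eps-n)U[n^\eps]+n\,\frac{{}^\perp(E^\eps-E[n])}{B(x)}$ and use the elementary identity ${}^\perp a\cdot b=-a\cdot{}^\perp b$. The flux term then produces exactly the target quadratic term, together with the two extra pieces $\intxtt{n\frac{{}^\perp\nabla_x k[n]}{B(x)}\cdot(E^\eps-E[n])}$ and $-\intxtt{(n^\eps-n)\,U[n]\cdot\nabla_x k[n]}$. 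Hence it remains to check the linear-in-perturbation identity
\[
-\intxtt{(n^\eps-n)\big(\partial_t k[n]+U[n]\cdot\nabla_x k[n]\big)}+\intxtt{n\frac{{}^\perp\nabla_x k[n]}{B(x)}\cdot(E^\eps-E[n])}=0.
\]

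To treat the material derivative I would first establish the pointwise identity $\partial_t k[n]+U[n]\cdot\nabla_x k[n]=\frac{q}{m}\partial_t\Phi[n]$. The entropy part gives $\sigma(\partial_t\ln n+U[n]\cdot\nabla_x\ln n)=-\sigma\,\Divx U[n]=\sigma\frac{{}^\perp E[n]\cdot\nabla_x B}{B^2}$, using the limit equation \eqref{EquivLimMod2D} together with $\Divx U[n]=-\frac{{}^\perp E[n]\cdot\nabla_x B}{B^2}$, which itself follows from $\mathrm{rot}_x E[n]=0$ (since $E[n]=-\nabla_x\Phi[n]$) and $\Divx\frac{{}^\perp\nabla_x\omega_c}{\omega_c^2}=0$; the potential part gives $\frac{q}{m}U[n]\cdot\nabla_x\Phi[n]=-\sigma\frac{{}^\perp E[n]\cdot\nabla_x B}{B^2}$ after using ${}^\perp E[n]\cdot E[n]=0$. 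These two local quantities cancel, leaving only $\frac{q}{m}\partial_t\Phi[n]$. Then, because the Poisson operator is linear, symmetric and $t$-independent, $\partial_t\Phi[n]=\Phi[\partial_t n]$, and the symmetry of the logarithmic kernel combined with the continuity equation $\partial_t n=-\Divx(nU[n])$ and an integration by parts gives $-\frac{q}{m}\intxtt{(n^\eps-n)\partial_t\Phi[n]}=\frac{q}{m}\intxtt{n\,U[n]\cdot(E^\eps-E[n])}$.

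Finally I would collect the coefficient of $(E^\eps-E[n])$: a direct computation shows that the electric contributions $\pm\frac{q}{mB(x)}{}^\perp E[n]$ cancel and that $n\big(\frac{q}{m}U[n]+\frac{{}^\perp\nabla_x k[n]}{B(x)}\big)=\sigma\,{}^\perp\nabla_x(n/B)$. The decisive step---the one I expect to carry the real content---is that $E^\eps-E[n]=-\nabla_x(\Phi[n^\eps]-\Phi[n])$ is a gradient, so the last integral is the pairing of a divergence-free field with a gradient and therefore vanishes:
\[
\sigma\intxtt{{}^\perp\nabla_x(n/B)\cdot(E^\eps-E[n])}=-\sigma\intxtt{\Divx\big((\Phi[n^\eps]-\Phi[n])\,{}^\perp\nabla_x(n/B)\big)}=0,
\]
where $\Divx{}^\perp\nabla_x(n/B)=0$ was used. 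This makes $R$ coincide with the target quadratic term and closes the proof. The only genuinely technical points are the justifications of the integrations by parts, i.e. the absence of boundary contributions at infinity, which rest on the integrability and decay afforded by $|x|n^\eps,\,|x|D\in L^1$, on Lemma \ref{PropSolPoi} applied to $\Phi[n^\eps]-\Phi[n]$, and on the a priori bounds of Section 3 for $j^\eps$ and for the momentum flux tensor.
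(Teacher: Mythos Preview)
Your argument is correct. Adding Lemmas \ref{EvoluFirstTerm} and \ref{EvoluSecdTerm} indeed reproduces $K(t,x)$ verbatim, and your reduction of the residual $R$ to the quadratic term checks out: the pointwise material-derivative identity $\partial_t k[n]+U[n]\cdot\nabla_x k[n]=\frac{q}{m}\partial_t\Phi[n]$ holds (the entropy and potential contributions cancel exactly via $\Divx U[n]=-\frac{{}^\perp E[n]\cdot\nabla_x B}{B^2}$), the Poisson symmetry step is the same as the paper's, the algebra giving $n\big(\frac{q}{m}U[n]+\frac{{}^\perp\nabla_x k[n]}{B}\big)=\sigma\,{}^\perp\nabla_x(n/B)$ is correct, and the vanishing of $\intxtt{{}^\perp\nabla_x(n/B)\cdot\nabla_x(\Phi[n^\eps]-\Phi[n])}$ is legitimate given the integrability provided by Lemma \ref{PropSolPoi} and the regularity of $n/B$.

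The paper follows a different but equivalent algebraic path. It uses the alternative splitting $A[n^\eps]-A[n]=(n^\eps-n)\frac{A[n]}{n}+n^\eps\frac{{}^\perp(E^\eps-E[n])}{B}$ (your splitting puts $n$ rather than $n^\eps$ on the cross term), so that the quadratic term appears only after combining an $n^\eps$-contribution with an $n$-contribution coming from $\intxtt{A[n]\cdot\frac{q}{m}(E^\eps-E[n])}$. To handle that last integral the paper invokes the identity $\Divx A[n]=-\Divx\big(\frac{n}{\omega_c}{}^\perp\nabla_x k[n]\big)$, which is precisely your observation that $A[n]+\frac{n}{\omega_c}{}^\perp\nabla_x k[n]=\frac{m}{q}\sigma\,{}^\perp\nabla_x(n/B)$ is divergence-free, phrased differently. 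Your route is arguably more transparent: isolating the pointwise transport identity for $k[n]$ explains \emph{why} the linear terms disappear, whereas the paper's version obtains the same cancellations through a sequence of integrations by parts without stating that identity. Both proofs rest on the same three structural facts: the self-adjointness of the Poisson kernel, the curl-free nature of $E[n]$, and the divergence-free nature of ${}^\perp\nabla_x$.
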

\begin{proof}
First thanks to Lemma \ref{PropSolPoi} and Poisson's equation, the first term on the right hand side in the equality of Lemma \ref{EvoluSecdTerm} can be written as
\[
-\intxtt{(n^\eps -n) \dfrac{q}{m}\partial_t\Phi[n]} = - \intxtt{\dfrac{q}{m}(\Phi[n^\eps] - \Phi[n])\partial_t n}.
\]
Then we together this equality with the first term on the right hand side in the equation of Lemma \ref{EvoluFirstTerm} to obtain
\begin{align}
\label{First_term_Prop6.1}
& -\intxtt{\partial_t n \left(\sigma \dfrac{n^\eps - n}{n} + \dfrac{q}{m}(\Phi[n^\eps]-\Phi[n]) \right)}= \intxtt{\Divx A[n] \left(\sigma \dfrac{n^\eps - n}{n} + \dfrac{q}{m}(\Phi[n^\eps]-\Phi[n]) \right)}\nonumber\\
 &= \intxtt{\sigma \Divx\left(\dfrac{^\perp E[n]}{B(x)} \right)(n^\eps -n)} + \sigma\intxtt{\dfrac{A[n]}{n}\cdot {\nabla  \ln n}(n^\eps -n)} + \intxtt{A[n] \cdot \dfrac{q}{m}(E^\eps -E[n])}\nonumber\\&
 = \intxtt{\nabla_x k[n]\cdot \dfrac{A[n]}{n}(n^\eps -n)}  + \intxtt{A[n] \cdot \dfrac{q}{m}(E^\eps -E[n])}.
\end{align}
Observe that
\begin{align*}
A[n^\eps]-A[n]-\dfrac{A[n]}{n}(n^\eps -n) &= n^\eps\left[ \dfrac{^\perp E^\eps}{B(x)} -\sigma \dfrac{^\perp \nabla \omega_c (x)}{\omega_c ^2(x)}\right]- n\left[ \dfrac{^\perp E[n]}{B(x)} -\sigma \dfrac{^\perp \nabla \omega_c (x)}{\omega_c ^2(x)} \right]\nonumber\\ &-\left[\   \dfrac{^\perp E[n]}{B(x)} -\sigma \dfrac{^\perp \nabla \omega_c (x)}{\omega_c ^2(x)} \right](n^\eps -n)
\\&= n^\eps \dfrac{{}^\perp (E^\eps -E[n])}{B(x)},
\end{align*}
thus we combine the first term of the last line of \eqref{First_term_Prop6.1} with the second term of the equation of Lemmas 6.1 and 6.2 to get
\[
- \intxtt{\nabla_x k[n]\cdot n^\eps \dfrac{{}^\perp (E^\eps -E[n])}{B(x)} }.
\]
On the other hand, we can write the divergence of the flux $A[n]$ in \eqref{EquivLimMod2D} as
\[
\Divx A[n] = - \Divx\left(\dfrac{n}{\omega_c(x)}{^\perp \nabla_x k[n]}\right),
\]
so the second term of the last line of \eqref{First_term_Prop6.1} can be written as
\[
-  \intxtt{{}^\perp\nabla_x k[n]\cdot n \dfrac{ (E^\eps -E[n])}{B(x)} }.
\]
Finally, we obtain the sum of the first two terms of the equation of Lemmas 6.1 and 6.2 
\begin{align*} 
& \intxtt{{}^\perp\nabla_x k[n]\cdot n^\eps \dfrac{ (E^\eps -E[n])}{B(x)} }-  \intxtt{{}^\perp\nabla_x k[n]\cdot n \dfrac{ (E^\eps -E[n])}{B(x)} }\\
&= \intxtt{\dfrac{{}^\perp\nabla_x k[n]}{B(x)} \cdot (n^\eps -n)(E^\eps -E[n])}.
\end{align*}
So, Proposition \ref{TimeEvolution2D} is proved.
\end{proof}

Coming back to \eqref{BalModEnerDens2D}, the modulated energy balance becomes
\begin{align}
\label{BalModEnerDens2DBis}
&\calE[n^\eps(t)|n(t)] +  \sigma\intxvtt{n^\eps(t) M h\left(\dfrac{f^\eps(t)}{n^\eps(t) M} \right)} + \dfrac{1}{\eps\tau}\int_{0}^{t}\intxvtt{\dfrac{|\sigma \nabla_v f^\eps + v f^\eps|^2}{f^\eps}}\mathrm{d}s\nonumber\\
&= \calE[n^\eps(0)|n(0)] +  \sigma\intxvtt{n^\eps(0) M h\left(\dfrac{f^\eps(0)}{n^\eps(0) M} \right)}\nonumber\\& + \int_{0}^{t}\intxtt{\dfrac{{}^\perp\nabla_x k[n]}{B(x)} \cdot (n^\eps -n)(E[n^\eps] -E[n])}\mathrm{d}s + \int_{0}^{t} K(s,x)\mathrm{d}s
\end{align}
where
\begin{align*}
\int_{0}^{t} K(s,x)\mathrm{d}s &= \int_{0}^{t}\dfrac{\mathrm{d} }{\mathrm{d}s} \intxtt{\eps\dfrac{ ^\perp j^\eps}{q\omega_c(x)}\cdot \nabla_x k[n]}\mathrm{d}s \nonumber\\ &- \int_{0}^{t}\intxtt{\eps\dfrac{^\perp j^\eps}{q\omega_c(x)}\partial_{s}\nabla_x k[n(s)]}\mathrm{d}s + \dfrac{1}{\tau}\int_{0}^{t}\intxtt{\dfrac{^\perp j^\eps}{q\omega_c(x)}\cdot \nabla_x k[n]}\mathrm{d}s\nonumber\\ & + \int_{0}^{t}\intxtt{\left(\intvtt{( v\otimes v -\sigma I_2) f^\eps}\right): \partial_{x}\left[ \dfrac{^\perp \nabla_x k[n]}{\omega_c(x)} \right]}\mathrm{d}s \\&:= K_1 + K_2 + K_3 + K_4 .
\end{align*}
In order to apply Gronwall's lemma, we will estimate the integrals
in the last  line  of \eqref{BalModEnerDens2DBis}.
Thanks to the formula
\begin{align*}
 (n^\eps -n)(E[n^\eps]-E[n]) &= \dfrac{\epsilon_0}{q}[\Divx(E[n^\eps]-E[n])](E[n^\eps]-E[n])\\
 &= \dfrac{\epsilon_0}{q} \Divx \left( (E[n^\eps]-E[n])\otimes (E[n^\eps]-E[n]) - \dfrac{|E[n^\eps]-E[n]|^2}{2}I_2 \right),
\end{align*}
we obtain
\begin{align*}
&\intxtt{\dfrac{{}^\perp\nabla_x k[n]}{B(x)} \cdot (n^\eps -n)(E[n^\eps] -E[n])} \\ &= \dfrac{\epsilon_0}{q}\intxtt{\left( (E[n^\eps]-E[n])\otimes (E[n^\eps]-E[n]) - \dfrac{|E[n^\eps]-E[n]|^2}{2}I_2 \right): \partial_x \left(\dfrac{{}^\perp\nabla_x k[n]}{B(x)} \right)}\\
&\leq \dfrac{\epsilon_0}{m} \left\|\partial_x \left(\dfrac{{}^\perp\nabla_x k[n]}{\omega_c(x)} \right) \right\|_{L^\infty(\R^2)}\left(1+\dfrac{\sqrt{2}}{2}\right)\intxtt{|E[n^\eps]-E[n]|^2},
\end{align*}
where for any matrix $P \in M_{2,2} (\R)$, the notation $\|P\|$ stands for $(P:P)^{1/2}$. Next we shall estmate the integrals $K_i,$ for $i=1,...,4$. For $K_1$, we have
\begin{align*}
K_1 &= \eps \intxtt{\dfrac{ ^\perp j^\eps(t,x)}{q\omega_c(x)}\cdot \nabla_x k[n(t)]} - \eps \intxtt{\dfrac{ ^\perp j^\eps(0,x)}{q\omega_c(x)}\cdot \nabla_x k[n(0)]}\\
&\leq  \dfrac{m}{qB_0}\sqrt{\eps}\intxvtt{(f^\eps(t,x,v) + f^\eps(0,x,v))\left(\eps \dfrac{|v|^2}{2} + \dfrac{\|\nabla_x k[n]\|^2_{L^\infty}}{2}\right)}.
\end{align*}
For $K_2$, an elementary estimate yields
\[
K_2 \leq  \dfrac{m}{qB_0}\|\partial_s \nabla_x k[n]\|_{L^\infty(\R^2)} \eps\int_{0}^{t}\intxvtt{\left(\dfrac{|v|^2}{2} +\dfrac{1}{2}\right)f^\eps(s,x,v)}\mathrm{d}s.
\]
For $K_3$, since $j^\eps = q\intvtt{(\sigma\nabla_v f^\eps + v f^\eps)}$ we have
\begin{align*}
K_3 &= -\dfrac{1}{\tau}\int_{0}^{t}\intxvtt{(\sigma\nabla_v f^\eps + v f^\eps)\cdot\dfrac{^\perp \nabla_x k[n]}{\omega_c(x)}}\mathrm{d}s\\
&\leq \dfrac{1}{4\eps\tau}\int_{0}^{t}\intxvtt{\dfrac{|\sigma\nabla_v f^\eps + v f^\eps|^2}{ f^\eps}}\mathrm{d}s + \dfrac{m}{qB_0 \tau}\|\nabla_x k[n]\|_{L^\infty}\eps \int_{0}^{t}\intxvtt{f^\eps}\mathrm{d}s.
\end{align*} 
For $K_4$, since 
\[
\intv{(v\otimes v -\sigma I_2 ) f^\eps}= \intv{(v f^\eps+\sigma\nabla_v f^\eps)\otimes v}
= \intv{\dfrac{(v f^\eps+\sigma\nabla_v f^\eps)}{\sqrt{ \eps f^\eps}}\otimes v \sqrt{ \eps f^\eps}},
\]
we have
\begin{align*}
K_4 \leq  \dfrac{1}{4\eps\tau}\int_{0}^{t}\intxvtt{\dfrac{|\sigma\nabla_v f^\eps + v f^\eps|^2}{ f^\eps}}\mathrm{d}s +  \left\|\partial_x \left(\dfrac{{}^\perp\nabla_x k[n]}{\omega_c(x)} \right) \right\|_{L^\infty} \eps \tau \int_{0}^{t}\intxvtt{|v|^2 f^\eps}\mathrm{d}s.
\end{align*}
Plugging the above computations in the equality \eqref{BalModEnerDens2DBis}, the modulated energy balance becomes for $0 \leq  t \leq T$
\begin{align*}
&\calE[n^\eps(t)|n(t)] +  \sigma\intxvtt{n^\eps(t) M h\left(\dfrac{f^\eps(t)}{n^\eps(t) M} \right)} + \dfrac{1}{4\eps\tau}\int_{0}^{t}\intxvtt{\dfrac{|\sigma \nabla_v f^\eps + v f^\eps|^2}{f^\eps}}\mathrm{d}s
\\
&\leq \calE[n^\eps(0)|n(0)] +  \sigma\intxvtt{n^\eps(0) M h\left(\dfrac{f^\eps(0)}{n^\eps(0) M} \right)}\\
&+  \left\|\partial_x \left(\dfrac{{}^\perp\nabla_x k[n]}{\omega_c(x)} \right) \right\|_{L^\infty(\R^2)}\left(2+{\sqrt{2}}\right)\dfrac{\epsilon_0}{2m}\intxtt{|E[n^\eps]-E[n]|^2}\\
&+ \left(\dfrac{m}{2 qB_0}\|\partial_s \nabla_x k[n]\|_{L^\infty}+\left\|\partial_x \left(\dfrac{{}^\perp\nabla_x k[n]}{\omega_c(x)} \right) \right\|_{L^\infty}\tau \right) \eps \intTxvtt{|v|^2f^\eps}\\
&+ \dfrac{m}{qB_0}\sqrt{\eps}\sup_{t\in [0,T]}\eps \intxvtt{|v|^2f^\eps}\\
&+ \dfrac{m}{qB_0}\sqrt{\eps} \left( \|\nabla_x k[n]\|_{L^\infty} + \sqrt{\eps}\dfrac{T}{2} \left( \|\partial_s \nabla_x k[n]\|_{L^\infty} +\dfrac{2}{\tau} \| \nabla_x k[n]\|_{L^\infty}\right) \right)\intxvtt{f^\eps(0,x,v)}.
\end{align*}
Thanks to Lemma \ref{BoundKinEner2D} and \eqref{equ:EntropyDen2D} for some constant $C_T$, $0\leq t \leq T$, $0< \eps<1$ we obtain
\begin{align*}
&\calE[n^\eps(t)|n(t)] +  \sigma\intxvtt{n^\eps(t) M h\left(\dfrac{f^\eps(t)}{n^\eps(t) M} \right)} + \dfrac{1}{4\eps\tau}\int_{0}^{t}\intxvtt{\dfrac{|\sigma \nabla_v f^\eps + v f^\eps|^2}{f^\eps}}\mathrm{d}s
\\
&\leq  \calE[n^\eps(0)|n(0)] +  \sigma\intxvtt{n^\eps(0) M h\left(\dfrac{f^\eps(0)}{n^\eps(0) M} \right)} + C_T \int_{0}^{t}\calE[n^\eps(s)|n(s)]\mathrm{d}s + C_T\sqrt{\eps}.
\end{align*}
Applying Gronwall's lemma, we deduce that for  $0\leq t \leq T$, $0< \eps<1$
\begin{align*}
&\calE[n^\eps(t)|n(t)] +  \sigma\intxvtt{n^\eps(t) M h\left(\dfrac{f^\eps(t)}{n^\eps(t) M} \right)} + \dfrac{1}{4\eps\tau}\int_{0}^{t}\intxvtt{\dfrac{|\sigma \nabla_v f^\eps + v f^\eps|^2}{f^\eps}}\mathrm{d}s
\\
&\leq \left[ \calE[n^\eps(0)|n(0)] +  \sigma\intxvtt{n^\eps(0) M h\left(\dfrac{f^\eps(0)}{n^\eps(0) M} \right)} + C_T\sqrt{\eps} \right] e^{C_T t}.
\end{align*}
The above inequality says that the particle density $f^\eps$ remains close to the Maxwellian with the same concentration, $\mathit{i.e.,}$ $n^\eps(t)M$, and $n^\eps(t)$ stays near $n(t)$, provided that analogous behaviour occur for the initial conditions. Therefore, we are ready to prove our main theorem.
\begin{proof}(of Theorem \ref{MainThm2D})$\;$\\
We justify the convergence of $f^\eps$ toward $nM$ in $L^\infty(0,T;L^1(\R^2\times\R^2))$, the other convergences being obvious. We use the Csis\'ar -Kullback inequality in order to control the $L^1$ norm by the relative entropy, cf. \cite{Csi1967, Kul1967}
\[
\int_{\R^n}|g-g_0|\mathrm{d}x \leq 2 \max \left\{ \left( \int_{\R^n}g_0\mathrm{d} x \right)^{1/2}, \left( \int_{\R^n}g\mathrm{d} x\right)^{1/2} \right\}\left(\int_{\R^n}g_0  h\left(\dfrac{g}{g_0} \right)\mathrm{d}x \right)^{1/2}
\]
for any non negative integrable functions $g_0,g: \R^n \to \R$. Applying two times the  Csis\'ar -Kullback inequality we obtain
\begin{align*}
&\intxvtt{|f^\eps(t,x,v) -n(t,x)M(v)|} \\ &\leq \intxvtt{|f^\eps(t,x,v)- n^\eps(t,x)M(v)|} + \intxtt{|n^\eps(t,x)-n(t,x)|}\\
&\leq 2 \sqrt{M_{\mathrm{in}}}\left( n^\eps(t)M(v) h\left(\dfrac{f^\eps(t)}{n^\eps(t)M} \right)\right)^{1/2} \\&+ 2 \max\left\{ \sqrt{M_{\mathrm{in}}},\sqrt{\|n_{\mathrm{in}}\|_{L^1(\R^2)}} \right\} \left( \intxtt{n(t) h \left(\dfrac{n^\eps(t)}{n(t)}\right)}  \right)^{1/2}  \to 0 ,\,\,\mathrm{as}\,\eps\searrow 0.
\end{align*}
\end{proof}
\paragraph*{Appendix A. The Vlasov-Fokker-Planck equation with external magnetic field.} In Section 2, we built the existence of regularized solution of \eqref{eq:VFP2DBis} relied on the existence of solutions to the linear kinetic equations. The purpose of this part is to prove this result. 
We consider the Vlasov-Fokker-Planck equation with a given electric field $E(t,x) = -\nabla_x \Phi(t,x)$
\begin{equation}
\left\{\begin{array}{l}
\label{equ:LiVFPAppen}
\partial_t f + v\cdot \nabla _x f + E(t,x)\cdot \nabla _v f + B(x)  ^\perp v\cdot\nabla _v f = \sigma \Delta_v f + \Divv(v f) ,\\
f(0,x,v) =  f_{\mathrm{in}}(x,v),\,\,(x,v)\in \R^2\times\R^2.
\end{array}\right.
\end{equation}
The results on the existence and uniqueness of solutions are deeply inspired by those given by Degond in \cite{Degond1986}, also taking into account the velocity transport, generated by the external magnetic field. We have the following result
\begin{thm}$\;$\\
\label{ExiVFP2D}
For a given $T\in ]0,\infty[$. Let $f_\mathrm{in}$ be an initial data verifying $\mathrm{H1}$, $\mathrm{H2}$, $E(t,x)$ be an external electric field belongs to $ (L^\infty((0,T)\times\R^2))^2$, and $B\in L^\infty(\R^2)$. Then there exists a unique positive weak solution of the equation \eqref{equ:LiVFPAppen} on the interval $[0,T]$ in the sense of Definition \ref{DefWeakSol} provided by Proposition \ref{PropExiUniq} such that $f\in L^\infty(0,T;L^\infty\cap L^1(\R^2\times\R^2))$. Furthermore, $f$ belongs to $L^2([0,T]\times\R^2_x;H^1(\R^2_v))$ and verifies the following estimates
\begin{align}
\label{InegLp}
\| f \|_{L^\infty (0,T;L^{p}(\R^2\times\R^2))} \leq e^{\frac{p-1}{p}2T} \| f_{\mathrm{in}} \|_{L^{p}(\R^2\times\R^2)},\,\,p\in]1,\infty[, \\
\| f\|_{L^\infty(0,T;L^{1}(\mathbb{R}^2\times\mathbb{R}^2))} = \| f_{\mathrm{in}}\|_{L^{1}(\R^2\times\R^2)},\,\,
\| f\|_{L^\infty((0,T)\times\R^2\times\R^2)}\leq e^{2T}\| f_{\mathrm{in}}\|_{L^\infty(\R^2\times\R^2)},\nonumber
\end{align}
\begin{equation}
\label{InegKinEner}
\sup_{[0,T]}\intxvtt{f(t,x,v)\dfrac{|v|^2}{2}} < C(\|E\|_{L^\infty}, T,\sigma) \intxvtt{f_{\mathrm{in}}(x,v) \dfrac{|v|^2}{2}},
\end{equation}
\begin{equation}
\label{InegPosition}
\sup_{[0,T]}\intxvtt{f(t,x,v)|x|} < C(T) \intxvtt{f_{\mathrm{in}}(x,v) |x|},
\end{equation}
\begin{equation}
\label{InegDissipation}
\|\nabla_v  f^{1/2}\|_{L^2(0,T;L^{2}(\R^2\times\R^2))} \leq C(\|E\|_{L^\infty},T, f_\mathrm{in},\sigma) + \intxvtt{\sigma f_\mathrm{in}|\ln f_\mathrm{in}|}.
\end{equation}
\end{thm}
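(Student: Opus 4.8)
The plan is to construct the solution as the limit of a parabolically regularized problem, to read off every bound in \eqref{InegLp}--\eqref{InegDissipation} from a priori estimates uniform in the regularization parameter, and to obtain uniqueness from linearity. Two algebraic facts make the external magnetic term harmless and explain why Degond's scheme in \cite{Degond1986} adapts. Writing the total force as $G(t,x,v)=E(t,x)+B(x){}^\perp v$, one has $\Divv G = B(x)\,\Divv({}^\perp v)=0$, since $E$ is independent of $v$ and ${}^\perp v=(v_2,-v_1)$; hence $G\cdot\nabla_v f=\Divv(Gf)$ and the force contributes nothing to the balance of the mass or of any $L^p$ norm. Moreover ${}^\perp v\cdot v=0$, so the magnetic force does no work and disappears from the kinetic energy balance.

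First I would regularize \eqref{equ:LiVFPAppen} by adding an artificial diffusion $\mu\Delta_x f$, mollifying $f_\mathrm{in}$ and, if convenient, truncating the coefficient $B(x){}^\perp v$ for large $|v|$. The regularized equation is uniformly parabolic; classical theory (which is what Proposition \ref{PropExiUniq} encapsulates) yields a smooth solution $f^\mu\geq 0$, nonnegativity following from the maximum principle once the Fokker--Planck part is written in divergence form as $\sigma\Delta_v f+\Divv((v-B(x){}^\perp v)f)$.

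The core of the proof is a family of a priori estimates, uniform in $\mu$, each obtained by multiplying the equation by a suitable weight and integrating by parts. Multiplying by $pf^{p-1}$ and using $\Divv G=0$ together with $\int_{\R^2}\Divv(vf)\,pf^{p-1}\,\mathrm{d}v=2(p-1)\int_{\R^2}f^p\,\mathrm{d}v$ gives $\frac{\mathrm d}{\mathrm dt}\|f\|_{L^p}^p\leq 2(p-1)\|f\|_{L^p}^p$, whence \eqref{InegLp} by Gronwall, with mass conservation and the $L^\infty$ bound as the cases $p=1$ and $p\to\infty$. Multiplying by $|v|^2/2$, the identity $G\cdot v=E\cdot v$ erases the magnetic contribution while the Fokker--Planck part produces $2\sigma\int f-\int|v|^2f$; bounding $\int f\,|E\cdot v|$ by $\|E\|_{L^\infty}$ and Cauchy--Schwarz yields \eqref{InegKinEner}. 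Multiplying by $|x|$ leaves only $\frac{\mathrm d}{\mathrm dt}\int f|x|\leq \int f|v|$, controlled by the mass and the kinetic energy, giving \eqref{InegPosition}. Finally, multiplying by $\sigma(1+\ln f)$ makes the transport and force terms vanish and the Fokker--Planck part yields the entropy dissipation $-\sigma^2\int|\nabla_v f|^2/f+2\sigma\int f$; since $\int|\nabla_v f|^2/f=4\|\nabla_v f^{1/2}\|_{L^2}^2$ and the negative part of $\int f\ln f$ is controlled by the already-bounded moments, integrating in time produces \eqref{InegDissipation} and the regularity $f\in L^2([0,T]\times\R^2_x;H^1(\R^2_v))$.

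It then remains to pass to the limit $\mu\to 0$ and to prove uniqueness. The uniform bounds furnish weak-$\star$ limits in the relevant $L^p$ and weighted spaces; the artificial diffusion disappears because $\mu\int f^\mu\Delta_x\varphi\to 0$ on any test function $\varphi\in C^\infty_0$, and every remaining term is linear in $f^\mu$ --- the force term appearing weakly as $-\int f^\mu\,\Divv(G\varphi)$ with $\varphi$ compactly supported --- so weak convergence suffices to identify $f$ as a weak solution in the sense of Definition \ref{DefWeakSol}, the estimates surviving by weak lower semicontinuity. Uniqueness is immediate: the difference of two solutions with the same datum solves the homogeneous linear equation with zero initial data, and the $p=2$ estimate above forces it to vanish. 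The main obstacle I anticipate is not the magnetic field itself --- which, thanks to $\Divv G=0$ and ${}^\perp v\cdot v=0$, never enters the dissipative estimates --- but the rigorous justification of the integrations by parts at the low regularity $L^1\cap L^\infty$: the required decay in $v$ and $x$ must be supplied by the weighted moment bounds and the $L^2_tH^1_v$ regularity, and all constants must be kept uniform in $\mu$ despite the coefficient $B(x){}^\perp v$ being unbounded in $v$.
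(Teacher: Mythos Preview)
Your approach is sound but takes a genuinely different route from the paper. The paper does \emph{not} regularize by adding $\mu\Delta_x$; instead, Proposition~\ref{PropExiUniq} constructs the solution directly in $\calH=L^2([0,T]\times\R^2_x;H^1(\R^2_v))$ via Lions' theorem (Theorem~\ref{LionsThm}), after the change of unknown $\tilde f(t,x,v)=e^{-(\lambda+2)t}f(t,x,e^{-t}v)$ which absorbs the friction term and renders the associated bilinear form coercive on the hypoelliptic equation itself. Positivity and the $L^\infty$ bound are then obtained by a maximum-principle argument on $w=e^{-2t}f$ (testing against $K'(w-\|w_0\|_\infty)$ for a suitable convex $K$), and the $L^p$, moment, and entropy estimates are derived by testing the transformed equation for $w(t,x,v)=e^{-2t}f(t,x,e^{-t}v)$ against carefully built approximate multipliers $\psi_\eps(w)$, with the Green-type trace identities of Lemma~\ref{GreenFormulas} supplying both the meaning of the initial condition and the uniqueness in the class $\Y$. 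Your regularize-estimate-pass-to-the-limit scheme is conceptually more elementary and would also work; what it buys is that all integrations by parts are done on smooth approximants. But two caveats: first, Proposition~\ref{PropExiUniq} is not ``classical parabolic theory'' as you describe it --- the equation has diffusion only in $v$ --- so your parenthetical mischaracterizes the paper's mechanism; second, your uniqueness argument via the $p=2$ estimate still implicitly needs exactly the trace/Green machinery the paper develops, since multiplying the difference of two weak solutions by itself and integrating by parts must be justified at the regularity $f\in\Y$, not merely for the smooth $f^\mu$. The algebraic observations $\Divv({}^\perp v)=0$ and ${}^\perp v\cdot v=0$ that you emphasize are indeed the reason both routes go through unchanged in the presence of the magnetic field.
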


Let us introduce the Hilbert space
\[
\calH =L^2 ([0,T]\times \R ^2 _x , H^{1}(\R ^2 _v)) = \left\{u\in L^2([0,T]\times\R^2\times\R^2)\,|\,\nabla_v u \in  L^2([0,T]\times\R^2\times\R^2)\right\},
\]
with norm $\|\cdot\|_{\calH}$ and scalar product $\left<\cdot,\cdot \right>_{\calH}$ defined by
\[
\| u \|^2_{\calH} = \intTxvtt{\vert u\vert ^2} + \intTxvtt{\vert \nabla _v u\vert ^2},\,\, u\in \calH,
\]
\[
\left<u,w \right>_{\calH} = \intTxvtt{ u w} + \intTxvtt{ \nabla_v u\cdot\nabla_v w},\,\, u, w\in \calH.
\]
We also denote $\calH'$ is the dual space of $\calH$ which is given by $\calH '= L^2 ([0,T]\times \R ^2 _x , H^{-1}(\R ^2 _v))$. The symbole $\left<\cdot ,\cdot\right>_{\calH ',\calH}$ represents the dual relation between $\calH$ and its dual.\\
We first state a result on the existence and uniqueness of a weak solution of equation \eqref{equ:LiVFPAppen} in an $L^2$ setting, which
can be rewritten in the following form
\[
\partial_t f + \calT f + E(t,x) \cdot\nabla_v f - 2 f -\sigma \Delta_v f = 0,
\]
where $\calT$ denotes the transport operator given by $\calT =  v\cdot\nabla_x  + (B(x) ^\perp v -v)\cdot\nabla_v $. Then we have the following result
\begin{pro}$\;$\\
\label{PropExiUniq}
Under the hypothesis of Theorem \ref{ExiVFP2D}, there exists a unique weak solution f of equation \eqref{equ:LiVFPAppen} in the class of functions $\Y$ defined by
\begin{align}
\label{ClassWeakSol}
\Y= \left\{ u\in \calH |\,\,\,\frac{\partial u}{\partial t}+ \calT u \in \calH '  \right\},
\end{align}
and satisfying the initial condition in the sense of distribution.
\end{pro}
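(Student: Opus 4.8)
The plan is to solve the equation by J.L.\ Lions' variational method for evolution problems (see Theorem~\ref{LionsThm}), in the spirit of Degond \cite{Degond1986}, exploiting the algebraic structure that makes the transport part skew-adjoint. First I would record the adjoint of the transport operator in $L^2(\R^2\times\R^2)$: since ${}^\perp v$ is divergence free in $v$ and $\Divv v = 2$, a direct computation gives $\calT^\ast = -\calT + 2$, so that $\calT - 1$ is skew-adjoint. Consequently $\int_{\R^2}\int_{\R^2}(\calT\varphi)\varphi\,\mathrm{d}v\,\mathrm{d}x = \|\varphi\|_{L^2}^2$, i.e.\ the entire transport contribution collapses to a zeroth-order term; the field term vanishes because $\Divv E = 0$ ($E$ is independent of $v$); and the Fokker--Planck diffusion produces the coercive quantity $\sigma\|\nabla_v\varphi\|^2_{L^2}$. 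This is precisely the role of the $-2f$ term: it is the divergence correction that symmetrises the problem.

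To compensate the unfavourable sign left over in the zeroth-order part, I would substitute $f = e^{\lambda t}g$, the extra term $\lambda g$ rendering the symmetric part of the spatial operator coercive once $\lambda$ is large enough; this is the sole purpose of the shift. I would then set up Lions' framework with trial space $\calH$ and test space
\[
\Phi = \left\{\varphi\in\calH : \partial_t\varphi + \calT\varphi\in\calH',\ \varphi(T,\cdot,\cdot)=0\right\},\qquad \|\varphi\|_\Phi^2 = \tfrac12\|\varphi(0)\|^2_{L^2(\R^2\times\R^2)} + \|\varphi\|_\calH^2,
\]
and define a bilinear form $a(g,\varphi)$ by multiplying the shifted equation by $\varphi$ and moving $\partial_t$ and $\calT$ onto $\varphi$ by integration by parts (keeping $\nabla_v g$ paired symmetrically with $\nabla_v\varphi$ in the diffusion term), while the initial datum is encoded in $L(\varphi) = \int_{\R^2}\int_{\R^2} f_{\mathrm{in}}\,\varphi(0,x,v)\,\mathrm{d}v\,\mathrm{d}x$. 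Crucially, neither $\partial_t\varphi$ nor $\calT\varphi$ lies separately in $\calH'$, but their sum does, so the pairing $\langle \partial_t\varphi + \calT\varphi, g\rangle_{\calH',\calH}$ is well defined; this is exactly why $\Phi$ is built through the combined operator.

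The heart of the argument is the weak coercivity $a(\varphi,\varphi)\ge\alpha\|\varphi\|_\Phi^2$. Using $\varphi(T)=0$, the time derivative yields $\tfrac12\|\varphi(0)\|^2_{L^2}$; the skew-symmetry of $\calT-1$ together with $\calT^\ast = -\calT+2$ and the shift reduces the transport and zeroth-order contributions to $\kappa\|\varphi\|_{L^2}^2$ with $\kappa>0$ for $\lambda$ large; and the diffusion contributes $\sigma\|\nabla_v\varphi\|^2_{L^2}$, so the sum dominates $\|\varphi\|_\Phi^2$. Continuity of $g\mapsto a(g,\varphi)$ on $\calH$ and of $L$ on $\Phi$ (using $f_{\mathrm{in}}\in L^1\cap L^\infty\subset L^2$ from H1--H2 and $|L(\varphi)|\le\sqrt2\,\|f_{\mathrm{in}}\|_{L^2}\|\varphi\|_\Phi$) are routine. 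Lions' theorem then furnishes $g\in\calH$ with $a(g,\varphi)=L(\varphi)$ for all $\varphi\in\Phi$; restricting to $\varphi\in C^\infty_0((0,T)\times\R^2\times\R^2)$ recovers the PDE in the distributional sense, whence $\partial_t g + \calT g = \sigma\Delta_v g - E\cdot\nabla_v g + (2-\lambda)g\in\calH'$, so $g\in\Y$, and undoing the change of unknown gives $f\in\Y$. The attainment of $f(0)=f_{\mathrm{in}}$ follows by comparing the weak identity with the integration-by-parts formula valid for $g\in\Y$ and testing with $\varphi$ such that $\varphi(0)\neq0$.

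For uniqueness, the difference $w$ of two solutions lies in $\Y$ with $w(0)=0$ and solves the homogeneous equation, so the energy identity $\tfrac12\frac{\mathrm{d}}{\mathrm{d}t}\|w\|_{L^2}^2 + \sigma\|\nabla_v w\|_{L^2}^2 = \|w\|_{L^2}^2$ (again using the skew-symmetry of $\calT-1$ and $\Divv E=0$) and Gronwall's lemma force $w\equiv0$. I expect the main obstacle to be twofold: establishing the diagonal coercivity cleanly, which hinges entirely on the algebraic cancellations above, and justifying rigorously both the trace at $t=0$ and the energy identity for elements of $\Y$ --- delicate because $\Y$ controls only the combination $\partial_t+\calT$ and not the two operators separately, so the required trace and integration-by-parts statements must be read off from the Lions regularity theorem for the combined evolution-transport operator rather than from a naive splitting.
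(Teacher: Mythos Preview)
Your approach is correct and follows the same Lions--Degond variational strategy as the paper, but with one technical difference worth noting. The paper performs the full change of unknown $\tilde f(t,x,v)=e^{-(\lambda+2)t}f(t,x,e^{-t}v)$, whose $v$-rescaling absorbs the friction term $-v\cdot\nabla_v$ and produces a new transport operator $\calT'=e^{-t}v\cdot\nabla_x+B(x)\,{}^\perp v\cdot\nabla_v$ that is \emph{exactly} skew-symmetric; coercivity then comes purely from the shift $\lambda$ and the diffusion $\sigma e^{2t}\|\nabla_v\phi\|^2$. You instead keep $\calT$ unchanged, exploit the algebraic identity $\calT^\ast=-\calT+2$ (equivalently $\int(\calT\varphi)\varphi=\|\varphi\|_{L^2}^2$), and let the exponential shift $f=e^{\lambda t}g$ compensate the resulting $-\|\varphi\|_{L^2}^2$ on the diagonal. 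Both routes land on the same coercivity structure; yours is marginally lighter in bookkeeping since no rescaling of $v$ is carried through, while the paper's has the conceptual advantage that the transport is rendered genuinely skew so no cancellation between first-order and zeroth-order parts needs to be tracked.

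One small point: your test space $\Phi=\{\varphi\in\calH:\partial_t\varphi+\calT\varphi\in\calH',\ \varphi(T,\cdot,\cdot)=0\}$ presupposes that the trace $\varphi(T)$ is defined, which is precisely the content of the trace lemma (Lemma~\ref{GreenFormulas}) you later invoke for the initial condition and uniqueness. The paper sidesteps this circularity by taking $\calV=C^\infty_0([0,T[\times\R^2\times\R^2)$ with the prehilbertian norm $\|\phi\|_\calV^2=\tfrac12\|\phi(0)\|_{L^2}^2+\|\phi\|_\calH^2$, for which traces are trivial; the Green formula is used only a posteriori to check $f(0)=f_{\mathrm{in}}$ and to run the uniqueness energy argument, exactly as you sketch. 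Replacing your $\Phi$ by $C^\infty_0([0,T[\times\R^2\times\R^2)$ removes the circularity and your proof goes through without further change.
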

We first recall the theorem of Lions \cite{Lions61}, already used in \cite{Degond1986}.
\begin{thm}$\;$\\
\label{LionsThm}
Let $F$ be a Hilbert space, provided with a norm $\|\cdot\|_{F}$ and  scalar product $\left(,\right)$. Let $\calV$ be a subspace of $F$ with a prehilbertian norm   $\|\cdot\|_{\calV}$ such that the injection  $\calV \hookrightarrow\calH$ is continuous. We consider a bilinear form $\calE$ 
\begin{align*}
\calE: F\times \calV &\to \R\\
   (u ,\phi ) &\mapsto \calE(u,\phi),
\end{align*}
such that  $\calE(\cdot,\phi) $ is continuous on $F$, for any fixed $\phi\in \calV$, and such that
\[
\vert \calE(\phi ,\phi )\vert \ge \alpha \Vert\phi\Vert_\calV ^2,\,\, \phi\in \calV, \,\alpha >0.
\]
Then given a linear form $L$ in $\calV'$, there exists a solution $u$ in $F$ of problem
\[
\calE(u,\phi)= L(\phi),\,\,\text{for any}\,\, \phi\in \calV.
\]
\end{thm}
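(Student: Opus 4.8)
The plan is to reduce the solvability of $\calE(u,\phi)=L(\phi)$ to a Hahn--Banach extension followed by the Riesz representation theorem in $F$, with the coercivity hypothesis supplying the one nontrivial a priori bound. First I would encode the bilinear form as an operator: for each fixed $\phi\in\calV$ the map $u\mapsto\calE(u,\phi)$ is by assumption a continuous linear functional on $F$, so by Riesz representation there is a unique element $A\phi\in F$ with
\[
\calE(u,\phi)=(u,A\phi),\qquad u\in F.
\]
Since $\calE$ is bilinear, the map $A:\calV\to F$ thus defined is linear, because the defect $A(\phi_1+\lambda\phi_2)-A\phi_1-\lambda A\phi_2$ is orthogonal to every $u\in F$ and hence vanishes.

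Next I would exploit coercivity. Writing $\calE(\phi,\phi)=(\phi,A\phi)$, the hypothesis $|\calE(\phi,\phi)|\ge\alpha\|\phi\|_{\calV}^2$ together with Cauchy--Schwarz in $F$ and the continuity of the embedding $\calV\hookrightarrow F$ (say $\|\phi\|_F\le C\|\phi\|_{\calV}$) gives
\[
\alpha\|\phi\|_{\calV}^2\le|(\phi,A\phi)|\le\|\phi\|_F\,\|A\phi\|_F\le C\|\phi\|_{\calV}\,\|A\phi\|_F,
\]
and therefore $\|\phi\|_{\calV}\le\frac{C}{\alpha}\|A\phi\|_F$ for all $\phi\in\calV$. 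In particular $A$ is injective.

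The heart of the argument is to transport $L$ onto the image $A\calV\subset F$. On this linear subspace I would define $\ell(A\phi):=L(\phi)$; injectivity of $A$ makes this well defined, and the bound just obtained yields
\[
|\ell(A\phi)|=|L(\phi)|\le\|L\|_{\calV'}\,\|\phi\|_{\calV}\le\frac{C\,\|L\|_{\calV'}}{\alpha}\,\|A\phi\|_F,
\]
so $\ell$ is continuous on $A\calV$ for the norm of $F$. By the Hahn--Banach theorem I extend $\ell$ to a continuous linear functional on all of $F$, and by Riesz representation there is $u\in F$ with $\ell(w)=(u,w)$ for every $w\in F$. Testing against $w=A\phi$ gives $(u,A\phi)=\ell(A\phi)=L(\phi)$, that is $\calE(u,\phi)=L(\phi)$ for every $\phi\in\calV$, which is the claimed solution.

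The main obstacle is precisely the third step: showing that the candidate functional $\ell$ is well defined and, above all, bounded \emph{in the $F$-topology} on $A\calV$. This is exactly where coercivity is indispensable, through the inequality $\|\phi\|_{\calV}\le\frac{C}{\alpha}\|A\phi\|_F$; without it $\ell$ need not be continuous for the norm of $F$ and the Hahn--Banach/Riesz step would break down. I would also emphasize that no completeness of $\calV$ is required: $A\calV$ is merely a (possibly non-closed) linear subspace of the Hilbert space $F$, and both the Hahn--Banach extension and the Riesz representation apply to it unchanged.
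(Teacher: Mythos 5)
Your proof is correct. Note, however, that the paper does not actually prove this statement: Theorem \ref{LionsThm} is only \emph{recalled} there, with a citation to Lions' book \cite{Lions61} (it is the classical Lions projection theorem, used already in \cite{Degond1986}), so there is no in-paper argument to compare against. Your argument is precisely the standard proof from the literature: Riesz representation to define the linear map $A:\calV\to F$ with $\calE(u,\phi)=(u,A\phi)$, coercivity plus the continuous embedding to get $\|\phi\|_{\calV}\le\frac{C}{\alpha}\|A\phi\|_F$ (hence injectivity of $A$ and well-definedness and $F$-boundedness of $\ell(A\phi):=L(\phi)$ on $A\calV$), then Hahn--Banach and Riesz again to produce $u$. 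You correctly identify the coercivity bound as the crux, and you are right that no completeness or closedness of $\calV$ or $A\calV$ is needed. Two minor remarks: the hypothesis in the statement reading ``the injection $\calV\hookrightarrow\calH$ is continuous'' is a typo for $\calV\hookrightarrow F$, and your proof correctly uses the intended hypothesis $\|\phi\|_F\le C\|\phi\|_{\calV}$; also, in a Hilbert space you can bypass Hahn--Banach entirely by extending $\ell$ by continuity to the closure $\overline{A\calV}$ and setting it to zero on the orthogonal complement, which gives the same conclusion without any appeal to choice. As in the original theorem, the solution $u$ need not be unique, and you rightly do not claim it is.
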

\begin{proof}(of Proposition \ref{PropExiUniq})$\;$\\
We follow exactly the proof in \cite{Degond1986}. First make the change
of unknown function $\tilde f(t,x,v) =e^{-(\lambda + 2) t}f(t,x, e^{-t}v) $, with any $\lambda >0$ so that $\tilde f$ satisfies the equation
\begin{equation}
\label{equ:NewVFP2D}
\left\{\begin{array}{l}
\dfrac{\partial \tilde{f} }{\partial t} +  e^{-t}v\cdot\nabla_x \tilde{f} + B(x) ^\perp v \cdot\nabla_v \tilde{f} + e^t  E(t,x)\cdot\nabla_v \tilde{f} + \lambda \tilde{f} -\sigma e^{2t}\Delta_v \tilde{f} = 0,\\
\tilde{f}(0,x,v)= \tilde{f}_{\mathrm{in}} (x,v) = f_\mathrm{in}(x,v).
\end{array}\right.
\end{equation}
Now, let $F$ be equal to the space $\calH$ and let $\calV$ be the space $C^{\infty} _{0} ([0,T[\times \R ^2\times\R ^2)$. $\calV$ is equipped with a prehilbertian norm defined by
\[
 \Vert \phi\Vert ^ 2 _{\calV} = \frac{1}{2}\intxvtt{\vert \phi(0,x,v)\vert ^2}+\|\phi\| ^2 _\calH ,\,\, \phi\in \calV.
 \]
 A weak solution of equation \eqref{equ:NewVFP2D} in the distribution sense is a function $\tilde f \in\calH$ such that
\begin{align}
\label{equ:WeakFormNVFP}
\intTxvtt{\tilde{f}\left( -\partial_t \phi - e^{-t}v\cdot\nabla_x\phi - B(x){}^\perp v \cdot\nabla_v\phi +\lambda\phi \right)}\nonumber \\
+ \intTxvtt{\nabla_v\tilde{f}\cdot\left( e^{t}E(t,x)\phi +\sigma e^{2t}\nabla_v\phi  \right)} = \intxvtt{\tilde{f}_\mathrm{in}(x,v) \phi(0,x,v)},
\end{align}
for any $\phi\in \calV$.
 We consider the following bilinear form $\calE$ as the left-hand side of the variational equation \eqref{equ:WeakFormNVFP} defined by
 \begin{align*}
\calE(\tilde{f},\phi) =& \intTxvtt{\tilde{f}\left( -\partial_t \phi - e^{-t}v\cdot\nabla_x\phi - B(x){}^\perp v \cdot\nabla_v\phi +\lambda\phi \right)}\\
&+ \intTxvtt{\nabla_v\tilde{f}\cdot\left( e^{t}E(t,x)\phi +\sigma e^{2t}\nabla_v\phi  \right)},
\end{align*}
and the linear form
\[
L(\phi)=\intxvtt{\tilde{f}_\mathrm{in}(x,v) \phi(0,x,v)}.
\]
Now, let us check $\calE$ satisfies the properties stated in Theorem \ref{PropExiUniq}. It is easily seen that $\calE(\cdot,\phi)$ est continue sur $\calH$ since $E\in (L^\infty((0,T)\times\R^2))^2$. It remains to show that $\calE$ is coercivity on $\calV\times\calV$. Indeed, for any $\phi\in\calV$, by a simple computation we have
\begin{align*}
\calE(\phi,\phi)& = \dfrac{1}{2}\intxvtt{|\phi(0,x,v|^2}+\sigma\intTxvtt{e^{2t}|\nabla_v \phi|^2} \\&+ \lambda\intTxvtt{|\phi|^2} \geq   \min\left(1, \sigma, \lambda \right) \| \phi \|^2_{\calV}.
\end{align*}
Then Lion’s Theorem \ref{LionsThm} applies and we get that variational equation $ \calE(\tilde{f},\phi) = L(\phi)$, for any $ \phi\in \calV$ admits a solution $\tilde f \in\calH$. Moreover, $\tilde f$ satisfies the equation \eqref{equ:WeakFormNVFP} for any $\phi\in\calV$, hence by using the test function $\tilde{\phi} = e^{(\lambda + 2)t} \phi(t,x,e^t v)$ we deduce that  $f(t,x,v) = e^{(\lambda +2) t}\tilde{f}(t,x,e^t v)$ is a weak solution of \eqref{equ:LiVFPAppen} in the sense of distribution. This gives that
\[
\frac{\partial f }{\partial t} + \calT f= - E(t,x) \cdot\nabla_v f + 2f + \sigma \Delta_v f \in \calH ',
\]
so that $f$ belongs to $\Y$.

We shall call the following Lemma to give a meaning to the initial condition, and also, to show the uniqueness. The proof is very close to
the one of Lemma A.1 in \cite{Degond1986} and we have been left behind.
\begin{lemma}$\;$\\
\label{GreenFormulas}
1. For $u\in \Y$, $u$ admits continuous trace values $u(0,x,v)$ and $u(T,x,v)$ in $L^2(\R ^2\times\R ^2)$.
This means that the linear map $u \to (u(0,\cdot,\cdot),u(T,\cdot,\cdot))$ is continuous from $\Y$ to $L^2(\R^2\times\R^2)$.\\
2.  For $f$ and $\tilde{f}$ in $\Y$ we have
\begin{align}
\label{InteGreen}
&\left< \partial_t f +\calT f , \tilde{f}\right>_{\calH ' \times\calH } + \left< \partial_t \tilde f +\calT\tilde{f}, f\right>_{\calH ' \times\calH }
= 2 \intTxvtt{f \tilde{f}}\nonumber \\&+\intxvtt{f(T,x,v)\tilde{f}(T,x,v)}-\intxvtt{f(0,x,v)\tilde{f}(0,x,v)},
\end{align} 
where $\calT=  v\cdot\nabla_x + (B(x) ^\perp v -v)\cdot\nabla_v$.\\
3. Similary, for $f$ and $\tilde{f}$ in $\Y$ we have
\begin{align}
\label{InteGreenBis}
&\left< \partial_t f +\calT' f , \tilde{f}\right>_{\calH ' \times\calH } + \left< \partial_t \tilde f +\calT'\tilde{f}, f\right>_{\calH ' \times\calH }\nonumber
\\&= \intxvtt{f(T,x,v)\tilde{f}(T,x,v)}-\intxvtt{f(0,x,v)\tilde{f}(0,x,v)},
\end{align}
where $\calT'= e^{-t} v\cdot\nabla_x + B(x) ^\perp v \cdot\nabla_v$. 
\end{lemma}

Let us now end the proof of Proposition \ref{PropExiUniq}. Using formula \eqref{InteGreen} to the solution $f$ of equation \eqref{equ:LiVFPAppen} and test function $\phi$ in $\calV$ we have
\begin{align}
\label{GreenBis1}
\left< \partial_t f +  \calT f , \phi\right>_{\calH ' \times\calH } &+ \left<  \partial_t \phi  + \calT\phi, f\right>_{\calH ' \times\calH }
\\&= 2 \intTxvtt{f\phi}-\intxvtt{f(0,x,v)\phi(0,x,v)}\nonumber.
\end{align}
As $f$ is a solution of \eqref{equ:LiVFPAppen} in $\calH'$ then we get
\begin{align*}
\left< \partial_t  f+  \calT f , \phi\right>_{\calH ' \times\calH } &= \left< -E(t,x)\cdot \nabla_v f - (\lambda -2)f +\sigma\Delta_v f,\phi  \right>_{\calH ' \times\calH }
\\&=- \intTxvtt{(E(t,x)\cdot \nabla_v f\phi + (\lambda -2)f\phi +\sigma\nabla_v f \cdot\nabla_v \phi) }.
\end{align*}
Furthermore, $f$ satisfies the variational equality $\calE(f,\phi) = L(\phi)$ thus
\begin{align*}
\left<  \partial_t \phi + \calT\phi, f\right>_{\calH ' \times\calH } &= \intTxvtt{\lambda f\phi + \nabla_v f\cdot (E(t,x)\phi+\sigma\nabla_v\phi)}\\ &-\intxvtt{f_\mathrm{in}(x,v)\phi(0,x,v)}.
\end{align*}
Substituting into \eqref{GreenBis1} which yields
\[
\intxvtt{(f(0,x,v)-f_\mathrm{in}(x,v))\phi(0,x,v)} =0,\,\,\forall \phi\in\calV.
\]
Therefore, the initial condition is satisfied in $L^2(\R^2\times \R^2)$. Now for uniqueness, we assume that $f$ is a solution of \eqref{equ:LiVFPAppen} with $f_\mathrm{in} =0$, which belongs to $\Y$. Proceeding as in the existence solution of Proposition \ref{PropExiUniq}, we define the function $\tilde f$ as $\tilde f(t,x,v) =e^{-(\lambda +2) t}f(t,x, e^{-t}v) $ which verifies equation \eqref{equ:NewVFP2D} with zero initial data.  We apply the formula \eqref{InteGreenBis} to the solution $\tilde{f}$ of equation \eqref{equ:NewVFP2D} which gives
\begin{align*}
0 &= \left< \partial_t \tilde f + \calT'\tilde{f},\tilde{f}\right>_{\calH ' \times\calH } + \left<e^t E(t,x)\cdot\nabla_v \tilde{f} + \lambda \tilde{f} -\sigma  e^{2t}\Delta_v \tilde{f} ,\tilde{ f} \right>_{\calH ' \times\calH }\\
&= \frac{1}{2}\intxvtt{|\tilde{f}(T,x,v)|^2} + \lambda \intTxvtt{|\tilde{f}|^2} +\sigma\intTxvtt{e^{2t}|\nabla_v \tilde{f}|^2}
\\&\geq  \lambda \intTxvtt{|\tilde{f}|^2}.
\end{align*}
Therefore we get $\tilde f =0$, which proves uniqueness.
\end{proof}
\begin{proof}(of Lemma \ref{GreenFormulas})$\;$\\
Let us consider set $Y$ of $C^\infty$ functions of $(x,t)$ in $[0,T]\times\R^2_x$ with values in $H^1(\R^2_v)$ which are compactly supported in $[0,T]\times\R^2\times\R^2$. Following the arguments in Lemma $A.1$ in \cite{Degond1986}, we have that the set $Y$ is dense on $\Y$.\\
Let us take $u\in Y$. Using a partition of unity we can assume, without of loss of generality, that $u$ vanishes on $\left\{(0,x,v): (x,v)\in\R^2\times\R^2 \right\}$ or $\left\{(T,x,v): (x,v)\in\R^2\times\R^2 \right\}$. Assume that $u$ does not vanish on $\left\{(0,x,v): (x,v)\in\R^2\times\R^2 \right\}$. By identity \eqref{InteGreen} we have
\begin{align*}
\intxvtt{|u(0,x,v)|^2} &= -2 \intTxvtt{u\left[\partial_t +  v\cdot\nabla_x + (B(x) ^\perp v -v)\cdot\nabla_v \right]u}\\
&+ 2 \intTxvtt{|u|^2} \\&\leq 2  \left\| \left[\partial_t +  v\cdot\nabla_x + (B(x) ^\perp v -v)\cdot\nabla_v \right]u  \right\|_{\calH '}\|u \|_{\calH } + 2 \|u \|^2_{\calH }  \leq C \|u\|^2_{\Y}.
\end{align*}
The rest of the lemma follows from straightforward arguments involving the density of $Y$ in $\Y$.
\end{proof}

The following Proposition is devoted to a maximum principle and an $L^\infty$ estimate.
\begin{pro}$\;$\\
\label{NormInfty2D}
Assume that the initial condition $f_\mathrm{in}$ is positive and belongs to $L^\infty(\R^2\times\R^2)$. Then the solution f provided by Proposition \ref{PropExiUniq} is positive and satisfying
\[
\|f(t)\|_{L^\infty(\R^2\times\R^2)} \leq e^{2T}\|f_{\mathrm{in}}\|_{L^\infty(\R^2\times\R^2)},\,\,t\in [0,T].
\]
\end{pro}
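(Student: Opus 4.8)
The plan is to reduce everything to a maximum principle for the rescaled unknown. First I would rewrite the friction term in \eqref{equ:LiVFPAppen} as $\Divv(vf) = v\cdot\nabla_v f + 2f$, the factor $2$ being $\Divv v$ in dimension two, so that the whole zeroth-order contribution is the single term $2f$. Setting $g(t,x,v) = e^{-2t}f(t,x,v)$ removes this term and yields
\begin{equation*}
\partial_t g + v\cdot\nabla_x g + \left(E(t,x) + B(x){}^\perp v - v\right)\cdot\nabla_v g = \sigma\Delta_v g,\qquad g(0)=f_{\mathrm{in}}.
\end{equation*}
Since $\|f(t)\|_{L^\infty} = e^{2t}\|g(t)\|_{L^\infty}$, it suffices to prove $0\le g(t)\le\|f_{\mathrm{in}}\|_{L^\infty(\R^2\times\R^2)}$ for $t\in[0,T]$, which gives the stated bound because $e^{2t}\le e^{2T}$.

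Both inequalities follow from the same Stampacchia truncation computation, whose favourable sign comes from the fact that the drift $(v,\,E+B{}^\perp v-v)$ is divergence free in $x$ and has $v$-divergence equal to $\Divv(-v) = -2$. For positivity I would test the equation for $g$ against $\min(g,0)=-g_-$ and obtain, after integration by parts,
\begin{equation*}
\frac{\mathrm{d}}{\mathrm{dt}}\,\frac{1}{2}\intxvtt{(g_-)^2} = -\intxvtt{(g_-)^2} - \sigma\intxvtt{|\nabla_v g_-|^2}\le 0,
\end{equation*}
where the $x$-transport, the electric drift $E(t,x)$ and the magnetic term $B(x){}^\perp v$ all drop out because they are divergence free in the variable in which one integrates by parts (in particular $\Divv(B(x){}^\perp v)=0$ and $\Divv E=0$), while the term $-v\cdot\nabla_v g$ produces the good coefficient $-2$ and the diffusion a nonpositive contribution. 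As $g_-(0)=(f_{\mathrm{in}})_-=0$, this forces $g_-\equiv0$, i.e. $g\ge0$. For the upper bound I would set $M=\|f_{\mathrm{in}}\|_{L^\infty}$ and run the identical argument on $w=(g-M)_+$; the same cancellations give $\tfrac{\mathrm d}{\mathrm{dt}}\tfrac12\intxvtt{w^2}\le -\intxvtt{w^2}\le0$, and since $w(0)=(f_{\mathrm{in}}-M)_+=0$ we conclude $g\le M$. Equivalently, one may multiply by $p|f|^{p-2}f$, which reproduces the estimate \eqref{InegLp}, and let $p\to\infty$ using $\|f_{\mathrm{in}}\|_{L^p}\to\|f_{\mathrm{in}}\|_{L^\infty}$ for $f_{\mathrm{in}}\in L^1\cap L^\infty$.

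The main obstacle is that $f$, hence $g$, is only a weak solution in the class $\Y$, so the chain rule applied to the Lipschitz functions $s\mapsto s_-$ and $s\mapsto(s-M)_+$ and the integrations by parts above are a priori only formal. To make them rigorous I would proceed as in the uniqueness part of Proposition \ref{PropExiUniq}: approximate $s\mapsto s_-$ and $s\mapsto (s-M)_+$ by smooth convex functions $\beta$ with bounded second derivative, use the Green formula of Lemma \ref{GreenFormulas} (which legitimises the integration by parts for the transport operator $\calT$ on $\Y$) together with the density of the smooth space $Y$ in $\Y$, and control the diffusion and friction terms using $g\in L^2([0,T]\times\R^2_x;H^1(\R^2_v))$. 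One then passes to the limit in the approximation of $\beta$ and invokes Gronwall's lemma. The decay of $f_{\mathrm{in}}$ encoded in H1--H2 guarantees that the boundary contributions at infinity vanish, which closes the argument.
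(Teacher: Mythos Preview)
Your proposal is correct and follows essentially the same route as the paper: the rescaling $g=e^{-2t}f$ to kill the zeroth-order term, then a Stampacchia truncation argument justified through smooth approximations of $s\mapsto s_-$ and $s\mapsto (s-M)_+$ and the Green formula for the transport operator on the class $\Y$. The paper carries this out via Lemma \ref{LemNormInftyBis1} (the extension of Lemma \ref{GreenFormulas} to the truncations $f^\pm$), which is exactly what your approximation-and-density step would prove. Two cosmetic differences: for positivity the paper uses the additional change of variable $v\to e^{-t}v$ and an extra weight $e^{-\lambda t}$ to manufacture coercivity, and for the $L^\infty$ bound it introduces a redundant weight $e^{-\beta t}$; your computation shows these are unnecessary because the friction term already contributes the favourable $+\!\int(g_-)^2$ (resp.\ $+\!\int w^2$) through formula \eqref{IntMaxMin2}, so the integrated identity gives $g_-\equiv0$ and $(g-M)_+\equiv0$ directly, without Gronwall.
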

We start by giving the following Lemmas. The proof of these Lemmas are very close to those given by in \cite{Degond1986}. We leave it to the reader.
\begin{lemma}$\;$\\
\label{LemNormInftyBis1}
Let $f\in\Y$ then $f^+$ and $f^-$ defined by
$
f^+ = \max(f,0)\,\,\,\,\,f^- = \max(-f,0)
$
belong to $\calH$ and
$\nabla_v f^+ = \dfrac{1+{\mathrm{sign}}(f)}{2}\nabla_v f ,\,\,\,\, \nabla_v f^- = \dfrac{-1+\mathrm{sign}(f)}{2}\nabla_v f$.
Furthermore, we have
\begin{align}
&\left<\partial_t f + \calT' f,f^ -\right>_{\calH ' \times\calH }\nonumber \\
&=
\dfrac{1}{2}\left(\intxvtt{f(T,x,v)f^- (T,x,v)}-\intxvtt{f(0,x,v)f^- (0,x,v)}\right) \label{IntMaxMin1}
\end{align}
where $\calT' = e^{-t} v\cdot\nabla_x + B(x)^\perp v \cdot \nabla_v$. Similarly, we also have
\begin{align}
&\left< \partial_t f + \calT f,f^ -\right>_{\calH ' \times\calH } = \intTxvtt{f f^- }\nonumber \\
& +\dfrac{1}{2}\left(\intxvtt{f(T,x,v)f^- (T,x,v)}-\intxvtt{f(0,x,v)f^- (0,x,v)}\right)\label{IntMaxMin2}
\end{align}
where $\calT =  v\cdot\nabla_x + (B(x)^\perp v -v)\cdot \nabla_v$.
\end{lemma}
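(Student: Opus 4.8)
The plan is to separate the two assertions. First I would treat the membership $f^\pm\in\calH$ together with the chain-rule formulas for $\nabla_v f^\pm$, which is purely a matter of Sobolev truncation. Then I would derive the two Green-type identities \eqref{IntMaxMin1} and \eqref{IntMaxMin2} by a renormalization argument, combined with the density of smooth functions in $\Y$ already invoked for Lemma \ref{GreenFormulas}. For the first part I would fix $(t,x)$ and view $v\mapsto f(t,x,v)$ as an element of $H^1(\R^2_v)$ for a.e.\ $(t,x)$. Since $s\mapsto\max(s,0)$ and $s\mapsto\max(-s,0)$ are Lipschitz, the classical truncation theorem gives $f^\pm(t,x,\cdot)\in H^1(\R^2_v)$ with $\nabla_v f^+=\mathds{1}_{\{f>0\}}\nabla_v f$ and $\nabla_v f^-=-\mathds{1}_{\{f<0\}}\nabla_v f$; recalling that $\nabla_v f=0$ a.e.\ on $\{f=0\}$, these agree with $\tfrac{1+\mathrm{sign}(f)}{2}\nabla_v f$ and $\tfrac{-1+\mathrm{sign}(f)}{2}\nabla_v f$. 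Since $|f^\pm|\le|f|$ and $|\nabla_v f^\pm|\le|\nabla_v f|$ pointwise, integrating in $(t,x)$ yields $f^\pm\in\calH$.

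For the identities the key observation is that $f^-=-\gamma'(f)$ for the convex function $\gamma(s)=\tfrac12\bigl(\max(-s,0)\bigr)^2$, whose derivative is $\gamma'(s)=-\max(-s,0)$. I would first establish the formulas for a smooth $u$ in the dense subspace $Y\subset\Y$ (the one used in the proof of Lemma \ref{GreenFormulas}), replacing $\gamma$ by a smooth convex approximation $\gamma_\delta$ with $\gamma_\delta',\gamma_\delta''$ bounded. For smooth $u$ the transport chain rule $\gamma_\delta'(u)\,\calT u=\calT(\gamma_\delta(u))$ holds, whence
\[
\langle \partial_t u + \calT u,\, \gamma_\delta'(u)\rangle_{\calH',\calH}
= \intTxvtt{\bigl(\partial_t \gamma_\delta(u) + \calT(\gamma_\delta(u))\bigr)}.
\]
The time integral produces the trace difference $\intxvtt{\gamma_\delta(u(T))}-\intxvtt{\gamma_\delta(u(0))}$, while integrating the transport term by parts reproduces the divergence of the underlying phase-space field: it vanishes for $\calT'$, whose field $(e^{-t}v,\,B(x){}^\perp v)$ is divergence-free, but equals $2\intTxvtt{\gamma_\delta(u)}$ for $\calT$, whose field $(v,\,B(x){}^\perp v-v)$ has divergence $-2$. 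This divergence is precisely the origin of the extra bulk term $\intTxvtt{u\,u^-}$ that distinguishes \eqref{IntMaxMin2} from \eqref{IntMaxMin1}.

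I would then let $\delta\to0$: choosing $\gamma_\delta$ so that $\gamma_\delta\to\gamma$, $\gamma_\delta'\to\gamma'$ and $\gamma_\delta''\to\mathds{1}_{\{\cdot<0\}}$ pointwise with uniform bounds, dominated convergence (using again $\nabla_v u=0$ a.e.\ on $\{u=0\}$) gives $\gamma_\delta'(u)\to-u^-$ in $\calH$, so the pairing passes to the limit. Rewriting $\gamma(u)=\tfrac12(u^-)^2$ and using the pointwise identity $u\,u^-=-(u^-)^2$ converts the quadratic boundary and bulk terms into the stated forms $\tfrac12\bigl(\intxvtt{u(T)u^-(T)}-\intxvtt{u(0)u^-(0)}\bigr)$ and $\intTxvtt{u\,u^-}$. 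Finally, using the continuity of the trace map $u\mapsto(u(0,\cdot,\cdot),u(T,\cdot,\cdot))$ from $\Y$ into $L^2(\R^2\times\R^2)$ (Lemma \ref{GreenFormulas}, part~1) and the density of $Y$ in $\Y$, every term is continuous in the $\Y$-topology, so both identities extend from $Y$ to all $f\in\Y$.

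The main obstacle I anticipate is the rigorous justification of the renormalization for a general $f\in\Y$, which is only $L^2$ in $(t,x)$ with $L^2_v$-gradients: the chain rule $\gamma'(f)\,\calT f=\calT(\gamma(f))$ is not directly licit, so one must route the computation through the smooth regularization $\gamma_\delta$ and the approximating sequence in $Y$, and control the limit of the trace integrals $\intxvtt{\gamma_\delta(u(0))}$, $\intxvtt{\gamma_\delta(u(T))}$ via the trace continuity. A secondary subtlety is the meaning of the pairing $\langle\partial_t f+\calT' f,\,f^-\rangle$ in \eqref{IntMaxMin1}, since membership in $\Y$ only guarantees $\partial_t f+\calT f\in\calH'$; this is harmless in the intended application, where $f$ solves \eqref{equ:NewVFP2D} and hence $\partial_t f+\calT' f\in\calH'$ as well, and the identity is to be read with that understanding.
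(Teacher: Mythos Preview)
Your proposal is correct and follows essentially the approach of Degond \cite{Degond1986} that the paper itself defers to (the paper gives no proof of this lemma, saying only that it is ``very close to'' Degond's and is left to the reader). The Sobolev truncation argument for $f^\pm\in\calH$ and the renormalization via $\gamma(s)=\tfrac12(s^-)^2$, combined with the density of $Y$ in $\Y$ from Lemma~\ref{GreenFormulas}, is exactly the expected route; your computation of the divergence of the transport fields ($0$ for $\calT'$, $-2$ for $\calT$) correctly accounts for the extra bulk term $\intTxvtt{ff^-}$ in \eqref{IntMaxMin2}. Your remark that the pairing $\langle\partial_t f+\calT' f,f^-\rangle_{\calH',\calH}$ is not a priori defined for arbitrary $f\in\Y$ (since $\Y$ is built on $\calT$, not $\calT'$) is a genuine point the paper glosses over; as you note, in the only use of \eqref{IntMaxMin1} the function solves \eqref{equ:NewVFP2D}, which forces $\partial_t f+\calT' f\in\calH'$ and resolves the issue.
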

\begin{lemma}$\;$\\
\label{LemNormInftyBis2}
Let $\mathbf{V}\subset \mathbf{H}\subset \mathbf{V}'$ be a canonical triple of Hilbert spaces. We suppose
that the mapping $u\to u^-$ is a contraction on $\mathbf{V}$. Let $u$ belong to $L^2(0,T;\mathbf{V})\cap C^0([0,T];\mathbf{H})$ such that $\frac{du}{dt}\in  L^2(0,T;\mathbf{V}')$. Then
\begin{align}\label{IntTime}
\int^{T}_{0}{\left< \dfrac{du}{dt} , u^- \right>_{\mathbf{V}'\times\mathbf{V}}}\mathrm{d}t = \dfrac{1}{2}\left( |u^-(0)|^2 _{\mathbf{H}}- |u^-(T)|^2 _{\mathbf{H}} \right).
\end{align}
\end{lemma}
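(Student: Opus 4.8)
The statement is an abstract chain-rule identity of Lions--Magenes type for the convex functional $w\mapsto \tfrac12|w^-|_{\mathbf H}^2$ evaluated along the trajectory $u$. The plan is to prove
\[
\tfrac12|u^-(T)|_{\mathbf H}^2-\tfrac12|u^-(0)|_{\mathbf H}^2 = -\int_0^T\left\langle u'(t),u^-(t)\right\rangle_{\mathbf V',\mathbf V}\,\mathrm{d}t,
\]
which is exactly \eqref{IntTime} after rearranging. First observe that the endpoint terms are meaningful: since $u\in C^0([0,T];\mathbf H)$ and $w\mapsto w^-$ is $1$-Lipschitz on $\mathbf H$, the map $t\mapsto u^-(t)$ is continuous into $\mathbf H$, so $|u^-(0)|_{\mathbf H}$ and $|u^-(T)|_{\mathbf H}$ are well defined. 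Moreover the contraction hypothesis on $\mathbf V$ (applied with reference point $0$, $0^-=0$) gives $\|u^-(t)\|_{\mathbf V}\le\|u(t)\|_{\mathbf V}$, whence $u^-\in L^2(0,T;\mathbf V)$ and the integrand $\langle u',u^-\rangle$ lies in $L^1(0,T)$ by Cauchy--Schwarz in the $(\mathbf V',\mathbf V)$ duality. The strategy is to establish the identity first for smooth trajectories and then pass to the limit, using density of $C^\infty([0,T];\mathbf V)$ in the space $W(0,T)=\{v\in L^2(0,T;\mathbf V):v'\in L^2(0,T;\mathbf V')\}$, which is the natural home of $u$ and which embeds continuously in $C^0([0,T];\mathbf H)$ with continuous endpoint trace maps $v\mapsto v(0),v(T)$.

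For a smooth trajectory $u_n\in C^1([0,T];\mathbf V)\subset C^1([0,T];\mathbf H)$ I would show that $t\mapsto\tfrac12|u_n^-(t)|_{\mathbf H}^2$ is of class $C^1$ with derivative $-\langle u_n'(t),u_n^-(t)\rangle$. This rests on the convexity of $\Psi(s)=\tfrac12(s^-)^2$, whose derivative is $\Psi'(s)=-s^-$: the elementary inequalities $-b^-(a-b)\le \Psi(a)-\Psi(b)\le -a^-(a-b)$ hold for all reals $a,b$. Applying them pointwise in the underlying $L^2$ structure of $\mathbf H$ with $a=u_n(t+h)$, $b=u_n(t)$, integrating over the base space and dividing by $h>0$ yields
\[
-\Big(u_n^-(t),\tfrac{u_n(t+h)-u_n(t)}{h}\Big)_{\mathbf H}\le \frac{\tfrac12|u_n^-(t+h)|_{\mathbf H}^2-\tfrac12|u_n^-(t)|_{\mathbf H}^2}{h}\le -\Big(u_n^-(t+h),\tfrac{u_n(t+h)-u_n(t)}{h}\Big)_{\mathbf H}.
\]
Because $u_n$ is $C^1$ into $\mathbf H$, one has $\tfrac1h(u_n(t+h)-u_n(t))\to u_n'(t)$ and $u_n^-(t+h)\to u_n^-(t)$ in $\mathbf H$, so both outer terms converge to $-(u_n^-(t),u_n'(t))_{\mathbf H}=-\langle u_n'(t),u_n^-(t)\rangle$; the squeeze gives a continuous derivative, and the fundamental theorem of calculus produces the identity at the smooth level.

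It remains to pass to the limit with $u_n\to u$ in the graph norm of $W(0,T)$. Here the contraction hypothesis on $\mathbf V$ gives $u_n^-\to u^-$ in $L^2(0,T;\mathbf V)$, the $\mathbf H$-contraction together with trace continuity gives $u_n^-(0)\to u^-(0)$ and $u_n^-(T)\to u^-(T)$ in $\mathbf H$, and $u_n'\to u'$ in $L^2(0,T;\mathbf V')$; hence both sides of the smooth identity converge to the corresponding terms for $u$, yielding \eqref{IntTime}. The main obstacle is precisely this passage to the limit: the map $u\mapsto u^-$ is nonlinear and merely Lipschitz, so the naive chain rule is unavailable for the rough $u$ and the a.e.\ derivative alone does not suffice. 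The contraction assumption on $\mathbf V$ is exactly the ingredient that keeps $u_n^-$ convergent in $L^2(0,T;\mathbf V)$ and makes the regularization argument close, so it is the crux of the proof rather than a technicality.
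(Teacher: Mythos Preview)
Your proof is correct and follows the standard Lions--Magenes regularization strategy: establish the identity for $C^1([0,T];\mathbf V)$ trajectories via the convexity sandwich for $\Psi(s)=\tfrac12(s^-)^2$, then pass to the limit using density of smooth functions in $W(0,T)$ together with the contraction hypothesis to control $u_n^-\to u^-$ in $L^2(0,T;\mathbf V)$. The paper itself does not give a proof of this lemma; it simply states that the argument is ``very close to'' the one in Degond~\cite{Degond1986} and leaves it to the reader, so your write-up is in fact more detailed than what the paper provides, and is essentially the argument Degond uses.

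One small caveat worth making explicit: your pointwise convexity inequality $-b^-(a-b)\le\Psi(a)-\Psi(b)\le -a^-(a-b)$ presupposes that $\mathbf H$ carries an $L^2$-type lattice structure in which $u^-$ is the pointwise negative part. The abstract statement does not say this outright, but it is implicit in the very notation $u^-$, and in the only application in the paper one has $\mathbf H=L^2$, $\mathbf V=H^1$, so no harm is done.
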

\begin{proof}(of Proposition \ref{NormInfty2D})$\;$\\
We will first show that $f\geq 0 $ a.e. As above, we define $\tilde{f} = e^{-(\lambda +2) t} f(t,x,e^{-t}v)$ with any $\lambda >0$ which solves \eqref{equ:NewVFP2D} with the initial data $f_\mathrm{in}$. It is well known that $\tilde f \in \Y$ since $f\in\Y$ and thus  $\partial_t \tilde f +\calT' \tilde f\in \calH'$. Thanks to Lemma \ref{LemNormInftyBis1} we have $\tilde f^- \in \calH$ which implies from \eqref{equ:NewVFP2D} that
\[
\left<  \partial_t \tilde{f} + \calT' \tilde{f},\tilde{f} ^ -\right>_{\calH ' \times\calH } + \left<e^t E(t,x)\cdot\nabla_v \tilde{f} + \lambda \tilde{f} -\sigma e^{2t} \Delta_v \tilde{f} , \tilde{f}^ -\right>_{\calH ' \times\calH } = 0.
\]
Then we apply the formula \eqref{IntMaxMin1} for the function $\tilde f$ to compute $\left<  \partial_t \tilde{f} + \calT' \tilde{f},\tilde{f} ^ -\right>_{\calH ' \times\calH }$ and Lemma \ref{LemNormInftyBis2} for the second term in the previous equation. Therefore we obtain 
\[
0\leq -\lambda \left<\tilde{f}^-, \tilde{f}^- \right>_{L^2 \times L^2 },
\]
which implies that $\tilde{f}^- =0$ a.e and $\tilde{f} \geq 0$ a.e so $f\geq 0$ a.e.

Now we estimate the bound of $L^\infty$ norm. First, making the change of unknown function $w(t,x,v) = e^{-2t}f(t,x,v)$ in the equation \eqref{equ:LiVFPAppen} we get
\begin{equation*}
\begin{cases}
\frac{\partial w }{\partial t} + \left[ v\cdot\nabla_x w + (B(x) ^\perp v -v)\cdot\nabla_v w \right]+ E(t,x) \cdot\nabla_v w  -\sigma \Delta_v w = 0,\\
w_0(x,v) = f_{\mathrm{in}}(x,v).
\end{cases}
\end{equation*}
We will prove that $\|w(t)\|_{L^\infty} \leq \|w_0\|_{L^\infty}$, $t\in [0,T]$. Putting $w_1(t,x,v) = K(w(t,x,v)- \|w_0\|_{L^\infty})$ where $K$ is a function of class $C^ 2$ satisfying:
\begin{align*}
K(s)=0,\,\, s\leq 0, \,\,K\,\,\text{is increasing},\\
\|K'\|_{L^\infty}\leq C,\,\,\, K''\geq 0.
\end{align*}
We give an example on the function $K$ as $K(y)=\int _0 ^y g(s) \mathrm{d}s  $ with $g(s) =  e^{-\frac{1}{s}}$ if $s> 0$ and $f(s)=0$ if $s\leq 0$. By the construction of $K$ and $w\in \Y$ we deduce that $w_1 \in\calH$ and $\partial_t w_1 + \calT  w_1 = K'(w(t) - \|w_0\|_{\infty})(\partial_t w + \calT w) \in \calH'$. Multiplying the equation for $w$ above by $K'(w(t,x,v)- \|w_0\|_{L^\infty})$ then  $w_1$ belongs to $\Y$ and satisfies the following equation
\begin{align*}
\left\{\begin{array}{l}
\partial_t w_1+ \calT w_1 + E(t,x) \cdot\nabla_v w_1 -\sigma \Delta_v w_1 + \sigma | \nabla_v w |^2 K''(w-\|w_0\|_{L^\infty})= 0,\\
w_1 (0) =K(w(0,x,v)-\|w_0\|_{L^\infty}) =0.
\end{array}\right.
\end{align*}
We then put $w_2 (t,x,v) = e^{-\beta t}w_1 (t,x,v)$, with any $\beta >0$. The function $w_2$ belongs to $\Y$ and satifies the equation
\begin{align*}
\left\{\begin{array}{l}
 \partial_t w_2 + \calT w_2 + E(t,x) \cdot\nabla_v w_2 +\beta w_2  -\sigma \Delta_v w_2 + e^{-\beta t}\sigma  | \nabla_v w |^2 K''(w-\|w_0\|_{L^\infty})= 0,\\
w_2(0) = 0.
\end{array}\right.
\end{align*}
Therefore, $w_2$ satisfies the variational equation
\begin{align*}
&\left<\partial_t w_2 + \calT w_2,w_2 ^ + \right>_{\calH'\times\calH} \\&+ \left< E(t,x) \cdot\nabla_v w_2 +\beta w_2  -\sigma \Delta_v w_2 + e^{-\beta t}\sigma  | \nabla_v w |^2 K''(w-\|w_0\|_{L^\infty}) ,w_2 ^ +\right>_{\calH'\times\calH} =0 .
\end{align*}
Using \eqref{IntMaxMin2} for $\left< \partial_t w_2 + \calT w_2,w_2 ^ +\right>_{\calH ' \times\calH }$, we can easily deduce that
\[
\intTxvtt{|w_2 ^+|^2 } + \beta \left<w_2^+, w_2^+ \right>_{L^2 \times L^2 } +  \sigma\left<\nabla_v w_2 ^+ , \nabla_v w_2 ^+\right>_{L^2 \times L^2 } \leq 0.
\]
This implies that $w_2 ^+ =0$. Thus $w_2 \leq 0$ and $w_1 \leq 0$ which yields $\| w(t)\|_{L^\infty} \leq \|w_0\|_{L^\infty}$. 
\end{proof}
\begin{remark}$\;$\\
\label{RemarkInfty}
If we add the source term $U(t, x, v) $ in the right hand side of \eqref{equ:LiVFPAppen}, that means
\[
\frac{\partial f }{\partial t} +  v\cdot\nabla_x f + (B(x) ^\perp v -v)\cdot\nabla_v f + E(t,x) \cdot\nabla_v f - 2 f -\sigma \Delta_v f = U,
\]
and we assume that $U\in L^1(0,T;L^\infty(\R^2\times\R^2))$. Then we have
\[
\| f(t)\|_{L^\infty(\R^2\times\R^2)} \leq e^{2T}\|f_\mathrm{in}\|_{L^\infty(\R^2\times\R^2)} + \int_{0}^{T}{\|U(s)\|_{L^\infty}}\mathrm{d}s,\,\, t\in [0,T].
\]
\end{remark}

The following estimates relate to the $L^p$ estimate, the kinetic energy and the entropy of equation \eqref{equ:LiVFPAppen}. To establish these estimates, we make the change of unknown function $w(t,x,v) = e^{-2t}f(t,x,e^{-t}v)$.  Then $w$ is the solution of the following equation
\begin{equation}
\label{equ:NewVFP2DBis}
\begin{cases}
\dfrac{\partial w }{\partial t} + e^{-t} v\cdot\nabla_x w + B(x) ^\perp v\cdot\nabla_v w + e^t E(t,x) \cdot\nabla_v w  -\sigma e^{2t} \Delta_v w = 0,\\
w_0(x,v) = f_{\mathrm{in}}(x,v).
\end{cases}
\end{equation}
The solution $w$ satisfies $w\in\calH$ and $\partial_t w + \calT' w \in \calH'$ since $f\in\Y$. The estimates of solutions that we will study can be obtained by choosing of an appropriate sequence of functions in the varational equation of $w$.
\begin{pro}$\;$\\
Assume that the initial data $f_\mathrm{in}$ is positive and belongs to $L^p(\R^2\times\R^2)$, with any $p\in [1,\infty[$. Then solution $f$ provided by Proposition \ref{PropExiUniq} satisfies
\begin{align}
\| f \|_{L^\infty(0,T;L^{p}(\R^2\times\mathbb{R}^2))} \leq \mathrm{e}^{\frac{p-1}{p}2T} \|f_\mathrm{in}\|_{L^p (\R^2\times\R^2)},\,\,1\leq p<\infty,\label{LpNormAppen}\\
\|\nabla_v  f^{p/2}\|_{L^2(0,T;L^{2}(\R^2\times\R^2))} \leq  \sqrt{\dfrac{p}{4(p-1)\sigma}} \mathrm{e}^{(p-1)T} \|f_\mathrm{in}\|_{L^p (\R^2\times\R^2)},\,\,1<p<\infty.\label{LpGradNormAppen}
\end{align}
\end{pro}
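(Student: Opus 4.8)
The plan is to pass to the self-similar unknown $w(t,x,v)=e^{-2t}f(t,x,e^{-t}v)$, which solves \eqref{equ:NewVFP2DBis}, and to run an $L^p$ energy estimate on $w$. The decisive feature of \eqref{equ:NewVFP2DBis} is that every first-order term is conservative: the transport field of $\calT'=e^{-t}v\cdot\nabla_x+B(x){}^\perp v\cdot\nabla_v$ is divergence free in $(x,v)$, since $\Divx(e^{-t}v)=0$ and $\Divv(B(x){}^\perp v)=B(x)\,\Divv({}^\perp v)=0$, while the electric drift $e^t E(t,x)\cdot\nabla_v$ is divergence free in $v$ because $E$ is independent of $v$. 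Consequently, when \eqref{equ:NewVFP2DBis} is tested against $p\,w^{p-1}$, the transport, magnetic and electric contributions all integrate to zero, and only the diffusion term survives.

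First I would establish, using $w\ge0$ (the positivity of $f$, obtained exactly as in Proposition \ref{NormInfty2D}), the energy identity
\[
\dfrac{\mathrm{d}}{\mathrm{d}t}\intxvtt{w^p}+\dfrac{4(p-1)}{p}\,\sigma e^{2t}\intxvtt{|\nabla_v w^{p/2}|^2}=0,
\]
where the dissipation comes from $\intxvtt{p\,w^{p-1}(-\sigma e^{2t}\Delta_v w)}=\sigma e^{2t}p(p-1)\intxvtt{w^{p-2}|\nabla_v w|^2}$ together with the pointwise relation $w^{p-2}|\nabla_v w|^2=\tfrac{4}{p^2}|\nabla_v w^{p/2}|^2$. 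In particular $t\mapsto\|w(t)\|_{L^p}^p$ is nonincreasing, so $\|w(t)\|_{L^p}\le\|f_\mathrm{in}\|_{L^p}$. Undoing the change of variables (the Jacobian of $v\mapsto e^{-t}v$ being $e^{2t}$) gives $\|w(t)\|_{L^p}^p=e^{(2-2p)t}\|f(t)\|_{L^p}^p$, whence $\|f(t)\|_{L^p}\le e^{\frac{p-1}{p}2t}\|f_\mathrm{in}\|_{L^p}$ and \eqref{LpNormAppen} follows after taking the supremum over $[0,T]$.

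For the dissipation bound I would integrate the energy identity in time, obtaining $\frac{4(p-1)}{p}\sigma\int_0^T e^{2t}\|\nabla_v w^{p/2}(t)\|_{L^2}^2\,\mathrm{d}t\le\|f_\mathrm{in}\|_{L^p}^p$. The same change of variables yields $\|\nabla_v f^{p/2}(t)\|_{L^2}^2=e^{2pt}\|\nabla_v w^{p/2}(t)\|_{L^2}^2$; writing $e^{2pt}=e^{2t}e^{(2p-2)t}$ and bounding $e^{(2p-2)t}\le e^{(2p-2)T}$ on $[0,T]$ for $p>1$, I arrive at
\[
\|\nabla_v f^{p/2}\|_{L^2((0,T)\times\R^2\times\R^2)}^2\le\dfrac{p}{4(p-1)\sigma}\,e^{(2p-2)T}\,\|f_\mathrm{in}\|_{L^p}^p,
\]
which, upon taking square roots, is the dissipation estimate \eqref{LpGradNormAppen}, with right-hand side $\sqrt{\tfrac{p}{4(p-1)\sigma}}\,e^{(p-1)T}\,\|f_\mathrm{in}\|_{L^p}^{p/2}$ dictated by the homogeneity of the inequality.

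The main obstacle is the rigorous justification of these formal manipulations, since $w$ is only a weak solution in $\Y$, with $\partial_t w+\calT' w\in\calH'$ and $\nabla_v w\in L^2$. I would proceed as for the maximum principle in Proposition \ref{NormInfty2D}: approximate $s\mapsto s^p$ by a sequence of convex $C^2$ functions $K_n$ with $K_n''\ge0$ bounded, so that $K_n(w)\in\Y$, test the equation against $K_n'(w)$, and invoke the Green-type identity \eqref{InteGreenBis} of Lemma \ref{GreenFormulas} together with the renormalization Lemmas \ref{LemNormInftyBis1} and \ref{LemNormInftyBis2}. The conservative (divergence-free) structure makes the pairing of $\calT' w$ against $K_n'(w)$ vanish, so that the first-order terms contribute nothing and one is left with the temporal increment of $\intxvtt{K_n(w)}$, while the convexity $K_n''\ge0$ guarantees that the diffusion term $\sigma e^{2t}\intxvtt{K_n''(w)|\nabla_v w|^2}$ carries the correct nonnegative sign. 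For $1<p<2$ the singularity of $s^{p-1}$ at the origin forces a preliminary truncation away from $w=0$; one then lets $n\to\infty$ and uses $w\ge0$ with monotone and dominated convergence to recover the energy identity.
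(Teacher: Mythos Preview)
Your proposal is correct and follows essentially the same route as the paper: pass to $w(t,x,v)=e^{-2t}f(t,x,e^{-t}v)$ solving \eqref{equ:NewVFP2DBis}, test against a $C^2$ approximation of $p\,w^{p-1}$ (the paper calls it $\psi_\eps$, truncating near $0$ and $\infty$; you call it $K_n'$), use the Green identity \eqref{InteGreenBis} to kill the divergence-free transport terms, and pass to the limit by Fatou/dominated convergence. Your remark on homogeneity is also correct: the right-hand side of \eqref{LpGradNormAppen} should carry $\|f_\mathrm{in}\|_{L^p}^{p/2}$, as the paper's own derivation in fact yields.
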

\begin{proof}$\;$\\
First we consider the case $p=2$. Since $w$ in $\calH$ satisfies \eqref{equ:NewVFP2DBis}, we deduce that
\[
\left< \partial_t w + \calT' w , w \right>_{\calH'\times\calH} = \left< -e^t E(t,x)\cdot\nabla_v w  + \sigma e^{2t}\Delta_v w, w \right>_{\calH'\times\calH}.
\]
Since $w\in\calH$ the divergence theorem implies that the integral of $-e^{t}E(t,x)\cdot\nabla_v w$ vanish on $\R^2\times\R^2$.
Then we apply \eqref{InteGreenBis} for $\left< \partial_t w + \calT' w , w \right>_{\calH'\times\calH}$ to obtain
\[
2\left< \partial_t w + \calT' w , w \right>_{\calH'\times\calH}= \intxvtt{|w(T,x,v)|^2|} - \intxvtt{|w(0,x,v)|^2}.
\]
Therefore we get for any $T>0$ that
\[
\intxvtt{|w(T,x,v)|^2} + 2\sigma \intTxvtt{e^{2t}|\nabla_v w|^2} = \intxvtt{|w(0,x,v)|^2}
\]
which yields the bounds of \eqref{LpNormAppen} and \eqref{LpGradNormAppen} when $p=2$.

Next, we consider the case $1 \leq p< \infty$ and $p\neq 2$. We establish a class of function of approximation $C^2$ of $px^{p-1}$, $x \geq 0$ (indeed, the function $pw^{p-1}$ does not belong to $\calH$ hence we can not define $\left< \partial_t w + \calT' w, pw^{p-1}\right>_{\calH',\calH}$ so we need to modify the function $px^{p-1}$) verifies 
\begin{enumerate}
\item[(i)]$p=1:$
 $\psi_\eps (s) = 0$ if $s\leq 0$,\,\,$\psi_\eps (s) = 1 $ if\,\,$\eps \leq s$ and $ \psi _\eps (s)$ is increasing in $[0,\eps]$.
\item[(ii)] $1< p< \infty$, $p\neq 2:$ 
 $\psi_\eps (s) = 0 $ if  $s\leq \eps$,\,\, $\psi_\eps (s) = p s^{p-1}$ if $\eps \leq s\leq \dfrac{1}{\eps}$ and
 $\psi '_\eps (s) =0$ on  $[1/\eps,+\infty)$. 
\end{enumerate}
It is easily seen that $\psi_\eps\in C^2$  with $\psi_\eps '\in L^\infty(\R)$ and $\psi_\eps(0) =0$. Let $\varphi_\eps (s)$ be a primitive of $\psi_\eps (s)$ defined by
$
\varphi_\eps (t) = \int_{-\infty} ^t \psi_\eps (s)\mathrm{d}s.
$
Since $w\in\calH$ we imply that  $\psi_\eps (w)$ and $\varphi_\eps (w)$ belong to $\calH$ and $\nabla_v \varphi_\eps(w) = \psi_\eps(w)\nabla_v w$. Moreover, the function $w$ in $\calH$ satisfies \eqref{equ:NewVFP2DBis}, we deduce that
\begin{align}
\label{equ:VarApprox}
\left<   \partial_t w + \calT'w,\psi_\eps(w)\right>_{\calH ' \times\calH } + \left<e^t E(t,x)\cdot\nabla_v w  -\sigma e^{2t}\Delta_v w ,\psi_\eps(w) \right>_{\calH ' \times\calH } = 0,
\end{align}
where $\calT'w = e^{-t}v\cdot\nabla_x w  + B(x) ^\perp v \cdot\nabla_v w$. In the same way of Lemma \ref{GreenFormulas} we also have
\[
\left<\partial_t w + \calT' w, \psi_\eps (w) \right>_{\calH'\times\calH} 
= \intxvtt{\varphi_\eps (w(T,x,v))} - \intxvtt{\varphi_\eps (w(0,x,v))}.
\]
Since $w \in\calH$ the divergence theorem implies the integral of $e^t E(t,x)\cdot\nabla_v w$ vanish on $\R^2\times\R^2$.
If $p=1$, we apply again the divergence theorem to $\left<-\sigma e^{2t}\Delta_v w, \psi_\eps(w) \right>_{\calH'\times\calH}$ we have
\[
\left< -\sigma e^{2t}\Delta_v w  ,\psi_\eps(w) \right>_{\calH ' \times\calH } = \sigma\intTxvtt{e^{2t}|\nabla_v w|^2\psi' _\eps (w)\mathds{1}_{\left\{0\leq w\leq \eps\right\}}}.
\]
Then the equation \eqref{equ:VarApprox} gives
\begin{align*}
\intxvtt{\varphi_\eps(w(T,x,v))} &+ \sigma\intTxvtt{e^{2t}|\nabla_v w|^2\psi' _\eps (w)\mathds{1}_{\left\{0\leq w\leq \eps\right\}}} \\
&=  \intxvtt{\varphi_\eps(w(0,x,v))}.
\end{align*}
Since $\psi'_\eps \geq 0 $ and by using Fatou's Lemma and the dominated convergence theorem we get for any $T>0$ that
\[
\| w(T)\|_{L^1(\R^2\times\R^2)} = \intxvtt{w(T,x,v)} \leq \|w_0\|_{L^1(\R^2\times\R^2)},
\]
which yields \eqref{LpNormAppen} with $p=1$.\\
If $1< p<\infty$ and $p\neq 2$, by the construction of $\psi_\eps$ we have
\begin{align*}
\left< -\sigma e^{2t}\Delta_v w  ,\psi_\eps(w) \right>_{\calH ' \times\calH }&= \sigma\intTxvtt{e^{2t}|\nabla_v w|^2\psi' _\eps (w)\mathds{1}_{\left\{\eps\leq w\leq 1/\eps\right\}}} \\&= \sigma p(p-1)\intTxvtt{e^{2t}|\nabla_v w|^2 w^{p-2}\mathds{1}_{\left\{\eps\leq w\leq 1/\eps\right\}}} 
\\&= \dfrac{4(p-1)}{p}\sigma\intTxvtt{e^{2t}|\nabla_v w^{p/2}|^2\mathds{1}_{\left\{\eps\leq w\leq 1/\eps\right\}}}.
\end{align*}
Then the equation \eqref{equ:VarApprox} becomes
\begin{align*}
\intxvtt{\varphi_\eps(w(T,x,v))}+ \dfrac{4(p-1)}{p}\sigma\intTxvtt{e^{2t}|\nabla_v w^{p/2}|^2\mathds{1}_{ \left\{\eps\leq w\leq 1/\eps\right\}}} \\= \intxvtt{\varphi_\eps(u(0,x,v))}.
\end{align*}
Using Fatou's Lemma and the dominated convergence theorem we get for any $T>0$ that
\[
\|w(T)\|^p_{L^p(\R^2\times\R^2)} + \dfrac{4(p-1)}{p}\sigma\|\nabla_v  w^{p/2}\|^2_{L^2(0,T;L^{2}(\R^2\times\R^2))} 
\leq \|w_0\|^p_{L^p(\R^2\times\R^2)},
\]
which yields the estimates of \eqref{LpNormAppen} and \eqref{LpGradNormAppen} when $1< p <\infty$ and $p\neq 2$.
\end{proof}

Next we provide the estimates of the kinetic energy and the entropy. First we consider the truncation function $\chi(s)\in C^\infty _{0}(\R)$ such that
\[
\chi(s) = 1 \,\,\text{if}\,\,|s|\leq 1,\,\,\,\chi(s) = 0\,\,\text{if}\,\,|s|\geq 2,\,\,\|\chi\|_ {W^{1,\infty}(\R)}\leq 1
\]
and we define $\chi_R (z) = \chi\left( \frac{|z|}{R} \right),  z\in\R^2, R>0$. We then consider a function of class $C^\infty(\R) \cap L^\infty(\R)$ satisfying $\psi_\eps (s) = 0$ if $s\leq 0$, $ \psi_\eps (s) = 1$ if $s\geq \eps$ and $\psi_\eps$ is increasing on $[0,\eps]$. Let $\varphi_\eps$ be a primitive of $\psi_\eps$ as $\varphi_\eps (t) = \int _{-\infty} ^t \psi_\eps (s)\mathrm{d}s$.
\begin{pro}$\;$\\
\label{EsKinEnerVFP2D}
Assume that the initial data $f_\mathrm{in}$ is positive and $(1+|v|^2 /2)f_\mathrm{in}\in L^1(\R^2\times\R^2)$. Then the solution given by Proposition \ref{PropExiUniq} satisfies
\[
\intxvtt{\frac{|v|^2}{2} f(t)} \leq C_1 + C_2\intxvtt{\frac{|v|^2}{2} f_\mathrm{in}},\,\, t\in  [0,T],
\]
for some constants $C_1$ and $C_2$, depending only on $\|E\|_{L^\infty},f_\mathrm{in},T,\sigma$.
\end{pro}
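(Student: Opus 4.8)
The plan is to carry the estimate out for the transformed unknown $w(t,x,v)=e^{-2t}f(t,x,e^{-t}v)$ solving \eqref{equ:NewVFP2DBis}, for which $\partial_t w+\calT' w\in\calH'$ with $\calT'=e^{-t}v\cdot\nabla_x+B(x){}^\perp v\cdot\nabla_v$, and to test the weak formulation of \eqref{equ:NewVFP2DBis} against the weight $\tfrac{|v|^2}{2}$, regularized by the cut-off $\varphi(t,x,v)=\theta(t)\tfrac{|v|^2}{2}\chi_R(x)\chi_R(v)\in\calH$. The decisive algebraic fact is that the magnetic transport performs no work: since ${}^\perp v\cdot v=0$, the leading contribution of $B(x){}^\perp v\cdot\nabla_v w$ paired against $\tfrac{|v|^2}{2}$ vanishes identically (the operator ${}^\perp v\cdot\nabla_v$ being moreover divergence-free in $v$), leaving only a correction carried by $\nabla_v\chi_R$ on the shell $\{R\le|v|\le 2R\}$. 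The spatial transport $e^{-t}v\cdot\nabla_x w$ produces only a boundary contribution through $\nabla_x\chi_R$, and the diffusion term, paired as $\sigma e^{2t}\intxvtt{\nabla_v w\cdot\nabla_v(\tfrac{|v|^2}{2}\chi_R\chi_R)}$ by Lemma \ref{GreenFormulas}, has leading part $-2\sigma e^{2t}\intxvtt{w}$ up to corrections supported on $\{R\le|v|\le 2R\}$ and $\{R\le|x|\le 2R\}$.

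Second, letting $R\to\infty$ I would obtain the energy identity
\[
\frac{\md}{\md t}\intxvtt{\frac{|v|^2}{2}\,w}=e^{t}\intxvtt{w\,E\cdot v}+2\sigma e^{2t}\intxvtt{w}.
\]
Writing $K(t)=\intxvtt{\frac{|v|^2}{2}\,w}$ and using that the mass is conserved, $\|w(t)\|_{L^1}=\|f_{\mathrm{in}}\|_{L^1}$ (from \eqref{LpNormAppen} with $p=1$ together with the change of variables), the electric term is controlled by Cauchy--Schwarz and Young: $e^{t}|\intxvtt{w\,E\cdot v}|\le e^{T}\|E\|_{L^\infty}\sqrt{2K}\,\|f_{\mathrm{in}}\|_{L^1}^{1/2}\le C_T(1+K)$. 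Hence $\tfrac{\md K}{\md t}\le C_T K+C_T'$, with constants depending only on $\|E\|_{L^\infty},\|f_{\mathrm{in}}\|_{L^1},\sigma,T$, and Gronwall's lemma bounds $K(t)$ on $[0,T]$ by $C_1+C_2 K(0)$. Since $K(0)=\intxvtt{\frac{|v|^2}{2}f_{\mathrm{in}}}$ and, reversing the change of variables, $\intxvtt{\frac{|v|^2}{2}f(t)}=e^{-2t}K(t)\le K(t)$ for $t\ge 0$, this yields exactly the claimed estimate.

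The main obstacle is the rigorous passage $R\to\infty$, not the (routine) Gronwall step. The corrections generated by $\nabla_x\chi_R$ and $\nabla_v\chi_R$ involve integrals of the type $\int_{R\le|x|\le 2R}\!\intvtt{|v|^3 w}$ and $\intxtt{\int_{R\le|v|\le 2R}|v|\,|\nabla_v w|}$; to send them to zero I would combine the $L^1$ mass bound, the finiteness of the second moment, and the dissipation bound $\nabla_v w\in L^2$ from \eqref{LpGradNormAppen} with $p=2$, applying Cauchy--Schwarz on the annular shells. A subtler point is that the formal computation presupposes $K(t)<\infty$; as in Degond \cite{Degond1986}, I would first establish the identity along the smooth approximants used to construct $w$, where all moments are finite, derive the uniform bound there, and only then pass to the limit so that the estimate survives. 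The regularizations $\psi_\eps,\varphi_\eps$ introduced above serve precisely to legitimize these integrations by parts within the class $\Y$, exactly as in Lemma \ref{GreenFormulas}.
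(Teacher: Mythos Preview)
Your proposal is correct and follows essentially the same route as the paper: pass to $w$ solving \eqref{equ:NewVFP2DBis}, test the variational formulation against a regularized version of $\tfrac{|v|^2}{2}$, extract a Gronwall inequality for $K(t)=\intxvtt{\tfrac{|v|^2}{2}w}$, and undo the change of variables. The only difference is the regularization device: you localize in space via $\chi_R(x)\chi_R(v)$, whereas the paper tests against $h=\chi_R(|v|)\tfrac{|v|^2}{2}\psi_\eps(w)$, with $\psi_\eps$ the monotone cut-off introduced just before the proposition. The point of the paper's choice is that $\psi_\eps(w)\nabla w=\nabla\varphi_\eps(w)$ with $0\le\varphi_\eps(w)\le w\in L^1$, so the $x$-transport term integrates away exactly and every correction arising from $\chi_R(|v|)$ is controlled by the mass alone; this sidesteps the spatial tail terms of type $\int_{R\le|x|\le 2R}\intvtt{|v|^3 w}$ that your $\chi_R(x)$ produces. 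The a priori finiteness issue you raise (needing $K(t)<\infty$ to close) is present in both versions---the paper also ends with $K(T)\le K(0)+C+C\int_0^T K$ and invokes Gronwall without further comment---and your proposed fix via approximants is the standard way to make this rigorous.
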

\begin{proof}$\;$\\
Since $w(t) = e^{-2t}f(t,x,e^{-t}v )$ in $\calH$ satisfies the equation \eqref{equ:NewVFP2DBis}, we deduce for any function $h\in\calH$ that
\begin{equation}
\label{equ:VariForm} 
\left< \partial_t w + \calT' w, h \right>_{\calH'\times\calH} + \left<e^{t}E(t,x)\cdot\nabla_v w - \sigma e^{2t}\Delta_v w,h \right>_{\calH'\times\calH} = 0,
\end{equation}
where $\calT'  = e^{-t}v\cdot\nabla_x w + B(x)^\perp v\cdot\nabla_v$.
Taking in \eqref{equ:VariForm} the function $h =  \chi_R (|v|)\frac{|v|^2}{2} \psi_\eps(w)$. It is easily seen that $h\in\calH$ since the function $\chi_R(|v|)\frac{|v|^2}{2}\in L^\infty(\R)$ and $\psi_\eps(w)\in \calH$ by $w\in \calH$. In the same way of Lemma \ref{GreenFormulas}, we have the following formula for the first term in \eqref{equ:VariForm}
\begin{align*}
\left<  \partial_t w + \calT' w,h\right>_{\calH ' \times\calH }
= \intxvtt{\left[ \varphi_\eps(w(T,x,v))
 - \varphi_\eps( w(0,x,v))\right]\chi_R (|v|)\frac{|v|^2}{2}} . 
\end{align*}
Before estimating the other terms in \eqref{equ:VariForm} we need to observe that $
\varphi_\eps (w) = w\Phi_\eps (w)$ with $\Phi_\eps (w) = \int _0 ^1 \psi_\eps (\theta w)\mathrm{d}\theta,
$
which implies that
\[
\varphi_\eps(w)=|\varphi_\eps(w)| \leq w \int _0 ^1 | \psi_\eps (\theta w)|\mathrm{d}\theta \leq w,\,\,\forall \eps > 0.
\]
Moreover, the solution $w$ belongs to $L^1((0,T)\times\R^2\times\R^2)$ beacause $w\in L^\infty(0,T;L^1(\R^2\times\R^2))$. On the other hand, since $w\in\calH$ the divergence theorem implies that the term $\left<e^t E(t,x)\cdot\nabla_v w, h \right>_{\calH\times\calH'}$ can be estimated as
\begin{align*}
\left<e^t E(t,x)\cdot\nabla_v w , h \right>_{\calH ' \times\calH } 
&\leq \| \chi\|_{W^{1,\infty}(\R^2)}\|E\|_{L^\infty}e^T \intTxvtt{ w(t,x,v) |v|}\\
&\leq \dfrac{1}{2}C(\|E\|_{L^\infty},T)\left(\intTxvtt{w} + \intTxvtt{w |v|^2} \right).
\end{align*}
It remains to estimate the contribution of $\left<-\sigma e^{2t}\Delta_v w, h \right>_{\calH\times\calH'}$ in \eqref{equ:VariForm}. Similarly,  applying the divergence theorem and by direct computations we get
\begin{align*}
&\left< -\sigma e^{2t}\Delta_v w, h \right>_{\calH ' \times\calH }
\geq \sigma\intTxvtt{e^{2t}\nabla_v w \cdot\left[ \left( \chi' \left(\frac{|v|}{R}\right) v\frac{|v|}{2R}\mathds{1}_{\left\{|v|\leq 2R\right\}}+\chi_R v \right)\psi_\eps(w) \right]}\\
&=\sigma\intTxvtt{e^{2t}\nabla_v \varphi_\eps(w) \cdot \left(\chi' \left(\frac{|v|}{R}\right) v \frac{|v|}{2R}\mathds{1}_{\left\{|v|\leq 2R\right\}}+\chi_R v \right) }\\
&=-\sigma \intTxvtt{e^{2t}\varphi_\eps(w) \left[ \left( \chi''\left(\frac{|v|}{R}\right)\frac{|v|^2}{2R^2}\mathds{1}_{\left\{|v|\leq 2R\right\}}+ 2 \chi'\left(\frac{|v|}{R}\right)\frac{|v|}{R}\mathds{1}_{\left\{|v|\leq 2R\right\}} + 2\chi_R \right) \right]}\\
&\rightarrow - 2\sigma\intTxvtt{e^{2t} w }, \,\,\text{when}\,\,\eps\searrow 0, R\to \infty,
\end{align*}
where we have used the dominated convergence theorem in the last integral. 
Since 
\[
\intTxvtt{w} \leq \intTxvtt{w_0} = T \|f_\mathrm{in}\|_{L^1(\R^2\times\R^2)},
\]
we finally deduce from the above estimations that
\begin{align*}
\intxvtt{\varphi_\eps(w(T,x,v))\chi_R(|v|)\dfrac{|v|^2}{2}} \leq \intxvtt{\varphi_\eps(w(0,x,v))\chi_R(|v|)\dfrac{|v|^2}{2}} \\
+ C(\|E\|_{L^\infty}, T,\sigma, f_\mathrm{in}) + C(\|E\|_{L^\infty},T)\intTxvtt{w\frac{|v|^2}{2}}.
\end{align*}
Using Fatou's Lemma and then the dominated convergence theorem when $\eps\searrow 0$, $R\to\infty$ we get for any $T>0$ that
\begin{align*}
\intxvtt{w(T,x,v)\dfrac{|v|^2}{2}} \leq \intxvtt{w(0,x,v)\dfrac{|v|^2}{2}} \\
+ C(\|E\|_{L^\infty}, T,\sigma, f_\mathrm{in}) + C(\|E\|_{L^\infty},T)\intTxvtt{w\frac{|v|^2}{2}}.
\end{align*}
Thanks to Gronwall's inequality we complete the proof.
\end{proof}

In the same way as for the proof of Proposition \ref{EsKinEnerVFP2D}, if we take the function $h$ in the equation \eqref{equ:VariForm} given by $h(t,x,v) = \chi_R(|x|)|x|\psi_\eps(w)$, we can obtain the following Proposition
\begin{pro}$\;$\\
\label{BoundPosit}
Assume that the initial data $f_\mathrm{in}$ belongs to $L^1(\R^2\times\R^2)$ and satisfies $(|x|+|v|^2/2)f_\mathrm{in}\in L^1(\R^2\times\R^2)$. Then the solution $f$ is given by Proposition \ref{PropExiUniq} satisfies
\[
\intxvtt{|x|f(t)} \leq C_1 + C_2 \intxvtt{|x|f_\mathrm{in}},\,\, t\in [0,T],
\]
for some constants $C_1$ and $C_2$, depending only on $f_\mathrm{in}, T$.
\end{pro}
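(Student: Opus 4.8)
The plan is to follow verbatim the structure of the proof of Proposition~\ref{EsKinEnerVFP2D}, simply replacing the velocity weight $\frac{|v|^2}{2}$ by the position weight $|x|$. Working with the rescaled unknown $w(t,x,v) = e^{-2t}f(t,x,e^{-t}v)$, which solves \eqref{equ:NewVFP2DBis} and satisfies the variational identity \eqref{equ:VariForm}, I would test against $h(t,x,v) = \chi_R(|x|)|x|\,\psi_\eps(w)$, where $\chi_R$ and $\psi_\eps$ are the cut-off and approximation functions introduced just before Proposition~\ref{EsKinEnerVFP2D}. Since $\chi_R(|x|)|x|\in L^\infty(\R^2)$ and $\psi_\eps(w)\in\calH$ whenever $w\in\calH$, the test function $h$ indeed belongs to $\calH$, so each pairing in \eqref{equ:VariForm} is well defined.

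First I would treat the transport contribution $\left<\partial_t w + \calT' w, h\right>_{\calH'\times\calH}$. Writing $g(x) = \chi_R(|x|)|x|$ and using $\psi_\eps = \varphi_\eps'$ together with the Green-type identity of Lemma~\ref{GreenFormulas} (exactly as in Proposition~\ref{EsKinEnerVFP2D}), this term equals $\intxvtt{[\varphi_\eps(w(T)) - \varphi_\eps(w(0))]\,g(x)} - \intTxvtt{\varphi_\eps(w)\,\calT' g}$, because $\calT' = e^{-t}v\cdot\nabla_x + B(x){}^\perp v\cdot\nabla_v$ is divergence-free in $(x,v)$ and the cut-off $\chi_R$ kills the spatial boundary terms. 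The crucial simplification is that $g$ depends only on $x$, so $\nabla_v g = 0$ and the magnetic part drops out: $\calT' g = e^{-t}v\cdot\nabla_x(\chi_R(|x|)|x|)$. Moreover $\nabla_x(\chi_R(|x|)|x|) = \chi_R'(|x|)\tfrac{x}{R} + \chi_R(|x|)\tfrac{x}{|x|}$ is bounded uniformly in $R$, whence $|\calT' g|\le C|v|$ with $C$ independent of $R$.

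Second, the two velocity-derivative terms simplify drastically for a $v$-independent weight. Since $\nabla_v\cdot E = 0$ and $\nabla_v g = 0$, integrating by parts in $v$ yields $\left<e^t E\cdot\nabla_v w, h\right>_{\calH'\times\calH} = 0$; this is where the position estimate is actually easier than the kinetic one. For the diffusion term, $\nabla_v g = 0$ gives $\left<-\sigma e^{2t}\Delta_v w, h\right>_{\calH'\times\calH} = \sigma\intTxvtt{e^{2t}g(x)\psi_\eps'(w)|\nabla_v w|^2}\ge 0$, which I simply discard. Collecting everything and using $\varphi_\eps(w)\le w$ together with $e^{-t}\le 1$, I obtain $\intxvtt{g\,\varphi_\eps(w(T))}\le \intxvtt{g\,\varphi_\eps(w(0))} + C\intTxvtt{w|v|}$.

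Finally I would pass to the limit, first $\eps\searrow 0$ and then $R\to\infty$, using Fatou's lemma on the left-hand side and dominated convergence on the right, exactly as at the end of Proposition~\ref{EsKinEnerVFP2D}. The coupling term is handled by $\intTxvtt{w|v|}\le \tfrac12\intTxvtt{w} + \tfrac12\intTxvtt{w|v|^2}$: the mass term is bounded by $T\|f_\mathrm{in}\|_{L^1}$, and after the change of variables $v\mapsto e^t v$ the second moment of $w$ equals $e^{2t}\intxvtt{|v|^2 f(t)}$, already controlled by Proposition~\ref{EsKinEnerVFP2D}. Reversing the rescaling via $\intxvtt{|x|w(t)} = \intxvtt{|x|f(t)}$ then gives the claimed bound. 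The one point to watch is precisely this position–velocity coupling produced by $e^{-t}v\cdot\nabla_x|x|$; the agreeable feature, which I would emphasize, is that unlike the kinetic-energy case no self-referential Gronwall argument is needed here, since the coupling term is only linear in $|v|$ and is absorbed directly by the already-established kinetic energy estimate rather than by the quantity $\intxvtt{|x|f(t)}$ being bounded.
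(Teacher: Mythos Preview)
Your proposal is correct and follows exactly the approach the paper itself indicates: the paper's ``proof'' of Proposition~\ref{BoundPosit} consists of a single sentence saying to repeat the argument of Proposition~\ref{EsKinEnerVFP2D} with the test function $h(t,x,v)=\chi_R(|x|)|x|\,\psi_\eps(w)$, and you carry this out carefully, correctly identifying the new coupling term $e^{-t}v\cdot\nabla_x g$ coming from $\calT'g$ (which was absent in the kinetic-energy case because there the weight was radial in $v$ and $x$-independent). Your remark that no Gronwall loop is needed here---the $|v|$ coupling being absorbed by the already-proved kinetic bound rather than by the position moment itself---is a useful clarification that the paper leaves implicit.
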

\begin{pro}$\;$\\
\label{EntropyVFP2D}
Assume that the initial function $f_\mathrm{in}$ is positive and verifies $(1+|x|+|v|^2/2)f_\mathrm{in}\in L^1(\R^2\times\R^2)$. Then the solution $f$ of Proposition \ref{PropExiUniq} satisfies
\[
\intxvtt{\sigma f|\ln f|} \leq C + \intxvtt{\sigma f_\mathrm{in}|\ln f_\mathrm{in}|},\,\, t\in [0,T],
\]
\[
\intxvtt{\dfrac{|\sigma\nabla_v f|^2}{f}} \leq C +\intxvtt{\sigma f_\mathrm{in}|\ln f_\mathrm{in}|},\,\, t\in [0,T],
\]
for some constant $C$, depending only on $\|E\|_{L^\infty}, f_\mathrm{in}, T, \sigma$.
\end{pro}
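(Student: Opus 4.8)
The plan is to run the entropy estimate on the rescaled unknown $w(t,x,v)=e^{-2t}f(t,x,e^{-t}v)$ solving \eqref{equ:NewVFP2DBis}, for which the friction has been absorbed so that only the time-weighted diffusion $-\sigma e^{2t}\Delta_v w$ remains. First I would record the scaling identities $\intxvtt{w}=\intxvtt{f}$, $\intxvtt{f|\ln f|}\le \intxvtt{w|\ln w|}+2t\,\|f_\mathrm{in}\|_{L^1(\R^2\times\R^2)}$ and $\intxvtt{|\nabla_v f|^2/f}=e^{2t}\intxvtt{|\nabla_v w|^2/w}$, which reduce both assertions to controlling the entropy and the entropy dissipation of $w$; I would also note that the position and kinetic-energy moments of $w$ stay bounded on $[0,T]$ thanks to Propositions \ref{BoundPosit} and \ref{EsKinEnerVFP2D} combined with the same scaling.

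Next, following verbatim the regularization scheme of Proposition \ref{EsKinEnerVFP2D}, I would test the variational formulation \eqref{equ:VariForm} with $h=\chi_R(|x|)\chi_R(|v|)\,\psi_\eps(w)$, where $\psi_\eps$ is a bounded $C^2$ approximation of the entropy variable $1+\ln s$ (built as in Lemma \ref{BalanceEntropy} from $\ln(1+w/\delta)$, which is admissible since $w\in L^\infty\cap\calH$) and $\chi_R$ is the cut-off of Lemma \ref{BalanceKin2D}. Writing $\varphi_\eps$ for a primitive of $\psi_\eps$, the three transport contributions vanish: $\int e^{-t}v\cdot\nabla_x\varphi_\eps(w)=0$ and $\int B(x){}^\perp v\cdot\nabla_v\varphi_\eps(w)=0$ because $\Divv({}^\perp v)=0$, while $e^t\int E\cdot\nabla_v\varphi_\eps(w)=0$ because $E$ does not depend on $v$. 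Only the diffusion survives and yields, after an integration by parts in $v$, the dissipation $\sigma e^{2t}\intxvtt{|\nabla_v w|^2/w}$. Passing to the limit $R\to\infty$ and $\eps\searrow0$ by Fatou's lemma and dominated convergence, exactly as in Proposition \ref{EsKinEnerVFP2D}, produces the entropy balance
\[
\intxvtt{w(t)\ln w(t)}+\sigma\int_0^t e^{2s}\intxvtt{\frac{|\nabla_v w|^2}{w}}\,\mathrm{d}s=\intxvtt{f_\mathrm{in}\ln f_\mathrm{in}},\qquad t\in[0,T].
\]

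It then remains to turn the signed entropy into the absolute one, which only needs the negative part. With $m(x,v)=e^{-(|x|+|v|^2/2)}$ and the elementary bound $-s\ln s\le 1/e$ applied to $s=w/m$, one gets $-w\ln w\le m/e+w(|x|+|v|^2/2)$, hence
\[
\intxvtt{w|\ln w|}=\intxvtt{w\ln w}+2\intxvtt{(w\ln w)^-}\le \intxvtt{f_\mathrm{in}\ln f_\mathrm{in}}+\frac{2}{e}\intxvtt{m}+2\intxvtt{w\Big(|x|+\tfrac{|v|^2}{2}\Big)}.
\]
The last term is finite and bounded on $[0,T]$ by the moment control above, and $\intxvtt{m}<\infty$; returning to $f$ via the scaling identities gives the first estimate, the constant absorbing $\tfrac{2}{e}\intxvtt{m}$, the moment bounds and the factor $2T\|f_\mathrm{in}\|_{L^1(\R^2\times\R^2)}$. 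For the dissipation, the second term of the entropy balance gives directly $\int_0^T\intxvtt{|\sigma\nabla_v f|^2/f}=\sigma\big(\intxvtt{f_\mathrm{in}\ln f_\mathrm{in}}-\intxvtt{w(T)\ln w(T)}\big)$, which is bounded in the same way and yields the second estimate in the time-integrated sense of \eqref{InegDissipation}.

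The main obstacle is entirely technical rather than conceptual: since $\ln w$ is neither bounded below nor an element of $\calH$, the formal computation above must be justified through the $\psi_\eps$/$\chi_R$ truncations and the Green formulas of Lemma \ref{GreenFormulas}, and the delicate points are the uniform integrability required to pass to the limit in the entropy-production term and the control of the negative entropy part $(w\ln w)^-$, for which the finiteness of the $|x|$- and $|v|^2$-moments of $w$ (equivalently of $f$) is indispensable.
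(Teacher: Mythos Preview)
Your proposal is correct and follows essentially the same route as the paper: both work on the rescaled unknown $w=e^{-2t}f(t,x,e^{-t}v)$, test the variational formulation against a bounded approximation of $1+\ln w$ to obtain the entropy identity, then convert the signed entropy to $|w\ln w|$ via the moment bounds of Propositions \ref{EsKinEnerVFP2D} and \ref{BoundPosit} and pass to the limit by Fatou/dominated convergence. The only cosmetic differences are in the regularization (the paper truncates with the indicator $\mathds{1}_{\{\eps\le w\le 1/\eps\}}$ rather than your $\ln(1+w/\delta)$ combined with $\chi_R$-cut-offs) and in the pointwise inequality used to control $(w\ln w)^-$ (the paper quotes the variant $|u\ln u|\le u\ln u+\tfrac14(|x|+|v|^2)u+Ce^{-(|x|+|v|^2)/2}$ from \cite{PouPerSol}, which is exactly your argument with $m=e^{-(|x|+|v|^2/2)}$); your remark that the dissipation bound is really the time-integrated estimate \eqref{InegDissipation} is also what the paper's computation actually yields.
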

\begin{proof}$\;$\\
As before, we will work on $w(t,x,v) = e^{-2t}f(t,x,e^{-t}v)$ which is satisfied by equation \eqref{equ:NewVFP2DBis} and variational equation \eqref{equ:VariForm}.
 For any $\eps >0$, we define the function $g_\eps(w)$ as: 
 \[ g_\eps(w):= \mathds{1}_{\left\{  \eps \leq w\leq 1/\eps \right\}}\ln \varphi_\eps(w) =  \mathds{1}_{\left\{  \eps \leq w\leq 1/\eps \right\}}\ln w  ,
 \]
and it is obvious that it belongs to $ L^\infty((0,T)\times\R^2\times\R^2)$. It is easily seen that 
\[
\partial_t w (1 +\mathds{1}_{\left\{  \eps \leq w\leq 1/\eps \right\}}\ln \varphi_\eps(w)) = \partial_t (w g_\eps(w)),
\]
and
\[
\calT' w (1 + \mathds{1}_{\left\{  \eps \leq w\leq 1/\eps \right\}}\ln \varphi_\eps(w) )= \calT'(w g_\eps(w)).
\]
Multiplying the equation \eqref{equ:NewVFP2DBis} by $\sigma(1 + \mathds{1}_{\left\{  \eps \leq w\leq 1/\eps \right\}}\ln \varphi_\eps(w))$ and then passing to the variational equation with $h = \psi_\eps(w)\in\calH$ we get
\begin{align}
\label{equ:VariFormBis}
\sigma\left< \partial_t (w g_\eps(w)) + \calT' (w g_\eps(w)), \psi_\eps \right>_{\calH'\times\calH}\nonumber\\ + \sigma\left<[ e^{t}E(t,x)\cdot\nabla_v w  - \sigma e^{2t}\Delta_v w] (1+g_\eps(w)),\psi_\eps(w) \right>_{\calH'\times\calH} = 0.
\end{align}
Since $\psi_\eps(w) = 1$ on $\eps\leq w \leq 1/\eps$ so in the same way of Lemma \ref{GreenFormulas}, we have the following formula for the first term in \eqref{equ:VariFormBis}
\begin{align*}
\sigma \left< \partial_t (w g_\eps(w)) + \calT' (w g_\eps(w)), \psi_\eps \right>_{\calH'\times\calH} &= \intxvtt{\sigma w(T,x,v)\ln w(T,x,v)\mathds{1}_{\left\{ \eps \leq w\leq {1}/{\eps }  \right\}}} \\&- \intxvtt{\sigma w(0,x,v)\ln w(0,x,v)\mathds{1}_{\left\{ \eps \leq w\leq {1}/{\eps }  \right\}}}.
\end{align*}
We estimate now the other terms in \eqref{equ:VariFormBis}. Since $w\in\calH$ so the divergence theorem implies that
\begin{align*}
\sigma \left< e^t E(x)\cdot\nabla_v w (1+ g_\eps(w)), \psi_\eps(w) \right>_{\calH'\times\calH}  =0,
\end{align*}
and
\begin{align*}
 \left<- \sigma^2 e^{2t}\Delta_v w (1+g_\eps(w)),\psi_\eps(w) \right>_{\calH'\times\calH} =  \intTxvtt{e^{2t}\dfrac{|\sigma\nabla_v w|^2}{w}\mathds{1}_{\left\{ \eps \leq w\leq {1}/{\eps }  \right\}}}.
 \end{align*}
 Finally, from \eqref{equ:VariFormBis} we obtain for any $T>0$ that
 \begin{align*}
 \intxvtt{\sigma w(T,x,v)\ln w(T,x,v)\mathds{1}_{\left\{ \eps \leq w\leq {1}/{\eps }  \right\}}} + \intTxvtt{e^{2t}\dfrac{|\sigma\nabla_v w|^2}{w}\mathds{1}_{\left\{ \eps \leq w\leq {1}/{\eps }  \right\}}}\\
 = \intxvtt{\sigma w(0,x,v)\ln w(0,x,v)\mathds{1}_{\left\{ \eps \leq w\leq {1}/{\eps }  \right\}}}.
 \end{align*}
By standard argument, there exists a constant $C>0$, (see \cite{PouPerSol}, Lemma 2.3) such that
\[
|u \ln u| = u\ln u - 2 u \ln u \mathds{1}_{\left\{ 0 \leq u \leq 1 \right\}} \leq u\ln u + \dfrac{1}{4} (|x| +|v|^2)u + C e^{-\frac{|x|+|v|^2}{2}},
\]
therefore
\begin{align*}
 \intxvtt{\sigma w(T,x,v)|\ln w(T,x,v)|\mathds{1}_{\left\{ \eps \leq w\leq {1}/{\eps }  \right\}}} + \intTxvtt{e^{2t}\dfrac{|\sigma\nabla_v w|^2}{w}\mathds{1}_{\left\{ \eps \leq w\leq {1}/{\eps }  \right\}}}\\
 \leq \intxvtt{\sigma w(0,x,v)|\ln w(0,x,v)|} + \dfrac{1}{4}\intxvtt{(|x| +|v|^2)w} + C 8\pi
\end{align*}
where we have used that $\intxvtt{e^{-\frac{|x|+|v|^2}{2}}}
 = 8\pi$. Thanks to the hypothesis on the initial data $f_\mathrm{in}$ we infer that $\frac{1}{4}\intxvtt{(|x| +|v|^2)w}\leq C(\|E\|_{L^\infty}, f_\mathrm{in}, T, \sigma)$. Therefore, Fatou's Lemma implies that
 \begin{align*}
& \intxvtt{\sigma w(T,x,v)|\ln w(T,x,v)|  } + \intTxvtt{e^{2t}\dfrac{|\sigma\nabla_v w|^2}{w}}\\&
 \leq \intxvtt{\sigma w(0,x,v)|\ln w(0,x,v)|}.
\end{align*}
Substitutively $w = e^{-2t}f(t,x,e^{-t}v)$ leads to the desired result.
\end{proof}
\paragraph{Acknowledgement}$\;$\\
This work has been carried out within the framework of the EUROfusion Consortium, funded by the European Union via the Euratom Research and Training Programme (Grant Agreement No 101052200 — EUROfusion). Views and opinions expressed are however those of the author(s) only and do not necessarily reflect those of the European Union or the European Commission. Neither the European Union nor the European Commission can be held responsible for them.

\end{document}